\newtheorem{defin}{Definition}
\newtheorem{lemma}{Lemma}
\newtheorem{prop}{Proposition}
\newtheorem{theo}{Theorem}
\newenvironment{proof}{\medskip\par\noindent{\bf Proof}}{\hfill $\Box$
\medskip\par}
\begin{document}
\title{On multiscale Gevrey and $q-$Gevrey asymptotics for some linear $q-$difference differential
initial value Cauchy problems}

\author{{\bf A. Lastra, S. Malek}\\
University of Alcal\'{a}, Departamento de F\'{i}sica y Matem\'{a}ticas,\\
Ap. de Correos 20, E-28871 Alcal\'{a} de Henares (Madrid), Spain,\\
University of Lille 1, Laboratoire Paul Painlev\'e,\\
59655 Villeneuve d'Ascq cedex, France,\\
{\tt alberto.lastra@uah.es}\\
{\tt Stephane.Malek@math.univ-lille1.fr }}
\date{\today}

\maketitle

\thispagestyle{empty}
{ \small \begin{center}
{\bf Abstract}
\end{center}

We study the asymptotic behavior of the solutions related to a singularly perturbed q-difference-differential problem in the complex domain. The analytic solution can be splitted according to the nature of the equation and its geometry so that both, Gevrey and $q-$Gevrey asymptotic phenomena are observed and can be distinguished, relating the analytic and the formal solution. 

The proof leans on a two level novel version of Ramis-Sibuya theorem under Gevrey and q-Gevrey orders.  

\noindent Key words: asymptotic expansion, Borel-Laplace transform, Fourier transform, Cauchy problem, formal power series,
nonlinear integro-differential equation, nonlinear partial differential equation, singular perturbation. 2010 MSC: 35C10, 35C20.}
\bigskip \bigskip

\section{Introduction}

The present work deals with a family of q-difference-differential equations of the form
\begin{equation}\label{e1}
P(t,z,\epsilon,\sigma_{q^{\delta}},\partial_t,\partial_z)y(t,z,\epsilon)=f(t,z,\epsilon),
\end{equation}
where $P(t,z,\epsilon,Z_1,Z_2,Z_3)$ is a polynomial in the first, and the last three variables, with coefficients depending holomorphically on $z$ and the perturbation parameter on some domain. More precisely, $P$ can be splitted in the following way:
$$P(t,z,\epsilon,\sigma_{q^{\delta}},\partial_t,\partial_z)=P_1(t,z,\epsilon,\sigma_{q^{\delta}},\partial_t,\partial_z)\mathbf{c}_{\mathbf{F}}(\epsilon)^{-1}P_2(t,z,\epsilon,\partial_t,\partial_z),$$
with
$$P_1(t,z,\epsilon,\sigma_{q^{\delta}},\partial_t,\partial_z)=Q(\partial_{z})-R_{D}(\partial_{z})\epsilon^{kd_{D}}(t^{k+1}\partial_{t})^{d_{D}}- \sum_{l=1}^{D-1} \epsilon^{\Delta_{l}}(t^{k+1}\partial_{t})^{d_{l}}c_{l}(z,\epsilon)R_{l}(\partial_{z})\sigma_{q^{\delta}},$$
and 
$$P_2\left(t,z,\epsilon,\partial_t,\partial_z\right)=\mathbf{Q}(\partial_z)\partial_{t}-\sum_{l=1}^{\mathbf{D}}\epsilon^{\boldsymbol{\Delta}_l}t^{\mathbf{d}_l}\partial_t^{\boldsymbol{\delta}_l}\mathbf{R}_l(\partial_z)-\mathbf{c}_{0,0}(\epsilon)\mathbf{c}_{0}(z,\epsilon)
\mathbf{R}_{0}(\partial_z).$$
Here, $\delta, q$ are positive real numbers with $q>1$, and $\sigma_{q^{\delta}}$ stands for the dilation operator on $t$ variable $\sigma_{q^{\delta}}(H)(t)=H(q^{\delta }t)$. $D,\mathbf{D}\ge2$ are fixed integers. The elements $Q,\mathbf{Q}.R_l,\mathbf{R}_h$  for every $1\le l\le D$ and $0\le h\le \mathbf{D}$ are polynomials with complex coefficients, and $k\ge 1$ is an integer number. We consider nonnegative integers $d_l$ for every $1\le l\le D$ and $\mathbf{d}_l,\boldsymbol{\delta}_l,\boldsymbol{\Delta}_l$ for every $1\le l\le D$, and $\Delta_l$ for $1\le l\le D-1$. 

The functions $\mathbf{c}_{\mathbf{F}}(\epsilon)$, $\mathbf{c}_{0,0}(\epsilon)$ turn out to be holomorphic functions in a neighborhood of the origin in the perturbation parameter, and $\mathbf{c}_0(z,\epsilon)$ and $c_l(z,\epsilon)$ for $1\le l\le D-1$ are holomorphic functions defined in a horizontal strip in $z$ variable, and a neighborhood of the origin with respect to the perturbation parameter, $\epsilon$. 

The forcing term $f(t,z,\epsilon)$ is chosen as a holomorphic function with respect to $(t,\epsilon)$ in a product of discs centered at the origin, and in a horizontal strip with respect to $z$. The precised requirements on the elements involved in the equation are accurately described in the following sections.

This problem embraces two different theories on the asymptotic behavior of the solutions of equations, recently studied by several authors: Gevrey and $q-$Gevrey expansions. More precisely, Gevrey asymptotics come out from the geometric disposal of the singularities coming from the operator $P_2$, whilst $q-$Gevrey asymptotics are due to the appearance of moving singularities usually related to dilation operators, such as the one involved in the operator $P_1$. In order to study equation (\ref{e1}) we regard two auxiliary problems. On the one hand, we consider the problem
\begin{equation}\label{e2}
P_1(t,z,\epsilon,\sigma_{q^{\delta}},\partial_t,\partial_z)u(t,z,\epsilon)=f(t,z,\epsilon),
\end{equation}
and then, the solution of the previous problem is set as the forcing term of a second auxiliary problem 
\begin{equation}\label{e3}
P_2\left(t,z,\epsilon,\partial_t,\partial_z\right)y(t,z,\epsilon)=\mathbf{c}_{\mathbf{F}}(\epsilon)u(t,z,\epsilon)
\end{equation}
to be studied. As a whole, the solution $y(t,z,\epsilon)$ would provide a solution of the main problem (\ref{e1}). As a matter of fact, two quite different pictures emerge. 

Equation (\ref{e3}) falls in the framework of the asymptotic study of singularly perturbed problems which have been studied in the case of a real perturbation parameter $\epsilon$ and $P_1$ acts on $\mathcal{C}^{\infty}(\mathbb{R}^d)$ functions or Sovolev spaces $H(\mathbb{R}^d)$. We refer to~\cite{kada} for the details. The case of a complex perturbation parameter $\epsilon$ has led to results in which the nature of the singularities arising from the problem describe different types of singularities. We refer to the work by M. Canalis-Durand, J. Mozo-Fern\'andez and R. Sch\"afke~\cite{camo}, the second author~\cite{ma12}, the authors~\cite{lamaade16} and the authors and J. Sanz~\cite{lamasa1}. In~\cite{lama0}, the authors study the parametric multi-level Gevrey solutions coming from a splitting of the equation which generates two Gevrey levels. In contrast to~\cite{lama0}, the splitting of the main equation in this work entails the necessity of a very accurate estimation of the situation of the moving singularities coming from the auxiliary equation (\ref{e2}).

The study of equation (\ref{e2}) is connected to the behavior of singularities appearing in the asymptotic study of advanced/delayed partial differential equations (see~\cite{lamasa2}, and the references therein). The study of this auxiliary equation continues a series of works dedicated to the asymptotic behavior of holomorphic solutions to different q-difference-differential problem involving irregular singularities such as~\cite{lamaq,lama2,lamaq2}. Another approach to handle this kind of equations is based on Newton polygon methods, see~\cite{taya2}, and also in the study of $q-$analogs of Briot-Bouquet type partial differential equations, see~\cite{ya}.

As a matter of fact, the appearance of different types of singularities, coming from small divisor problems in (\ref{e2}) and (\ref{e3}) makes the problem much more difficult to handle rather than solving both equations independently. The procedure followed rests on a meticulous tracing of the decreasing rate to zero of the singularities related to the small divison problem derived from (\ref{e2}). This would allow to compute the solution of the auxiliar problem (see Theorem~\ref{teo1}).

The plan of the work is as follows. 

In the first section, we state the definition on some Banach spaces of functions with some $q-$exponential growth with respect to some variable $\tau$, and exponential decay with respect to some other variable $m$ (see Definition~\ref{defi2}) and we provide some results to estimate the norm of certain continuous operators defined on such spaces. Afterwards, we introduce another auxiliary Banach spaces of functions with exponential growth of finite order, and decay of order 1 (see Definition~\ref{defi3}), and recall the action of certain operators on such spaces.

Departing from a good covering in $\mathbb{C}^{\star}$, $(\mathcal{E}_{p})_{0\le p\le \varsigma-1}$, (see Definition~\ref{goodcovering}), we construct the solution of the problem
\begin{equation}\label{e2p}
P_1(t,z,\epsilon,\sigma_{q^{\delta}},\partial_t,\partial_z)u_p(t,z,\epsilon)=f(t,z,\epsilon),
\end{equation}
for every $0\le p\le \varsigma-1$, in the form of the Fourier and $m_k$-Laplace transform (see Definition~\ref{defi4}) of a function obtained by a fixed point argument in the first Banach space of functions with $q-$exponential growth and exponential decay. Indeed, we have
$$u_{p}(t,z,\epsilon) = \frac{k}{(2\pi)^{1/2}} \int_{-\infty}^{+\infty} \int_{L_{\gamma_{p}}}
w_{k}^{p}(u,m,\epsilon) \exp( -(\frac{u}{\epsilon t})^{k} ) e^{izm} \frac{du}{u} dm,$$
where $L_{\gamma_p}$ stands for an appropriate direction, and there exists $C_1>0$ such that
$$|w_{k}^{p}(u,m,\epsilon)|\le C_1 \exp(-\beta|m|)\frac{1}{(1+|m|)^{\mu}}|\tau|\exp\left(\frac{k_1}{2}\frac{\log^2(|\tau|+\tau_0)}{\log(q)}+\alpha\log(|\tau|+\tau_0)\right),$$
for every $m\in\mathbb{R}$, $\epsilon\in\mathcal{E}_{p}$ and $u$ in an infinite sectorial triangle frame (see Figure~\ref{figure2}). The direction of integration $\gamma_p$ is chosen in such a way that $L_{\gamma_p}$ belongs to the frame. Consequently, $u_p(t,z,\epsilon)$ turns out to be a holomorphic function defined in $\mathcal{T}\times H_{\beta'}\times\mathcal{E}_p$, for some finite sector $\mathcal{T}$, and a horizontal strip $H_{\beta'}$. At this point, it is crucial to split the integration path in an infinite number of pieces in order to attain $q-$exponential flatness with respect to $\epsilon$, and uniformly in the other variables, of the difference of two consecutive solutions (in the sense that these solutions correspond to consecutive sectors in the perturbation parameter) (see (\ref{q_exp_small_difference_u_p})). The geometry of the problem is described in detail in Section~\ref{seccion42} to clarify this essential and delicate point. A q-Gevrey version of Ramis-Sibuya theorem, Theorem (q-RS), guarantees the existence of a formal solution $\hat{u}(t,z,\epsilon)$ of (\ref{e2p}) which asymptotically represents $u_p(t,z,\epsilon)$ for every $0\le p\le \varsigma-1$. 

For every $0\le p\le \varsigma-1$, we choose a family of sectors $(\mathcal{E}_{p',p})_{0\le p'\le \chi_p}$ with $\mathcal{E}_{p',p}\subseteq\mathcal{E}_{p}$ for every $0\le p'\le \chi_p-1$ and such that $(\mathcal{E}_{p',p})_{\substack{0\le p\le \varsigma-1\\0\le p'\le \chi_p-1}}$ is a good covering in $\mathbb{C}^{\star}$, and consider the problem
\begin{equation}\label{e3pp}
P_2\left(t,z,\epsilon,\partial_t,\partial_z\right)y_{p',p}(t,z,\epsilon)=\mathbf{c}_{\mathbf{F}}(\epsilon)u_p(t,z,\epsilon)
\end{equation}
for every $0\le p\le \varsigma-1$ and $0\le p'\le \chi_p$. The previous problem is solved by means of a fixed point result leading to the solution
$$y_{p',p}(t,z,\epsilon)=\frac{k}{(2\pi)^{1/2}}\int_{-\infty}^{\infty}\int_{L_{\gamma_{p',p}}}v_{k}^{\mathfrak{e}_{p',p},\mathfrak{d}_p}(u,m,\epsilon)e^{-\left(\frac{u}{t\epsilon}\right)^{k}}\frac{du}{u}\exp(izm)dm,$$
for some direction of integration $L_{\gamma_{p',p}}$, and where $v_{k}^{\mathfrak{e}_{p',p},\mathfrak{d}_p}$ satisfies the next estimates: there exists $C_2>0$ with
$$|v_{k}^{\mathfrak{e}_{p',p},\mathfrak{d}_p}(\tau,m,\epsilon)|\le C_2\frac{1}{(1+|m|)^{\mu}}\frac{|\tau|}{1+|\tau|^{2k}}\exp(-\beta|m|+\nu|\tau|^k),$$
for all $\tau\in S_{\mathfrak{e}_{p',p}}$, which is an infinite sector containing direction $L_{\gamma_{p',p}}$, all $m\in\mathbb{R}$ and all $\epsilon\in\mathcal{E}_{p',p}$. The funtion $y_{p',p}(t,z,\epsilon)$ is holomorphic in a finite sector with vertex at the origin with respect to $t$ variable, an horizontal strip with respect to $z$ variable, and in $\mathcal{E}_{p',p}$ with respect to $\epsilon$. In addition to that, the deformation of the integration paths derives to accurate estimations of the difference of two consecutive solutions (in the sense of above). Again, the geometry of the problem has to be inspected to attain accurate bounds on this difference along infinite segments in which the paths have to be splitted. It is worth mentioning that the difference of two consecutive solutions would become $q-$exponentially flat (resp. exponentially flat) whenever the consecutive sectors of the good covering come from different (resp. come from a common) $p\in\{0,1,\ldots,\varsigma-1\}$ (see Theorem~\ref{teo3}). This would suggest the application of a novel version of a multilevel Gevrey and $q-$Gevrey Ramis-Sibuya theorem (see Theorem~\ref{teo4}) in order to conclude the existence of a formal solution $\hat{y}(t,z,\epsilon)$ of the second auxiliary problem (\ref{e3pp}) which represents each of the analytic solutions $y_{p',p}(t,z,\epsilon)$, seen as a function of the perturbation parameter with coefficients in some Banach space. The formal and the analytic solution are decomposed accordingly to the multilevel asymptotic representation, in the shape of that of multisummability described in~\cite{ba2}. 

More precisely, the main result of the present work (Theorem~\ref{teopral}) states that equation
\begin{equation}\label{e0}P(t,z,\epsilon,\sigma_{q^{\delta}},\partial_t,\partial_z)y_{p',p}(t,z,\epsilon)=f(t,z,\epsilon),
\end{equation}
admits an analytic solution $y_{p',p}(t,z,\epsilon)$, holomorphic on the product of a finite sector with vertex at the origin, times a horizontal strip times $\mathcal{E}_{p',p}$, for every $0\le p\le \varsigma-1$ and all $0\le p'\le \chi_p-1$, with $y_{p',p}(0,z,\epsilon)\equiv 0$. In addition to that, (\ref{e0}) admits a unique formal solution $\hat{y}(t,z,\epsilon)$, written as a formal power series in the perturbation parameter, and with coefficients in an appropriate Banach space, $\mathbb{F}$, in such a way that $y_{p',p}(t,z,\epsilon)$ (resp. $\hat{y}(t,z,\epsilon)$) can be written in the form 
$$y_{p',p}(t,z,\epsilon)=a(t,z,\epsilon)+y^{1}_{p',p}(t,z,\epsilon)+y^{2}_{p',p}(t,z,\epsilon),$$
(resp. 
$$\hat{y}(t,z,\epsilon)=a(t,z,\epsilon)+\hat{y}_{1}(t,z,\epsilon)+\hat{y}_{2}(t,z,\epsilon),)$$
where $a(t,z,\epsilon)\in\mathbb{F}\{\epsilon\}$, $\epsilon\mapsto y^{1}_{p',p}(t,z,\epsilon)$ admits $\hat{y}_{1}(t,z,\epsilon)$ as its Gevrey asymptotic expansion of order $1/k$ in $\mathcal{E}_{p',p}$, and $\epsilon\mapsto y^{2}_{p',p}(t,z,\epsilon)$ admits $\hat{y}_{2}(t,z,\epsilon)$ as its $q-$Gevrey asymptotic expansion of order $1/\kappa$ for every $0<\kappa<\frac{k}{\delta}\min_{l=1}^{D-1}d_l$ in $\mathcal{E}_{p',p}$. These asymptotic representations are common for every $0\le p\le \varsigma-1$ and all $0\le p'\le \chi_p-1$, and depend on the geometric configuration of the scheme.

The work is structured in the following sections. In Section~\ref{seccion2} and Section~\ref{seccion3}, we introduce some Banach spaces of functions which take part in the subsequent sections, and state some continuity properties under the action of certain operators. In Section~\ref{seccion41} we describe some properties of the elements involved in an auxiary $q-$difference and convolution initial value problem, in which the geometry of the problem, described in Seccion~\ref{seccion42} in detail, plays a crucial role. In Section~\ref{seccion43} we give rise to the auxiliary problem within the geometry described in the previous subsection. In Section~\ref{seccion5}, we recall some properties of Laplace and Fourier transforms that would allow us to reduce the study of main problem to the study of auxiliary problems, easier to handle. In Section~\ref{seccion6}, we recall some facts about $q-$Gevrey asymptotics and a $q-$analog verson of Ramis-Sibuya theorem, used in the subsequent sections to provide a two-level version of Ramis-Sibuya theorem (see Theorem~\ref{teo4} in Section~\ref{seccion111}) which would provide the asymptotic behavior of the analytic solution of the main problem. In Section~\ref{seccion112}, we give a detailed summary of the steps to follow in the resolution of the main problem under study. More precisely, we split the problem into two auxiliary problems: a first equation (\ref{e1p}), studied in detail in Section~\ref{seccion7}, for which we get the existence of a unique formal solution (see Theorem~\ref{teo2}) asymptotically related to the analytic solutions, for which we also provide estimations for the difference of two of them (see Theorem~\ref{teo1}). A second equation (\ref{e2_1}) is studied in Section~\ref{seccion8} and Section~\ref{seccion9} in two different Borel planes. We also give upper bounds of such solutions which are used in Section~\ref{seccion10} in the study of the second term in the splitting. In this section, we obtain the existence of an analytic solution of such problem, and estimations of the difference of two consecutive solutions (see Theorem~\ref{teo3}). We conclude the work with the main result of the present work, Theorem~\ref{teopral}, in which we give detail on the appearance of two different asymptotic behavior observed in the solution that emerge from the splitting.

\section{Banach spaces of functions with q-exponential growth and exponential decay}\label{seccion2}

\begin{defin}\label{defi1} Let $\beta, \mu \in \mathbb{R}$. We denote by
$E_{(\beta,\mu)}$ the vector space of continuous functions $h : \mathbb{R} \rightarrow \mathbb{C}$ such that
$$ ||h(m)||_{(\beta,\mu)} = \sup_{m \in \mathbb{R}} (1+|m|)^{\mu} \exp( \beta |m|) |h(m)| $$
is finite. The space $E_{(\beta,\mu)}$ equipped with the norm $||.||_{(\beta,\mu)}$ is a Banach space.
\end{defin}

\begin{defin}\label{defi2} Let $k_{1},\beta,\mu, \tau_{0}>0$, $q>1$ be positive real number and let $\alpha \in \mathbb{R}$.
Let $\Omega$ be an open subset of $\mathbb{C}$. We denote $\bar{\Omega}$ its closure. We denote
$\mathrm{Exp}_{(k_{1},\beta,\mu,\alpha);\bar{\Omega}}^{q}$ the vector space of continuous functions
$(\tau,m) \mapsto h(\tau,m)$ on $\bar{\Omega} \times \mathbb{R}$, holomorphic w.r.t. $\tau$ on $\Omega$
and such that
\begin{multline*}
||h(\tau,m)||_{(k_{1},\beta,\mu,\alpha);\bar{\Omega}} \\
= \sup_{\tau \in \bar{\Omega},m \in \mathbb{R}}
(1+|m|)^{\mu} e^{\beta |m|} \frac{1}{|\tau|} \exp( -\frac{k_{1}}{2}
\frac{\log^{2}(|\tau|+\tau_{0})}{\log(q)} - \alpha\log(|\tau|+\tau_{0}) )|h(\tau,m)|
\end{multline*}
is finite. One can check that the space $\mathrm{Exp}_{(k_{1},\beta,\mu,\alpha);\bar{\Omega}}^{q}$ endowed with the
norm $||.||_{(k_{1},\beta,\mu,\alpha);\bar{\Omega}}$ is a Banach space.
\end{defin}

We first check some elementary properties of these latter Banach spaces.

\begin{prop}\label{prop1} 1) Let $\Omega'$ be an open subset of $\Omega$. If
$f(\tau,m) \in \mathrm{Exp}_{(k_{1},\beta,\mu,\alpha);\bar{\Omega}}^{q}$, then
$f(\tau,m) \in \mathrm{Exp}_{(k_{1},\beta,\mu,\alpha);\bar{\Omega'}}^{q}$ and
\begin{equation}
||f(\tau,m)||_{(k_{1},\beta,\mu,\alpha);\bar{\Omega'}} \leq ||f(\tau,m)||_{(k_{1},\beta,\mu,\alpha);\bar{\Omega}} 
\end{equation}
2) Let $a(\tau,m)$ be a bounded continuous function on $\bar{\Omega} \times \mathbb{R}$, holomorphic w.r.t. $\tau$
on $\Omega$. Then,
\begin{equation}
||a(\tau,m)f(\tau,m)||_{(k_{1},\beta,\mu,\alpha);\bar{\Omega}} \leq
\left(\sup_{\tau \in \bar{\Omega},m \in \mathbb{R}}|a(\tau,m)|\right) ||f(\tau,m)||_{(k_{1},\beta,\mu,\alpha);\bar{\Omega}}
\end{equation}
for all $f(\tau,m) \in \mathrm{Exp}_{(k_{1},\beta,\mu,\alpha);\bar{\Omega}}^{q}$
\end{prop}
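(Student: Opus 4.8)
The plan is to reduce both assertions to direct comparisons of the defining supremum norms. For part 1), observe that since $\Omega' \subseteq \Omega$, we have $\bar{\Omega'} \subseteq \bar{\Omega}$, hence $\bar{\Omega'} \times \mathbb{R} \subseteq \bar{\Omega} \times \mathbb{R}$. The restriction of $f$ to $\bar{\Omega'} \times \mathbb{R}$ remains continuous, and its restriction to $\Omega'$ remains holomorphic in $\tau$; thus $f$ lies in $\mathrm{Exp}_{(k_{1},\beta,\mu,\alpha);\bar{\Omega'}}^{q}$ as soon as the relevant supremum is finite, which it is because a supremum over a subset cannot exceed a supremum over the larger set. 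Writing out the weight
$$
W(\tau,m) = (1+|m|)^{\mu} e^{\beta |m|} \frac{1}{|\tau|} \exp\!\left( -\frac{k_{1}}{2}\frac{\log^{2}(|\tau|+\tau_{0})}{\log(q)} - \alpha\log(|\tau|+\tau_{0}) \right),
$$
the inequality
$$
\sup_{\tau \in \bar{\Omega'},\, m \in \mathbb{R}} W(\tau,m)|f(\tau,m)| \;\le\; \sup_{\tau \in \bar{\Omega},\, m \in \mathbb{R}} W(\tau,m)|f(\tau,m)|
$$
is then exactly the claimed bound.

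For part 2), the function $a(\tau,m)f(\tau,m)$ is continuous on $\bar{\Omega} \times \mathbb{R}$ and holomorphic in $\tau$ on $\Omega$, being a product of two such functions. Setting $M = \sup_{\tau \in \bar{\Omega},\, m \in \mathbb{R}} |a(\tau,m)|$, which is finite by hypothesis, one estimates pointwise
$$
W(\tau,m)\,|a(\tau,m)f(\tau,m)| \;\le\; M\, W(\tau,m)\,|f(\tau,m)|
$$
for every $(\tau,m) \in \bar{\Omega} \times \mathbb{R}$. Taking the supremum over $(\tau,m)$ on both sides yields
$$
||a(\tau,m)f(\tau,m)||_{(k_{1},\beta,\mu,\alpha);\bar{\Omega}} \;\le\; M\, ||f(\tau,m)||_{(k_{1},\beta,\mu,\alpha);\bar{\Omega}},
$$
which in particular shows the product lies in the Banach space, so the statement is well posed. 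There is no genuine obstacle here; the only point requiring minimal care is the verification that the product $af$ inherits the continuity and holomorphy properties built into the definition of the space, together with noting that all suprema involved are taken over the same index set in part 2) and over nested index sets in part 1).
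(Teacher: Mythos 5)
Your proof is correct, and it is exactly the elementary sup-comparison argument the paper has in mind: indeed the paper states Proposition~\ref{prop1} without proof precisely because these pointwise weight estimates and the nested-supremum observation are considered immediate. Nothing is missing.
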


Let $\Omega$ be an open set in $\mathbb{C}$. Let $q > 1$, $\delta>0$ be real numbers. We denote
$q^{\delta}\Omega = \{ q^{\delta}\tau \in \mathbb{C} / \tau \in \Omega \}$.

\begin{prop}\label{prop2} Let $R_{1}(X),R_{2}(X) \in \mathbb{C}[X]$ such that
\begin{equation}
\mathrm{deg}(R_1) \leq \mathrm{deg}(R_2) \ \ , \ \ R_{2}(im) \neq 0
\end{equation}
for all $m \in \mathbb{R}$. Let $m \mapsto b(m)$ be a continuous function on $\mathbb{R}$ such that
$$ |b(m)| \leq \frac{1}{|R_{2}(im)|} $$
for all $m \in \mathbb{R}$. We also consider a function $c(m)$ that belongs to $E_{(\beta,\mu)}$.
Assume that $\mu \ge \mathrm{deg}(R_1) + 1$. Let $\gamma_{1},\gamma_{2} > 0$ be real numbers such that
\begin{equation}
 \gamma_{1} \geq k_{1}\delta + \gamma_{2}. \label{constraint_gamma_1_2}
\end{equation}
Let $a_{\gamma_{1}}(\tau)$ be a holomorphic function on $\Omega$, continuous on $\bar{\Omega}$, with
$$ |a_{\gamma_1}(\tau)| \leq \frac{1}{(1 + |\tau|)^{\gamma_1}} $$
for all $\tau \in \bar{\Omega}$. Then, there exists a constant $C_{1}>0$ (depending on
$q,\alpha,\delta,R_{1},R_{2},\mu$) such that
\begin{multline}
|| \tau^{\gamma_{2}} a_{\gamma_1}(\tau) b(m) \int_{-\infty}^{+\infty}
c(m-m_{1}) R_{1}(im_{1}) f(q^{\delta}\tau,m_{1}) dm_{1} ||_{(k_{1},\beta,\mu,\alpha);\bar{\Omega}}\\
\leq C_{1} \left( \sup_{\tau \in \bar{\Omega}} \frac{|\tau|^{\gamma_{2}}}{(1+|\tau|)^{\gamma_{1}}}
((q^{\delta}|\tau| + \tau_{0})(|\tau| + \tau_{0}))^{k_{1}\delta/2} \right) ||c(m)||_{(\beta,\mu)}
||f(\tau,m)||_{(k_{1},\beta,\mu,\alpha);\overline{q^{\delta}\Omega}}
\end{multline}
for all $f(\tau,m) \in \mathrm{Exp}_{(k_{1},\beta,\mu,\alpha);\overline{q^{\delta}\Omega}}^{q}$.
\end{prop}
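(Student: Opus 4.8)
The plan is to estimate the $\mathrm{Exp}^{q}_{(k_{1},\beta,\mu,\alpha);\bar{\Omega}}$-norm of the operator by a direct computation, splitting the weight according to the factorization of the integrand into a $\tau$-dependent part and an $m$-dependent part. First I would write out the definition of the norm: I must bound
$$\sup_{\tau\in\bar{\Omega},m\in\mathbb{R}}(1+|m|)^{\mu}e^{\beta|m|}\frac{1}{|\tau|}E(\tau)^{-1}\,\left|\tau^{\gamma_2}a_{\gamma_1}(\tau)b(m)\int_{-\infty}^{+\infty}c(m-m_1)R_1(im_1)f(q^{\delta}\tau,m_1)\,dm_1\right|,$$
where $E(\tau)=\exp\bigl(\tfrac{k_1}{2}\tfrac{\log^2(|\tau|+\tau_0)}{\log q}+\alpha\log(|\tau|+\tau_0)\bigr)$. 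Using $|b(m)|\le 1/|R_2(im)|$ and inserting a factor of $1$ in the form of the weight for $f$ evaluated at $q^{\delta}\tau$, the integrand is dominated by
$$\|f\|_{(k_1,\beta,\mu,\alpha);\overline{q^{\delta}\Omega}}\,|q^{\delta}\tau|\,E(q^{\delta}\tau)\,\frac{|c(m-m_1)|\,|R_1(im_1)|}{(1+|m_1|)^{\mu}e^{\beta|m_1|}}.$$

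The $\tau$-part then contributes the factor $|\tau|^{\gamma_2}|a_{\gamma_1}(\tau)|\cdot|q^{\delta}\tau|\cdot E(q^{\delta}\tau)/(|\tau|E(\tau))$; since $a_{\gamma_1}(\tau)\le (1+|\tau|)^{-\gamma_1}$, I need to control
$$\frac{|\tau|^{\gamma_2}}{(1+|\tau|)^{\gamma_1}}\cdot\frac{E(q^{\delta}\tau)}{E(\tau)}.$$
The key elementary identity is $\log^2(q^{\delta}|\tau|+\tau_0)-\log^2(|\tau|+\tau_0)=\bigl(\log(q^{\delta}|\tau|+\tau_0)+\log(|\tau|+\tau_0)\bigr)\bigl(\log(q^{\delta}|\tau|+\tau_0)-\log(|\tau|+\tau_0)\bigr)$, and using that the difference $\log(q^{\delta}|\tau|+\tau_0)-\log(|\tau|+\tau_0)\le \delta\log q$ (valid because $\tau_0>0$, so the map $x\mapsto(q^{\delta}x+\tau_0)/(x+\tau_0)$ is bounded by $q^{\delta}$), one gets $E(q^{\delta}\tau)/E(\tau)\le \bigl((q^{\delta}|\tau|+\tau_0)(|\tau|+\tau_0)\bigr)^{k_1\delta/2}\cdot C(q,\alpha,\delta)$ for some constant coming from the $\alpha\log$ term. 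This produces exactly the supremum factor appearing in the statement. The $q^{\delta}$ from $|q^{\delta}\tau|/|\tau|$ and the $\alpha$-term get absorbed into $C_1$.

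For the $m$-integral, I would use the classical convolution estimate (this is the standard argument in this circle of papers): from $c\in E_{(\beta,\mu)}$ one has $|c(m-m_1)|\le \|c\|_{(\beta,\mu)}(1+|m-m_1|)^{-\mu}e^{-\beta|m-m_1|}$, so
$$\frac{(1+|m|)^{\mu}e^{\beta|m|}|c(m-m_1)||R_1(im_1)|}{(1+|m_1|)^{\mu}e^{\beta|m_1|}}\le \|c\|_{(\beta,\mu)}|R_1(im_1)|\frac{(1+|m|)^{\mu}}{(1+|m-m_1|)^{\mu}(1+|m_1|)^{\mu}},$$
where the exponential factors cancel by the triangle inequality $|m|\le|m-m_1|+|m_1|$. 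Since $\deg R_1\le \mu-1$, the factor $|R_1(im_1)|(1+|m_1|)^{-\mu}\le C(1+|m_1|)^{-1}$, and then the integral $\int(1+|m|)^{\mu}(1+|m-m_1|)^{-\mu}(1+|m_1|)^{-1}\,dm_1$ is bounded uniformly in $m$ by a standard lemma on such convolution integrals (splitting the region $|m_1|\le|m|/2$ vs. $|m_1|\ge|m|/2$), yielding a finite constant depending only on $\mu$. Collecting all constants into $C_1$ gives the claim.

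The only mildly delicate point — and the one I would treat most carefully — is the logarithmic weight comparison $E(q^{\delta}\tau)/E(\tau)$; everything else is routine. In particular I would make sure the inequality $\log(q^{\delta}|\tau|+\tau_0)-\log(|\tau|+\tau_0)\le \delta\log q$ is justified from $\tau_0>0$, and that the resulting extra growth $\bigl((q^{\delta}|\tau|+\tau_0)(|\tau|+\tau_0)\bigr)^{k_1\delta/2}$ is precisely what is kept inside the supremum (it is \emph{not} absorbed into $C_1$ because it is unbounded in $\tau$ in general; its compensation against $|\tau|^{\gamma_2}(1+|\tau|)^{-\gamma_1}$ is exactly where the constraint $\gamma_1\ge k_1\delta+\gamma_2$ in~(\ref{constraint_gamma_1_2}) will later be invoked by the reader, though the Proposition itself only records the supremum). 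This constraint guarantees the supremum factor is finite, but since the statement leaves it inside the bound, the proof need only produce it, not estimate it.
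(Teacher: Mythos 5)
Your treatment of the $\tau$-dependent weight is exactly the paper's argument: the factorization $\log^{2}A-\log^{2}B=\log(A/B)\log(AB)$ together with $\log\bigl((q^{\delta}|\tau|+\tau_{0})/(|\tau|+\tau_{0})\bigr)\le\delta\log q$ gives $E(q^{\delta}\tau)/E(\tau)\le q^{\alpha\delta}\bigl((q^{\delta}|\tau|+\tau_{0})(|\tau|+\tau_{0})\bigr)^{k_{1}\delta/2}$, and keeping that factor inside the supremum (finiteness being where $\gamma_{1}\ge k_{1}\delta+\gamma_{2}$ enters) is precisely what is done in the paper. That half is fine.

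The $m$-part, however, has a genuine gap. After invoking $|b(m)|\le 1/|R_{2}(im)|$ at the start, you never use it again: in your displayed estimate the factor $1/|R_{2}(im)|$ has disappeared, and you then replace $|R_{1}(im_{1})|(1+|m_{1}|)^{-\mu}$ by $C(1+|m_{1}|)^{-1}$ and claim that
$\int_{-\infty}^{+\infty}(1+|m|)^{\mu}(1+|m-m_{1}|)^{-\mu}(1+|m_{1}|)^{-1}\,dm_{1}$
is bounded uniformly in $m$. It is not: the contribution of the region $m_{1}$ near $m$ already gives a lower bound of order $(1+|m|)^{\mu-1}$ (the inner convolution decays only like $(1+|m|)^{-1}$), so this quantity blows up whenever $\mu>1$. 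The loss is not merely technical — if one genuinely discards $R_{2}$, the statement becomes false for $\deg R_{1}\ge 1$ (take $R_{1}(X)=X$, $b\equiv 1$), so the decay of $b$ must be kept. The paper's proof retains $|R_{2}(im)|\ge\mathfrak{R}_{2}(1+|m|)^{\deg R_{2}}$ and bounds the $m$-factor by
$\sup_{m}(1+|m|)^{\mu-\deg R_{2}}\int(1+|m-m_{1}|)^{-\mu}(1+|m_{1}|)^{\deg R_{1}-\mu}\,dm_{1}$,
which is finite precisely because $\deg R_{2}\ge\deg R_{1}$ and $\mu\ge\deg R_{1}+1$ (this is the cited convolution lemma, Lemma 4 of \cite{cota2} or Lemma 2.2 of \cite{ma2}). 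To repair your argument, keep the factor $(1+|m|)^{-\deg R_{2}}$ coming from $b$, do not degrade $(1+|m_{1}|)^{\deg R_{1}-\mu}$ to $(1+|m_{1}|)^{-1}$, and note that the resulting constant then depends on $R_{1}$ and $R_{2}$ (through $\mathfrak{R}_{1},\mathfrak{R}_{2}$), not only on $\mu$ as you stated.
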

\begin{proof}
Let $f(\tau,m) \in \mathrm{Exp}_{(k_{1},\beta,\mu,\alpha);\overline{q^{\delta}\Omega}}^{q}$. By the definition of the norm and the assumptions on the functions $a_{\gamma_1}(\tau)$ and $b(m)$, we can write
\begin{multline}
A := || \tau^{\gamma_{2}} a_{\gamma_1}(\tau) b(m) \int_{-\infty}^{+\infty}
c(m-m_{1}) R_{1}(im_{1}) f(q^{\delta}\tau,m_{1}) dm_{1} ||_{(k_{1},\beta,\mu,\alpha);\Omega}\\
\leq \sup_{\tau \in \bar{\Omega}, m \in \mathbb{R}} (1+|m|)^{\mu} e^{\beta|m|} \frac{1}{|\tau|}
\exp( -\frac{k_{1}}{2}
\frac{\log^{2}(|\tau|+\tau_{0})}{\log(q)} - \alpha\log(|\tau|+\tau_{0}) )
\frac{|\tau|^{\gamma_{2}}}{(1+|\tau|)^{\gamma_1}} \frac{1}{|R_{2}(im)|}\\
\times \int_{-\infty}^{+\infty} \{ (1+|m-m_{1}|)^{\mu} e^{\beta|m-m_{1}|} |c(m-m_{1})| \}
\\
\times \{ (1+|m_{1}|)^{\mu} e^{\beta |m_{1}|}
\frac{1}{|q^{\delta}\tau|} \exp( -\frac{k_{1}}{2}
\frac{\log^{2}(|q^{\delta}\tau|+\tau_{0})}{\log(q)} - \alpha\log(|q^{\delta}\tau|+\tau_{0}) )
|f(q^{\delta}\tau,m_{1})| \}\\
\times \mathcal{A}(\tau,m,m_{1}) dm_{1}
\end{multline}
where
\begin{multline*}
\mathcal{A}(\tau,m,m_{1}) = |R_{1}(im_{1})|
\frac{e^{-\beta|m-m_{1}|}e^{-\beta|m_{1}|}}{(1+|m-m_{1}|)^{\mu}(1+|m_{1}|)^{\mu}} |q^{\delta}\tau|\\
\times \exp( \frac{k_{1}}{2}
\frac{\log^{2}(|q^{\delta}\tau|+\tau_{0})}{\log(q)} + \alpha\log(|q^{\delta}\tau|+\tau_{0}) )
\end{multline*}
We deduce that
\begin{equation}
A \leq C_{1.1}C_{1.2}||c(m)||_{(\beta,\mu)}||f(\tau,m)||_{(k_{1},\beta,\mu,\alpha);\overline{q^{\delta}\Omega}} 
\end{equation}
with
\begin{multline*}
C_{1.1} = \sup_{\tau \in \bar{\Omega}} \exp( -\frac{k_{1}}{2}
\frac{\log^{2}(|\tau|+\tau_{0})}{\log(q)} - \alpha\log(|\tau|+\tau_{0}) )
\exp( \frac{k_{1}}{2}
\frac{\log^{2}(|q^{\delta}\tau|+\tau_{0})}{\log(q)} + \alpha\log(|q^{\delta}\tau|+\tau_{0}) )\\
\times \frac{|\tau|^{\gamma_{2}}}{(1+|\tau|)^{\gamma_{1}}}
\end{multline*}
and
$$
C_{1.2} = \sup_{m \in \mathbb{R}} q^{\delta}(1+|m|)^{\mu}e^{\beta |m|} \frac{1}{|R_{2}(im)|}
\int_{-\infty}^{+\infty}
\frac{|R_{1}(im_{1})|e^{-\beta|m-m_{1}|}e^{-\beta|m_{1}|}}{(1+|m-m_{1}|)^{\mu} (1+|m_{1}|)^{\mu}} dm_{1}.
$$
We first provide estimates for $C_{1.1}$. Indeed, using the fact that the function
$\psi(x)= (q^{\delta}x+\tau_{0})/(x+\tau_{0})$ is increasing on $\mathbb{R}_{+}$, we get that
\begin{equation}
\alpha\log(|q^{\delta}\tau|+\tau_{0}) - \alpha\log(|\tau|+\tau_{0}) =
\alpha \log( \frac{q^{\delta}|\tau| + \tau_{0}}{|\tau|+\tau_{0}} ) \leq \alpha \delta \log(q) \label{maj_diff_log}
\end{equation}
for all $\tau \in \bar{\Omega}$. Moreover, using (\ref{maj_diff_log}), we get that
\begin{multline}
\frac{k_1}{2\log(q)} \left( \log^{2}(|q^{\delta}\tau| + \tau_{0}) - \log^{2}(|\tau|+\tau_{0}) \right)\\
=
\frac{k_1}{2\log(q)} \log( \frac{|q^{\delta}\tau| + \tau_{0}}{|\tau| + \tau_{0}} )
\log( (|q^{\delta}\tau| + \tau_{0})(|\tau|+\tau_{0}) ) \leq
\frac{k_{1}\delta}{2} \log( (|q^{\delta}\tau| + \tau_{0})(|\tau|+\tau_{0}) ) \label{maj_diff_log_square}
\end{multline}
for all $\tau \in \bar{\Omega}$. From (\ref{maj_diff_log}) and (\ref{maj_diff_log_square}), we deduce that
\begin{equation}
C_{1.1} \leq q^{\alpha \delta} \sup_{\tau \in \bar{\Omega}} \frac{|\tau|^{\gamma_{2}}}{(1+|\tau|)^{\gamma_{1}}}
((q^{\delta}|\tau| + \tau_{0})(|\tau| + \tau_{0}))^{k_{1}\delta/2}
\end{equation}
which is finite since the condition (\ref{constraint_gamma_1_2}) holds. In the last step, we give upper bounds
for $C_{2.2}$. There exist two constants $\mathfrak{R}_{1},\mathfrak{R}_{2}>0$ such that
$$ |R_{1}(im_{1})| \leq \mathfrak{R}_{1}(1+|m_1|)^{\mathrm{deg}(R_1)} \ \ , \ \
|R_{2}(im)| \geq \mathfrak{R}_{2}(1+|m|)^{\mathrm{deg}(R_1)}
$$
for all $m,m_{1} \in \mathbb{R}$. Using these latter inequalities and the triangular inequality 
$|m| \leq |m-m_{1}| + |m_{1}|$, we get that
\begin{equation}
C_{1.2} \leq q^{\delta}\frac{\mathfrak{R}_{1}}{\mathfrak{R}_{2}}
\sup_{m \in \mathbb{R}} (1+|m|)^{\mu - \mathrm{deg}(R_{2})} \int_{-\infty}^{+\infty}
\frac{dm_{1}}{(1 + |m-m_{1}|)^{\mu}(1+|m_{1}|)^{\mu - \mathrm{deg}(R_1)}}
\end{equation}
which is finite provided that $\mu > \mathrm{deg}(R_1)$ and $\mathrm{deg}(R_2) \geq \mathrm{deg}(R_1)$
according to Lemma 4 in \cite{cota2} (see also Lemma 2.2 from \cite{ma2}).
\end{proof}

\section{Banach spaces of functions with exponential growth of finite order and decay of order 1}\label{seccion3}

In this section, we mainly recall some functional properties of the Banach spaces already introduced in the work
\cite{lama0}. Throughout the whole section we fix some positive real numbers $\nu,\beta,\mu>0$ and an integer $k \geq 1$.
Let $\Omega$ be a nonempty open subset of $\mathbb{C}$. As in the previous section, $\bar{\Omega}$ denotes
the closure of $\Omega$. We assume that $\bar{\Omega}$ is a \emph{starshaped} domain, meaning that
for all $z \in \bar{\Omega}$, the segment $[0,z]$ is contained in $\bar{\Omega}$.

\begin{defin}\label{defi3}
Let $F_{(\nu,\beta,\mu,k);\bar{\Omega}}$ be the vector space of continuous functions $(\tau,m) \mapsto h(\tau,m)$ on
$\bar{\Omega} \times \mathbb{R}$, which are holomorphic with respect to $\tau$ on $\Omega$ and
such that
$$ ||h(\tau,m)||_{(\nu,\beta,\mu,k);\bar{\Omega}} =
\sup_{\tau \in \bar{\Omega},m \in \mathbb{R}} (1+|m|)^{\mu}
\frac{1 + |\tau|^{2k}}{|\tau|}\exp( \beta|m| - \nu|\tau|^{k} ) |h(\tau,m)|$$
is finite. One can check that the normed space
$(F_{(\nu,\beta,\mu,k);\bar{\Omega}},||.||_{(\nu,\beta,\mu,k);\bar{\Omega}})$ is a Banach space.
\end{defin}

In the next lemma, we check
the continuity property related to the previous space by product with bounded functions.

\begin{lemma}\label{lema1} Let $(\tau,m) \mapsto a(\tau,m)$ be a continuous bounded function on
$\bar{\Omega} \times \mathbb{R}$, which is holomorphic with respect to $\tau$ on $\Omega$. Then, we have
\begin{equation}
|| a(\tau,m) h(\tau,m) ||_{(\nu,\beta,\mu,k);\bar{\Omega}} \leq
\left( \sup_{\tau \in \bar{\Omega},m \in \mathbb{R}} |a(\tau,m)| \right)
||h(\tau,m)||_{(\nu,\beta,\mu,k);\bar{\Omega}}
\end{equation}
for all $h(\tau,m) \in F_{(\nu,\beta,\mu,k);\bar{\Omega}}$.
\end{lemma}

In the next propositions, we study the continuity property of some convolution operators acting on the latter Banach spaces. Proposition~\ref{prop3} slightly differs from Proposition~\ref{prop1} in~\cite{lama0}, but its proof can be easily adapted in our framework. More precisely, the term $a_{\gamma_1,k}(\tau)$ and condition $\gamma_1\ge\nu_2$ join in the proof when estimating $B_{2.1}$ in the proof of Proposition 1 in~\cite{lama0}. Proposition~\ref{prop241} coincides with Proposition 2 in~\cite{lama0}. For these reasons, we omit the proofs of the following two results.

\begin{prop}\label{prop3} Let $\gamma_{1} \geq 0$ and $\chi>-1$ be real numbers. Let $\nu_{2} \geq -1$ be an integer.
We consider a function $a_{\gamma_{1},k}(\tau)$, continuous on
$\bar{\Omega}$ and holomorphic in $\Omega$, such that
$$ |a_{\gamma_{1},k}(\tau)| \leq \frac{1}{(1+|\tau|^{k})^{\gamma_1}} $$
for all $\tau \in \bar{\Omega}$.\medskip

\noindent If $1 + \chi + \nu_{2} \geq 0$ and $\gamma_{1} \geq \nu_{2}$, then there exists a constant $E_{2}>0$
(depending on $\nu,\nu_{2},\chi,\gamma_{1}$) such that
\begin{equation}
|| a_{\gamma_{1},k}(\tau) \int_{0}^{\tau^k} (\tau^{k}-s)^{\chi}s^{\nu_{2}}f(s^{1/k},m)
ds ||_{(\nu,\beta,\mu,k);\bar{\Omega}} \\
\leq E_{2}||f(\tau,m)||_{(\nu,\beta,\mu,k);\bar{\Omega}}
\label{conv_op_prod_s_continuity_1_a_gamma_1}
\end{equation}
for all $f(\tau,m) \in F_{(\nu,\beta,\mu,k);\bar{\Omega}}$.
\end{prop}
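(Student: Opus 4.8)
The plan is to follow the scheme of the proof of Proposition~1 in~\cite{lama0}, keeping track of the two places where the factor $a_{\gamma_1,k}(\tau)$ and the hypothesis $\gamma_{1} \geq \nu_{2}$ come into play. Fix $f(\tau,m) \in F_{(\nu,\beta,\mu,k);\bar{\Omega}}$. Since $\bar{\Omega}$ is starshaped, for each $\tau \in \bar{\Omega}$ we parametrize the integration path by $s = u\tau^{k}$, $u \in [0,1]$, so that $s^{1/k} = u^{1/k}\tau$ describes the segment $[0,\tau] \subset \bar{\Omega}$; as in~\cite{lama0}, the resulting function of $\tau$ is holomorphic on $\Omega$ and continuous on $\bar{\Omega}$. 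Expanding the norm of the left-hand side, using $|a_{\gamma_1,k}(\tau)| \leq (1+|\tau|^{k})^{-\gamma_1}$ and, for $s = u\tau^{k}$,
$$|f(s^{1/k},m)| \leq ||f(\tau,m)||_{(\nu,\beta,\mu,k);\bar{\Omega}}\,(1+|m|)^{-\mu}e^{-\beta|m|}\frac{|\tau|u^{1/k}}{1+|\tau|^{2k}u^{2}}e^{\nu|\tau|^{k}u},$$
one sees that the factors $(1+|m|)^{\mu}e^{\beta|m|}$ cancel and the variable $m$ decouples completely (there is no convolution in $m$ here, in contrast with Proposition~\ref{prop2}).

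After the change of variables $s = u\tau^{k}$ and setting $x = |\tau|^{k}$, the claim thus reduces to the finiteness of
$$\sup_{x \geq 0}\ \frac{x^{\chi+\nu_2+1}}{(1+x)^{\gamma_1}}\int_{0}^{1}(1-u)^{\chi}u^{\nu_2+1/k}\,\frac{1+x^{2}}{1+x^{2}u^{2}}\,e^{-\nu x(1-u)}\,du,$$
where the key simplification is that the weight factor $e^{-\nu|\tau|^{k}}$ combines with the factor $e^{\nu|\tau|^{k}u}$ coming from the bound on $f$ to produce the decaying kernel $e^{-\nu x(1-u)}$, and that all powers of $x$ collapse to $x^{\chi+\nu_2+1}$, with exponent $\geq 0$ by the hypothesis $1+\chi+\nu_2 \geq 0$.

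I would split the $u$-integral at $u = 1/2$. On $[0,1/2]$ one has $1-u \geq 1/2$, hence $e^{-\nu x(1-u)} \leq e^{-\nu x/2}$, while $(1-u)^{\chi}$ is bounded and $\frac{1+x^{2}}{1+x^{2}u^{2}} \leq 1+x^{2}$; since $\nu_2 + 1/k > -1$ (because $\nu_2 \geq -1$ and $k \geq 1$), the integral $\int_{0}^{1/2}u^{\nu_2+1/k}\,du$ converges, so this contribution is bounded by a polynomial in $x$ times $e^{-\nu x/2}$, hence bounded on $[0,\infty)$ (the factor $x^{\chi+\nu_2+1}$ being bounded near $x=0$ thanks to $1+\chi+\nu_2 \geq 0$). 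On $[1/2,1]$ one has $u \geq 1/2$, so $\frac{1+x^{2}}{1+x^{2}u^{2}} \leq 4$ and $u^{\nu_2+1/k}$ is bounded; with $v = 1-u$,
$$\int_{1/2}^{1}(1-u)^{\chi}e^{-\nu x(1-u)}\,du = \int_{0}^{1/2}v^{\chi}e^{-\nu x v}\,dv \leq \min\!\left(\frac{(1/2)^{\chi+1}}{\chi+1},\ \frac{\Gamma(\chi+1)}{(\nu x)^{\chi+1}}\right),$$
both quantities being finite because $\chi > -1$. The first bound controls $x$ near $0$ (yielding a contribution $\leq C\,x^{\chi+\nu_2+1}$, bounded), while the second controls large $x$: there the factor $x^{-(\chi+1)}$ cancels $x^{\chi+\nu_2+1}$ and leaves $C\,x^{\nu_2}/(1+x)^{\gamma_1}$, which is bounded precisely because $\gamma_1 \geq \nu_2$. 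Combining the two pieces yields the constant $E_2$, and the image lies again in $F_{(\nu,\beta,\mu,k);\bar{\Omega}}$.

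The main obstacle — and the only genuine departure from the case $a \equiv 1$ — is the piece $u \in [1/2,1]$, which plays the role of the term $B_{2.1}$ in~\cite{lama0}: one must avoid bounding $\frac{1+x^{2}}{1+x^{2}u^{2}}$ too crudely and instead use that the kernel $e^{-\nu x(1-u)}$ concentrates near $u = 1$, where $1+x^{2}u^{2} \sim 1+x^{2}$ cancels the weight, so that the remaining power $x^{\nu_2-\gamma_1}$ is controlled exactly by the condition $\gamma_1 \geq \nu_2$. Holomorphy in $\tau$ of the operator image is established exactly as in~\cite{lama0}.
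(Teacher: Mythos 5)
Your proof is correct, and it follows exactly the route the paper indicates: it omits the proof of Proposition~\ref{prop3}, referring to Proposition 1 of~\cite{lama0} and noting only that the factor $a_{\gamma_1,k}(\tau)$ and the condition $\gamma_1\ge\nu_2$ enter when estimating the critical term ($B_{2.1}$) near $u=1$, which is precisely where your splitting at $u=1/2$ and the bound $\int_0^{1/2}v^{\chi}e^{-\nu x v}dv\le \Gamma(\chi+1)/(\nu x)^{\chi+1}$ put them. Your reduction via $s=u\tau^k$ to the supremum in $x=|\tau|^k$ and the two-piece estimate are sound, so the argument fills in the omitted adaptation correctly.
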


\begin{prop}\label{prop241} Let $Q(X),R(X) \in \mathbb{C}[X]$ be polynomials such that
\begin{equation}
\mathrm{deg}(R) \geq \mathrm{deg}(Q) \ \ , \ \ R(im) \neq 0 \label{cond_R_Q}
\end{equation}
for all $m \in \mathbb{R}$. Assume that $\mu > \mathrm{deg}(Q) + 1$. Let $m \mapsto b(m)$ be a continuous
function such that
$$
|b(m)| \leq \frac{1}{|R(im)|}
$$
for all $m \in \mathbb{R}$. Then, there exists a constant $E_{3}>0$ (depending on $Q,R,\mu,k,\nu$) such that
\begin{multline}
|| b(m) \int_{0}^{\tau^{k}} (\tau^{k}-s)^{\frac{1}{k}}\int_{-\infty}^{+\infty} f(m-m_{1})Q(im_{1})g(s^{1/k},m_{1})dm_{1}
\frac{ds}{s}||_{(\nu,\beta,\mu,k);\bar{\Omega}}\\
\leq E_{3} ||f(m)||_{(\beta,\mu)} ||g(\tau,m)||_{(\nu,\beta,\mu,k);\bar{\Omega}}
\label{norm_k_conv_f_g<norm_f_beta_mu_times_norm_k_g}
\end{multline}
for all $f(m) \in E_{(\beta,\mu)}$, all $g(\tau,m) \in F_{(\nu,\beta,\mu,k);\bar{\Omega}}$.
\end{prop}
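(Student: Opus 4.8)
The plan is to follow the scheme of the proof of Proposition~2 in \cite{lama0}: unravel the definition of $\|\cdot\|_{(\nu,\beta,\mu,k);\bar{\Omega}}$ on the left-hand side, insert the pointwise bounds coming from the three norms involved, and then factor the resulting estimate into an \emph{$m$-part} and a \emph{$\tau$-part} that are treated separately. Denoting by $\Phi(\tau,m)$ the function whose norm must be bounded, we use
\[
|b(m)| \leq \frac{1}{|R(im)|},\qquad |f(m-m_{1})| \leq \|f\|_{(\beta,\mu)}\frac{e^{-\beta|m-m_{1}|}}{(1+|m-m_{1}|)^{\mu}},
\]
\[
|g(s^{1/k},m_{1})| \leq \|g\|_{(\nu,\beta,\mu,k);\bar{\Omega}}\,\frac{1}{(1+|m_{1}|)^{\mu}}\,\frac{|s|^{1/k}}{1+|s|^{2}}\,e^{-\beta|m_{1}|+\nu|s|},
\]
the last one being legitimate because, as $s$ runs along the segment $[0,\tau^{k}]$, the point $s^{1/k}$ lies on the segment $[0,\tau]\subset\bar{\Omega}$, since $\bar{\Omega}$ is starshaped. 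As the $s$-dependence of the bound for $g$ no longer involves $m_{1}$, the inner $m_{1}$-integral factors out of the $s$-integral and the supremum over $(\tau,m)$ splits as a product of an $m$-supremum and a $\tau$-supremum.

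For the $m$-part one uses that there are constants $\mathfrak{Q},\mathfrak{R}>0$ with $|Q(im_{1})|\leq \mathfrak{Q}(1+|m_{1}|)^{\deg(Q)}$ and $|R(im)|\geq \mathfrak{R}(1+|m|)^{\deg(R)}$ for all $m,m_{1}\in\mathbb{R}$, together with the triangular inequality in the form $e^{\beta|m|}e^{-\beta|m-m_{1}|}e^{-\beta|m_{1}|}\leq 1$, which reduces the claim to the finiteness of
\[
\sup_{m\in\mathbb{R}}(1+|m|)^{\mu-\deg(R)}\int_{-\infty}^{+\infty}\frac{dm_{1}}{(1+|m-m_{1}|)^{\mu}(1+|m_{1}|)^{\mu-\deg(Q)}}.
\]
This holds because $\mu>\deg(Q)+1$ and $\deg(R)\geq\deg(Q)$, by Lemma~4 in \cite{cota2} (see also Lemma~2.2 in \cite{ma2}), exactly as at the end of the proof of Proposition~\ref{prop2}.

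For the $\tau$-part, one parametrizes $s=\tau^{k}u$ with $u\in[0,1]$, so that $|\tau^{k}-s|^{1/k}=|\tau|(1-u)^{1/k}$, $|s|^{1/k}=|\tau|u^{1/k}$, $|s|=|\tau|^{k}u$ and $|ds|/|s|=du/u$; collecting the powers of $|\tau|$ and combining the exponentials as $e^{-\nu|\tau|^{k}}e^{\nu|s|}=e^{-\nu|\tau|^{k}(1-u)}$ leaves us to bound
\[
\sup_{\tau\in\bar{\Omega}}\;(1+|\tau|^{2k})\,|\tau|\int_{0}^{1}\frac{(1-u)^{1/k}u^{1/k-1}}{1+|\tau|^{2k}u^{2}}\,e^{-\nu|\tau|^{k}(1-u)}\,du .
\]
On a bounded range of $|\tau|$ this is finite since $\int_{0}^{1}(1-u)^{1/k}u^{1/k-1}\,du<\infty$ (both exponents exceed $-1$). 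For $|\tau|\geq 1$ one splits $\int_{0}^{1}=\int_{0}^{1/2}+\int_{1/2}^{1}$: on $[0,1/2]$ the factor $e^{-\nu|\tau|^{k}(1-u)}\leq e^{-\nu|\tau|^{k}/2}$ kills the polynomial prefactor; on $[1/2,1]$ one bounds $u^{1/k-1}$ by a constant, uses $1/(1+|\tau|^{2k}u^{2})\leq 4/(1+|\tau|^{2k})$, and estimates $\int_{1/2}^{1}(1-u)^{1/k}e^{-\nu|\tau|^{k}(1-u)}\,du\leq\int_{0}^{\infty}w^{1/k}e^{-\nu|\tau|^{k}w}\,dw=\Gamma(1+\tfrac{1}{k})(\nu|\tau|^{k})^{-1-1/k}$, which produces a bound of order $|\tau|^{-3k-1}$ that beats the prefactor $(1+|\tau|^{2k})|\tau|$. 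Combining the two parts yields the constant $E_{3}$, depending only on $Q,R,\mu,k,\nu$.

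The only genuinely delicate point is this last $\tau$-integral: one must keep careful track of the powers of $|\tau|$ produced by $(\tau^{k}-s)^{1/k}$, by the factor $|s|^{1/k}$ coming from the norm of $g$, by the weight $1/(1+|s|^{2})$, and by the measure $ds/s$, and then exploit the extra polynomial gain encoded in the factor $1+|\tau|^{2k}$ of the target norm together with the exponential $e^{-\nu|\tau|^{k}(1-u)}$ in order to absorb it. This is precisely the role of the weight $\tfrac{1+|\tau|^{2k}}{|\tau|}$ appearing in Definition~\ref{defi3}, and it is what makes the convolution operator bounded rather than merely estimable with a loss.
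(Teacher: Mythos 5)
Your proof is correct: the paper itself omits the proof of this proposition, simply citing Proposition 2 of \cite{lama0}, and your argument reproduces the standard scheme used there and in the proof of Proposition~\ref{prop2} of this paper — unravel the norm, split into an $m$-part handled by the convolution estimate of Lemma 4 in \cite{cota2} (using $\mu>\deg(Q)+1$, $\deg(R)\ge\deg(Q)$ and the triangle inequality for the exponentials), and a $\tau$-part handled by the parametrization $s=\tau^{k}u$, which is exactly where the starshapedness of $\bar{\Omega}$ is needed so that $s^{1/k}\in[0,\tau]\subset\bar{\Omega}$. Your bookkeeping of the powers of $|\tau|$ in the $\tau$-integral is also right: the $[1/2,1]$ piece yields $O(|\tau|^{-k})$ after multiplication by the prefactor $(1+|\tau|^{2k})|\tau|$, and the $[0,1/2]$ piece is killed by $e^{-\nu|\tau|^{k}/2}$, so the supremum is finite and the constant $E_{3}$ depends only on $Q,R,\mu,k,\nu$.
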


\section{Analytic solutions of an auxiliary $q-$difference and convolution initial value problem}

\subsection{Layout of the initial value problem}\label{seccion41}
Let $k \geq 1$ and $D \geq 2$ be integers. We also fix $\beta,\mu>0$. Let $\delta,k_{1}>0$, $q>1$ and $\alpha$ be real numbers and for $1 \leq l \leq D$, let
$d_{l},\Delta_{l} \geq 0$ be nonnegative integers. We make the assumption that
\begin{equation}
kd_{D} - 1 \geq k_{1}\delta + kd_{l} \ \ , \ \ \Delta_{l} \geq kd_{l} \label{assum_dD_dl_k1_Delta_l}
\end{equation}
for all $1 \leq l \leq D-1$. Let $Q(X),R_{l}(X) \in \mathbb{C}[X]$, $1 \leq l \leq D$, be polynomials such that
\begin{equation}
\mathrm{deg}(Q) \geq \mathrm{deg}(R_D) \geq \mathrm{deg}(R_{l}) \ \ , \ \ Q(im) \neq 0 \ \ , \ \
R_{D}(im) \neq 0, \label{assum_deg_Q_RD_Rl}
\end{equation}
for all $m \in \mathbb{R}$, all $1 \leq l \leq D-1$. We make the additional assumption that there exists a bounded sectorial
annulus
$$ A_{Q,R_{D}} = \{ z \in \mathbb{C} / r_{Q,R_{D}} \leq |z| \leq r_{Q,R_{D}}^{1} \ \ , \ \
|\mathrm{arg}(z) - d_{Q,R_{D}}| \leq \eta_{Q,R_{D}} \} $$
with direction $d_{Q,R_{D}} \in \mathbb{R}$, aperture $2\eta_{Q,R_{D}}>0$ for some radius
$r_{Q,R_{D}},r_{Q,R_{D}}^{1}>0$ such that
\begin{equation}
\frac{Q(im)}{R_{D}(im)} \in A_{Q,R_{D}} \label{quotient_Q_RD_in_Ann}
\end{equation} 
for all $m \in \mathbb{R}$. We define the polynomial $P_{m}(\tau) = Q(im) - R_{D}(im)(k\tau^{k})^{d_{D}}$ and we
factorize it in the form
\begin{equation}
P_{m}(\tau) = -R_{D}(im)k^{d_{D}}\Pi_{l=0}^{kd_{D}-1} (\tau - q_{l}(m)) \label{factor_P_m_q_lm}
\end{equation}
where
\begin{equation}
q_{l}(m) = \left( \frac{|Q(im)|}{|R_{D}(im)|k^{d_D}} \right)^{\frac{1}{kd_{D}}}
\exp \left( \sqrt{-1}( \mathrm{arg}(\frac{Q(im)}{R_{D}(im)k^{d_{D}}})\frac{1}{kd_{D}} + \frac{2\pi l}{kd_{D}} ) \right)
\label{defin_roots_q_lm}
\end{equation}
for all $0 \leq l \leq kd_{D}-1$.
Let $\varsigma \geq 2$ be an integer and let $\mathfrak{d}_{p}$, $0 \leq p \leq \varsigma-1$, be real numbers.
We choose a set of unbounded sectors $U_{\mathfrak{d}_{p}}$, $0 \leq p \leq \varsigma-1$, with bisecting direction $\mathfrak{d}_{p}$, a small closed disc
$\bar{D}(0,\mu_{0})$, an annulus $\bar{A}_{\mu_{1}} = \{ \tau \in \mathbb{C}/ |\tau| \geq \mu_{1} \}$,
where $\mu_{1} > \mu_{0}$ are positive real numbers, and we prescribe the sectorial annulus $A_{Q,R_{D}}$ in such a way that
the next conditions hold.\medskip

\noindent 1) There exists a constant $M_{1}>0$ such that
\begin{equation}
|\tau - q_{l}(m)| \geq M_{1}(1 + |\tau|) \label{root_cond_1_q_lm}
\end{equation}
for all $0 \leq l \leq kd_{D}-1$, all $m \in \mathbb{R}$, all $\tau \in \bar{U}_{\mathfrak{d}_{p}} \cup
\bar{D}(0,\mu_{0}) \cup \bar{A}_{\mu_1}$. Indeed,
from (\ref{quotient_Q_RD_in_Ann}) and the explicit expression (\ref{defin_roots_q_lm}) of $q_{l}(m)$, we first observe
that one can select a small real number $\rho>0$ such that
$$ \mu_{0} + \rho < |q_{l}(m)| < \mu_{1} - \rho $$
for every $m \in \mathbb{R}$, all $0 \leq l \leq kd_{D}-1$ for an appropriate choice of $r_{Q,R_{D}},r_{Q,R_{D}}^{1}>0$
and of $\mu_{0},\mu_{1}>0$. We also see that for all $m \in \mathbb{R}$, all $0 \leq l \leq kd_{D}-1$, the roots
$q_{l}(m)$ remain in a union $\mathcal{U}$ of unbounded sectors with vertex at 0 that do not cover a full neighborhood of
the origin in $\mathbb{C}^{\ast}$ provided that $\eta_{Q,R_{D}}$ is small enough. Therefore, one can choose an adequate
set of unbounded sectors $\bar{U}_{\mathfrak{d}_{p}}$ with bisecting direction $\mathfrak{d}_{p}$, such that
$\bar{U}_{\mathfrak{d}_{p}} \cap \mathcal{U} = \emptyset$ for all $0\le p\le \varsigma-1$, with the property that for all
$0 \leq l \leq kd_{D}-1$ the quotients $q_{l}(m)/\tau$ lay outside
some small disc centered at 1 in $\mathbb{C}$ for all $\tau \in \bar{U}_{\mathfrak{d}_{p}}$, all $m \in \mathbb{R}$,
all $0 \leq p \leq \varsigma-1$. This yields (\ref{root_cond_1_q_lm}) for some small constant $M_{1}>0$.\medskip

\noindent 2) There exists a constant $M_{2}>0$ such that
\begin{equation}
|\tau - q_{l_0}(m)| \geq M_{2}|q_{l_0}(m)| \label{root_cond_2_q_lm}
\end{equation}
for some $l_{0} \in \{0,\ldots,kd_{D}-1 \}$, all $m \in \mathbb{R}$, all
$\tau \in \bar{U}_{\mathfrak{d}_p} \cup \bar{D}(0,\mu_{0}) \cup \bar{A}_{\mu_1}$. Indeed, for the
sectors $\bar{U}_{\mathfrak{d}_{p}}$, the disc $\bar{D}(0,\mu_{0})$ and the annulus $\bar{A}_{\mu_1}$
chosen as above in 1), we notice that for any fixed
$0 \leq l_{0} \leq kd_{D}-1$, the quotient $\tau/q_{l_0}(m)$ stays outside a small disc centered at 1 in
$\mathbb{C}$ for all $\tau \in \bar{U}_{\mathfrak{d}_{p}} \cup \bar{D}(0,\mu_{0}) \cup \bar{A}_{\mu_1}$,
all $m \in \mathbb{R}$, all $0 \leq p \leq \varsigma-1$. Hence (\ref{root_cond_2_q_lm}) must hold
for some small constant $M_{2}>0$.\medskip

\noindent 3) The set of directions $\mathfrak{d}_{p}$, $0 \leq p \leq \varsigma-1$ are chosen in such a way that
each sector
$$\bar{U}_{\mathfrak{d}_{p},\mathfrak{d}_{p+1}} =
\{ \tau \in \mathbb{C}^{\ast} / \mathrm{arg}(\tau) \in [\mathfrak{d}_{p},\mathfrak{d}_{p+1}] \}$$
contains at least
one root $q_{l}(m)$ of $P_{m}(\tau)$, for some $0 \leq l \leq kd_{D}-1$, all $m \in \mathbb{R}$. In particular, this entails $kd_D\ge \varsigma$. \medskip

By construction
of the roots (\ref{defin_roots_q_lm}) in the factorization (\ref{factor_P_m_q_lm}) and using the lower bound estimates
(\ref{root_cond_1_q_lm}), (\ref{root_cond_2_q_lm}), we get a constant $C_{P}>0$ such that
\begin{multline}
|P_{m}(\tau)| \geq M_{1}^{kd_{D}-1}M_{2}|R_{D}(im)|k^{d_{D}}( \frac{|Q(im)|}{|R_{D}(im)|k^{d_{D}}})^{\frac{1}{kd_{D}}}
(1+|\tau|)^{kd_{D}-1}\\
\geq C_{P}(r_{Q,R_{D}})^{\frac{1}{kd_{D}}}|R_{D}(im)|(1+|\tau|)^{kd_{D}-1} \label{lower_bounds_Pm}
\end{multline}
for all $\tau \in \bar{U}_{\mathfrak{d}_{p}} \cup \bar{D}(0,\mu_{0}) \cup \bar{A}_{\mu_1}$,
all $m \in \mathbb{R}$, all $0 \leq p \leq \varsigma-1$.\medskip

Let $m \mapsto C_{l}(m,\epsilon)$, $1 \leq l \leq D-1$, be functions that belong to the space
$E_{(\beta,\mu)}$ for some $\beta>0$ and $\mu > \mathrm{deg}(R_{D}) + 1$ and depend holomorphically on
$\epsilon$ on $D(0,\epsilon_{0})$ for some $\epsilon_{0}>0$. We also define a function 
$\psi_{k}(\tau,m,\epsilon)$ that belongs to the Banach space
$\mathrm{Exp}_{(k_{1},\beta,\mu,\alpha);\mathbb{C}}^{q}$ (see Definition~\ref{defi2}) for all $\epsilon \in D(0,\epsilon_{0})$.
We consider now the $q-$difference and convolution initial value problem
\begin{multline}
w_{k}(\tau,m,\epsilon) = \sum_{l=1}^{D-1} \epsilon^{\Delta_{l} - kd_{l}}
\frac{(k \tau^{k})^{d_{l}}}{P_{m}(\tau)} \frac{1}{(2\pi)^{1/2}}
\int_{-\infty}^{+\infty} C_{l}(m-m_{1},\epsilon)R_{l}(im_{1}) w_{k}(q^{\delta}\tau,m_{1},\epsilon) dm_{1}\\
+ \frac{\psi_{k}(\tau,m,\epsilon)}{P_{m}(\tau)} \label{q_diff_conv_init_v_prob}
\end{multline}
for given initial data $w_{k}(0,m,\epsilon) \equiv 0$. The main purpose of this section is the construction of
actual holomorphic solutions $w_{k,p}(\tau,m,\epsilon)$ of this problem which belong to some Banach space
$\mathrm{Exp}_{(k_{1},\beta,\mu,\alpha);\bar{\Omega}}^{q}$ on some suitable domains $\bar{\Omega} \subset \mathbb{C}$,
$0 \leq p \leq \varsigma-1$.

\subsection{Construction of the domains $R\Omega_{j}^{p}$, $\Theta_{\hat{Q}\mu_{1}}^p$ and norm estimates}\label{seccion42}

We preserve the value of the constants and the geometric constructions of the previous subsection. We assume there exist $\hat{Q}>1$ and $0 < \check{Q} < 1$ such that
\begin{equation}
\hat{Q}\mu_{1} = q^{\delta}\check{Q}\mu_{0}. \label{constraint_mu1_mu0}
\end{equation}
We consider two sequences $\mu_{0,h} = (1/q^{\delta})^{h}\mu_{0}$ and $\mu_{1,h} = (1/q^{\delta})^{h}\mu_{1}$, for
all integers $h \in \mathbb{Z}$. Let $0 \leq p \leq \varsigma-1$. By convention, we put
$\mathfrak{d}_{\varsigma} = \mathfrak{d}_{0}$. We recall that
$$\bar{U}_{\mathfrak{d}_{p},\mathfrak{d}_{p+1}} =
\{ \tau \in \mathbb{C}^{\ast} / \mathrm{arg}(\tau) \in [\mathfrak{d}_{p},\mathfrak{d}_{p+1}] \}.$$
contains at least
one root $q_{l}(m)$ of $P_{m}(\tau)$, for some $0 \leq l \leq kd_{D}-1$, all $m \in \mathbb{R}$ (see Condition 3) in the previous subsection). Notice furthermore
that by construction, all the roots $q_{l}(m)$, for all $0 \leq l \leq kd_{D}-1$, all $m \in \mathbb{R}$ are located
in the annulus $\{ \tau \in \mathbb{C} / \mu_{0}+\rho < |\tau| < \mu_{1} - \rho \}$ for some small positive real number
$\rho>0$.

We first define a domain $P\Omega_{h}^{p}$ which is a
\emph{sectorial square frame} built as follows. We consider the four sectorial annuli
\begin{multline*}
C_{\mu_{1,h}}^{p} = \{ \tau \in \mathbb{C} / \mu_{1,h} \leq |\tau| \leq \hat{Q}\mu_{1,h} \} \cap 
(\bar{U}_{\mathfrak{d}_{p}} \cup \bar{U}_{\mathfrak{d}_{p},\mathfrak{d}_{p+1}} \cup \bar{U}_{\mathfrak{d}_{p+1}}),\\
C_{\mu_{0,h}}^{p} = \{ \tau \in \mathbb{C} / \check{Q}\mu_{0,h} \leq |\tau| \leq \mu_{0,h} \} \cap 
(\bar{U}_{\mathfrak{d}_{p}} \cup \bar{U}_{\mathfrak{d}_{p},\mathfrak{d}_{p+1}} \cup \bar{U}_{\mathfrak{d}_{p+1}}),\\
A_{\mu_{0,h},\mu_{1,h}}^{p} = \{ \tau \in \mathbb{C} / \check{Q}\mu_{0,h} \leq |\tau| \leq \hat{Q}\mu_{1,h} \} \cap
\bar{U}_{\mathfrak{d}_p},\\
A_{\mu_{0,h},\mu_{1,h}}^{p+1} = \{ \tau \in \mathbb{C} / \check{Q}\mu_{0,h} \leq |\tau| \leq \hat{Q}\mu_{1,h} \} \cap
\bar{U}_{\mathfrak{d}_{p+1}}
\end{multline*}
We define
$$ P\Omega_{h}^{p} = C_{\mu_{1,h}}^{p} \cup C_{\mu_{0,h}}^{p} \cup A_{\mu_{0,h},\mu_{1,h}}^{p} \cup
A_{\mu_{0,h},\mu_{1,h}}^{p+1} $$
for all $h \in \mathbb{Z}$, all $0 \leq p \leq \varsigma-1$. See Figure~\ref{figure1}.

\begin{figure}[h]
	\centering
		\includegraphics[width=.5\textwidth]{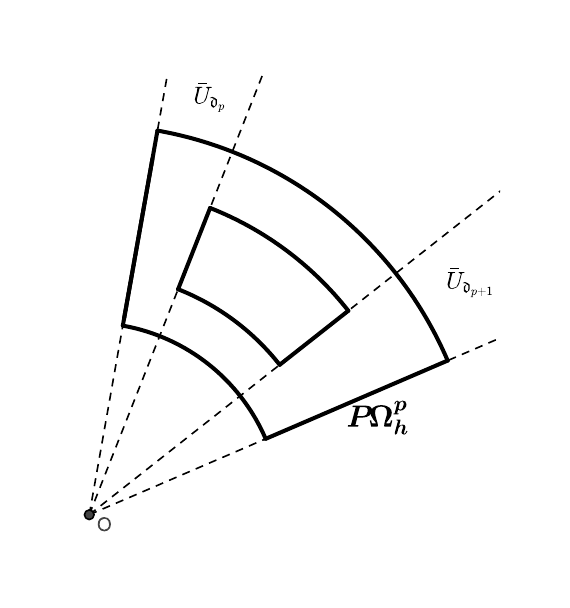}
	\caption{Sectorial square frame $P\Omega_h^p$}
	\label{figure1}
\end{figure}

We also define the union of some of these frames with a disc
$$ R\Omega_{j}^{p} = (\cup_{h=0}^{j} P\Omega_{h}^{p}) \cup \bar{D}(0,\check{Q}\mu_{0,j}) $$
for all integers $j \geq 0$. Notice that due to the assumption (\ref{constraint_mu1_mu0}), we get in
particular that
\begin{equation}
\bar{U}_{\mathfrak{d}_{p}} \cap \bar{D}(0,\hat{Q}\mu_{1}) \subset R\Omega_{j}^p \ \ , \ \ 
\bar{U}_{\mathfrak{d}_{p+1}} \cap \bar{D}(0,\hat{Q}\mu_{1}) \subset R\Omega_{j}^p
\end{equation}
for all $j \geq 0$. Finally, we introduce the \emph{sectorial triangle frame}
\begin{equation}
\Theta_{\hat{Q}\mu_{1}}^{p} = \bar{U}_{\mathfrak{d}_{p}} \cup \bar{U}_{\mathfrak{d}_{p+1}} \cup \bar{A}_{\hat{Q}\mu_{1}}.
\end{equation}

\begin{figure}[h]
	\centering
		\includegraphics[width=.5\textwidth]{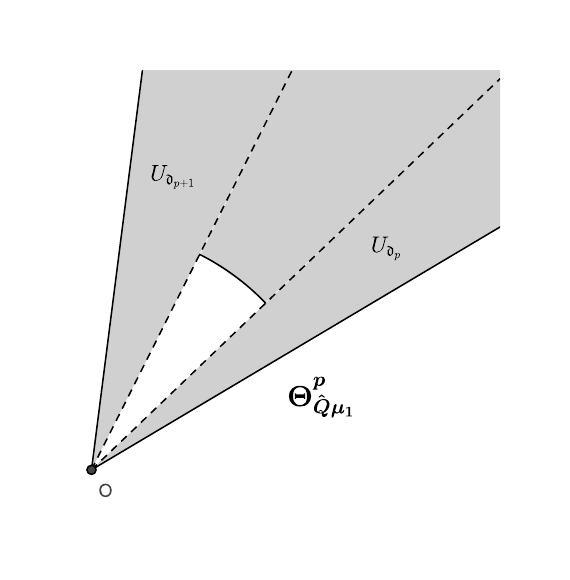}
	\caption{Sectorial triangle frame $\Theta^p_{\hat{Q}\mu_1}$}
	\label{figure2}
\end{figure}

We provide now norm estimates on $q-$difference/convolution operators acting on the Banach spaces of functions introduced in Section~\ref{seccion2}, considered on the above domains $R\Omega_{j}^{p}$ and $\Theta_{\hat{Q}\mu_{1}}^{p}$. Under the hypotheses made in Section~\ref{seccion41}, we
get

\begin{prop} There exists a constant $C_{2}>0$ (depending on $q,\alpha,\delta,R_{l},R_{D},\mu$) such that
\begin{multline}
|| \frac{\tau^{kd_{l}}}{P_{m}(\tau)} \int_{-\infty}^{+\infty} C_{l}(m-m_{1},\epsilon)R_{l}(im_{1})
f(q^{\delta}\tau,m_{1})dm_{1}||_{(k_{1},\beta,\mu,\alpha);P\Omega_{h+1}^{p}}\\
\leq \frac{C_{2}}{C_{P}(r_{Q,R_{D}})^{\frac{1}{kd_{D}}}}
\frac{ |\hat{Q}\mu_{1,h+1}|^{kd_{l}} }{(1 + \check{Q}\mu_{0,h+1})^{kd_{D}-1}}
((q^{\delta}\hat{Q}\mu_{1,h+1} + \tau_{0})(\hat{Q}\mu_{1,h+1} + \tau_{0}))^{\frac{k_{1}\delta}{2}}\\
\times ||f(\tau,m)||_{(k_{1},\beta,\mu,\alpha);P\Omega_{h}^{p}} \times ||C_{l}(m,\epsilon)||_{(\beta,\mu)}
\label{norm_H_Pomega_h}
\end{multline}
for all $h \in \mathbb{Z}$, all $1 \leq l \leq D-1$, for all
$f(\tau,m) \in \mathrm{Exp}_{(k_{1},\beta,\mu,\alpha);P\Omega_{h}^{p}}^{q}$ and
\begin{multline}
|| \frac{\tau^{kd_{l}}}{P_{m}(\tau)} \int_{-\infty}^{+\infty} C_{l}(m-m_{1},\epsilon)R_{l}(im_{1})
f(q^{\delta}\tau,m_{1})dm_{1}||_{(k_{1},\beta,\mu,\alpha);\bar{D}(0,\check{Q}\mu_{0,j+1})}\\
\leq \frac{C_{2}}{C_{P}(r_{Q,R_{D}})^{\frac{1}{kd_{D}}}} |\check{Q}\mu_{0,j+1}|^{kd_{l}}
((q^{\delta}\check{Q}\mu_{0,j+1}+\tau_{0})(\check{Q}\mu_{0,j+1}+\tau_{0}))^{\frac{k_{1}\delta}{2}}\\
\times ||f(\tau,m)||_{(k_{1},\beta,\mu,\alpha);D(0,\check{Q}\mu_{0,j})} \times ||C_{l}(m,\epsilon)||_{(\beta,\mu)}
\label{norm_H_disc_Qmuj}
\end{multline}
for all $j \geq 0$, all $1 \leq l \leq D-1$, all
$f(\tau,m) \in \mathrm{Exp}_{(k_{1},\beta,\mu,\alpha);D(0,\check{Q}\mu_{0,j})}^{q}$. Finally,
there exists a constant $C_{3}>0$ (depending on $q,\alpha,\delta, R_l,\mu,\mu_1,\hat{Q},\tau_0,k_1,k,d_l$) such that
\begin{multline}
|| \frac{\tau^{kd_{l}}}{P_{m}(\tau)} \int_{-\infty}^{+\infty} C_{l}(m-m_{1},\epsilon)R_{l}(im_{1})
f(q^{\delta}\tau,m_{1})dm_{1}||_{(k_{1},\beta,\mu,\alpha);\Theta_{\hat{Q}\mu_{1}}^{p}}\\
\leq \frac{C_{3}}{C_{P}(r_{Q,R_{D}})^{\frac{1}{kd_{D}}}}
||f(\tau,m)||_{(k_{1},\beta,\mu,\alpha);\Theta_{\hat{Q}\mu_{1}}^{p}}
\times ||C_{l}(m,\epsilon)||_{(\beta,\mu)} \label{norm_H_ThetaQmu1}
\end{multline}
for all $1 \leq l \leq D-1$, all $f(\tau,m) \in \mathrm{Exp}_{(k_{1},\beta,\mu,\alpha);\Theta_{\hat{Q}\mu_{1}}^{p}}^{q}$.
\end{prop}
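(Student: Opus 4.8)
The plan is to reduce all three estimates to Proposition~\ref{prop2}, which already provides the core convolution estimate on an arbitrary open set $\Omega$, by a careful choice of $\Omega$ together with the factorizations $a_{\gamma_1}(\tau) = \tau^{kd_l}/(\tau^{kd_D-1})$-type splittings suggested by the lower bound (\ref{lower_bounds_Pm}) for $P_m(\tau)$. First I would set $\gamma_1 = kd_D - 1$, $\gamma_2 = kd_l$, $R_1 = R_l$, $R_2 = R_D$, and $b(m) = 1/(-R_D(im)k^{d_D}\Pi(\tau-q_l(m)))$ normalized appropriately, checking that the hypotheses $\mathrm{deg}(R_l)\le\mathrm{deg}(R_D)$, $R_D(im)\ne 0$, $\mu \ge \mathrm{deg}(R_D)+1 \ge \mathrm{deg}(R_l)+1$, and the crucial constraint (\ref{constraint_gamma_1_2}), namely $kd_D - 1 \ge k_1\delta + kd_l$, all hold — these are exactly the standing assumptions (\ref{assum_dD_dl_k1_Delta_l}) and (\ref{assum_deg_Q_RD_Rl}). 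The function $\tau^{kd_l}/P_m(\tau)$ must be written as $\tau^{\gamma_2} a_{\gamma_1}(\tau) b(m)$ where $a_{\gamma_1}(\tau)$ absorbs the $(1+|\tau|)^{-(kd_D-1)}$ decay coming from (\ref{lower_bounds_Pm}); the residual $m$-dependent constant $(r_{Q,R_D})^{1/kd_D}/(C_P |R_D(im)|)$ is handled by part~2) of Proposition~\ref{prop1} and accounts for the prefactor $C_2/(C_P(r_{Q,R_D})^{1/kd_D})$ and the dependence of $C_2$ on $R_l, R_D, \mu$.

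For the first estimate (\ref{norm_H_Pomega_h}), I would take $\Omega$ to be the interior of $P\Omega_h^p$ (so that $q^\delta\Omega \subseteq$ the interior of $P\Omega_{h+1}^p$ — this inclusion is precisely the reason the frames were built with the dilation ratios $\hat Q\mu_{1,h} = q^\delta\check Q\mu_{0,h-1}$ of (\ref{constraint_mu1_mu0}) and the geometric sequences $\mu_{i,h} = (1/q^\delta)^h\mu_i$) and then apply Proposition~\ref{prop2} with $\bar\Omega = P\Omega_{h+1}^p$, noting that $P_m(\tau)$ satisfies the lower bound (\ref{lower_bounds_Pm}) on $\bar U_{\mathfrak d_p}\cup\bar D(0,\mu_0)\cup\bar A_{\mu_1}$ and hence on every $P\Omega_{h+1}^p$ since these frames avoid the annulus containing the roots $q_l(m)$. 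The supremum $\sup_{\tau\in\bar\Omega}|\tau|^{\gamma_2}(1+|\tau|)^{-\gamma_1}((q^\delta|\tau|+\tau_0)(|\tau|+\tau_0))^{k_1\delta/2}$ from Proposition~\ref{prop2} is then bounded by plugging in $|\tau|\le\hat Q\mu_{1,h+1}$ in the numerator factors and $|\tau|\ge\check Q\mu_{0,h+1}$ in the denominator, which yields exactly the explicit quotient on the right-hand side of (\ref{norm_H_Pomega_h}). The second estimate (\ref{norm_H_disc_Qmuj}) is the same argument with $\Omega = D(0,\check Q\mu_{0,j})$, $q^\delta\Omega = D(0,q^\delta\check Q\mu_{0,j}) = D(0,\check Q\mu_{0,j-1}) \supseteq D(0,\check Q\mu_{0,j+1})$ — here $\gamma_2 = kd_l \ge 0$ so $|\tau|^{\gamma_2}$ is bounded by $|\check Q\mu_{0,j+1}|^{kd_l}$ and the denominator factor $(1+|\tau|)^{-(kd_D-1)}$ is simply bounded by $1$, giving the stated bound.

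For the last estimate (\ref{norm_H_ThetaQmu1}), I take $\Omega = \Theta_{\hat Q\mu_1}^p$ (which is starshaped-compatible in the sense needed: $q^\delta\Theta_{\hat Q\mu_1}^p \subseteq \Theta_{\hat Q\mu_1}^p$ since $\bar A_{\hat Q\mu_1}$ is dilation-stable upward and the sectors $\bar U_{\mathfrak d_p}$ are cones, hence dilation-invariant). Applying Proposition~\ref{prop2} again, the only point to verify is that $\sup_{\tau\in\Theta_{\hat Q\mu_1}^p}|\tau|^{kd_l}(1+|\tau|)^{-(kd_D-1)}((q^\delta|\tau|+\tau_0)(|\tau|+\tau_0))^{k_1\delta/2}$ is \emph{finite}; since the exponent of $|\tau|$ at infinity is $kd_l + k_1\delta - (kd_D-1) \le 0$ by (\ref{assum_dD_dl_k1_Delta_l}), this supremum is attained and finite, yielding an absolute constant $C_3$ depending only on the listed parameters $q,\alpha,\delta,R_l,\mu,\mu_1,\hat Q,\tau_0,k_1,k,d_l$. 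I expect the main obstacle to be purely bookkeeping: tracking the geometric constants through the dilation $\tau \mapsto q^\delta\tau$ so that the domain inclusions $q^\delta P\Omega_h^p \supseteq P\Omega_{h+1}^p$ (and its disc analogue) are genuinely correct given the definitions of $C_{\mu_{1,h}}^p$, $C_{\mu_{0,h}}^p$, $A_{\mu_{0,h},\mu_{1,h}}^{p}$ and constraint (\ref{constraint_mu1_mu0}) — the analytic content is entirely contained in Propositions~\ref{prop1} and~\ref{prop2}, and the novelty here is only that the frames are designed so their images under $q^\delta$ nest correctly, which is what makes the fixed-point scheme in the next subsection work.
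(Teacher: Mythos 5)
Your overall strategy coincides with the paper's: use the lower bound (\ref{lower_bounds_Pm}) for $P_{m}(\tau)$ to write $\tau^{kd_l}/P_m(\tau)$ in the form $\tau^{\gamma_2}a_{\gamma_1}(\tau)b(m)$ with $\gamma_1=kd_D-1$, $\gamma_2=kd_l$, $R_1=R_l$, $R_2=R_D$, check the hypotheses of Proposition~\ref{prop2} from (\ref{assum_dD_dl_k1_Delta_l}) and (\ref{assum_deg_Q_RD_Rl}), and then exploit the dilation structure of the domains; for (\ref{norm_H_ThetaQmu1}) the finiteness of the supremum thanks to $kd_D-1\geq k_1\delta+kd_l$, the inclusion $q^{\delta}\Theta_{\hat{Q}\mu_1}^{p}\subset\Theta_{\hat{Q}\mu_1}^{p}$ and the monotonicity of the norm from Proposition~\ref{prop1} is also exactly the paper's argument.

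However, your domain identifications for the first two estimates are stated in the wrong direction. Since $\mu_{i,h}=(1/q^{\delta})^{h}\mu_i$, multiplication by $q^{\delta}>1$ sends the smaller (higher-index) domains onto the larger (lower-index) ones: the correct identities are $q^{\delta}P\Omega_{h+1}^{p}=P\Omega_{h}^{p}$ and $q^{\delta}\bar{D}(0,\check{Q}\mu_{0,j+1})=\bar{D}(0,\check{Q}\mu_{0,j})$ (also, (\ref{constraint_mu1_mu0}) gives $\hat{Q}\mu_{1,h}=q^{\delta}\check{Q}\mu_{0,h}$, not $q^{\delta}\check{Q}\mu_{0,h-1}$). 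Your parenthetical claim that taking $\Omega=\mathrm{int}\,P\Omega_{h}^{p}$ yields $q^{\delta}\Omega\subseteq\mathrm{int}\,P\Omega_{h+1}^{p}$ is therefore false, and it contradicts your own subsequent (correct) choice $\bar{\Omega}=P\Omega_{h+1}^{p}$, for which $\overline{q^{\delta}\Omega}=P\Omega_{h}^{p}$ is precisely what places the $f$-norm on the right-hand side over $P\Omega_{h}^{p}$ as in (\ref{norm_H_Pomega_h}). Likewise, in the disc case your choice $\Omega=D(0,\check{Q}\mu_{0,j})$ would give, through Proposition~\ref{prop2}, a left-hand norm over $\bar{D}(0,\check{Q}\mu_{0,j})$ and an $f$-norm over $\bar{D}(0,\check{Q}\mu_{0,j-1})$, i.e.\ the inequality with shifted indices rather than (\ref{norm_H_disc_Qmuj}); one must take $\Omega=D(0,\check{Q}\mu_{0,j+1})$ so that $q^{\delta}\Omega=D(0,\check{Q}\mu_{0,j})$. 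Once these identifications are corrected (and noting, as you do, that every frame $P\Omega_{h}^{p}$, $h\in\mathbb{Z}$, is contained in the region $\bar{U}_{\mathfrak{d}_p}\cup\bar{U}_{\mathfrak{d}_{p+1}}\cup\bar{D}(0,\mu_0)\cup\bar{A}_{\mu_1}$ where (\ref{lower_bounds_Pm}) holds), your evaluation of the suprema with $|\tau|\leq\hat{Q}\mu_{1,h+1}$ and $|\tau|\geq\check{Q}\mu_{0,h+1}$ reproduces the paper's proof.
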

\begin{proof} All the inequalities (\ref{norm_H_Pomega_h}), (\ref{norm_H_disc_Qmuj}) and (\ref{norm_H_ThetaQmu1})
are consequence of Proposition~\ref{prop2}. Indeed, in order to get (\ref{norm_H_Pomega_h}), one can check that
$q^{\delta}P\Omega_{h+1}^{p} = P\Omega_{h}^{p}$ and that
\begin{multline*}
\sup_{\tau \in P\Omega_{h+1}^{p}} \frac{|\tau|^{kd_{l}}}{(1+|\tau|)^{kd_{D}-1}}
((q^{\delta}|\tau| + \tau_{0})(|\tau| + \tau_{0}))^{k_{1}\delta/2} \\
\leq
\frac{ |\hat{Q}\mu_{1,h+1}|^{kd_{l}} }{(1 + \check{Q}\mu_{0,h+1})^{kd_{D}-1}}
((q^{\delta}\hat{Q}\mu_{1,h+1} + \tau_{0})(\hat{Q}\mu_{1,h+1} + \tau_{0}))^{\frac{k_{1}\delta}{2}}.
\end{multline*}
On the other hand, one can get (\ref{norm_H_disc_Qmuj}) as a consequence of the fact that
$q^{\delta}\bar{D}(0,\check{Q}\mu_{0,j+1}) = \bar{D}(0,\check{Q}\mu_{0,j})$ and that
\begin{multline*}
\sup_{\tau \in \bar{D}(0,\check{Q}\mu_{0,j+1})} \frac{|\tau|^{kd_{l}}}{(1+|\tau|)^{kd_{D}-1}}
((q^{\delta}|\tau| + \tau_{0})(|\tau| + \tau_{0}))^{k_{1}\delta/2} \\
\leq |\check{Q}\mu_{0,j+1}|^{kd_{l}}
((q^{\delta}\check{Q}\mu_{0,j+1}+\tau_{0})(\check{Q}\mu_{0,j+1}+\tau_{0}))^{\frac{k_{1}\delta}{2}}.
\end{multline*}
Finally, (\ref{norm_H_ThetaQmu1}) is a consequence of the inclusion $q^{\delta}\Theta_{\hat{Q}\mu_{1}}^{p} \subset
\Theta_{\hat{Q}\mu_{1}}^{p}$, and  
$$
\sup_{\tau \in \Theta_{\hat{Q}\mu_{1}}^{p}} \frac{|\tau|^{kd_{l}}}{(1+|\tau|)^{kd_{D}-1}}
((q^{\delta}|\tau| + \tau_{0})(|\tau| + \tau_{0}))^{k_{1}\delta/2}<\infty, \\
$$
due to the first inequality of (\ref{assum_dD_dl_k1_Delta_l}) holds and from the inequality
$$ ||f(\tau,m)||_{(k_{1},\beta,\mu,\alpha);q^{\delta}\Theta^p_{\hat{Q}\mu_{1}}} \leq
||f(\tau,m)||_{(k_{1},\beta,\mu,\alpha);\Theta^p_{\hat{Q}\mu_{1}}} $$
for any function $f(\tau,m) \in \mathrm{Exp}_{(k_{1},\beta,\mu,\alpha);\Theta^p_{\hat{Q}\mu_{1}}}^{q}$,
which results from Proposition~\ref{prop1}.
\end{proof}

\subsection{Construction of actual solutions on the domains
$R\Omega_{j}^{p}$, $\Theta_{\hat{Q}\mu_{1}}^{p}$ of the problem (\ref{q_diff_conv_init_v_prob})}\label{seccion43}

\begin{prop}\label{prop6} Let us assume the statements (\ref{assum_dD_dl_k1_Delta_l}), (\ref{assum_deg_Q_RD_Rl}),
(\ref{quotient_Q_RD_in_Ann}) hold. Let $\epsilon\in D(0,\epsilon_0)$. In the framework of the layout of Section~\ref{seccion41} and the geometric construction described in Subsection~\ref{seccion42}, there exists $r_{Q,R_{D}}>0$ and $\gamma_{l}>0$ such that if
\begin{equation}
\sup_{\epsilon\in D(0,\epsilon_0)}||C_{l}(m,\epsilon)||_{(\beta,\mu)} \leq \gamma_{l},
\end{equation}
for all $1 \leq l \leq D-1$, the problem
(\ref{q_diff_conv_init_v_prob}) with vanishing initial data has a solution $w_{k}^{p}(\tau,m,\epsilon)$ that
belongs to the Banach
space $\mathrm{Exp}_{(k_{1},\beta,\mu,\alpha);\Theta_{\hat{Q}\mu_{1}}^{p}}^{q}$, for all
$0 \leq p \leq \varsigma-1$. Moreover, we can write $w_{k}^{p}(\tau,m,\epsilon)$ as a sum
\begin{equation}
w_{k}^{p}(\tau,m,\epsilon) = \sum_{j \geq 0} w_{k,j}^{p}(\tau,m,\epsilon) \label{decomp_w_kp}
\end{equation}
where each function $w_{k,j}^{p}(\tau,m,\epsilon)$ satisfies the next properties.\medskip

\noindent 1) For all $j \geq 0$,
the function $w_{k,j}^{p}(\tau,m,\epsilon)$ belongs to
$\mathrm{Exp}_{(k_{1},\beta,\mu,\alpha);\Theta_{\hat{Q}\mu_{1}}^{p}}^{q}$ and there exists a constant
$0 < K_{1} < 1$ (depending on $q,\alpha,\delta,R_l,\mu,\mu_1,\hat{Q},\tau_0,k_1,k,d_l,\epsilon_0,\Delta_l,r_{Q,R_D}$) such that
\begin{equation}
||w_{k,j}^{p}(\tau,m,\epsilon)||_{(k_{1},\beta,\mu,\alpha);\Theta_{\hat{Q}\mu_{1}}^{p}} \leq
K_{1}^{j}||\frac{\psi_{k}(\tau,m,\epsilon)}{P_{m}(\tau)}||_{(k_{1},\beta,\mu,\alpha);\Theta_{\hat{Q}\mu_{1}}^{p}}
\label{norm_w_kj_on_triangle}
\end{equation}
for all $j \geq 0$, and $0 \leq p \leq \varsigma-1$.\medskip

\noindent 2) For all $j \geq 0$, the function $w_{k,j}^{p}(\tau,m,\epsilon)$ belongs to
$\mathrm{Exp}_{(k_{1},\beta,\mu,\alpha);\bar{D}(0,\check{Q}\mu_{0,j})}^{q}$ and there exists a constant
$0<K_{2}<1$ (depending on $q,\alpha,\delta,R_l,\mu,\epsilon_0,\Delta_l,r_{Q,R_D},\gamma,\check{Q},\mu_0,\tau_0,k,k_1,d_l$) such that
\begin{equation}
||w_{k,j}^{p}(\tau,m,\epsilon)||_{(k_{1},\beta,\mu,\alpha);\bar{D}(0,\check{Q}\mu_{0,j})}
\leq K_{2}^{j}(\frac{1}{q^{\delta}})^{\frac{k \min_{l=1}^{D-1}d_{l}}{2}(j+1)j}
||\frac{\psi_{k}(\tau,m,\epsilon)}{P_{m}(\tau)}||_{(k_{1},\beta,\mu,\alpha);\bar{D}(0,\check{Q}\mu_{0})}
\label{norm_w_kj_on_discs}
\end{equation}
for all $j \geq 0$, all $0 \leq p \leq \varsigma-1$.\medskip

\noindent 3) For all $j \geq 0$, all $0 \leq h \leq j$, the function
$w_{k,j}^{p}(\tau,m,\epsilon)$ belongs to $\mathrm{Exp}_{(k_{1},\beta,\mu,\alpha); P\Omega_{h}^{p}}^{q}$ and there exist
constants $0<K_{3}<1$, $0<K_{4}<1$ and $C_{4}>0$\\ (depending on $q,\alpha,\delta,R_l,\mu,\epsilon_0,\Delta_l,r_{Q,R_D},\gamma_l,\hat{Q},\check{Q},\mu_0,\mu_1,\tau_0,k,k_1,d_l$) such that
\begin{multline}
||w_{k,j}^{p}(\tau,m,\epsilon)||_{(k_{1},\beta,\mu,\alpha);P\Omega_{h}^{p}}
\leq C_{4}K_{3}^{j}K_{4}^{h}(\frac{1}{q^{\delta}})^{\frac{k \min_{l=1}^{D-1}d_{l}}{2}(h+1)h}\\
\times
\max( ||\frac{\psi_{k}(\tau,m,\epsilon)}{P_{m}(\tau)} ||_{(k_{1},\beta,\mu,\alpha);\bar{A}_{\hat{Q}\mu_{1}}},
||\frac{\psi_{k}(\tau,m,\epsilon)}{P_{m}(\tau)} ||_{(k_{1},\beta,\mu,\alpha);P\Omega_{0}^{p}} )
\label{norm_w_kj_on_square_frames}
\end{multline}
for all $j \geq 0$, all $0 \leq h \leq j$ and all $0 \leq p \leq \varsigma-1$.
\end{prop}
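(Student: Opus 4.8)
The plan is to set up a fixed point scheme for the map
$$\mathcal{H}_{\epsilon}(w) = \mathcal{L}_{\epsilon}(w) + \frac{\psi_{k}(\tau,m,\epsilon)}{P_{m}(\tau)}, \qquad \mathcal{L}_{\epsilon}(w)(\tau,m) = \sum_{l=1}^{D-1} \frac{\epsilon^{\Delta_{l}-kd_{l}}(k\tau^{k})^{d_{l}}}{(2\pi)^{1/2}P_{m}(\tau)} \int_{-\infty}^{+\infty} C_{l}(m-m_{1},\epsilon)R_{l}(im_{1})\, w(q^{\delta}\tau,m_{1})\, dm_{1},$$
whose fixed points are exactly the solutions of \eqref{q_diff_conv_init_v_prob} with vanishing data. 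First I would note that $\psi_{k}(\cdot,\cdot,\epsilon)$ is entire and that, by \eqref{lower_bounds_Pm}, $1/P_{m}$ is bounded on $\bar{U}_{\mathfrak{d}_{p}} \cup \bar{U}_{\mathfrak{d}_{p+1}} \cup \bar{D}(0,\mu_{0}) \cup \bar{A}_{\mu_{1}}$, so by Proposition~\ref{prop1} the function $\psi_{k}/P_{m}$ belongs to $\mathrm{Exp}^{q}_{(k_{1},\beta,\mu,\alpha);\bar{\Omega}}$ for each $\bar{\Omega} \in \{\Theta_{\hat{Q}\mu_{1}}^{p},\ \bar{D}(0,\check{Q}\mu_{0}),\ P\Omega_{h}^{p},\ \bar{A}_{\hat{Q}\mu_{1}}\}$, with norm controlled by $\|\psi_{k}\|$ and $(C_{P}r_{Q,R_{D}}^{1/(kd_{D})})^{-1}$. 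Using $q^{\delta}\Theta_{\hat{Q}\mu_{1}}^{p} \subset \Theta_{\hat{Q}\mu_{1}}^{p}$, the estimate \eqref{norm_H_ThetaQmu1}, and $\Delta_{l} \geq kd_{l}$ (so $|\epsilon|^{\Delta_{l}-kd_{l}} \leq \epsilon_{0}^{\Delta_{l}-kd_{l}}$), the operator $\mathcal{L}_{\epsilon}$ has norm on $\mathrm{Exp}^{q}_{(k_{1},\beta,\mu,\alpha);\Theta_{\hat{Q}\mu_{1}}^{p}}$ bounded by $K_{1} := \sum_{l=1}^{D-1}\epsilon_{0}^{\Delta_{l}-kd_{l}}k^{d_{l}}(2\pi)^{-1/2}C_{3}(C_{P}r_{Q,R_{D}}^{1/(kd_{D})})^{-1}\gamma_{l}$, which is $<1$ once the $\gamma_{l}$ are taken small enough. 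Then $\mathcal{H}_{\epsilon}$ is a contraction of a sufficiently large closed ball of this space into itself, and the Banach fixed point theorem yields $w_{k}^{p} \in \mathrm{Exp}^{q}_{(k_{1},\beta,\mu,\alpha);\Theta_{\hat{Q}\mu_{1}}^{p}}$; the condition $w_{k}^{p}(0,m,\epsilon)\equiv0$ is automatic, since the $1/|\tau|$ weight in the norm forces vanishing at $\tau=0$ on the disc part of the domain.

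For the splitting \eqref{decomp_w_kp} I would use the Picard iterates $u_{0}=0$, $u_{n+1}=\mathcal{H}_{\epsilon}(u_{n})\to w_{k}^{p}$: since $u_{n+1}-u_{n}=\mathcal{L}_{\epsilon}(u_{n}-u_{n-1})$ and $u_{1}-u_{0}=\psi_{k}/P_{m}$, one has $u_{n+1}-u_{n}=\mathcal{L}_{\epsilon}^{n}(\psi_{k}/P_{m})$, so I would \emph{define} $w_{k,j}^{p}:=\mathcal{L}_{\epsilon}^{j}(\psi_{k}/P_{m})$ (one holomorphic function, restricted to whichever domain is under discussion), getting $w_{k}^{p}=\sum_{j\geq0}w_{k,j}^{p}$ in $\mathrm{Exp}^{q}_{(k_{1},\beta,\mu,\alpha);\Theta_{\hat{Q}\mu_{1}}^{p}}$ and, immediately, bound 1): $\|w_{k,j}^{p}\|_{\Theta_{\hat{Q}\mu_{1}}^{p}}\leq K_{1}^{j}\|\psi_{k}/P_{m}\|_{\Theta_{\hat{Q}\mu_{1}}^{p}}$. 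Bound 2) I would obtain by iterating \eqref{norm_H_disc_Qmuj}: as $q^{\delta}\bar{D}(0,\check{Q}\mu_{0,i+1})=\bar{D}(0,\check{Q}\mu_{0,i})$, each application of $\mathcal{L}_{\epsilon}$ carries a factor $|\check{Q}\mu_{0,i+1}|^{kd_{l}}=(\check{Q}\mu_{0})^{kd_{l}}(1/q^{\delta})^{(i+1)kd_{l}}$ and a uniformly bounded contribution from the $\tau_{0}$-terms (decreasing to $\tau_{0}^{k_{1}\delta}$); multiplying $j$ of these, using $d_{l}\geq\min_{l}d_{l}$ and $1/q^{\delta}<1$, and absorbing the bounded pieces together with $\gamma_{l}$ into $K_{2}<1$, produces \eqref{norm_w_kj_on_discs} with its super-exponential gain $(1/q^{\delta})^{\frac{k\min_{l}d_{l}}{2}(j+1)j}$.

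The heart is bound 3). By \eqref{constraint_mu1_mu0} one has $q^{\delta}P\Omega_{h+1}^{p}=P\Omega_{h}^{p}$ for all $h\in\mathbb{Z}$, hence $q^{\delta m}P\Omega_{0}^{p}=P\Omega_{-m}^{p}$, and (since $\check{Q}<1<\mu_{1}/\mu_{0}$ and again by \eqref{constraint_mu1_mu0}) $P\Omega_{-m}^{p}\subset\bar{A}_{\hat{Q}\mu_{1}}$ for every $m\geq1$. For $0\leq h\leq j$ I would write $\mathcal{L}_{\epsilon}^{j}=\mathcal{L}_{\epsilon}^{h}\circ\mathcal{L}_{\epsilon}^{j-h}$ and estimate
$$\|w_{k,j}^{p}\|_{P\Omega_{h}^{p}} \leq \|\mathcal{L}_{\epsilon}^{h}\|_{P\Omega_{0}^{p}\to P\Omega_{h}^{p}} \cdot \|\mathcal{L}_{\epsilon}^{j-h}\|_{P\Omega_{-(j-h)}^{p}\to P\Omega_{0}^{p}} \cdot \|\psi_{k}/P_{m}\|_{P\Omega_{-(j-h)}^{p}}.$$
The inward factor $\|\mathcal{L}_{\epsilon}^{h}\|_{P\Omega_{0}^{p}\to P\Omega_{h}^{p}}$, handled via \eqref{norm_H_Pomega_h} at the shrinking scales $\mu_{1,i}=(1/q^{\delta})^{i}\mu_{1}$, $1\leq i\leq h$, behaves just like in bound 2) and contributes $(1/q^{\delta})^{\frac{k\min_{l}d_{l}}{2}(h+1)h}$ times a geometric factor $K_{4}^{h}$ with $K_{4}<1$ (using $1/(1+\check{Q}\mu_{0,i})^{kd_{D}-1}\leq1$ and the boundedness of the $\tau_{0}$-terms). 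The outward factor $\|\mathcal{L}_{\epsilon}^{j-h}\|_{P\Omega_{-(j-h)}^{p}\to P\Omega_{0}^{p}}$, handled via \eqref{norm_H_Pomega_h} at the growing scales $\mu_{1,-i+1}=(q^{\delta})^{i-1}\mu_{1}$, $1\leq i\leq j-h$, has per-step bound of order $\mathrm{const}\cdot(q^{\delta})^{(i-1)(kd_{l}+k_{1}\delta-(kd_{D}-1))}$, uniformly bounded precisely because $kd_{D}-1\geq k_{1}\delta+kd_{l}$ by \eqref{assum_dD_dl_k1_Delta_l}; hence it is at most $K_{0}^{j-h}$ with $K_{0}<1$. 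Finally $\|\psi_{k}/P_{m}\|_{P\Omega_{-(j-h)}^{p}}\leq\max(\|\psi_{k}/P_{m}\|_{\bar{A}_{\hat{Q}\mu_{1}}},\|\psi_{k}/P_{m}\|_{P\Omega_{0}^{p}})$ by Proposition~\ref{prop1} and the inclusion above. Collecting $K_{4}^{h}K_{0}^{j-h}$ and relabeling constants (with a multiplicative constant $C_{4}$) gives \eqref{norm_w_kj_on_square_frames}. The main obstacle is precisely this bi-indexed bookkeeping: one must verify that the single inequality $kd_{D}-1\geq k_{1}\delta+kd_{l}$ exactly offsets the growth of $|\tau|^{kd_{l}}$ and of the $q$-exponential weight picked up when pushing the forcing term outward through $P\Omega_{-1}^{p},P\Omega_{-2}^{p},\dots$, while the contraction on the inward frames simultaneously delivers the super-exponential decay; the finiteness of the relevant suprema of geometric quantities — established in the Proposition just before this one — underlies every step.
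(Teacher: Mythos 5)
Your proposal is correct and follows essentially the same route as the paper: you iterate the same operator on $\psi_{k}/P_{m}$, obtain existence via the contraction/Neumann series in $\mathrm{Exp}^{q}_{(k_{1},\beta,\mu,\alpha);\Theta_{\hat{Q}\mu_{1}}^{p}}$, and prove 1)--3) by chaining the norm estimates (\ref{norm_H_ThetaQmu1}), (\ref{norm_H_disc_Qmuj}) and (\ref{norm_H_Pomega_h}) across the shifted frames, with (\ref{assum_dD_dl_k1_Delta_l}) controlling the outward frames and the shrinking discs producing the factor $(1/q^{\delta})^{\frac{k\min_{l}d_{l}}{2}(h+1)h}$. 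The only differences from the paper's proof are cosmetic (Banach fixed point instead of summing the Neumann series explicitly, and grouping the telescoping product in (\ref{norm_w_kj_on_square_frames_first}) into inward/outward blocks before bounding, which after the relabeling $K_{4}^{h}K_{0}^{j-h}\leq(\max(K_{4}^{1/2},K_{0}))^{j}(K_{4}^{1/2})^{h}$ yields the stated form).
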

\begin{proof} Let $\epsilon\in D(0,\epsilon_0)$. We define the linear operator
$$
\mathcal{H}_{\epsilon}(w(\tau,m)) := \sum_{l=1}^{D-1} \epsilon^{\Delta_{l} - kd_{l}}
\frac{(k\tau^{k})^{d_l}}{P_{m}(\tau)} \frac{1}{(2\pi)^{1/2}}
\int_{-\infty}^{+\infty} C_{l}(m-m_{1},\epsilon)R_{l}(im_{1})w(q^{\delta}\tau,m_{1}) dm_{1}
$$
Let $w_{k,0}^{p}(\tau,m,\epsilon) = \frac{\psi_{k}(\tau,m,\epsilon)}{P_{m}(\tau)}$. Due to the lower bound estimates
(\ref{lower_bounds_Pm}) and the hypothesis that
$\psi_{k}(\tau,m,\epsilon) \in \mathrm{Exp}_{(k_{1},\beta,\mu,\alpha);\mathbb{C}}^{q}$, we deduce that
$w_{k,0}^{p}(\tau,m,\epsilon)$ belongs to the spaces 
$\mathrm{Exp}_{(k_{1},\beta,\mu,\alpha);\Theta_{\hat{Q}\mu_{1}}^{p}}^{q}$,
$\mathrm{Exp}_{(k_{1},\beta,\mu,\alpha);\bar{D}(0,\check{Q}\mu_{0,0})}^{q}$ and
$\mathrm{Exp}_{(k_{1},\beta,\mu,\alpha); P\Omega_{0}}^{q}$. In the next step, we put
$$ w_{k,j}^{p}(\tau,m,\epsilon) := \mathcal{H}_{\epsilon}^{(j)}\left( \frac{\psi_{k}(\tau,m,\epsilon)}{P_{m}(\tau)} \right) $$
where $\mathcal{H}_{\epsilon}^{(j)}$ denotes the map $\mathcal{H}_{\epsilon}$ composed $j$ times with itself. We first
show the estimates (\ref{norm_w_kj_on_triangle}). Due to the estimates (\ref{norm_H_ThetaQmu1}),
and for small enough $r_{Q,R_{D}}>0$ and $\gamma_{l}>0$, $0 \leq l \leq D-1$, we get a constant $0 < K_{1} < 1$ with
\begin{multline}
||w_{k,j+1}^{p}(\tau,m,\epsilon)||_{(k_{1},\beta,\mu,\alpha);\Theta_{\hat{Q}\mu_{1}}^{p}}\\
\leq \left( \sum_{l=1}^{D-1} \epsilon_{0}^{\Delta_{l}-kd_{l}}
\frac{C_{3}k^{d_l}}{C_{P}(r_{Q,R_{D}})^{1/(kd_{D})}(2\pi)^{1/2}}
\sup_{\epsilon \in D(0,\epsilon_{0})}||C_{l}(m,\epsilon)||_{(\beta,\mu)} \right)\\
\times ||w_{k,j}^{p}(\tau,m,\epsilon)||_{(k_{1},\beta,\mu,\alpha);\Theta_{\hat{Q}\mu_{1}}^{p}}\\
\leq K_{1}||w_{k,j}^{p}(\tau,m,\epsilon)||_{(k_{1},\beta,\mu,\alpha);\Theta_{\hat{Q}\mu_{1}}^{p}}
\label{norm_w_kj_on_triangle_recursion}
\end{multline}
for all $j \geq 0$. As a result, we deduce (\ref{norm_w_kj_on_triangle}) by induction on $j$ with the help
of (\ref{norm_w_kj_on_triangle_recursion}). Therefore, we deduce that for all $\epsilon \in D(0,\epsilon_{0})$, the
series $\sum_{j \geq 0} w_{k,j}^{p}(\tau,m,\epsilon)$ converges in the Banach space
$\mathrm{Exp}_{(k_{1},\beta,\mu,\alpha);\Theta_{\hat{Q}\mu_{1}}^{p}}^{q}$ to a function $w_{k}^{p}(\tau,m,\epsilon)$.
By construction, $w_{k}^{p}(\tau,m,\epsilon)$ is equal to
\begin{equation}
w_{k}^{p}(\tau,m,\epsilon) = \sum_{j \geq 0}
\mathcal{H}_{\epsilon}^{(j)}( \frac{\psi_{k}(\tau,m,\epsilon)}{P_{m}(\tau)} )=
(\mathrm{id}-\mathcal{H}_{\epsilon})^{-1}( \frac{\psi_{k}(\tau,m,\epsilon)}{P_{m}(\tau)} )
\end{equation}
where $\mathrm{id}$ denotes the identity map. As a result, $w_{k}^{p}(\tau,m,\epsilon)$ solves the equation
(\ref{q_diff_conv_init_v_prob}) and satisfies $w_{k}^{p}(0,m,\epsilon) \equiv 0$.

We deal now with the estimates (\ref{norm_w_kj_on_discs}).
According to (\ref{norm_H_disc_Qmuj}), for sufficiently small $r_{Q,R_{D}}$ and
$\gamma_{l}$, $0 \leq l \leq D-1$, we get a constant $0<K_{2}<1$ with
\begin{multline}
||w_{k,j+1}^{p}(\tau,m,\epsilon)||_{(k_{1},\beta,\mu,\alpha);D(0,\check{Q}\mu_{0,j+1})}\\
\leq \left( \sum_{l=1}^{D-1} \epsilon_{0}^{\Delta_{l} - kd_{l}}
\frac{C_{2}k^{d_l}}{C_{P}(r_{Q,R_{D}})^{1/(kd_{D})}(2\pi)^{1/2}}
\sup_{\epsilon \in D(0,\epsilon_{0})}||C_{l}(m,\epsilon)||_{(\beta,\mu)} \right. \\
\left. \times |\check{Q}\mu_{0,j+1}|^{kd_{l}}
((q^{\delta}\check{Q}\mu_{0,j+1}+\tau_{0})(\check{Q}\mu_{0,j+1} + \tau_{0}))^{\frac{k_{1}\delta}{2}} \right)\\
||w_{k,j}^{p}(\tau,m,\epsilon)||_{(k_{1},\beta,\mu,\alpha);D(0,\check{Q}\mu_{0,j})}\\
\leq K_{2}(\frac{1}{q^{\delta}})^{(j+1) \min_{l=1}^{D-1} kd_{l}}
||w_{k,j}^{p}(\tau,m,\epsilon)||_{(k_{1},\beta,\mu,\alpha);D(0,\check{Q}\mu_{0,j})}
\label{norm_w_kj_on_discs_recursion}
\end{multline}
for all $j \geq 0$. Hence, we can deduce (\ref{norm_w_kj_on_discs}) by induction on $j$ by means of (\ref{norm_w_kj_on_discs_recursion}). In the last step, we consider (\ref{norm_w_kj_on_square_frames}).
Taking into account the estimates (\ref{norm_H_Pomega_h}), for small enough $r_{Q,R_{D}}$ and
$\gamma_{l}$, $0 \leq l \leq D-1$, we get a constant $0<K_{3.1}<1$ with
\begin{multline}
||w_{k,j+1}^{p}(\tau,m,\epsilon)||_{(k_{1},\beta,\mu,\alpha);P\Omega_{h+1}^{p}}\\
\leq \left( \sum_{l=1}^{D-1} \epsilon_{0}^{\Delta_{l} - kd_{l}}
\frac{C_{2}k^{d_l}}{C_{P}(r_{Q,R_{D}})^{1/(kd_{D})}(2\pi)^{1/2}}
\sup_{\epsilon \in D(0,\epsilon_{0})}||C_{l}(m,\epsilon)||_{(\beta,\mu)} \right. \\
\left. \times \frac{|\hat{Q}\mu_{1,h+1}|^{kd_{l}}}{(1 + \check{Q}\mu_{0,h+1})^{kd_{D}-1}}
((q^{\delta}\hat{Q}\mu_{1,h+1} + \tau_{0})(\hat{Q}\mu_{1,h+1}+\tau_{0}))^{\frac{k_{1}\delta}{2}} \right)
||w_{k,j}^{p}(\tau,m,\epsilon)||_{(k_{1},\beta,\mu,\alpha);P\Omega_{h}^{p}}\\
\leq K_{3.1}\frac{\max_{l=1}^{D-1}|\hat{Q}\mu_{1,h+1}|^{kd_{l}}}{(1 + \check{Q}\mu_{0,h+1})^{kd_{D}-1}}
((q^{\delta}\hat{Q}\mu_{1,h+1} + \tau_{0})(\hat{Q}\mu_{1,h+1}+\tau_{0}))^{\frac{k_{1}\delta}{2}}\\
\times
||w_{k,j}^{p}(\tau,m,\epsilon)||_{(k_{1},\beta,\mu,\alpha);P\Omega_{h}^{p}} \label{norm_w_kj_on_square_frames_recursion}
\end{multline}
for all $h \in \mathbb{Z}$, all $j \geq 0$. From (\ref{norm_w_kj_on_square_frames_recursion}),
using induction on $j$, we deduce that
\begin{multline}
||w_{k,j}^{p}(\tau,m,\epsilon)||_{(k_{1},\beta,\mu,\alpha);P\Omega_{h}^{p}}\\
\leq K_{3.1}^{j} \left( \prod_{b=h-j+1}^{h}
\frac{\max_{l=1}^{D-1}|\hat{Q}\mu_{1,b}|^{kd_{l}}}{(1 + \check{Q}\mu_{0,b})^{kd_{D}-1}}
((q^{\delta}\hat{Q}\mu_{1,b} + \tau_{0})(\hat{Q}\mu_{1,b}+\tau_{0}))^{\frac{k_{1}\delta}{2}} \right)\\
\times || \frac{\psi_{k}(\tau,m,\epsilon)}{P_{m}(\tau)} ||_{(k_{1},\beta,\mu,\alpha);P\Omega_{h-j}^{p}}
\label{norm_w_kj_on_square_frames_first}
\end{multline}
for all $j \geq 0$ and all $h\in\mathbb{Z}$. On the one hand, from the assumption
(\ref{assum_dD_dl_k1_Delta_l}), we get a constant $K_{3.2}>0$ such that
\begin{equation}
\frac{\max_{l=1}^{D-1}|\hat{Q}\mu_{1,b}|^{kd_{l}}}{(1 + \check{Q}\mu_{0,b})^{kd_{D}-1}}
((q^{\delta}\hat{Q}\mu_{1,b} + \tau_{0})(\hat{Q}\mu_{1,b}+\tau_{0}))^{\frac{k_{1}\delta}{2}} \leq
K_{3.2} \label{maj_K3.2}
\end{equation}
for all $b < 0$. Observe there are $j-h-1$ elements in the product in (\ref{norm_w_kj_on_square_frames_first}) of this type. On the other hand, we get a constant $K_{3.3}>0$ such that
\begin{equation}
\frac{\max_{l=1}^{D-1}|\hat{Q}\mu_{1,b}|^{kd_{l}}}{(1 + \check{Q}\mu_{0,b})^{kd_{D}-1}}
((q^{\delta}\hat{Q}\mu_{1,b} + \tau_{0})(\hat{Q}\mu_{1,b}+\tau_{0}))^{\frac{k_{1}\delta}{2}} \leq
K_{3.3}(\frac{1}{q^{\delta}})^{bk \min_{l=1}^{D-1} d_{l}} \label{maj_K3.3}
\end{equation}
for all $b \geq 0$. Observe there are $h+1$ elements of this type in the product on (\ref{norm_w_kj_on_square_frames_first}). Besides, since $h\leq j$, we know that the next inclusion must hold 
$P\Omega_{h-j}^p \subset \bar{A}_{\hat{Q}\mu_{1}}$ if $h-j \leq -1$. In particular, we get from
Proposition~\ref{prop1} that
\begin{multline}
|| \frac{\psi_{k}(\tau,m,\epsilon)}{P_{m}(\tau)} ||_{(k_{1},\beta,\mu,\alpha);P\Omega_{h-j}^{p}}\\
\leq \max(
|| \frac{\psi_{k}(\tau,m,\epsilon)}{P_{m}(\tau)} ||_{(k_{1},\beta,\mu,\alpha);\bar{A}_{\hat{Q}\mu_{1}}},
|| \frac{\psi_{k}(\tau,m,\epsilon)}{P_{m}(\tau)} ||_{(k_{1},\beta,\mu,\alpha);P\Omega_{0}^{p}} )
\label{norm_psi_k_maj_on_square_frames}
\end{multline}
Gathering (\ref{norm_w_kj_on_square_frames_first}), (\ref{maj_K3.2}), (\ref{maj_K3.3}) and
(\ref{norm_psi_k_maj_on_square_frames}) one concludes the estimates
\begin{multline}
||w_{k,j}^{p}(\tau,m,\epsilon)||_{(k_{1},\beta,\mu,\alpha);P\Omega_{h}^{p}}\\
 \leq \frac{K_{3.3}}{K_{3.2}} (K_{3.1}K_{3.2})^{j} (\frac{K_{3.3}}{K_{3.2}})^{h}
(\frac{1}{q^{\delta}})^{\frac{h(h+1)}{2}k \min_{l=1}^{D-1} d_{l} }\\
\times \max( ||\frac{\psi_{k}(\tau,m,\epsilon)}{P_{m}(\tau)} ||_{(k_{1},\beta,\mu,\alpha);\bar{A}_{\hat{Q}\mu_{1}}},
||\frac{\psi_{k}(\tau,m,\epsilon)}{P_{m}(\tau)} ||_{(k_{1},\beta,\mu,\alpha);P\Omega_{0}^{p}} ),
\end{multline}
for some $K_{3.2},K_{3.3}>0$.
Now, we can take the constant $K_{3.1}>0$ small enough such that $0<K_{3.1}K_{3.2}<1$ and
we choose $K_{3.2}>0$ large enough such that $K_{3.3}/K_{3.2}<1$. This yields the estimates 
(\ref{norm_w_kj_on_square_frames}).
\end{proof}

\textbf{Remark:} Holomorphy of $\epsilon\mapsto w_{k,j}^p(\tau,m,\epsilon)$ is deduced from holomorphy of $\Psi_{k}(\tau,m,\epsilon)$ with respect to $\epsilon$ and the construction displayed in the proof of the previous result.

\section{Laplace and Fourier transforms}\label{seccion5}

\begin{defin}\label{defi4}
Let $(\mathbb{E},||.||_{\mathbb{E}})$ be a complex Banach space. Let $k \geq 1$ be an integer and let
$w : S_{d,\delta} \rightarrow \mathbb{E}$ be a holomorphic function on the open unbounded sector
$S_{d,\delta} = \{ \tau \in \mathbb{C}^{\ast} : |d - \mathrm{arg}(\tau) | < \delta \} $, continuous on
$S_{d,\delta} \cup \{ 0 \}$ and such that $w(0)=0$. Assume there exist $C>0$ and $K>0$
such that
\begin{equation}
||w(\tau)||_{\mathbb{E}}
\leq C e^{ K|\tau|^{k} } \label{w_exp_growth_k}
\end{equation}
for all $\tau \in S_{d, \delta}$. Then, the $m_{k}-$Laplace transform of $w$ (following the terminology introduced
in \cite{lama}) in the direction $d$ is defined by
$$ \mathcal{L}_{m_{k}}^{d}(w(\tau))(T) = k \int_{L_{\gamma}}
w(u) e^{ - ( u/T )^{k} } \frac{d u}{u},$$
along a half-line $L_{\gamma} = \mathbb{R}_{+}e^{i\gamma} \subset S_{d,\delta} \cup \{ 0 \}$, where $\gamma$ depends on
$T$ and is chosen in such a way that $\cos(k(\gamma - \mathrm{arg}(T))) \geq \delta_{1} > 0$, for some fixed $\delta_{1}$.
The function $\mathcal{L}^{d}_{m_k}(w(\tau))(T)$ is well defined, holomorphic and bounded in any sector
\begin{equation}\label{e690}
S_{d,\theta,R^{1/k}} = \{ T \in \mathbb{C}^{\ast} : |T| < R^{1/k} \ \ , \ \ |d - \mathrm{arg}(T) | < \theta/2 \},
\end{equation}
where $\frac{\pi}{k} < \theta < \frac{\pi}{k} + 2\delta$ and
$0 < R < \delta_{1}/K$.
\end{defin}
\noindent We now state some elementary properties concerning the $m_{k}-$Laplace transform.\medskip
\begin{prop}\label{prop7}

\noindent 1) If $w$ is an entire function on $\mathbb{C}$, with growth estimates (\ref{w_exp_growth_k}) and
with Taylor expansion $w(\tau) = \sum_{n \geq 0} b_{n}\tau^{n}$, then $\mathcal{L}_{m_{k}}^{d}(w(\tau))(T)$ defines
an analytic function near 0 with convergent Taylor expansion $\sum_{n \geq 0} \Gamma(\frac{n}{k})b_{n}T^{n}$.\\
\noindent 2) Let $w$ a function defined as above in Definition 4. Let $q>1$ and $\delta>0$ be real numbers. Then the next
identity
\begin{equation}
 \mathcal{L}^{d}_{m_k}(w(\tau))(q^{\delta}T) = \mathcal{L}^{d}_{m_k}(w(q^{\delta}\tau))(T)
\end{equation}
holds for all $T \in S_{d,\theta,R_{1}^{1/k}}$ for $0 < R_{1} < \delta_{1}/(Kq^{k\delta})$. Moreover, one has
\begin{multline}
\mathcal{L}^{d}_{m_k}(k \tau^{k}w(\tau))(T) = T^{k+1}\partial_{T}(\mathcal{L}^{d}_{m_k}(w(\tau))(T)),\\
T^{m}\mathcal{L}_{m_k}^{d}(w(\tau))(T) = \mathcal{L}_{m_k}^{d}\left( \frac{\tau^{k}}{\Gamma(\frac{m}{k})}
\int_{0}^{\tau^k}(\tau^{k}-s)^{\frac{m}{k}-1} w(s^{1/k}) \frac{ds}{s} \right)(T)
\end{multline}
for every nonnegative integer $m$, and for all $T \in S_{d,\theta,R^{1/k}}$ with $0 < R < \delta_{1}/K$.
\end{prop}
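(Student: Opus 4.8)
The plan is to establish the three assertions separately, each by a direct manipulation of the integral $\mathcal{L}_{m_{k}}^{d}(w(\tau))(T)=k\int_{L_{\gamma}}w(u)e^{-(u/T)^{k}}\frac{du}{u}$.

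For 1), I would insert the Taylor expansion $w(u)=\sum_{n\ge 0}b_{n}u^{n}$ into the integral and integrate term by term. On the finite segment $L_{\gamma}\cap\{|u|\le\rho\}$ the series converges uniformly, so the interchange of sum and integral is legitimate there; the tail over $\{|u|>\rho\}\cap L_\gamma$ is controlled by combining $|w(u)|\le Ce^{K|u|^{k}}$ with the bound $|e^{-(u/T)^{k}}|\le e^{-\delta_{1}|u|^{k}/|T|^{k}}$ coming from $\cos(k(\gamma-\mathrm{arg}(T)))\ge\delta_{1}>0$, which decays and dominates the growth of $w$ as soon as $|T|^{k}<\delta_{1}/K$, i.e. $|T|<R^{1/k}$; letting $\rho\to\infty$ then justifies the termwise computation. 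Each term is evaluated through the change of variable $x=(u/T)^{k}$, for which $\frac{du}{u}=\frac{1}{k}\frac{dx}{x}$, turning $k\int_{L_{\gamma}}b_{n}u^{n}e^{-(u/T)^{k}}\frac{du}{u}$ into $b_{n}T^{n}\int_{0}^{+\infty}x^{n/k-1}e^{-x}\,dx=\Gamma(\tfrac{n}{k})b_{n}T^{n}$. Convergence of $\sum_{n\ge 0}\Gamma(\tfrac{n}{k})b_{n}T^{n}$ near $0$ then follows from the Cauchy estimates $|b_{n}|\le C\inf_{\rho>0}e^{K\rho^{k}}\rho^{-n}$ combined with Stirling's formula applied to $\Gamma(\tfrac{n}{k})$.

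For the dilation identity in 2), I would substitute $u=q^{\delta}v$ in $\mathcal{L}_{m_{k}}^{d}(w(\tau))(q^{\delta}T)=k\int_{L_{\gamma}}w(u)e^{-(u/(q^{\delta}T))^{k}}\frac{du}{u}$: since $L_{\gamma}$ is a half-line issued from the origin it is invariant under scaling by the positive real $q^{\delta}$, $\frac{du}{u}=\frac{dv}{v}$, and $(u/(q^{\delta}T))^{k}=(v/T)^{k}$, which yields precisely $\mathcal{L}_{m_{k}}^{d}(w(q^{\delta}\tau))(T)$; the admissible radius shrinks to $R_{1}<\delta_{1}/(Kq^{k\delta})$ because $|w(q^{\delta}\tau)|\le Ce^{Kq^{k\delta}|\tau|^{k}}$. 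For the first operator identity I would differentiate under the integral sign, which is justified by the local uniform bounds provided by (\ref{w_exp_growth_k}), using $\partial_{T}e^{-(u/T)^{k}}=ku^{k}T^{-k-1}e^{-(u/T)^{k}}$, so that $T^{k+1}\partial_{T}\mathcal{L}_{m_{k}}^{d}(w(\tau))(T)=k\int_{L_{\gamma}}(ku^{k}w(u))e^{-(u/T)^{k}}\frac{du}{u}=\mathcal{L}_{m_{k}}^{d}(k\tau^{k}w(\tau))(T)$.

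For the fractional-integral identity in 2), set $W_{m}(\tau)=\frac{\tau^{k}}{\Gamma(m/k)}\int_{0}^{\tau^{k}}(\tau^{k}-s)^{m/k-1}w(s^{1/k})\frac{ds}{s}$. A first step, routine but necessary, is to verify that $W_{m}$ is holomorphic on a sector containing $L_{\gamma}$ and still obeys an exponential bound of order $k$, so that $\mathcal{L}_{m_{k}}^{d}(W_{m})$ is well defined. Then the substitution $x=u^{k}$ in the outer integral rewrites $\mathcal{L}_{m_{k}}^{d}(W_{m})(T)$ as the ordinary directional Laplace transform $\frac{1}{\Gamma(m/k)}\int\bigl(\int_{0}^{x}(x-s)^{m/k-1}g(s)\,ds\bigr)e^{-x/T^{k}}\,dx$ with $g(s)=w(s^{1/k})/s$, which is a Laplace convolution of $x\mapsto x^{m/k-1}$ with $g$; since the directional Laplace transform of $x\mapsto x^{m/k-1}$ equals $\Gamma(m/k)\,T^{m}$ (dual variable $T^{-k}$), the convolution theorem produces $\mathcal{L}_{m_{k}}^{d}(W_{m})(T)=T^{m}\times(\text{Laplace transform of }g)$, and undoing the substitution $s=v^{k}$ identifies that last factor with $\mathcal{L}_{m_{k}}^{d}(w)(T)$. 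Alternatively, for $w$ entire one reduces to monomials $w(\tau)=\tau^{n}$, for which the inner integral is a Beta integral yielding $\frac{\Gamma(n/k)}{\Gamma((n+m)/k)}\tau^{m+n}$, and part 1) then closes the computation. I expect the main obstacle to be exactly this last identity: one must justify the Fubini interchange and the validity of the complex Laplace convolution formula along the unbounded ray $L_{\gamma}$, which rests on the preliminary exponential-growth estimate for $W_{m}$ and on careful bookkeeping of the decay furnished by $\cos(k(\gamma-\mathrm{arg}(T)))\ge\delta_{1}>0$.
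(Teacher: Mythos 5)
Your argument is correct, but note that the paper itself does not prove this proposition: it is stated without proof and the reader is referred to \cite{lama} for the details, and the computations given there are exactly the ones you reproduce (term-by-term integration of the Taylor series with the change of variable $x=(u/T)^{k}$, scale invariance of the ray $L_{\gamma}$ for the dilation identity, differentiation under the integral sign, and the Laplace-convolution/Beta-function identity for the last formula). The only points worth flagging are minor: the $n=0$ term forces $b_{0}=0$, which is guaranteed by the requirement $w(0)=0$ in Definition~\ref{defi4} (and the last identity is read for $m\geq 1$), and the integrability of $w(s^{1/k})/s$ at $s=0$ together with the Fubini step along the ray — which you correctly single out as the only place requiring the exponential bound on $W_{m}$ — is exactly what the growth estimate (\ref{w_exp_growth_k}) and the condition $\cos(k(\gamma-\mathrm{arg}(T)))\geq\delta_{1}$ provide.
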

In the following proposition, we recall some properties of the inverse Fourier transform.
\begin{prop}\label{prop8}
Let $f \in E_{(\beta,\mu)}$ with $\beta > 0$, $\mu > 1$. The inverse Fourier transform of $f$ is defined by
$$ \mathcal{F}^{-1}(f)(x) = \frac{1}{ (2\pi)^{1/2} } \int_{-\infty}^{+\infty} f(m) \exp( ixm ) dm $$
for all $x \in \mathbb{R}$. The function $\mathcal{F}^{-1}(f)$ extends to an analytic function on the strip
\begin{equation}
H_{\beta} = \{ z \in \mathbb{C} / |\mathrm{Im}(z)| < \beta \}. \label{strip_H_beta}
\end{equation}
Let $\phi(m) = im f(m) \in E_{(\beta,\mu - 1)}$. Then, we have
\begin{equation}
\partial_{z} \mathcal{F}^{-1}(f)(z) = \mathcal{F}^{-1}(\phi)(z) \label{dz_fourier}
\end{equation}
for all $z \in H_{\beta}$.\\
Let $g \in E_{(\beta,\mu)}$ and let $\psi(m) = \frac{1}{(2\pi)^{1/2}} f \ast g(m)$ be the convolution product of $f$ and $g$, defined for all $m \in \mathbb{R}$. Then, $\psi \in E_{(\beta,\mu)}$. Moreover, we have
\begin{equation}
\mathcal{F}^{-1}(f)(z)\mathcal{F}^{-1}(g)(z) = \mathcal{F}^{-1}(\psi)(z) \label{prod_fourier}
\end{equation}
for all $z \in H_{\beta}$.
\end{prop}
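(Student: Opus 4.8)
The plan is to establish the four assertions by elementary majorizations of absolutely convergent integrals, together with the convolution inequality already used in the proof of Proposition~\ref{prop2}.

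\emph{Definition and holomorphy on $H_{\beta}$.} First I would fix $0 < \beta' < \beta$ and observe that for $z = x + iy$ with $|y| \le \beta'$ one has $|f(m)\exp(izm)| = |f(m)|e^{-ym} \le ||f||_{(\beta,\mu)}(1+|m|)^{-\mu}e^{-(\beta-\beta')|m|}$, which is an $m$-integrable function independent of $z$. Hence $\mathcal{F}^{-1}(f)$ is well defined on $\{|\mathrm{Im}(z)| \le \beta'\}$, and since $z \mapsto f(m)\exp(izm)$ is entire for each $m$, the classical theorem on holomorphy of parameter-dependent integrals (equivalently, Morera's theorem combined with Fubini) shows that $\mathcal{F}^{-1}(f)$ is holomorphic on $\{|\mathrm{Im}(z)| < \beta'\}$; as $\beta' < \beta$ was arbitrary, this gives holomorphy on all of $H_{\beta}$. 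The case $z \in \mathbb{R}$ of the very first claim is the special instance $y = 0$.

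\emph{The derivative identity.} Since $\phi(m) = imf(m)$ lies in $E_{(\beta,\mu-1)}$, applying the same bound to $\phi$ shows that $\partial_{z}\bigl(f(m)\exp(izm)\bigr) = \phi(m)\exp(izm)$ is dominated, uniformly on each closed substrip $\{|\mathrm{Im}(z)| \le \beta'\}$, by an $m$-integrable function. Differentiation under the integral sign then yields $\partial_{z}\mathcal{F}^{-1}(f)(z) = \mathcal{F}^{-1}(\phi)(z)$ for all $z \in H_{\beta}$.

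\emph{The convolution identity.} I would first check that $\psi \in E_{(\beta,\mu)}$. Using $e^{\beta|m|} \le e^{\beta|m-m_{1}|}e^{\beta|m_{1}|}$ and the defining inequalities for $f$ and $g$, for every $m \in \mathbb{R}$
\[
(1+|m|)^{\mu}e^{\beta|m|}|\psi(m)| \le \frac{||f||_{(\beta,\mu)}\,||g||_{(\beta,\mu)}}{(2\pi)^{1/2}}(1+|m|)^{\mu}\int_{-\infty}^{+\infty}\frac{dm_{1}}{(1+|m-m_{1}|)^{\mu}(1+|m_{1}|)^{\mu}}.
\]
As $\mu > 1$, the inequality $\int_{-\infty}^{+\infty}(1+|m-m_{1}|)^{-\mu}(1+|m_{1}|)^{-\mu}\,dm_{1} \le C_{\mu}(1+|m|)^{-\mu}$ (Lemma~4 in \cite{cota2}, see also Lemma~2.2 in \cite{ma2}) yields $||\psi||_{(\beta,\mu)} \le (2\pi)^{-1/2}C_{\mu}||f||_{(\beta,\mu)}||g||_{(\beta,\mu)} < \infty$, so $\psi \in E_{(\beta,\mu)}$ and in particular $\mathcal{F}^{-1}(\psi)$ is holomorphic on $H_{\beta}$ by the first part. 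For the product formula, I would prove it first for real $x$: the double integral $\int_{\mathbb{R}}\int_{\mathbb{R}}f(m-m_{1})g(m_{1})e^{ixm}\,dm_{1}\,dm$ is absolutely convergent, so the change of variables $(m,m_{1})\mapsto(m-m_{1},m_{1})$ together with Fubini's theorem gives $\mathcal{F}^{-1}(f)(x)\mathcal{F}^{-1}(g)(x) = \mathcal{F}^{-1}(\psi)(x)$. Both sides are holomorphic on $H_{\beta}$ — the left-hand side as a product of two functions holomorphic there — so the identity theorem propagates the equality to all of $H_{\beta}$. The only point that needs real care is the uniform-in-$m$ estimate for $||\psi||_{(\beta,\mu)}$, which hinges on the convolution inequality valid precisely because $\mu > 1$, together with the routine but necessary verification that on every closed substrip of $H_{\beta}$ the differentiation under the integral and the Fubini interchange are legitimate, i.e.\ that there is an $m$-integrable dominating function independent of $z$.
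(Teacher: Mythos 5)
Your proof is correct and complete. The paper itself does not prove Proposition~\ref{prop8} — it simply refers to Propositions 5 and 7 of \cite{lama} — and your argument (dominated convergence/Morera for holomorphy on closed substrips, differentiation under the integral sign using $\phi \in E_{(\beta,\mu-1)}$, the convolution inequality for $\mu>1$ from Lemma 4 of \cite{cota2}, and Fubini on the real line followed by the identity theorem) is exactly the standard reasoning underlying the cited results.
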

For a detailed proof of these results, we refer to Proposition 5 and Proposition 7 in~\cite{lama}.

\section{$q-$Gevrey asymptotics of order $1/k$ and a $q-$analog of the Ramis-Sibuya theorem}\label{seccion6}

The results contained in this section can be found in detail in~\cite{ma}, so we omit the proofs and refer to the reader to that previous work for the complete details.

\begin{defin} Let $V$ be a bounded open sector centered at 0 in $\mathbb{C}$. Let $(\mathbb{F},||.||_{\mathbb{F}})$ be a complex Banach space.
Let $q>1$ be a real number and $k \geq 1$ be an integer.
We say that a holomorphic function $f : V \rightarrow \mathbb{F}$ admits a formal power series $\hat{f}(\epsilon) =
\sum_{n \geq 0} f_{n} \epsilon^{n} \in \mathbb{F}[[\epsilon]]$ as its $q-$Gevrey asymptotic expansion of order $1/k$ if for every
open subsector $U \subset V$ with $\bar{U} \subset V$, there exist constants $A,C>0$ such that
$$ ||f(\epsilon) - \sum_{n=0}^{N}f_{n} \epsilon^{n}||_{\mathbb{F}} \leq CA^{N+1}q^{\frac{(N+1)N}{2k}}|\epsilon|^{N+1}$$
for all $\epsilon \in U$, and every $N \geq 0$.
\end{defin}

\begin{lemma}\label{lema2} A holomorphic function $f:V \rightarrow \mathbb{F}$ admits the null formal series $\hat{0} \in \mathbb{F}[[\epsilon]]$
as its $q-$Gevrey asymptotic expansion of order $1/k$ if and only for any open subsector $U \subset V$ with
$\bar{U} \subset V$ there exist two constants $K \in \mathbb{R}$, $M>0$
with
\begin{equation}
||f(\epsilon)||_{\mathbb{F}} \leq M\exp( -\frac{k}{2\log(q)} \log^{2}|\epsilon| )|\epsilon|^{K}
\end{equation}
for all $\epsilon \in U$.
\end{lemma}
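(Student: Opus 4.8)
The plan is to deduce this equivalence from the standard ``Legendre duality'' between a scale of estimates indexed by $N$ and a single flatness bound, in the $q$-analogue where the Gaussian weight $\exp(-\frac{k}{2\log q}\log^{2}|\epsilon|)$ plays the role that ordinary exponential decay plays in the classical Gevrey setting. Fix an open subsector $U\subset V$ with $\bar U\subset V$. Since $V$ is bounded, $|\epsilon|$ stays in a bounded interval on $U$, and because the constants in both the $q$-Gevrey condition and the flatness condition are allowed to depend on $U$, it suffices to argue on one such $U$ at a time, the only genuine content lying in the regime $|\epsilon|\to 0$.

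For the direct implication, assume $\hat 0$ is the $q$-Gevrey asymptotic expansion of order $1/k$ of $f$ on $U$, i.e. there are $A,C>0$ with $\|f(\epsilon)\|_{\mathbb F}=\|f(\epsilon)-\sum_{n=0}^{N}0\cdot\epsilon^{n}\|_{\mathbb F}\le C A^{N+1}q^{(N+1)N/(2k)}|\epsilon|^{N+1}$ for all $N\ge 0$ and $\epsilon\in U$. Taking logarithms and setting $x=N+1\ge 1$, $L=\log|\epsilon|$, the exponent on the right becomes the quadratic $\varphi(x)=\frac{\log q}{2k}x^{2}+\bigl(\log A-\frac{\log q}{2k}+L\bigr)x+\log C$ with positive leading coefficient. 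Its real minimiser $x^{\ast}(L)$ tends to $+\infty$ as $L\to-\infty$, so for $|\epsilon|$ below a threshold one may choose an integer $N\ge 0$ with $|N+1-x^{\ast}(L)|\le 1$, and then $\varphi(N+1)\le\varphi(x^{\ast}(L))+\frac{\log q}{2k}$ by convexity of $\varphi$. Substituting $\varphi(x^{\ast}(L))=\log C-\frac{k}{2\log q}\bigl(L+\log A-\frac{\log q}{2k}\bigr)^{2}$ and expanding the square peels off the term $-\frac{k}{2\log q}L^{2}=-\frac{k}{2\log q}\log^{2}|\epsilon|$, a term linear in $L$ that becomes a factor $|\epsilon|^{K}$ with $K=\frac12-\frac{k\log A}{\log q}$, plus a constant; this is exactly $\|f(\epsilon)\|_{\mathbb F}\le M_{0}\exp(-\frac{k}{2\log q}\log^{2}|\epsilon|)|\epsilon|^{K}$ for small $|\epsilon|$. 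On the remaining part of $U$, where $|\epsilon|$ is bounded away from $0$, both sides are bounded above and below by positive constants, so enlarging $M_{0}$ to some $M$ yields the estimate on all of $U$.

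For the converse, assume $\|f(\epsilon)\|_{\mathbb F}\le M\exp(-\frac{k}{2\log q}\log^{2}|\epsilon|)|\epsilon|^{K}$ on $U$; the task is to produce $A,C>0$ with $\|f(\epsilon)\|_{\mathbb F}\le C A^{N+1}q^{(N+1)N/(2k)}|\epsilon|^{N+1}$ for every $N\ge0$. Dividing by $|\epsilon|^{N+1}$, taking logarithms, and writing $L=\log|\epsilon|$, it suffices to verify, uniformly in $L$, that $-\frac{k}{2\log q}L^{2}+(K-N-1)L\le\log(C/M)+(N+1)\log A+\frac{(N+1)N}{2k}\log q$. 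The left-hand side is a downward parabola in $L$, so replacing it by its maximum over all $L\in\mathbb R$, namely $\frac{\log q}{2k}(N+1-K)^{2}$, only strengthens the requirement. Using $\frac{(N+1)N}{2k}\log q=\frac{\log q}{2k}\bigl((N+1)^{2}-(N+1)\bigr)$, the quadratic terms in $N+1$ cancel, leaving an inequality that is affine in $y:=N+1\ge1$, with slope $\log A+\frac{(2K-1)\log q}{2k}$; choosing $\log A$ large makes this slope positive, and then choosing $\log C$ large makes the inequality hold at $y=1$, hence for all $y\ge1$. This is the required $q$-Gevrey flatness.

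Each direction is thus a completion of the square once the underlying quadratic structure in $\log|\epsilon|$ is made explicit; the only delicate points are the rounding of the optimal index to an admissible integer $N\ge0$ in the first implication and the separate, but routine, treatment of the part of $U$ on which $|\epsilon|$ is bounded away from the origin. I expect this bookkeeping of boundary effects, rather than any substantial idea, to be the main nuisance.
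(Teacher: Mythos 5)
Your proof is correct: both directions are the standard Legendre-duality/completion-of-the-square argument (optimizing the index $N$ near the minimizer of the quadratic in $\log|\epsilon|$ for the direct implication, and maximizing the downward parabola in $\log|\epsilon|$ for the converse), and the bookkeeping — the rounding $|N+1-x^{\ast}|\le 1$ with the convexity penalty $\frac{\log q}{2k}$, the exponent $K=\frac12-\frac{k\log A}{\log q}$, and the cancellation of quadratic terms leaving an affine condition in $N+1$ — all checks out, with the region where $|\epsilon|$ is bounded away from $0$ handled correctly by adjusting constants. The paper itself omits the proof of this lemma, referring to the cited work \cite{ma}, where the argument is essentially the same optimization-over-$N$ computation, so your approach matches the intended one.
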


The classical Ramis-Sibuya theorem is a cohomological criterion that ensures $k-$summability of a given formal series (see
\cite{ba}, Section 4.4 Proposition 2 or \cite{hssi}, Lemma XI-2-6). We recall a version of this theorem in the
framework of $q-$Gevrey asymptotics of order $1/k$.

\begin{defin}\label{goodcovering} Let $\varsigma \geq 2$ be an integer. For all $0 \leq p \leq \varsigma-1$, we consider an open sector
$\mathcal{E}_{p}$ centered at $0$, with radius $\epsilon_{0}$ such that
$\mathcal{E}_{p} \cap \mathcal{E}_{p+1} \neq \emptyset$, for all
$0 \leq p \leq \varsigma-1$ (with the convention that $\mathcal{E}_{\varsigma} = \mathcal{E}_{0})$. Moreover, we assume that
the intersection of any three different elements in $(\mathcal{E}_{p})_{0 \leq p \leq \varsigma}$ is empty and that
$\cup_{p=0}^{\varsigma - 1} \mathcal{E}_{p} = \mathcal{U} \setminus \{ 0 \}$,
where $\mathcal{U}$ is some neighborhood of 0 in $\mathbb{C}$. Such a set of sectors
$\{ \mathcal{E}_{p} \}_{0 \leq p \leq \varsigma - 1}$ is called a good covering in $\mathbb{C}^{\ast}$.
\end{defin}

\noindent {\bf Theorem (q-RS)} {\it Let $(\mathbb{F},||.||_{\mathbb{F}})$ be a Banach space and
$\{ \mathcal{E}_{p} \}_{0 \leq p \leq \varsigma-1}$ be a good covering in $\mathbb{C}^{\ast}$. For all $0 \leq p \leq \varsigma-1$,
let $G_{p}(\epsilon)$ be a holomorphic function from $\mathcal{E}_{p}$ into $\mathbb{F}$ and let the cocycle
$\Delta_{p}(\epsilon) = G_{p+1}(\epsilon) - G_{p}(\epsilon)$ be a holomorphic function from $Z_{p} =
\mathcal{E}_{p+1} \cap \mathcal{E}_{p}$ into $\mathbb{F}$ (with the convention that $\mathcal{E}_{\varsigma}=\mathcal{E}_{0}$
and $G_{\varsigma}=G_{0}$). We make the following further assumptions:\medskip

\noindent {\bf 1)} The functions $G_{p}(\epsilon)$ are bounded as $\epsilon$ tends to 0 on $\mathcal{E}_{p}$, for all
$0 \leq p \leq \varsigma-1$.\medskip

\noindent {\bf 2)} The function $\Delta_{p}(\epsilon)$ is $q-$exponentially flat of order $k$ on $Z_{p}$ for all
$0 \leq p \leq \varsigma-1$,
meaning that there exist two constants $C_{p}^{1} \in \mathbb{R}$ and $C_{p}^{2}>0$ with
\begin{equation}
|| \Delta_{p}(\epsilon) ||_{\mathbb{F}} \leq C_{p}^{2}|\epsilon|^{C_{p}^{1}}\exp( -\frac{k}{2\log(q)}\log^{2}|\epsilon| )
\label{norm_Delta_q_exp_flat}
\end{equation}
for all $\epsilon \in Z_{p}$, all $0 \leq p \leq \varsigma-1$.\medskip

Then, there exists a formal power series $\hat{G}(\epsilon) \in \mathbb{F}[[\epsilon]]$ which is the common
$q-$Gevrey asymptotic expansion of order $1/k$ of the function $G_{p}(\epsilon)$ on $\mathcal{E}_{p}$, for all
$0 \leq p \leq \varsigma-1$.}

\section{Analytic solutions of a differential and $q-$difference
initial value problem with complex parameter and analytic forcing term. Study of their parametric asymptotic expansions}\label{seccion7}

Let $k \geq 1$ and $D \geq 2$ be integers. Let $\delta,k_{1}>0$, $q>1$ and $\alpha$ be real numbers and for $1 \leq l \leq D$, let
$d_{l},\Delta_{l} \geq 0$ be nonnegative integers. We make the assumption that
\begin{equation}
kd_{D} - 1 \geq k_{1}\delta + kd_{l} \ \ , \ \ \Delta_{l} \geq kd_{l} \label{cond_coeff_k_dl_Deltal}
\end{equation}
for all $1 \leq l \leq D-1$. Let $Q(X),R_{l}(X) \in \mathbb{C}[X]$, $1 \leq l \leq D$, be polynomials such that
\begin{equation}
\mathrm{deg}(Q) \geq \mathrm{deg}(R_D) \geq \mathrm{deg}(R_{l}) \ \ , \ \ Q(im) \neq 0 \ \ , \ \
R_{D}(im) \neq 0, \label{cond_polynom_Q_Rl}
\end{equation}
for all $m \in \mathbb{R}$, all $1 \leq l\le  D-1$. We make the additional assumption that there exists a bounded sectorial
annulus
$$ A_{Q,R_{D}} = \{ z \in \mathbb{C} / r_{Q,R_{D}} \leq |z| \leq r_{Q,R_{D}}^{1} \ \ , \ \
|\mathrm{arg}(z) - d_{Q,R_{D}}| \leq \eta_{Q,R_{D}} \} $$
with direction $d_{Q,R_{D}} \in \mathbb{R}$, aperture $2\eta_{Q,R_{D}}>0$ for some radius
$r_{Q,R_{D}},r_{Q,R_{D}}^{1}>0$ such that (\ref{quotient_Q_RD_in_Ann}) holds for all $m \in \mathbb{R}$. Moreover, we assume that the roots $q_l(m)$ of the polynomial $P_m(\tau)=Q(im)-R_D(im)(k\tau^k)^{d_D}$ satisfy conditions (\ref{root_cond_1_q_lm}) and (\ref{root_cond_2_q_lm}).

\begin{defin}\label{defi7} Let $\{ \mathcal{E}_{p} \}_{0 \leq p \leq \varsigma - 1}$ be a good covering in $\mathbb{C}^{\ast}$. Let
$\mathcal{T}$ be an open bounded sector centered at 0 with radius $r_{\mathcal{T}}$ and consider a family of open sectors
$$ U_{\mathfrak{d}_{p},\theta,\epsilon_{0}r_{\mathcal{T}}} =
\{ T \in \mathbb{C}^{\ast} / |T| < \epsilon_{0}r_{\mathcal{T}} \ \ , \ \ |\mathfrak{d}_{p} - \mathrm{arg}(T)| < \theta/2 \} $$
with aperture $\pi/k<\theta < \pi/k + \mathrm{Ap}(U_{\mathfrak{d}_{p}})$, where $\mathfrak{d}_{p} \in \mathbb{R}$ and
$U_{\mathfrak{d}_{p}}$ are unbounded sectors for all $0 \leq p \leq \varsigma-1$,with bisecting direction $\mathfrak{d}_p$ for all $0\le p\le \varsigma-1$.

For all $0 \leq p \leq \varsigma - 1$, for all $t \in \mathcal{T}$, all $\epsilon \in \mathcal{E}_{p}$, we assume that $\epsilon t \in U_{\mathfrak{d}_{p},\theta,\epsilon_{0}r_{\mathcal{T}}}$. Under the previous conditions, we say that the family
$\{ (U_{\mathfrak{d}_{p},\theta,\epsilon_{0}r_{\mathcal{T}}})_{0 \leq p \leq \varsigma-1},\mathcal{T} \}$
is associated to the good covering $\{ \mathcal{E}_{p} \}_{0 \leq p \leq \varsigma - 1}$.
\end{defin}

Let $\mu_0,\mu_1>0$ with $\mu_1>\mu_0$ such that (\ref{constraint_mu1_mu0}) holds.

We consider a family
$\{ (U_{\mathfrak{d}_{p},\theta,\epsilon_{0}r_{\mathcal{T}}})_{0 \leq p \leq \varsigma-1},\mathcal{T} \}$ associated to a given good covering $\{ \mathcal{E}_{p} \}_{0 \leq p \leq \varsigma - 1}$.
We study the following linear initial value problem
\begin{multline}
Q(\partial_{z})u(t,z,\epsilon) = R_{D}(\partial_{z})\epsilon^{kd_{D}}(t^{k+1}\partial_{t})^{d_{D}}u(t,z,\epsilon)\\
+ \sum_{l=1}^{D-1} \epsilon^{\Delta_{l}}(t^{k+1}\partial_{t})^{d_{l}}c_{l}(z,\epsilon)R_{l}(\partial_{z})u(q^{\delta}t,z,\epsilon)
+ f(t,z,\epsilon) \label{main_q_diff_diff_first}
\end{multline}
for initial data $u(0,z,\epsilon) \equiv 0$. The coefficients $c_{l}(z,\epsilon)$ are constructed as follows.
For every $1\le l\le D-1$, a map $m \mapsto C_{l}(m,\epsilon)$, that belongs to the space
$E_{(\beta,\mu)}$ for some $\beta>0$ and $\mu > \mathrm{deg}(R_{D}) + 1$ and depends holomorphically on
$\epsilon$ on $D(0,\epsilon_{0})$. We denote
\begin{equation}
c_{l}(z,\epsilon) = \mathcal{F}^{-1}(m \mapsto C_{l}(m,\epsilon))(z), \label{defin_coeff_c_l_z}
\end{equation}
which represents a bounded holomorphic function on the
strip $H_{\beta'} = \{ z \in \mathbb{C} / |\mathrm{Im}(z)|< \beta' \}$ for any real number $0 < \beta' < \beta$ (
where $\mathcal{F}^{-1}$ denotes the inverse Fourier transform defined in Proposition~\ref{prop8}). The forcing term
$f(t,z,\epsilon)$ is built as follows. Let $F(t,m,\epsilon)$ be the $m_{k}-$Laplace transform
\begin{equation}
F(t,m,\epsilon) = k \int_{L_{\gamma}} \psi_{k}(u,m,\epsilon) \exp(-(\frac{u}{\epsilon t})^{k}) \frac{du}{u} 
\end{equation}
along any halfine $L_{\gamma} = \mathbb{R}_{+}e^{\sqrt{-1}\gamma}$, where
$\psi_{k}(\tau,m,\epsilon)$ is a function that belongs to the Banach space
$\mathrm{Exp}_{(k_{1},\beta,\mu,\alpha);\mathbb{C}}^{q}$ (see Definition~\ref{defi2}) for all $\epsilon \in D(0,\epsilon_{0})$, which is holomorphic with respect to $\epsilon$ in $D(0,\epsilon_0)$.
By construction,
$(t,\epsilon) \mapsto F(t,m,\epsilon)$ defines a bounded holomorphic function w.r.t $(t,\epsilon)$ near
the origin in $\mathbb{C}^{2}$, with values in $E_{(\beta,\mu)}$. In view of the results stated in~\cite{ra}, the fact that $\psi_{k}(\tau,m,\epsilon)$ defines an entire function with values in $E_{(\beta,\mu)}$ with
$q-$exponential growth of order $k_{1}$, entails that its Taylor expansion at $\tau=0$, denoted by
$\sum_{n \geq 1} \psi_{k,n}(m,\epsilon) \tau^{n}$, is such that 
$$  ||\psi_{k,n}(m,\epsilon)||_{(\beta,\mu)} \leq Cq^{-\frac{n(n+1)}{2k_{1}}}A^{n} $$
for well chosen constants $C,A>0$, for all $n \geq 1$, all $\epsilon \in D(0,\epsilon_{0})$. As a result, we can write
$$ F(t,m,\epsilon) = \sum_{n \geq 1} \Gamma(\frac{n}{k}) \psi_{k,n}(m,\epsilon) (\epsilon t)^{n} $$
which defines a $E_{(\beta,\mu)}-$valued analytic function w.r.t $t$ and $\epsilon$ near the origin (actually an
entire function w.r.t $t$). We define the forcing term as the Fourier inverse transform
\begin{equation}
f(t,z,\epsilon) = \mathcal{F}^{-1}(m \mapsto F(t,m,\epsilon))(z) = \frac{k}{(2\pi)^{1/2}}\int_{-\infty}^{+\infty}
\int_{L_{\gamma}} \psi_{k}(u,m,\epsilon) \exp(-(\frac{u}{\epsilon t})^{k}) e^{izm} \frac{du}{u} dm \label{defin_forcing_f}
\end{equation}
which defines a bounded holomorphic function w.r.t $(t,\epsilon)$ near the origin in $\mathbb{C}^{2}$ and w.r.t
$z$ on any strip $H_{\beta'}$ for $0 < \beta' < \beta$.

The next Theorem states the first main result in this section. We construct a family of actual holomorphic solutions of
(\ref{main_q_diff_diff_first}) with vanishing initial data, defined on the sectors $\mathcal{E}_{p}$ w.r.t $\epsilon$.
Moreover, we give bound estimates for the difference between any two neighboring solutions on $\mathcal{E}_{p} \cap \mathcal{E}_{p+1}$ and show that it is at most of $q-$exponential decay of some order
$\kappa>0$.

\begin{theo}\label{teo1} Under the hypotheses stated in this section, there exist $r_{Q,R_{D}}>0$ and $\gamma_{l}>0$ such that if
\begin{equation}
\sup_{\epsilon \in D(0,\epsilon_{0})} ||C_{l}(m,\epsilon)||_{(\beta,\mu)} \leq \gamma_{l} \label{norm_C_l_small}
\end{equation}
for all $1 \leq l \leq D-1$, then for every $0 \leq p \leq \varsigma-1$  there exists a solution
$u_{p}(t,z,\epsilon)$ of (\ref{main_q_diff_diff_first}) with $u_{p}(0,z,\epsilon) \equiv 0$, which defines
a bounded holomorphic function on the domain $\mathcal{T} \times H_{\beta'} \times \mathcal{E}_{p}$ for any given
$0 < \beta' < \beta$ with small enough $r_{\mathcal{T}}$. Moreover, for any given
$0 < \kappa < k \min_{l=1}^{D-1} d_{l}$, one can choose $M_{p}>0$ and $K_{p} \in \mathbb{R}$
(not dependending on $\epsilon$) such that
\begin{equation}
\sup_{t \in \mathcal{T}, z \in H_{\beta'}} |u_{p+1}(t,z,\epsilon) - u_{p}(t,z,\epsilon)| \leq
M_{p}\exp(-\frac{\kappa}{2\log(q)}\log^{2}(|\epsilon|)) |\epsilon|^{K_{p}}
\label{q_exp_small_difference_u_p}
\end{equation}
for all $\epsilon \in \mathcal{E}_{p} \cap \mathcal{E}_{p+1}$, for all $0 \leq p \leq \varsigma-1$ (where
by convention $u_{\varsigma}:=u_{0}$).
\end{theo}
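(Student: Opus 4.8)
The idea is to conjugate (\ref{main_q_diff_diff_first}) into the $q$-difference/convolution problem (\ref{q_diff_conv_init_v_prob}) by the Fourier and $m_k$-Laplace transforms, and then transport the output of Proposition~\ref{prop6} back to the $(t,z)$-variables. Concretely, I would look for the solution in the form $u_p(t,z,\epsilon)=\mathcal{F}^{-1}(m\mapsto \mathcal{L}^{\mathfrak{d}_p}_{m_k}(\tau\mapsto w_k^p(\tau,m,\epsilon))(\epsilon t))(z)$, i.e. exactly the double integral displayed after (\ref{e2p}). Using the operational rules of Proposition~\ref{prop7} — namely $\mathcal{L}^{d}_{m_k}(k\tau^{k}w)(T)=T^{k+1}\partial_T\mathcal{L}^{d}_{m_k}(w)(T)$ and $\mathcal{L}^{d}_{m_k}(w(q^{\delta}\tau))(T)=\mathcal{L}^{d}_{m_k}(w)(q^{\delta}T)$, combined with $T=\epsilon t$, whence $T^{k+1}\partial_T=\epsilon^{k}t^{k+1}\partial_t$ — together with Proposition~\ref{prop8} (so that $\partial_z$ becomes multiplication by $im$ and the product by $c_l(z,\epsilon)=\mathcal{F}^{-1}(C_l(\cdot,\epsilon))(z)$ becomes the convolution in $m$ against $C_l$), one checks that $u$ solves (\ref{main_q_diff_diff_first}) with $u(0,z,\epsilon)\equiv0$ if and only if, after division by $P_m(\tau)=Q(im)-R_D(im)(k\tau^{k})^{d_D}$, the function $w_k^p$ solves (\ref{q_diff_conv_init_v_prob}) with $w_k^p(0,m,\epsilon)\equiv0$; the forcing $\psi_k/P_m$ is matched by the construction of $f$.

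Granting this reduction, Proposition~\ref{prop6} provides, for $r_{Q,R_D}$ and the $\gamma_l$ small, a solution $w_k^p\in\mathrm{Exp}^{q}_{(k_1,\beta,\mu,\alpha);\Theta^p_{\hat Q\mu_1}}$ together with the splitting $w_k^p=\sum_{j\ge0}w_{k,j}^p$ and the norm bounds on $\Theta^p_{\hat Q\mu_1}$, on the discs $\bar D(0,\check Q\mu_{0,j})$, and on the square frames $P\Omega_h^p$. Since $L_{\gamma_p}=\mathbb{R}_+e^{\sqrt{-1}\gamma_p}\subset\bar U_{\mathfrak{d}_p}\subset\Theta^p_{\hat Q\mu_1}$ and, by Definition~\ref{defi2}, $w_k^p$ grows at most like $|\tau|(|\tau|+\tau_0)^{\frac{k_1}{2}\log(|\tau|+\tau_0)/\log q+\alpha}$ in $\tau$ — hence subexponentially with respect to $|\tau|^{k}$ — the inner $m_k$-Laplace integral converges and is holomorphic for $\epsilon t$ in a sector of the type (\ref{e690}), while the factor $e^{-\beta|m|}$ in the norm, weighed against $e^{|\mathrm{Im}(z)|\,|m|}$, makes the $m$-integral converge for $z\in H_{\beta'}$, $0<\beta'<\beta$. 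Differentiating under the integral sign then shows that $u_p$ is holomorphic and bounded on $\mathcal{T}\times H_{\beta'}\times\mathcal{E}_p$ for $r_{\mathcal{T}}$ small enough, and $u_p(0,z,\epsilon)\equiv0$ because $w_k^p(0,m,\epsilon)\equiv0$.

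For (\ref{q_exp_small_difference_u_p}) I would first show that $w_k^p$ and $w_k^{p+1}$ coincide on the connected intersection of their domains: both solve (\ref{q_diff_conv_init_v_prob}) with vanishing data, both are analytic on the common small disc $\bar D(0,\check Q\mu_{0,j})$ for $j$ large where the equation forces the same Taylor coefficients, and the equality propagates by analytic continuation through $\bar U_{\mathfrak{d}_{p+1}}\cup\bar A_{\hat Q\mu_1}$. Denoting by $W$ this common function — holomorphic on $\bar U_{\mathfrak{d}_p}\cup\bar U_{\mathfrak{d}_{p+1}}\cup\bar A_{\hat Q\mu_1}\cup(\bigcup_{h\ge0}P\Omega_h^p)$, a domain that encircles but avoids all the moving singularities $q_l(m)$ of $P_m$, which sit in the annular holes $\mu_{0,h}<|\tau|<\mu_{1,h}$ — I would write $u_{p+1}-u_p=\frac{k}{(2\pi)^{1/2}}\int_{\mathbb{R}}(\int_{L_{\gamma_{p+1}}}-\int_{L_{\gamma_p}})W(u,m,\epsilon)e^{-(u/\epsilon t)^{k}}e^{izm}\frac{du}{u}dm$ and deform the difference of the two half-lines inside the domain of $W$. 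The arc at infinity drops out (subexponential growth against $e^{-\delta_1|u/\epsilon t|^{k}}$), and the path difference splits as a sum over $h\ge0$ of integrals along paths contained in $P\Omega_h^p$, of scale $|u|\sim q^{-\delta h}\mu_1$. On each such path $|W(u,m,\epsilon)|$ is dominated by $(\sum_{j\ge h}\|w_{k,j}^p\|_{(k_1,\beta,\mu,\alpha);P\Omega_h^p})|u|e^{-\beta|m|}(1+|m|)^{-\mu}(|u|+\tau_0)^{\frac{k_1}{2}\log(|u|+\tau_0)/\log q+\alpha}$, where by (\ref{norm_w_kj_on_square_frames}) the bracket is at most $CK_4^h(q^{-\delta})^{\frac{k}{2}\min_l d_l\,h(h+1)}\max(\cdots)$, while the Laplace kernel contributes a factor $e^{-\delta_1 q^{-k\delta h}\mu_1^{k}/|\epsilon t|^{k}}$. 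The resulting series in $h$ is then estimated by the usual splitting at $h\approx\log(1/|\epsilon|)/(\delta\log q)$: small scales are suppressed by the kernel, large scales by the super-geometric factor $(q^{-\delta})^{\frac{k}{2}\min_l d_l\,h(h+1)}$, and the dominating term yields, after integrating the remaining $e^{(\beta'-\beta)|m|}$ in $m$, a bound of the form $M_p\exp(-\frac{\kappa}{2\log q}\log^{2}|\epsilon|)|\epsilon|^{K_p}$ for every $\kappa<k\min_{l=1}^{D-1}d_l$, which is exactly the $q$-Gevrey flatness stated in (\ref{q_exp_small_difference_u_p}) (cf. the characterisation in Lemma~\ref{lema2}).

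I expect the last paragraph to be the main obstacle: setting up the contour deformation consistently with the staircase geometry of Subsection~\ref{seccion42}, so that every piece of the deformed path genuinely lies in some $P\Omega_h^p$ and stays away from the $q_l(m)$, and then carrying out the splitting of the $h$-series so that the competition between the Laplace kernel and the factor $(q^{-\delta})^{\frac{k}{2}\min_l d_l\,h(h+1)}$ produces precisely the announced order. The reduction to (\ref{q_diff_conv_init_v_prob}) and the holomorphy and boundedness of $u_p$ are, by contrast, comparatively routine applications of Propositions~\ref{prop6},~\ref{prop7} and~\ref{prop8}.
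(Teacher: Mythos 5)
Your reduction of (\ref{main_q_diff_diff_first}) to (\ref{q_diff_conv_init_v_prob}) via Propositions~\ref{prop7} and~\ref{prop8}, and the construction and boundedness of $u_p$ from Proposition~\ref{prop6}, follow the paper's route and are fine; also, the identification of $w_k^p$ and $w_k^{p+1}$ on overlapping domains needs no continuation argument, since both are the same sum $\sum_{j\ge 0}\mathcal{H}^{(j)}_{\epsilon}(\psi_k/P_m)$ restricted to different domains. The genuine gap is in your last step, where you deform the difference of the two half-lines for the \emph{summed} function $W=\sum_{j\ge0}w^p_{k,j}$ and claim that on $P\Omega_h^p$ the modulus of $W$ is dominated by $\bigl(\sum_{j\ge h}\|w^p_{k,j}\|_{(k_1,\beta,\mu,\alpha);P\Omega_h^p}\bigr)$ times the weight. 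This is false: $W$ also contains the terms $0\le j<h$, and in particular the term $j=0$, namely $\psi_k/P_m$, whose norm on $P\Omega_h^p$ does not decay in $h$ at all (the low iterates only give geometric decay $\sim q^{-\delta k\min_l d_l\, jh}$, dominated by $j=0,1$). Consequently the super-geometric factor $(1/q^{\delta})^{\frac{k\min_l d_l}{2}h(h+1)}$, which is the engine of the whole estimate, is simply not available for $W$ at scale $h$; the honest bound is only $|W(u,m,\epsilon)|\lesssim |u|$ (times the $m$- and weight factors) near the origin. Feeding that into your $h$-series, the splitting at $h\approx\log(1/|\epsilon|)/(\delta\log q)$ yields $\sum_h Cq^{-\delta h}\exp(-\delta_1 q^{-\delta k h}\mu^k/|\epsilon t|^k)=O(|\epsilon|^{K})$, i.e.\ only power-like smallness, which is far from the $q$-exponential flatness $\exp(-\frac{\kappa}{2\log q}\log^2|\epsilon|)|\epsilon|^{K_p}$ claimed in (\ref{q_exp_small_difference_u_p}).

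The way to rescue the argument is precisely what the paper does: do not sum in $j$ before deforming. Write $u_{p+1}-u_p=\sum_{j\ge0}(u_{p+1,j}-u_{p,j})$ and deform the contour of each $j$-th difference separately, with the inner arc taken at the $j$-dependent radius $\check Q\mu_{0,j}$ and the staircase segments $L_{\gamma_a,\check Q\mu_{0,h},\hat Q\mu_{1,h}}$ only for $0\le h\le j$ (see (\ref{path_deform_decomp_difference_u_p_j})). Then the smallness of the individual iterate $w^p_{k,j}$ on the shrinking disc, estimate (\ref{norm_w_kj_on_discs}), and on the frames, estimate (\ref{norm_w_kj_on_square_frames}), supplies the factor $(1/q^{\delta})^{\frac{k\min_l d_l}{2}(j+1)j}$ (resp.\ in $h$), while the Laplace kernel supplies $\exp(-\delta_1(\check Q\mu_{0,j})^k/|\epsilon t|^k)$; the resulting double series in $j,h$ is of the form treated in Lemma~\ref{lema3} (Euler--Maclaurin plus the $q$-Gevrey Watson lemma), and it is this lemma, not a crude threshold splitting of a single $h$-series with uniformly bounded coefficients, that produces the bound $M_p\exp(-\frac{\kappa}{2\log q}\log^{2}|\epsilon|)|\epsilon|^{K_p}$. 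As written, your proposal would prove only a Gevrey-type ($|\epsilon|^{K}$) estimate for the cocycle, not the statement of the theorem.
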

\begin{proof} For the proof, we adopt the notation stated in Section~\ref{seccion42}. Under the assumptions (\ref{cond_coeff_k_dl_Deltal}), (\ref{cond_polynom_Q_Rl}) and bearing in mind that the family of sectors
$\{ (U_{\mathfrak{d}_{p},\theta,\epsilon_{0}r_{\mathcal{T}}})_{0 \leq p \leq \varsigma-1},\mathcal{T} \}$
is associated to the good covering $\{ \mathcal{E}_{p} \}_{0 \leq p \leq \varsigma - 1}$, we get from Proposition~\ref{prop6} the
existence of $r_{Q,R_{D}}>0$ and $\gamma_{l}>0$ such that if (\ref{norm_C_l_small}) holds for all $1 \leq l \leq D-1$,
the equation (\ref{q_diff_conv_init_v_prob}), with vanishing initial data, admits a solution $w_{k}^{p}(\tau,m,\epsilon)$
belonging to the Banach space $\mathrm{Exp}_{(k_{1},\beta,\mu,\alpha);\Theta_{\hat{Q}\mu_{1}}^p}^{q}$ for each
$0 \leq p \leq \varsigma-1$. We apply Fourier and $m_{k}-$Laplace transform of
$w_{k}^{p}(\tau,m,\epsilon)$ in direction $\mathfrak{d}_{p}$ in order to define
\begin{equation}
u_{p}(t,z,\epsilon) = \frac{k}{(2\pi)^{1/2}} \int_{-\infty}^{+\infty} \int_{L_{\gamma_{p}}}
w_{k}^{p}(u,m,\epsilon) \exp( -(\frac{u}{\epsilon t})^{k} ) e^{izm} \frac{du}{u} dm,
\end{equation}
where $L_{\gamma_{p}} = \mathbb{R}_{+}e^{\sqrt{-1}\gamma_{p}} \subset U_{\mathfrak{d}_{p}} \cup \{ 0 \}$
which may depend on $(\epsilon,t)$. By construction, $u_{p}(t,z,\epsilon)$ represents a bounded holomorphic function on
$\mathcal{T} \times H_{\beta'} \times \mathcal{E}_{p}$ provided that $r_{\mathcal{T}}>0$ is small enough. Moreover,
$u_{p}(0,z,\epsilon) \equiv 0$. Using the properties of Fourier inverse transform described in Proposition~\ref{prop8} and $m_{k}-$Laplace transform from Proposition~\ref{prop7}, we deduce that $u_{p}(t,z,\epsilon)$ solves the initial value
problem (\ref{main_q_diff_diff_first}) on $\mathcal{T} \times H_{\beta'} \times \mathcal{E}_{p}$.

We now proceed to the proof of the estimates (\ref{q_exp_small_difference_u_p}). We use the decomposition
(\ref{decomp_w_kp}) in order to write the function $u_{p}(t,z,\epsilon)$ as a sum
\begin{equation}
u_{p}(t,z,\epsilon) = \sum_{j \geq 0} u_{p,j}(t,z,\epsilon)
\end{equation}
where 
$$ u_{p,j}(t,z,\epsilon) = \frac{k}{(2\pi)^{1/2}} \int_{-\infty}^{+\infty} \int_{L_{\gamma_{p}}}
w_{k,j}^{p}(u,m,\epsilon) \exp( -(\frac{u}{\epsilon t})^{k} ) e^{izm} \frac{du}{u} dm. $$
Let $j \geq 0$ and $0 \leq p \leq \varsigma-1$. By construction of the domains $R\Omega_{j}^{p}$ and
$\Theta_{\hat{Q}\mu_{1}}^{p}$ together with the use of a path deformation argument, which is available due to the properties of $w_{k,j}^{p}$ described in Proposition~\ref{prop6}, we can write each difference
$u_{p+1,j}(t,z,\epsilon) - u_{p,j}(t,z,\epsilon)$ as a sum of integrals along halflines and arcs of circles:
\begin{multline}
 u_{p+1,j}(t,z,\epsilon) - u_{p,j}(t,z,\epsilon) \\=
 \frac{k}{(2\pi)^{1/2}} \int_{-\infty}^{+\infty} \int_{C_{\gamma_{p},\gamma_{p+1},\check{Q}\mu_{0,j}}}
 w_{k,j}^{p}(u,m,\epsilon) \exp( -(\frac{u}{\epsilon t})^{k} ) e^{izm} \frac{du}{u} dm\\
 + \left(\sum_{h=0}^{j} \frac{k}{(2\pi)^{1/2}} \int_{-\infty}^{+\infty}
 \int_{L_{\gamma_{p+1},\check{Q}\mu_{0,h},\hat{Q}\mu_{1,h}}}
 w_{k,j}^{p+1}(u,m,\epsilon) \exp( -(\frac{u}{\epsilon t})^{k} ) e^{izm} \frac{du}{u} dm \right)\\
 + \frac{k}{(2\pi)^{1/2}} \int_{-\infty}^{+\infty}
 \int_{L_{\gamma_{p+1},\hat{Q}\mu_{1},\infty}}
 w_{k,j}^{p+1}(u,m,\epsilon) \exp( -(\frac{u}{\epsilon t})^{k} ) e^{izm} \frac{du}{u} dm\\
 - \left( \sum_{h=0}^{j} \frac{k}{(2\pi)^{1/2}} \int_{-\infty}^{+\infty}
 \int_{L_{\gamma_{p},\check{Q}\mu_{0,h},\hat{Q}\mu_{1,h}}}
 w_{k,j}^{p}(u,m,\epsilon) \exp( -(\frac{u}{\epsilon t})^{k} ) e^{izm} \frac{du}{u} dm \right)\\
 - \frac{k}{(2\pi)^{1/2}} \int_{-\infty}^{+\infty}
 \int_{L_{\gamma_{p},\hat{Q}\mu_{1},\infty}}
 w_{k,j}^{p}(u,m,\epsilon) \exp( -(\frac{u}{\epsilon t})^{k} ) e^{izm} \frac{du}{u} dm
 \label{path_deform_decomp_difference_u_p_j}
\end{multline}
where $C_{\gamma_{p},\gamma_{p+1},\check{Q}\mu_{0,j}}$ is an arc of circle with radius
$\check{Q}\mu_{0,j}$ connecting $\check{Q}\mu_{0,j}e^{\sqrt{-1}\gamma_{p}}$ and
$\check{Q}\mu_{0,j}e^{\sqrt{-1}\gamma_{p+1}}$ with a well chosen orientation and the other paths are segments
$$ L_{\gamma_{a},\check{Q}\mu_{0,h},\hat{Q}\mu_{1,h}} = [\check{Q}\mu_{0,h},\hat{Q}\mu_{1,h}]e^{\sqrt{-1}\gamma_{a}}
\ \ , \ \ L_{\gamma_{a},\hat{Q}\mu_{1},\infty} = [\hat{Q}\mu_{1},\infty)e^{\sqrt{-1}\gamma_{a}} $$
for $a=p,p+1$. Observe that all the paths $L_{\gamma_{a},\check{Q}\mu_{0,h},\hat{Q}\mu_{1,h}}$ are concatenated, due to (\ref{constraint_mu1_mu0}).

\begin{figure}[h]
	\centering
		\includegraphics[width=.5\textwidth]{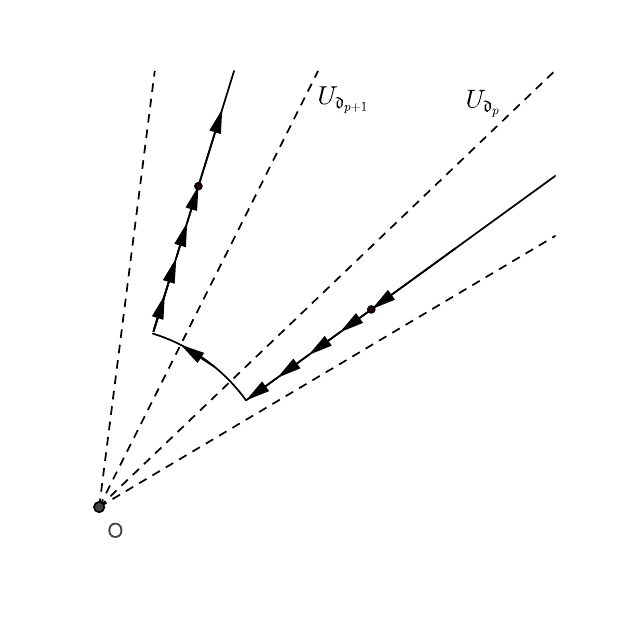}
	\caption{Splitting of the integration path in the proof of Theorem~\ref{teo1}}
	\label{figure3}
\end{figure}

We provide estimates for the first integral
$$
I_{1} = \left| \frac{k}{(2\pi)^{1/2}} \int_{-\infty}^{+\infty} \int_{C_{\gamma_{p},\gamma_{p+1},\check{Q}\mu_{0,j}}}
 w_{k,j}^{p}(u,m,\epsilon) \exp( -(\frac{u}{\epsilon t})^{k} ) e^{izm} \frac{du}{u} dm \right|.
$$
The arc of circle $C_{\gamma_{p},\gamma_{p+1},\check{Q}\mu_{0,j}}$ is chosen in such a way that there exists
$\delta_{1}>0$ with $\cos(k(\theta - \mathrm{arg}(\epsilon t))) \geq \delta_{1}$ for all
$\theta \in [\gamma_{p},\gamma_{p+1}]$, all $t \in \mathcal{T}$, all
$\epsilon \in \mathcal{E}_{p} \cap \mathcal{E}_{p+1}$. From the estimates (\ref{norm_w_kj_on_discs}), we deduce the
existence of a constant $K_{2}>0$ and a constant $M_{1}>0$ (depending on $q,\alpha,\delta, R_l,\mu,\epsilon_0,\Delta_l, r_{Q,R_D},\gamma_l,\check{Q},\mu_0,\tau_0,k,k_1,d_l,\beta,\beta'$) such that
\begin{multline}
I_{1} \leq  \frac{k}{(2\pi)^{1/2}} \int_{-\infty}^{+\infty} \left|
\int_{\gamma_{p}}^{\gamma_{p+1}} |w_{k,j}^{p}(\check{Q}\mu_{0,j}e^{i\theta},m,\epsilon)| \right.\\
\left. \times \exp( -\frac{\cos( k( \theta - \mathrm{arg}(\epsilon t)) )}{|\epsilon t|^{k}}(\check{Q}\mu_{0,j})^{k} )
e^{-m\mathrm{Im}(z)} d\theta \right| dm\\
\leq \frac{k}{(2\pi)^{1/2}}
||\frac{\psi_{k}(\tau,m,\epsilon)}{P_{m}(\tau)}||_{(k_{1},\beta,\mu,\alpha);\bar{D}(0,\check{Q}\mu_{0})}
\int_{-\infty}^{+\infty} e^{-(\beta - \beta')|m|} dm |\gamma_{p+1} - \gamma_{p}|\\
\times |\check{Q}\mu_{0,j}| \exp( \frac{k_{1}}{2}
\frac{\log^{2}(|\check{Q}\mu_{0,j}|+\tau_{0})}{\log(q)} + \alpha\log(|\check{Q}\mu_{0,j}|+\tau_{0}) )\\
\times K_{2}^{j}(\frac{1}{q^{\delta}})^{\frac{k \min_{l=1}^{D-1}d_{l}}{2}(j+1)j}
\exp( - \frac{\delta_{1}}{|\epsilon t|^{k}} (\check{Q}\mu_{0,j})^{k} )\\
\leq M_{1} K_{2}^{j}(\frac{1}{q^{\delta}})^{\frac{k \min_{l=1}^{D-1}d_{l}}{2}(j+1)j}
\exp( - \frac{\delta_{1}(\check{Q}\mu_{0})^{k}}{(r_{\mathcal{T}})^{k}} (\frac{1}{q^{\delta k}})^{j}
\frac{1}{|\epsilon|^{k}} ) \label{I_1_exp_small_order_k_on circle} 
\end{multline}
for all $t \in \mathcal{T}$, all $z \in H_{\beta'}$ and $\epsilon \in \mathcal{E}_{p} \cap \mathcal{E}_{p+1}$.
For $a=p,p+1$ and $h=0,1,\ldots,j$, we give bounds for the integrals
$$ I_{2} = \left| \frac{k}{(2\pi)^{1/2}} \int_{-\infty}^{+\infty}
 \int_{L_{\gamma_{a},\check{Q}\mu_{0,h},\hat{Q}\mu_{1,h}}}
 w_{k,j}^{a}(u,m,\epsilon) \exp( -(\frac{u}{\epsilon t})^{k} ) e^{izm} \frac{du}{u} dm \right|.
$$
We choose the direction $\gamma_{a}$ in a suitable way in order that
$\cos( k(\gamma_{a} - \mathrm{arg}(\epsilon t))) \geq \delta_{1}$ for all $t \in \mathcal{T}$, all
$\epsilon \in \mathcal{E}_{p} \cap \mathcal{E}_{p+1}$. We make use of
(\ref{norm_w_kj_on_square_frames}) which yields constants $0 < K_{3} < 1$ and $C_{4},K_{4}>0$ and a constant
$M_{2}>0$ (depending on $q,\alpha,\delta, R_l,\mu,\epsilon_0,\Delta_l, r_{Q,R_D},\gamma_l,\check{Q},\hat{Q},\mu_0,\mu_1,\tau_0,k,k_1,d_l,\beta,\beta',\psi_k$) with
\begin{multline}
I_{2} \leq \frac{k}{(2\pi)^{1/2}} \int_{-\infty}^{+\infty}
\int_{\check{Q}\mu_{0,h}}^{\hat{Q}\mu_{1,h}}
|w_{k,j}^{a}(re^{\sqrt{-1}\gamma_{a}},m,\epsilon)|\\
\times 
\exp( -\frac{\cos(k(\gamma_{a} - \mathrm{arg}(\epsilon t)))}{|\epsilon t|^{k}} r^{k} )
e^{-m \mathrm{Im}(z)} \frac{dr}{r} dm\\
\leq \frac{C_{4}k}{(2\pi)^{1/2}}\int_{-\infty}^{+\infty} e^{-(\beta - \beta')|m|} dm
\max( ||\frac{\psi_{k}(\tau,m,\epsilon)}{P_{m}(\tau)} ||_{(k_{1},\beta,\mu,\alpha);\bar{A}_{\hat{Q}\mu_{1}}},
||\frac{\psi_{k}(\tau,m,\epsilon)}{P_{m}(\tau)} ||_{(k_{1},\beta,\mu,\alpha);P\Omega_{0}^{p}} )\\
\times (\hat{Q}\mu_{1} - \check{Q}\mu_{0})(\frac{1}{q^{\delta}})^{h}
\sup_{r \in [0,\hat{Q}\mu_{1}]} \exp( \frac{k_{1}}{2}
\frac{\log^{2}(r+\tau_{0})}{\log(q)} + \alpha\log(r+\tau_{0}) )\\
\times
K_{3}^{j}K_{4}^{h}(\frac{1}{q^{\delta}})^{\frac{k \min_{l=1}^{D-1}d_{l}}{2}(h+1)h}
\exp( -\frac{\delta_{1}}{|\epsilon t|^{k}}(\check{Q}\mu_{0,h})^{k} )\\
\leq M_{2} K_{3}^{j}K_{4}^{h}(\frac{1}{q^{\delta}})^{\frac{k \min_{l=1}^{D-1}d_{l}}{2}(h+1)h}
\exp( - \frac{\delta_{1}(\check{Q}\mu_{0})^{k}}{(r_{\mathcal{T}})^{k}} (\frac{1}{q^{\delta k}})^{h}
\frac{1}{|\epsilon|^{k}} ) \label{I_2_exp_small_order_k_on_segment_h}
\end{multline}
for all $t \in \mathcal{T}$, all $z \in H_{\beta'}$ and $\epsilon \in \mathcal{E}_{p} \cap \mathcal{E}_{p+1}$.
Again for $a=p,p+1$, we deal with the third kind of integral
$$ I_{3} = \left| \frac{k}{(2\pi)^{1/2}} \int_{-\infty}^{+\infty}
 \int_{L_{\gamma_{a},\hat{Q}\mu_{1},\infty}}
 w_{k,j}^{a}(u,m,\epsilon) \exp( -(\frac{u}{\epsilon t})^{k} ) e^{izm} \frac{du}{u} dm \right|.
$$
We set the direction $\gamma_{p+1}$ provided the adecquate sense in order that
$\cos( k(\gamma_{a} - \mathrm{arg}(\epsilon t))) \geq \delta_{1}$ for all $t \in \mathcal{T}$, all
$\epsilon \in \mathcal{E}_{p} \cap \mathcal{E}_{p+1}$. Bearing in mind (\ref{norm_w_kj_on_triangle}), we get
a constant $0<K_{1}<1$ and $M_{3}>0$ (depending on $q,\alpha,\delta, R_l,\mu,\mu_1,\hat{Q},,\tau_0,k_1,k,d_l,\epsilon_0,\Delta_l, r_{Q,R_D},\gamma_l,\beta,\beta',\psi_k$) with
\begin{multline}
I_{3} \leq  \frac{k}{(2\pi)^{1/2}} \int_{-\infty}^{+\infty}
\int_{\hat{Q}\mu_{1}}^{+\infty}
|w_{k,j}^{a}(re^{\sqrt{-1}\gamma_{a}},m,\epsilon)|\\
\times 
\exp( -\frac{\cos(k(\gamma_{a} - \mathrm{arg}(\epsilon t)))}{|\epsilon t|^{k}} r^{k} )
e^{-m \mathrm{Im}(z)} \frac{dr}{r} dm\\
\leq \frac{k}{(2\pi)^{1/2}}
||\frac{\psi_{k}(\tau,m,\epsilon)}{P_{m}(\tau)}||_{(k_{1},\beta,\mu,\alpha);\Theta_{\hat{Q}\mu_{1}}^{p}}
\int_{-\infty}^{+\infty} e^{-(\beta - \beta')|m|} dm\\
\times
\int_{\hat{Q}\mu_{1}}^{+\infty} \exp( \frac{k_{1}}{2}
\frac{\log^{2}(r+\tau_{0})}{\log(q)} + \alpha\log(r+\tau_{0}) )
\exp( -\frac{\delta_{1}}{2(\epsilon_{0}r_{\mathcal{T}})}r^{k} ) dr K_{1}^{j}
\exp( -\frac{\delta_{1}}{2|\epsilon t|^{k}}(\hat{Q}\mu_{1})^{k} )\\
\leq M_{3}K_{1}^{j} \exp( - \frac{\delta_{1}(\hat{Q}\mu_{1})^{k}}{2(r_{\mathcal{T}})^{k}} \frac{1}{|\epsilon|^{k}} )
\label{I_3_exp_small_order_k_on_halfline}
\end{multline}
for all $t \in \mathcal{T}$, all $z \in H_{\beta'}$ and $\epsilon \in \mathcal{E}_{p} \cap \mathcal{E}_{p+1}$.

From (\ref{path_deform_decomp_difference_u_p_j}) and gathering the bounds estimates
(\ref{I_1_exp_small_order_k_on circle}), (\ref{I_2_exp_small_order_k_on_segment_h}) and
(\ref{I_3_exp_small_order_k_on_halfline}), we deduce that
\begin{equation}
|u_{p+1}(t,z,\epsilon) - u_{p}(t,z,\epsilon)| \leq
\sum_{j \geq 0} |u_{p+1,j}(t,z,\epsilon) - u_{p,j}(t,z,\epsilon)| \leq J_{1}(\epsilon) + J_{2}(\epsilon) + J_{3}(\epsilon)
\label{maj_difference_u_p_with_J1_J2_J3}
\end{equation}
where
\begin{multline}
J_{1}(\epsilon) = M_{1} \sum_{j \geq 0} K_{2}^{j}(\frac{1}{q^{\delta}})^{\frac{k \min_{l=1}^{D-1}d_{l}}{2}(j+1)j}
\exp( - \frac{\delta_{1}(\check{Q}\mu_{0})^{k}}{(r_{\mathcal{T}})^{k}} (\frac{1}{q^{\delta k}})^{j}
\frac{1}{|\epsilon|^{k}} ),\\
J_{2}(\epsilon) = 2M_{2} \sum_{j \geq 0} \sum_{h=0}^{j}K_{3}^{j}K_{4}^{h}(\frac{1}{q^{\delta}})^{\frac{k \min_{l=1}^{D-1}d_{l}}{2}(h+1)h}
\exp( - \frac{\delta_{1}(\check{Q}\mu_{0})^{k}}{(r_{\mathcal{T}})^{k}} (\frac{1}{q^{\delta k}})^{h}
\frac{1}{|\epsilon|^{k}} ),\\
J_{3}(\epsilon) = 2M_{3} \sum_{j \geq 0}
K_{1}^{j} \exp( - \frac{\delta_{1}(\hat{Q}\mu_{1})^{k}}{2(r_{\mathcal{T}})^{k}} \frac{1}{|\epsilon|^{k}} )
\end{multline}
for all $t \in \mathcal{T}$, all $z \in H_{\beta'}$ and $\epsilon \in \mathcal{E}_{p} \cap \mathcal{E}_{p+1}$.

In order to provide upper estimates for the functions $J_{l}(\epsilon)$ as $\epsilon$ tends to 0, we will need the
following lemma. This lemma is a modified version of the one given by the authors in~\cite{lama2}.
\begin{lemma}\label{lema3} Let $\Delta_{1},\Delta_{2},\Delta_{3}>0$, be real numbers and $0 < \Delta_{4} < 1$. Then, for
any
$$0 < \kappa < 2\Delta_{2}\log^{2}(q)/\log^{2}(\Delta_{4}),$$
there exist a real number $0 < \upsilon < \Delta_{3}$
and two constants $D_{1} \in \mathbb{R}$ and $D_{2}>0$ such that the next estimates
\begin{equation}
\sum_{j \geq 0} \Delta_{1}^{j} q^{-\Delta_{2}j^{2}} \exp( -\Delta_{3} \Delta_{4}^{j} / \epsilon ) \leq
D_{2}\exp( -\frac{\kappa}{2\log(q)} \log^{2}|\epsilon| )|\epsilon|^{D_{1}} \label{q_exp_small_Dirichlet_sum}
\end{equation}
hold for all $\epsilon \in (0,\upsilon]$.
\end{lemma}
\begin{proof} We first recall the next version of Watson's lemma in the context of $q-$Gevrey asymptotics.\medskip

\noindent {\bf Watson's Lemma} {\it Let $b>0$ and $f:[0,b] \mapsto \mathbb{C}$ be a continuous
function having the formal expansion $\sum_{n \geq 0}a_{n}t^{n} \in \mathbb{C}[[t]]$ as its $q-$Gevrey asymptotic expansion of
order $1/\kappa$ at 0, meaning there exist $C,M>0$ such that
$$\left| f(t) - \sum_{n=0}^{N} a_{n}t^{n} \right| \leq CA^{N}q^{\frac{(N+1)N}{2\kappa}}|t|^{N+1},$$
for every $N \geq 0$ and $t \in [0,\delta]$, for some $0<\delta<b$.

Then, the function
$$ I(x) = \int_{0}^{b}f(s)e^{-\frac{s}{x}}ds $$
admits the formal power series $\sum_{n \geq 0} a_{n} n!x^{n+1} \in \mathbb{C}[[x]]$ as its
$q-$Gevrey asymptotic expansion of order $1/\kappa'$ at 0 for any $\kappa'<\kappa$, meaning that
there exist $\tilde{C},\tilde{A}>0$ such that
$$\left| I(x) - \sum_{n=0}^{N-1} a_{n} n! x^{n+1} \right| \leq
\tilde{C}\tilde{A}^{N+1}q^{\frac{(N+1)N}{2\kappa'}}|x|^{N+1},$$
for every $N \geq 1$ and $ x \in [0,\delta']$ for some $0<\delta'<b$.}\medskip

Let $f: [0,+\infty) \mapsto \mathbb{R}$ be a continuously differentiable function.
From Euler-Maclaurin formula, one has
\begin{equation}\label{Euler_Mac_Laurin}
\sum_{j=0}^{n}f(j) = \frac{1}{2}(f(0)+f(n)) + \int_{0}^{n}f(t)dt +
\int_{0}^{n}B_{1}(t-\left\lfloor t\right\rfloor)f'(t)dt,
\end{equation}
for every $n \in \mathbb{N}$, where $B_{1}$ is the Bernoulli polynomial $B_1(s)=s-\frac{1}{2}$.
Here, $\left\lfloor \cdot\right\rfloor$ stands for the floor function.

Let $\epsilon>0$. If we choose $f$ in (\ref{Euler_Mac_Laurin}) to be
$f(s) = \Delta_{1}^{s} q^{-\Delta_{2}s^{2}} \exp( -\Delta_{3} \Delta_{4}^{s} / \epsilon )$, one obtains
\begin{multline*}
\sum_{j=0}^{n} \Delta_{1}^{j} q^{-\Delta_{2}j^{2}} \exp( -\Delta_{3} \Delta_{4}^{j} / \epsilon )
= \frac{1}{2}(e^{-\frac{\Delta_{3}}{\epsilon}} +
\Delta_{1}^{n} q^{-\Delta_{2}n^{2}} \exp( -\Delta_{3} \Delta_{4}^{n} / \epsilon ) )\\
+ \int_{0}^{n} \Delta_{1}^{t} q^{-\Delta_{2}t^{2}} \exp( -\Delta_{3} \Delta_{4}^{t} / \epsilon ) dt\\
+ \int_{0}^{n} B_{1}(t-\left\lfloor t\right\rfloor)
\Delta_{1}^{t} q^{-\Delta_{2}t^{2}} \exp( -\Delta_{3} \Delta_{4}^{t} / \epsilon )
\left( \log(\Delta_{1}) - 2t\Delta_{2}\log(q) - \frac{\Delta_{3}}{\epsilon}\log(\Delta_{4})\Delta_{4}^{t} \right) dt
\end{multline*}
for all $n \geq 0$. Taking the limit when $n$ tends to infinity in the previous expression
we arrive at an equality concerning a convergent series:
\begin{multline}
\sum_{j \geq 0} \Delta_{1}^{j} q^{-\Delta_{2}j^{2}} \exp( -\Delta_{3} \Delta_{4}^{j} / \epsilon )
= \frac{1}{2}e^{-\frac{\Delta_{3}}{\epsilon}} +
\int_{0}^{+\infty} \Delta_{1}^{t} q^{-\Delta_{2}t^{2}} \exp( -\Delta_{3} \Delta_{4}^{t} / \epsilon ) dt\\
+ \int_{0}^{+\infty} B_{1}(t-\left\lfloor t\right\rfloor)
\Delta_{1}^{t} q^{-\Delta_{2}t^{2}} \exp( -\Delta_{3} \Delta_{4}^{t} / \epsilon )
\left( \log(\Delta_{1}) - 2t\Delta_{2}\log(q) - \frac{\Delta_{3}}{\epsilon}\log(\Delta_{4})\Delta_{4}^{t} \right) dt\\
\leq \frac{1}{2}e^{-\frac{\Delta_{3}}{\epsilon}} + L_{1}(\epsilon) + \frac{1}{2}L_{2}(\epsilon)
\label{q_exp_sum_equal_integrals}
\end{multline}
where
\begin{multline*}
L_{1}(\epsilon) =
\int_{0}^{+\infty} \Delta_{1}^{t} q^{-\Delta_{2}t^{2}} \exp( -\Delta_{3} \Delta_{4}^{t} / \epsilon ) dt,\\
L_{2}(\epsilon) = \int_{0}^{+\infty} \Delta_{1}^{t} q^{-\Delta_{2}t^{2}} \exp( -\Delta_{3} \Delta_{4}^{t} / \epsilon )
\left( \log(\Delta_{1}) - 2t\Delta_{2}\log(q) - \frac{\Delta_{3}}{\epsilon}\log(\Delta_{4})\Delta_{4}^{t} \right) dt
\end{multline*}
We now provide upper bound estimates for the integrals $L_{1}(\epsilon)$ and $L_{2}(\epsilon)$. The change of variable
$s=\Delta_{3}\Delta_{4}^{t}$ in $L_{1}(\epsilon)$ and $L_{2}(\epsilon)$ allows us to write
\begin{multline}
L_{1}(\epsilon) = \Delta_{1,2,3,4,q} \int_{0}^{\Delta_{3}}
s^{\frac{\log(\Delta_{1})}{\log(\Delta_{4})} + 2\log(q)\Delta_{2}\frac{\log(\Delta_{3})}{\log^{2}(\Delta_{4})} - 1}
\exp( -\frac{\Delta_{2}\log(q)}{\log^{2}(\Delta_{4})} \log^{2}(s) ) e^{-\frac{s}{\epsilon}} ds,\\
L_{2}(\epsilon) = \Delta_{1,2,3,4,q} \int_{0}^{\Delta_{3}}
s^{\frac{\log(\Delta_{1})}{\log(\Delta_{4})} + 2\log(q)\Delta_{2}\frac{\log(\Delta_{3})}{\log^{2}(\Delta_{4})} - 1}
\left( \log(\Delta_{1}) - \frac{2\Delta_{2}\log(q)}{\log(\Delta_{4})}\log(\frac{s}{\Delta_{3}}) -
\log(\Delta_{4})\frac{s}{\epsilon} \right)\\
\times \exp( -\frac{\Delta_{2}\log(q)}{\log^{2}(\Delta_{4})} \log^{2}(s) ) e^{-\frac{s}{\epsilon}} ds\\
\leq \tilde{L}_{2}(\epsilon) = \Delta_{1,2,3,4,q} \int_{0}^{\Delta_{3}}
s^{\frac{\log(\Delta_{1})}{\log(\Delta_{4})} + 2\log(q)\Delta_{2}\frac{\log(\Delta_{3})}{\log^{2}(\Delta_{4})} - 1}
\left( \log(\Delta_{1}) -
\log(\Delta_{4})\frac{s}{\epsilon} \right)\\
\times \exp( -\frac{\Delta_{2}\log(q)}{\log^{2}(\Delta_{4})} \log^{2}(s) ) e^{-\frac{s}{\epsilon}} ds
\end{multline}
with
$$ \Delta_{1,2,3,4,q} = -\frac{1}{\log(\Delta_{4})}
(\frac{1}{\Delta_{3}})^{\frac{\log(\Delta_{1})}{\log(\Delta_{4})}}
q^{-\Delta_{2}\frac{\log^{2}(\Delta_{3})}{\log^{2}(\Delta_{4})}}.
$$
Combining Lemma\ref{lema2} and Watson's lemma stated above, for any $0<\kappa<2\Delta_{2}\log^{2}(q)/\log^{2}(\Delta_{4})$
one deduces the existence of four constants
$K_{L_1},K_{\tilde{L}_2} \in \mathbb{R}$ and $M_{L_1},M_{\tilde{L}_2}>0$ and a real number $0 < \upsilon < \Delta_{3}$
such that
\begin{multline}
L_{1}(\epsilon) \leq M_{L_1}\exp(-\frac{\kappa}{2\log(q)}\log^{2}(\epsilon)) |\epsilon|^{K_{L_1}},\\
L_{2}(\epsilon) \leq \tilde{L}_{2}(\epsilon) \leq M_{\tilde{L}_2}\exp(-\frac{\kappa}{2\log(q)}\log^{2}(\epsilon))
|\epsilon|^{K_{\tilde{L}_2}} \label{q_exp_small_L1_tildeL2}
\end{multline}
for all $\epsilon \in (0,\upsilon]$. Besides, since $\lim_{\epsilon \rightarrow 0} \epsilon\log^{2}(\epsilon) = 0$,
we see that $\exp(-\Delta_{3}/\epsilon)$ is also $q-$exponen\-tially flat of order $\kappa$ on $(0,\upsilon]$ for
$\upsilon>0$ small enough. Gathering
(\ref{q_exp_sum_equal_integrals}) and (\ref{q_exp_small_L1_tildeL2}) yields the expected estimates
(\ref{q_exp_small_Dirichlet_sum}).
\end{proof}
In the final part of the proof, we furnish bound estimates for the functions $J_{1}(\epsilon)$,$J_{2}(\epsilon)$ and
$J_{3}(\epsilon)$. Concerning $J_{1}(\epsilon)$, a direct use of Lemma~\ref{lema3}, by taking
$$ \Delta_{1} = K_{2}(1/q^{\delta})^{\frac{k}{2}\min_{l=1}^{D-1} d_{l}} \ \ , \ \
\Delta_{2} = \frac{\delta k}{2}\min_{l=1}^{D-1} d_{l} \ \ , \ \
\Delta_{3}=\frac{\delta_{1}(\check{Q}\mu_{0})^{k}}{r_{\mathcal{T}}^{k}} \ \ , \ \ \Delta_{4} = 1/q^{\delta k}
$$
shows that for any $0 < \kappa_{1} < \frac{k}{\delta} \min_{l=1}^{D-1} d_{l}$, one can choose
two constants $K_{J_1} \in \mathbb{R}$ and $M_{J_1}>0$ such that
\begin{equation}
J_{1}(\epsilon) \leq  M_{J_1}\exp(-\frac{\kappa_{1}}{2\log(q)}\log^{2}(|\epsilon|)) |\epsilon|^{K_{J_1}}
\label{J_1_q_exp_flat}
\end{equation}
for all $\epsilon \in \mathcal{E}_{p} \cap \mathcal{E}_{p+1}$, provided that $\epsilon_{0}>0$ is small enough.
By inverting the order of summation, we can rewrite $J_{2}(\epsilon)$ in the form
\begin{multline}
J_{2}(\epsilon) = 2M_{2} \sum_{h \geq 0} (\sum_{j \geq h} K_{3}^{j}) K_{4}^{h}(\frac{1}{q^{\delta}})^{\frac{k \min_{l=1}^{D-1}d_{l}}{2}(h+1)h}
\exp( - \frac{\delta_{1}(\check{Q}\mu_{0})^{k}}{(r_{\mathcal{T}})^{k}} (\frac{1}{q^{\delta k}})^{h}
\frac{1}{|\epsilon|^{k}} )\label{e1192} \\
=
\frac{2M_{2}}{1-K_{3}} \sum_{h \geq 0} (K_{3}K_{4})^{h}(\frac{1}{q^{\delta}})^{\frac{k \min_{l=1}^{D-1}d_{l}}{2}(h+1)h}
\exp( - \frac{\delta_{1}(\check{Q}\mu_{0})^{k}}{(r_{\mathcal{T}})^{k}} (\frac{1}{q^{\delta k}})^{h}
\frac{1}{|\epsilon|^{k}} ).
\end{multline}
Now, a direct application of Lemma~\ref{lema3} with
$$ \Delta_{1} = K_{3}K_{4}(1/q^{\delta})^{\frac{k}{2}\min_{l=1}^{D-1} d_{l}} \ \ , \ \
\Delta_{2} = \frac{\delta k}{2}\min_{l=1}^{D-1} d_{l} \ \ , \ \
\Delta_{3} = \frac{\delta_{1}(\check{Q}\mu_{0})^{k}}{r_{\mathcal{T}}^{k}} \ \ , \ \ \Delta_{4} = 1/q^{\delta k}
$$
shows that for any $0 < \kappa_{2} < \frac{k}{\delta} \min_{l=1}^{D-1} d_{l}$, one can choose
two constants $K_{J_2} \in \mathbb{R}$ and $M_{J_2}>0$ such that
\begin{equation}
J_{2}(\epsilon) \leq  M_{J_2}\exp(-\frac{\kappa_{2}}{2\log(q)}\log^{2}(|\epsilon|)) |\epsilon|^{K_{J_2}}
\label{J_2_q_exp_flat}
\end{equation}
for all $\epsilon \in \mathcal{E}_{p} \cap \mathcal{E}_{p+1}$, taking for granted that $\epsilon_{0}>0$ is small enough.
Finally, again since $\lim_{\epsilon \rightarrow 0} \epsilon^{k}\log^{2}(\epsilon) = 0$, for any 
$0 < \kappa_{3} < \frac{k}{\delta} \min_{l=1}^{D-1} d_{l}$, one can choose
a constant $M_{J_3}>0$ such that
\begin{equation}
J_{3}(\epsilon) \leq  M_{J_3}\exp(-\frac{\kappa_{3}}{2\log(q)}\log^{2}(|\epsilon|))
\label{J_3_q_exp_flat}
\end{equation}
for all $\epsilon \in \mathcal{E}_{p} \cap \mathcal{E}_{p+1}$, for $\epsilon_{0}$ taken small enough.

Finally, bearing in mind the last estimates (\ref{e1192}), (\ref{J_2_q_exp_flat}) and (\ref{J_2_q_exp_flat}),
we deduce from (\ref{maj_difference_u_p_with_J1_J2_J3}) the inequality (\ref{q_exp_small_difference_u_p}). 
\end{proof}

In the second main result of this section, we construct a formal power series in the complex parameter
$\epsilon$ whose coefficients are bounded holomorphic functions on the product of a bounded sector and a strip
that formally solves our main equation (\ref{main_q_diff_diff_first}) and which grants a common $q-$Gevrey asymptotic
expansion of some order $1/\kappa$ of the actual solution $u_{p}$ of (\ref{main_q_diff_diff_first}) build up in
Theorem 1.

\begin{theo}\label{teo2} Under the hypotheses of Theorem~\ref{teo1}, there exists a formal power series
$$ \hat{u}(t,z,\epsilon) = \sum_{m \geq 0} h_{m}(t,z) \frac{\epsilon^{m}}{m!} $$
solution of (\ref{main_q_diff_diff_first}) with coefficients $h_{m}(t,z)$ that belongs to the Banach space
$\mathbb{F}$ of bounded holomorphic functions
on $\mathcal{T} \times H_{\beta'}$ endowed with the sup norm and which is the common $q-$Gevrey asymptotic
expansion of order $1/\kappa$
of the solutions $u_{p}(t,z,\epsilon)$ of (\ref{main_q_diff_diff_first}) seen as holomorphic functions
from $\mathcal{E}_{p}$ into $\mathbb{F}$, for all $0 \leq p \leq \varsigma-1$, for any given
$0 < \kappa < \frac{k}{\delta} \min_{l=1}^{D-1} d_{l}$. Namely, for all $0 \leq p \leq \varsigma-1$, there exists two constants
$A_{p},C_{p}>0$ such that
\begin{equation}
\sup_{t \in \mathcal{T},z \in H_{\beta'}} |u_{p}(t,z,\epsilon) - \sum_{m=0}^{n} h_{m}(t,z) \frac{\epsilon^m}{m!}|
\leq C_{p}A_{p}^{n+1}q^{\frac{(n+1)n}{2\kappa}}|\epsilon|^{n+1}
\end{equation}
for all $n \geq 0$, all $\epsilon \in \mathcal{E}_{p}$.
\end{theo}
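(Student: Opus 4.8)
The plan is to apply the $q$-analog of the Ramis--Sibuya theorem (Theorem (q-RS), see~\cite{ma}) to the family $u_{p}(t,z,\epsilon)$ constructed in Theorem~\ref{teo1}, regarded as holomorphic maps from $\mathcal{E}_{p}$ into the Banach space $\mathbb{F}$ of bounded holomorphic functions on $\mathcal{T}\times H_{\beta'}$ endowed with the supremum norm. Fix $0<\kappa<\frac{k}{\delta}\min_{l=1}^{D-1}d_{l}$, and set $G_{p}(\epsilon)=u_{p}(\cdot,\cdot,\epsilon)$ together with the cocycle $\Delta_{p}(\epsilon)=G_{p+1}(\epsilon)-G_{p}(\epsilon)$ on $Z_{p}=\mathcal{E}_{p}\cap\mathcal{E}_{p+1}$ (with $G_{\varsigma}=G_{0}$). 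First I would verify the two hypotheses of Theorem (q-RS) with order parameter $\kappa$: boundedness of each $G_{p}$ as $\epsilon\to 0$ holds because $u_{p}$ is bounded and holomorphic on $\mathcal{T}\times H_{\beta'}\times\mathcal{E}_{p}$, while the required $q$-exponential flatness of the cocycle,
$$\|\Delta_{p}(\epsilon)\|_{\mathbb{F}}=\sup_{t\in\mathcal{T},\,z\in H_{\beta'}}|u_{p+1}(t,z,\epsilon)-u_{p}(t,z,\epsilon)|\leq M_{p}\exp\bigl(-\tfrac{\kappa}{2\log(q)}\log^{2}|\epsilon|\bigr)|\epsilon|^{K_{p}},$$
is precisely estimate (\ref{q_exp_small_difference_u_p}).

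Theorem (q-RS) then yields a formal series $\hat{G}(\epsilon)=\sum_{m\geq 0}h_{m}\tfrac{\epsilon^{m}}{m!}\in\mathbb{F}[[\epsilon]]$, with $h_{m}\in\mathbb{F}$, which is the common $q$-Gevrey asymptotic expansion of order $1/\kappa$ of every $G_{p}$ on $\mathcal{E}_{p}$; unwinding the definition of this notion gives, for each $p$, constants $A_{p},C_{p}>0$ for which the bounds stated in the theorem hold. Put $\hat{u}(t,z,\epsilon)=\hat{G}(\epsilon)=\sum_{m\geq0}h_{m}(t,z)\tfrac{\epsilon^{m}}{m!}$. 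The remaining task is to show that $\hat{u}$ formally solves (\ref{main_q_diff_diff_first}). The mechanism is that each operation appearing in (\ref{main_q_diff_diff_first}) is compatible with ordinary (hence $q$-Gevrey) asymptotic expansions in $\epsilon$: the partial derivatives $\partial_{t},\partial_{z}$ by Cauchy estimates (after passing to an arbitrarily slightly smaller sector and strip), the dilation $\sigma_{q^{\delta}}$ acting on $t$ (which, once $r_{\mathcal{T}}$ is taken small as in the construction of $u_{p}$, maps $\mathcal{T}$ into itself), multiplication by $t^{k+1}$ and by the powers $\epsilon^{kd_{D}},\epsilon^{\Delta_{l}}$, and multiplication by the coefficients $c_{l}(z,\epsilon)$, which are bounded holomorphic on $H_{\beta'}$ and holomorphic in $\epsilon$ near $0$ so that they carry convergent, in particular asymptotic, expansions; here one also uses that the Cauchy product of two asymptotic series represents the product of the functions. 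Consequently the identically vanishing function $P_{1}(t,z,\epsilon,\sigma_{q^{\delta}},\partial_{t},\partial_{z})u_{p}-f$ admits on $\mathcal{E}_{p}$ the formal series $P_{1}(t,z,\epsilon,\sigma_{q^{\delta}},\partial_{t},\partial_{z})\hat{u}-\hat{f}$ as its asymptotic expansion, where $\hat{f}$ denotes the Taylor expansion of $f$ at $\epsilon=0$ (which converges to $f$, since $f$ is holomorphic in $\epsilon$ near the origin). By the uniqueness of the coefficients of an asymptotic expansion this series must be the null series, i.e. $P_{1}(t,z,\epsilon,\sigma_{q^{\delta}},\partial_{t},\partial_{z})\hat{u}=f$ in $\mathbb{F}[[\epsilon]]$, which is exactly the formal version of (\ref{main_q_diff_diff_first}).

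The hard part will be this last compatibility step: one must ensure that $\sigma_{q^{\delta}}$ and the blocks $(t^{k+1}\partial_{t})^{d_{l}}$ are well defined as operators on $\mathbb{F}$ (arranged by choosing $\mathcal{T}$ with small radius and unchanged aperture, as already done for $u_{p}$) and, above all, that the termwise action of the full operator $P_{1}$ on $\hat{u}$ genuinely reproduces the asymptotic expansion of $P_{1}u_{p}$, notably for the convolution-type multiplication by $c_{l}(z,\epsilon)$; the rest follows directly from Theorem~\ref{teo1} and Theorem (q-RS).
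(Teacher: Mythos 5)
Your overall strategy coincides with the paper's for the main step: you set $G_{p}(\epsilon)=u_{p}(\cdot,\cdot,\epsilon)\in\mathbb{F}$, check boundedness, use (\ref{q_exp_small_difference_u_p}) to verify $q$-exponential flatness of the cocycle, and invoke Theorem (q-RS) to obtain the common formal expansion $\hat{u}$, exactly as in the paper. You diverge in how you verify that $\hat{u}$ formally solves (\ref{main_q_diff_diff_first}): the paper differentiates the equation $m$ times with respect to $\epsilon$, uses the limit property (\ref{limit_deriv_order_m_of_up_epsilon}) supplied by the asymptotic expansion, and obtains the recursion (\ref{recursion_h_m}) for the coefficients $h_{m}$, which combined with the Taylor expansions (\ref{Taylor_f}) of $c_{l}$ and $f$ identifies $\hat{u}$ as a formal solution; you instead argue that the operator $P_{1}$ acts compatibly on asymptotic expansions and conclude by uniqueness of asymptotic coefficients. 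The two routes are standard and essentially equivalent; the paper's recursion avoids discussing stability of expansions under each operation separately, while yours avoids writing the recursion explicitly.

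One assertion in your compatibility step is wrong as stated: since $q^{\delta}>1$, the dilation $\sigma_{q^{\delta}}$ never maps the bounded sector $\mathcal{T}$ into itself, no matter how small $r_{\mathcal{T}}$ is taken; it sends $\mathcal{T}$ onto $q^{\delta}\mathcal{T}$, of radius $q^{\delta}r_{\mathcal{T}}>r_{\mathcal{T}}$. The correct repair is to note that $u_{p}$ is given by the $m_{k}$-Laplace transform of $w_{k}^{p}$, whose $q$-exponential growth is slower than $e^{K|\tau|^{k}}$ for every $K>0$, so the integral converges and stays bounded for $\epsilon t$ in a sector of any finite radius; hence $u_{p}$ (and therefore the coefficients $h_{m}$) are defined and bounded on $q^{\delta}\mathcal{T}\times H_{\beta'}$, and one may run the whole argument in the Banach space attached to the enlarged sector and restrict afterwards. (The paper tacitly relies on the same extension, since $h_{m_2}(q^{\delta}t,z)$ appears in (\ref{recursion_h_m}).) With this fix, and with your Cauchy-estimate reduction to a slightly smaller $\mathcal{T}'\times H_{\beta''}$ for the $(t^{k+1}\partial_{t})^{d_{l}}$ and $\partial_{z}$ blocks followed by analytic continuation of the resulting coefficient identities back to $\mathcal{T}\times H_{\beta'}$, your argument goes through.
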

\begin{proof} We search for the family of functions $u_{p}(t,z,\epsilon)$, $0 \leq p \leq \varsigma-1$ constructed in Theorem~\ref{teo1}.
For all $0 \leq p \leq \varsigma-1$, we define $G_{p}(\epsilon) := (t,z) \mapsto u_{p}(t,z,\epsilon)$, which is by construction a
holomorphic and bounded function from $\mathcal{E}_{p}$ into the Banach space $\mathbb{F}$ of bounded holomorphic functions on
$\mathcal{T} \times H_{\beta'}$ equipped with the sup norm, where $\mathcal{T}$ is introduced in
Definition~\ref{defi7} and $\beta'>0$ is the width of the strip $H_{\beta'}$ on which the coefficients $c_{l}(z,\epsilon)$ and the forcing term
$f(t,z,\epsilon)$ are holomorphic and bounded with respect to
$z$ (see (\ref{defin_coeff_c_l_z}), (\ref{defin_forcing_f})). Bearing in mind the estimates
(\ref{q_exp_small_difference_u_p}), the cocycle
$\Theta_{p}(\epsilon) = G_{p+1}(\epsilon) - G_{p}(\epsilon)$ is $q-$exponentially flat of order $\kappa$ on
$Z_{p} = \mathcal{E}_{p} \cap \mathcal{E}_{p+1}$, for any $0 \leq p \leq \varsigma-1$, and for any given
$0 < \kappa < \frac{k}{\delta} \min_{l=1}^{D-1} d_{l}$.

Regarding Theorem (q-RS) stated in Section~\ref{seccion6}, we deduce the existence of a formal power series
$\hat{G}(\epsilon) \in \mathbb{F}[[\epsilon]]$ such that the
functions $G_{p}(\epsilon)$ admit $\hat{G}(\epsilon)$ as their $q-$Gevrey asymptotic expansion of order $1/\kappa$ on
$\mathcal{E}_{p}$, for all $0 \leq p \leq \varsigma-1$. We put
$$ \hat{G}(\epsilon ) = \sum_{m \geq 0} h_{m}(t,z) \epsilon^{m}/m! =: \hat{u}(t,z,\epsilon). $$
In the last part of the proof we show that the formal series $\hat{u}(t,z,\epsilon)$ solves the main equation
(\ref{main_q_diff_diff_first}). Since $\hat{G}(\epsilon)$ is the asymptotic expansion of the functions
$G_{p}(\epsilon)$ on $\mathcal{E}_{p}$, we get in particular that
\begin{equation}
\lim_{\epsilon \rightarrow 0, \epsilon \in \mathcal{E}_{p}}
\sup_{t \in \mathcal{T}, z \in H_{\beta'}}|\partial_{\epsilon}^{m}u_{p}(t,z,\epsilon) - h_{m}(t,z)| = 0
\label{limit_deriv_order_m_of_up_epsilon}
\end{equation}
for all $0 \leq p \leq \varsigma-1$, all $m \geq 0$. Now, we choose some $p \in \{ 0, \ldots, \varsigma-1 \}$. By construction, the function
$u_{p}(t,z,\epsilon)$ is a solution of (\ref{main_q_diff_diff_first}). We take derivatives of order $m$ for some $m \geq 0$ with respect to $\epsilon$ on the
left and right hand-side of the equation (\ref{main_q_diff_diff_first}). Using Leibniz rule, we deduce that
$\partial_{\epsilon}^{m}u_{p}(t,z,\epsilon)$ solves the following equation
\begin{multline*}
Q(\partial_{z})\partial_{\epsilon}^{m}u_{p}(t,z,\epsilon) = R_{D}(\partial_{z})
\sum_{m_{1}+m_{2}=m} \frac{m!}{m_{1}!m_{2}!}
\partial_{\epsilon}^{m_1}(\epsilon^{kd_{D}})(t^{k+1}\partial_{t})^{d_{D}}\partial_{\epsilon}^{m_2}u_{p}(t,z,\epsilon)\\
+ \sum_{l=1}^{D-1} \sum_{m_{1}+m_{2}=m} \frac{m!}{m_{1}!m_{2}!}(\partial_{\epsilon}^{m_1}\epsilon^{\Delta_l})
(t^{k+1}\partial_{t})^{d_l}c_{l}(z,\epsilon)R_{l}(\partial_{z})\partial_{\epsilon}^{m_2}u(q^{\delta}t,z,\epsilon) +
\partial_{\epsilon}^{m}f(t,z,\epsilon)
\end{multline*}
for all $m \geq 0$, all $(t,z,\epsilon) \in \mathcal{T} \times H_{\beta'} \times \mathcal{E}_{p}$. We put $c_l(z,\epsilon)=\sum_{m\ge0}c_{l,m}(z)\frac{\epsilon^m}{m!}$. Letting $\epsilon$ tend to zero in the latter formula and making use of
(\ref{limit_deriv_order_m_of_up_epsilon}), we get the recursion on the coefficients $h_{m}(t,z)$,
\begin{multline}
Q(\partial_{z})h_{m}(t,z) = R_{D}(\partial_{z})\frac{m!}{(m-kd_{D})!}(t^{k+1}\partial_{t})^{d_D}h_{m-kd_{D}}(t,z)\\
+ \sum_{l=1}^{D-1}\sum_{m_1+m_2=m-\Delta_l} \frac{m!}{(m_1!m_2!}(t^{k+1}\partial_{t})^{d_l}c_{l,m_1}(z)R_{l}(\partial_{z})
h_{m_2}(q^{\delta}t,z) + (\partial_{\epsilon}^{m}f)(t,z,0) \label{recursion_h_m}
\end{multline}
for all $m \geq \max( \max_{1 \leq l \leq D-1} \Delta_{l}, kd_{D})$, all $(t,z) \in \mathcal{T} \times H_{\beta'}$.
Since the functions $c_l(z,\epsilon)$ and $f(t,z,\epsilon)$ are analytic w.r.t. $\epsilon$ near 0, their Taylor expansion are given by
\begin{equation}
c_l(z,\epsilon)=\sum_{m\ge0}\frac{(\partial_\epsilon c_{l})(z,0)}{m!}\epsilon^m,\qquad f(t,z,\epsilon) = \sum_{m \geq 0} \frac{(\partial_{\epsilon}^{m}f)(t,z,0)}{m!}\epsilon^{m} \label{Taylor_f}
\end{equation}
respectively, for all $\epsilon \in D(0,\epsilon_{0})$ and $z \in H_{\beta'}$. Finally, a direct inspection on the recursion
(\ref{recursion_h_m}) and the expansion (\ref{Taylor_f}) allows us to conclude that the formal series
$\hat{u}(t,z,\epsilon) = \sum_{m \geq 0} h_{m}(t,z)\epsilon^{m}/m!$ solves the equation (\ref{main_q_diff_diff_first}).
\end{proof}

\section{Analytic solutions of an auxiliary convolution initial value problem}\label{seccion8}

In this section, we preserve the values in the layout of the problem considered in Section~\ref{seccion41} and the notation for the geometric construction and domains introduced in Section~\ref{seccion42}. We also consider the solution $w^p_k(\tau,m,\epsilon)$ of the problem (\ref{q_diff_conv_init_v_prob}) determined in Section~\ref{seccion43}. We also fix a value of $\nu>0$.

Let $\mathbf{D} \geq 2$ be an integer. For
$1 \leq l \leq \mathbf{D}$, let
$\mathbf{d}_{l}$,$\boldsymbol{\delta}_{l}$,$\mathbf{\Delta}_{l} \geq 0$ be nonnegative integers.
We assume that 
\begin{equation}
1 = \boldsymbol{\delta}_{1} \ \ , \ \ \boldsymbol{\delta}_{l} < \boldsymbol{\delta}_{l+1},  \label{assum_delta_l_bf_Borel}
\end{equation}
for all $1 \leq l \leq \mathbf{D}-1$. We also make the assumption that
\begin{equation}
\mathbf{d}_{\mathbf{D}} = (\boldsymbol{\delta}_{\mathbf{D}}-1)(k+1) \ \ , \ \
\mathbf{d}_{l} > (\boldsymbol{\delta}_{l}-1)(k+1) \ \ , \ \
\mathbf{\Delta}_{l} - \mathbf{d}_{l} + \boldsymbol{\delta}_{l} - 1 \geq 0 \ \ , \ \
\mathbf{\Delta}_{\mathbf{D}} = \mathbf{d}_{\mathbf{D}} - \boldsymbol{\delta}_{\mathbf{D}} + 1
\label{assum_dl_delta_l_Delta_l_bf_Borel}
\end{equation}
for all $1 \leq l \leq \mathbf{D}-1$. We define integers $\mathbf{d}_{l,k} \geq 0$ satisfying
\begin{equation}
\mathbf{d}_{l} + k + 1 = \boldsymbol{\delta}_{l}(k+1) + \mathbf{d}_{l,k} \label{defin_d_l_k_bold_Borel}
\end{equation}
for all $1 \leq l \leq \mathbf{D}$. Let $\mathbf{Q}(X),\mathbf{R}_{l}(X) \in \mathbb{C}[X]$,
$0 \leq l \leq \mathbf{D}$, be polynomials such that
\begin{equation}
\mathrm{deg}(\mathbf{Q}) \geq \mathrm{deg}(\mathbf{R}_{\mathbf{D}}) \geq \mathrm{deg}(\mathbf{R}_{l}) \ \ , \ \
\mathbf{Q}(im) \neq 0 \ \ , \ \ \mathbf{R}_{\mathbf{D}}(im) \neq 0 \label{assum_Q_Rl_bf_Borel}
\end{equation}
for all $m \in \mathbb{R}$, all $0 \leq l \leq \mathbf{D}-1$. We consider $\mu>\deg(\mathbf{R}_{\mathbf{D}})+1$ and we make the additional assumption that there exists
an unbounded sector
$$ S_{\mathbf{Q},\mathbf{R}_{\mathbf{D}}} = \{ z \in \mathbb{C} /
|z| \geq r_{\mathbf{Q},\mathbf{R}_{\mathbf{D}}} \ \ , \ \ |\mathrm{arg}(z) - d_{\mathbf{Q},\mathbf{R}_{\mathbf{D}}}|
\leq \eta_{\mathbf{Q},\mathbf{R}_{\mathbf{D}}} \} $$
with bisecting direction $d_{\mathbf{Q},\mathbf{R}_{\mathbf{D}}} \in \mathbb{R}$, aperture
$2\eta_{\mathbf{Q},\mathbf{R}_{\mathbf{D}}}>0$ for some radius $r_{\mathbf{Q},\mathbf{R}_{\mathbf{D}}}>0$
such that
\begin{equation}
\frac{\mathbf{Q}(im)}{\mathbf{R}_{\mathbf{D}}(im)} \in S_{\mathbf{Q},\mathbf{R}_{\mathbf{D}}} \label{quotient_Q_RD_in_S_bf}
\end{equation} 
for all $m \in \mathbb{R}$. We factorize the polynomial $\mathbf{P}_{m}(\tau) = \mathbf{Q}(im)k -
\mathbf{R}_{\mathbf{D}}(im)k^{\boldsymbol{\delta}_{\mathbf{D}}}
\tau^{(\boldsymbol{\delta}_{\mathbf{D}}-1)k}$ in the form
\begin{equation}
\mathbf{P}_{m}(\tau) = -\mathbf{R}_{\mathbf{D}}(im)k^{\boldsymbol{\delta}_{\mathbf{D}}}
\Pi_{l=0}^{(\boldsymbol{\delta}_{\mathbf{D}}-1)k-1} (\tau - \mathbf{q}_{l}(m)) \label{factor_P_m_bf}
\end{equation}
where
\begin{multline}
\mathbf{q}_{l}(m) = (\frac{|\mathbf{Q}(im)|}{|\mathbf{R}_{\mathbf{D}}(im)|
k^{\boldsymbol{\delta}_{\mathbf{D}}-1}})^{\frac{1}{(\boldsymbol{\delta}_{\mathbf{D}}-1)k}}\\
\times \exp( \sqrt{-1}( \mathrm{arg}( \frac{\mathbf{Q}(im)}{\mathbf{R}_{\mathbf{D}}(im)
k^{\boldsymbol{\delta}_{\mathbf{D}}-1}}) \frac{1}{(\boldsymbol{\delta}_{\mathbf{D}}-1)k} +
\frac{2\pi l}{(\boldsymbol{\delta}_{\mathbf{D}}-1)k} ) ) \label{defin_roots_bf}
\end{multline}
for all $0 \leq l \leq (\boldsymbol{\delta}_{\mathbf{D}}-1)k-1$, all $m \in \mathbb{R}$. For every $0\le p\le \varsigma -1$ we consider a finite set of real numbers $\mathfrak{e}_{p',p}$, $0 \leq p' \leq \chi_p-1$. We choose a family of unbounded
sectors $S_{\mathfrak{e}_{p',p}}$ centered at 0 with bisecting
direction $\mathfrak{e}_{p',p}$, a small closed disc $\bar{D}(0,\rho)$ and we prescribe the sector
$S_{\mathbf{Q},\mathbf{R}_{\mathbf{D}}}$ in such a way that the following conditions hold.\medskip

\noindent 1) There exists a constant $\mathbf{M}_{1}>0$ such that
\begin{equation}
|\tau - \mathbf{q}_{l}(m)| \geq \mathbf{M}_{1}(1 + |\tau|) \label{root_cond_1_bf}
\end{equation}
for all $0 \leq l \leq (\boldsymbol{\delta}_{\mathbf{D}}-1)k-1$, all $m \in \mathbb{R}$,
all $\tau \in S_{\mathfrak{e}_{p',p}} \cup \bar{D}(0,\rho)$, for all $0 \leq p' \leq \chi_p -1$.\medskip

\noindent 2) There exists a constant $\mathbf{M}_{2}>0$ such that
\begin{equation}
|\tau - \mathbf{q}_{l_0}(m)| \geq \mathbf{M}_{2}|\mathbf{q}_{l_0}(m)| \label{root_cond_2_bf}
\end{equation}
for some $l_{0} \in \{0,\ldots,(\boldsymbol{\delta}_{\mathbf{D}}-1)k-1 \}$, all $m \in \mathbb{R}$, all
$\tau \in S_{\mathfrak{e}_{p',p}} \cup \bar{D}(0,\rho)$, for all $0 \leq p' \leq \chi_p -1$.\medskip

\noindent 3) We make the assumption that for all $0 \leq p' \leq \chi_p-1$, one has
\begin{equation}\label{e1374}
S_{\mathfrak{e}_{p',p}} \subset U_{\mathfrak{d}_{p}},
\end{equation}
where $U_{\mathfrak{d}_p}$ are the ones fixed in Section~\ref{seccion41}.
Let $0 \leq p \leq \varsigma-1$. By construction
of the roots (\ref{defin_roots_bf}) in the factorization (\ref{factor_P_m_bf}) and using the lower bound estimates
(\ref{root_cond_1_bf}), (\ref{root_cond_2_bf}), we get a constant $C_{\mathbf{P}}>0$ such that
\begin{multline}
|\mathbf{P}_{m}(\tau)| \geq \mathbf{M}_{1}^{(\boldsymbol{\delta}_{\mathbf{D}}-1)k-1}\mathbf{M}_{2}
|\mathbf{R}_{\mathbf{D}}(im)|k^{\boldsymbol{\delta}_{\mathbf{D}}}
(\frac{|\mathbf{Q}(im)|}{|\mathbf{R}_{\mathbf{D}}(im)|k^{\boldsymbol{\delta}_{\mathbf{D}}-1}})^{
\frac{1}{(\boldsymbol{\delta}_{\mathbf{D}}-1)k}} (1+|\tau|)^{(\boldsymbol{\delta}_{\mathbf{D}}-1)k-1}\\
\geq \mathbf{M}_{1}^{(\boldsymbol{\delta}_{\mathbf{D}}-1)k-1}\mathbf{M}_{2}
\frac{k^{\boldsymbol{\delta}_{\mathbf{D}}}}{(k^{\boldsymbol{\delta}_{\mathbf{D}}-1})^{
\frac{1}{(\boldsymbol{\delta}_{\mathbf{D}}-1)k}} }
(r_{\mathbf{Q},\mathbf{R}_{\mathbf{D}}})^{\frac{1}{(\boldsymbol{\delta}_{\mathbf{D}}-1)k}}
|\mathbf{R}_{\mathbf{D}}(im)| \\
\times (\min_{x \geq 0}
\frac{(1+x)^{(\boldsymbol{\delta}_{\mathbf{D}}-1)k-1}}{(1+x^{k})^{(\boldsymbol{\delta}_{\mathbf{D}}-1)
- \frac{1}{k}}}) (1 + |\tau|^{k})^{(\boldsymbol{\delta}_{\mathbf{D}}-1)-\frac{1}{k}}\\
= C_{\mathbf{P}} (r_{\mathbf{Q},\mathbf{R}_{\mathbf{D}}})^{
\frac{1}{(\boldsymbol{\delta}_{\mathbf{D}}-1)k}} |\mathbf{R}_{\mathbf{D}}(im)|
(1+|\tau|^{k})^{(\boldsymbol{\delta}_{\mathbf{D}}-1)-\frac{1}{k}}
\label{low_bounds_P_m_bf}
\end{multline}
for all $\tau \in S_{\mathfrak{e}_{p',p}} \cup \bar{D}(0,\rho)$, all $m \in \mathbb{R}$, all $0 \leq p' \leq \chi_p - 1$.

We consider a function $m \mapsto \mathbf{C}_{0,0}(m,\epsilon)$ that belongs to the Banach space
$E_{(\beta,\mu)}$ and which
depends holomorphically on $\epsilon \in D(0,\epsilon_{0})$. Let $\mathbf{c}_{0,0}(\epsilon)$ and
$\mathbf{c}_{\mathbf{F}}(\epsilon)$ be bounded holomorphic functions on $D(0,\epsilon_{0})$ which vanish at the origin
$\epsilon=0$. For all $0 \leq p \leq \varsigma-1$, we consider the function $w_{k}^{p}(u,m,\epsilon)$
constructed in Proposition~\ref{prop6} that solves the problem (\ref{q_diff_conv_init_v_prob}). In this section we focus on the following convolution problem
\begin{multline}
v_{k}^{\mathfrak{e}_{p',p},\mathfrak{d}_{p}}(\tau,m,\epsilon)\\
= \sum_{1 \leq h \leq \boldsymbol{\delta}_{\mathbf{D}}-1} A_{\boldsymbol{\delta}_{\mathbf{D}},h}
\frac{\mathbf{R}_{\mathbf{D}}(im)}{\mathbf{P}_{m}(\tau)\Gamma(\boldsymbol{\delta}_{\mathbf{D}}-h)}\int_{0}^{\tau^{k}}
(\tau^{k}-s)^{\boldsymbol{\delta}_{\mathbf{D}}-h-1}
k^{h} s^{h} v_{k}^{\mathfrak{e}_{p',p},\mathfrak{d}_{p}}(s^{1/k},m,\epsilon) \frac{ds}{s}\\
+ \sum_{l=1}^{\mathbf{D}-1} \frac{\mathbf{R}_{l}(im)}{\mathbf{P}_{m}(\tau)}
\left( \epsilon^{\mathbf{\Delta}_{l}-\mathbf{d}_{l}+\boldsymbol{\delta}_{l}-1}
\frac{1}{\Gamma( \frac{\mathbf{d}_{l,k}}{k} )} \right.
\int_{0}^{\tau^{k}} (\tau^{k}-s)^{\frac{\mathbf{d}_{l,k}}{k}-1}k^{\boldsymbol{\delta}_l}s^{\boldsymbol{\delta}_l}
v_{k}^{\mathfrak{e}_{p',p},\mathfrak{d}_{p}}(s^{1/k},m,\epsilon) \frac{ds}{s}\\
+ \sum_{1 \leq h \leq \boldsymbol{\delta}_{l}-1} A_{\boldsymbol{\delta}_{l},h}
\epsilon^{\mathbf{\Delta}_{l}-\mathbf{d}_{l}+\boldsymbol{\delta}_{l}-1}
\frac{1}{\Gamma( \frac{\mathbf{d}_{l,k}}{k} + \boldsymbol{\delta}_{l}-h)} \int_{0}^{\tau^{k}}
\left. (\tau^{k}-s)^{\frac{\mathbf{d}_{l,k}}{k}+\boldsymbol{\delta}_{l}-h-1}k^{h}s^{h}
v_{k}^{\mathfrak{e}_{p',p},\mathfrak{d}_{p}}(s^{1/k},m,\epsilon) \frac{ds}{s} \right)\\
+ \epsilon^{-1}\frac{1}{\mathbf{P}_{m}(\tau)\Gamma(1 + \frac{1}{k})}\\
\times \int_{0}^{\tau^{k}}
(\tau^{k}-s)^{1/k} \frac{\mathbf{c}_{0,0}(\epsilon)}{(2\pi)^{1/2}}
( \int_{-\infty}^{+\infty} \mathbf{C}_{0,0}(m-m_{1},\epsilon)\mathbf{R}_{0}(im_{1})
v_{k}^{\mathfrak{e}_{p',p},\mathfrak{d}_{p}}(s^{1/k},m_{1},\epsilon) dm_{1} )\frac{ds}{s}\\
+ \epsilon^{-1} \mathbf{c}_{\mathbf{F}}(\epsilon) \frac{1}{\mathbf{P}_{m}(\tau)\Gamma(1 + \frac{1}{k})}\int_{0}^{\tau^{k}}
(\tau^{k}-s)^{1/k} w_{k}^{p}(s^{1/k},m,\epsilon) \frac{ds}{s}. \label{k_Borel_equation_analytic_bf}
\end{multline}

Here, $A_{\boldsymbol{\delta}_{l},h}\in\mathbb{C}$ for every $1\le l\le \boldsymbol{\delta}_{l}-1$ and $1\le l\le \mathbf{D}$. Our goal in this section is to construct actual holomorphic solutions
$v_{k}^{\mathfrak{e}_{p',p},\mathfrak{d}_{p}}(\tau,m,\epsilon)$ of this problem which belong to Banach spaces
$F_{(\nu,\beta,\mu,k);\bar{\Omega}}$ for some starshaped domains, $\Omega$, for $0 \leq p' \leq \chi_p-1$,
$0 \leq p \leq \varsigma-1$.

\begin{prop}\label{prop9} Under the assumption that
\begin{equation}
\boldsymbol{\delta}_{\mathbf{D}} \geq \boldsymbol{\delta}_{l} + \frac{1}{k} \label{deltaD_deltal_constraints_bf}
\end{equation}
for all $1 \leq l \leq \mathbf{D}-1$, there exists a radius $r_{\mathbf{Q},\mathbf{R}_{\mathbf{D}}}>0$ and constants
$\boldsymbol{\varsigma}_{0,0},\boldsymbol{\varsigma}_{0},\boldsymbol{\varsigma}_{F}>0$ such that if
\begin{equation}
\sup_{\epsilon \in D(0,\epsilon_{0})} \left| \frac{\mathbf{c}_{0,0}(\epsilon)}{\epsilon} \right|
\leq \boldsymbol{\varsigma}_{0,0} \ \ , \ \
||\mathbf{C}_{0,0}(m,\epsilon)||_{(\beta,\mu)} \leq \boldsymbol{\varsigma}_{0} \ \ , \ \
\sup_{\epsilon \in D(0,\epsilon_{0})} \left| \frac{\mathbf{c}_{\mathbf{F}}(\epsilon)}{\epsilon} \right|
\leq \boldsymbol{\varsigma}_{\mathbf{F}} \label{varsigma00_C00_cF_small_bf}
\end{equation}
for all $\epsilon \in D(0,\epsilon_{0})$, the equation (\ref{k_Borel_equation_analytic_bf}) has a unique solution
$v_{k}^{\mathfrak{e}_{p',p},\mathfrak{d}_{p}}(\tau,m,\epsilon)$ that belongs to the Banach space
$F_{(\nu,\beta,\mu,k);\overline{S_{\mathfrak{e}_{p',p}}}}$ (see Definition~\ref{defi3}) for all $0 \leq p \leq \varsigma-1$, all
$0 \leq p' \leq \chi_p-1$. Moreover, we can write
$v_{k}^{\mathfrak{e}_{p',p},\mathfrak{d}_{p}}(\tau,m,\epsilon)$ as a sum
\begin{equation}
v_{k}^{\mathfrak{e}_{p',p},\mathfrak{d}_{p}}(\tau,m,\epsilon) = \sum_{j \geq 0}
v_{k,j}^{\mathfrak{e}_{p',p},\mathfrak{d}_{p}}(\tau,m,\epsilon) 
\end{equation}
where each function $v_{k,j}^{\mathfrak{e}_{p',p},\mathfrak{d}_{p}}(\tau,m,\epsilon)$ satisfies the next features.\medskip

\noindent 1) For all $j \geq 0$, the function $v_{k,j}^{\mathfrak{e}_{p',p},\mathfrak{d}_{p}}(\tau,m,\epsilon)$ belongs
to $F_{(\nu,\beta,\mu,k);\overline{S_{\mathfrak{e}_{p',p}}}}$ and there exist constants $C_{5}>0$, $0 < K_{5} < 1$ such that
\begin{equation}\label{e1465}
|| v_{k,j}^{\mathfrak{e}_{p',p},\mathfrak{d}_{p}}(\tau,m,\epsilon) ||_{(\nu,\beta,\mu,k);\overline{S_{\mathfrak{e}_{p',p}}}}
\leq C_{5}K_{5}^{j}
\end{equation}
for all $j \geq 0$, all $\epsilon \in D(0,\epsilon_{0})$.\medskip

\noindent 2) For all $j \geq 0$, the function
$v_{k,j}^{\mathfrak{e}_{p',p},\mathfrak{d}_{p}}(\tau,m,\epsilon)$ belongs to 
$F_{(\nu,\beta,\mu,k);\bar{D}(0,\check{Q}\mu_{0,j})}$ and there exist constants $C_{6}>0$, $0<K_{6}<1$ such that
\begin{equation}\label{e1474}
|| v_{k,j}^{\mathfrak{e}_{p',p},\mathfrak{d}_{p}}(\tau,m,\epsilon) ||_{(\nu,\beta,\mu,k);\bar{D}(0,\check{Q}\mu_{0,j})}
\leq C_{6}K_{6}^{j}(\frac{1}{q^{\delta}})^{\frac{k \min_{l=1}^{D-1}d_{l}}{2}(j+1)j}
\end{equation}
for all $j \geq 0$, all $\epsilon \in D(0,\epsilon_{0})$.\medskip

\noindent 3) For all $j \geq 0$, all $0 \leq h \leq j$, the function
$v_{k,j}^{\mathfrak{e}_{p',p},\mathfrak{d}_{p}}(\tau,m,\epsilon)$ belongs to
$F_{(\nu,\beta,\mu,k);\overline{S_{\mathfrak{e}_{p',p}}} \cap \bar{D}(0,\hat{Q}\mu_{1,h})}$ and there exist constants
$C_{7}>0$, $0 < K_{7} <1$ such that
\begin{equation}\label{e1484}
|| v_{k,j}^{\mathfrak{e}_{p',p},\mathfrak{d}_{p}}(\tau,m,\epsilon)
||_{(\nu,\beta,\mu,k);\overline{S_{\mathfrak{e}_{p',p}}} \cap \bar{D}(0,\hat{Q}\mu_{1,h})}\\
\leq C_{7} K_{7}^{j} (\frac{1}{q^{\delta}})^{\frac{k \min_{l=1}^{D-1}d_{l}}{2}(h+1)h}
\end{equation}
for all $j \geq 0$, all $0 \leq h \leq j$ and all $\epsilon \in D(0,\epsilon_{0})$.

The constants $C_5,K_5,C_6,K_6,C_7,K_7$ depend on the geometric elements and constants determined by the problem under study, and do not depend on $\epsilon$.
\end{prop}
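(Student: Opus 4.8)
The plan is to recast the convolution equation (\ref{k_Borel_equation_analytic_bf}) as a fixed point problem $v = \mathcal{V}_{\epsilon}(v) + \mathcal{W}_{\epsilon}^{p}(w_{k}^{p})$, where $\mathcal{V}_{\epsilon}$ collects all the $v$-dependent terms of the right-hand side and $\mathcal{W}_{\epsilon}^{p}$ is the forcing operator $w \mapsto \epsilon^{-1}\mathbf{c}_{\mathbf{F}}(\epsilon)\frac{1}{\mathbf{P}_{m}(\tau)\Gamma(1+\frac{1}{k})}\int_{0}^{\tau^{k}}(\tau^{k}-s)^{1/k}w(s^{1/k},m)\frac{ds}{s}$ evaluated at the solution $w_{k}^{p}$ of (\ref{q_diff_conv_init_v_prob}) built in Proposition~\ref{prop6}. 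First I would check that $\mathcal{V}_{\epsilon}$ maps $F_{(\nu,\beta,\mu,k);\bar{\Omega}}$ into itself for every starshaped domain $\Omega$ containing $0$ (each of $\overline{S_{\mathfrak{e}_{p',p}}}$, $\bar{D}(0,\check{Q}\mu_{0,j})$ and $\overline{S_{\mathfrak{e}_{p',p}}}\cap\bar{D}(0,\hat{Q}\mu_{1,h})$ is of this type, and every integral $\int_{0}^{\tau^{k}}$ stays inside $\Omega$), and that on each such domain its operator norm is a constant $K<1$, uniform in $\Omega$.

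The uniform contractivity is the core estimate. The $\mathbf{R}_{\mathbf{D}}$-correction terms and the $\mathbf{R}_{l}$-terms ($1\le l\le\mathbf{D}-1$) are controlled by Proposition~\ref{prop3} with $\gamma_{1}=\boldsymbol{\delta}_{\mathbf{D}}-1-\frac{1}{k}$ dictated by the lower bound (\ref{low_bounds_P_m_bf}) for $|\mathbf{P}_{m}(\tau)|$, and $\chi,\nu_{2}$ read off the kernels, so that the admissibility hypothesis $\gamma_{1}\ge\nu_{2}$ of Proposition~\ref{prop3} becomes precisely (\ref{deltaD_deltal_constraints_bf}), while $1+\chi+\nu_{2}\ge 0$ follows from $\mathbf{d}_{l,k}\ge 1$, itself a consequence of (\ref{assum_dl_delta_l_Delta_l_bf_Borel})--(\ref{defin_d_l_k_bold_Borel}); the $\mathbf{c}_{0,0}$-term is controlled by Proposition~\ref{prop241}. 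The scalar factors $\epsilon^{\mathbf{\Delta}_{l}-\mathbf{d}_{l}+\boldsymbol{\delta}_{l}-1}$, $\mathbf{c}_{0,0}(\epsilon)/\epsilon$, $\mathbf{c}_{\mathbf{F}}(\epsilon)/\epsilon$ are made small by (\ref{varsigma00_C00_cF_small_bf}) and by shrinking $\epsilon_{0}$, after the radius $r_{\mathbf{Q},\mathbf{R}_{\mathbf{D}}}$ (which enters through (\ref{low_bounds_P_m_bf})) has been fixed. Since the constants of Propositions~\ref{prop3} and~\ref{prop241} do not depend on $\Omega$, the constant $K<1$ is uniform, so $(\mathrm{id}-\mathcal{V}_{\epsilon})^{-1}$ is a bounded, domain-preserving operator with norm $\le\frac{1}{1-K}$. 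A short additional estimate, using (\ref{low_bounds_P_m_bf}) once more and the elementary fact that $q$-exponential growth of order $k_{1}$ is dominated by $\exp(\nu|\tau|^{k})$ for every $\nu>0$, shows that $\mathcal{W}_{\epsilon}^{p}$ sends any function of $\mathrm{Exp}^{q}_{(k_{1},\beta,\mu,\alpha);\bar{\Omega}}$ continuously into $F_{(\nu,\beta,\mu,k);\bar{\Omega}}$, so that $\mathcal{W}_{\epsilon}^{p}(w_{k}^{p})\in F_{(\nu,\beta,\mu,k);\overline{S_{\mathfrak{e}_{p',p}}}}$. A fixed point argument, as in the proof of Proposition~\ref{prop6}, then yields the unique solution $v_{k}^{\mathfrak{e}_{p',p},\mathfrak{d}_{p}}=(\mathrm{id}-\mathcal{V}_{\epsilon})^{-1}\mathcal{W}_{\epsilon}^{p}(w_{k}^{p})$ in $F_{(\nu,\beta,\mu,k);\overline{S_{\mathfrak{e}_{p',p}}}}$.

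For the decomposition, I would use $w_{k}^{p}=\sum_{j\ge 0}w_{k,j}^{p}$ from (\ref{decomp_w_kp}) and put $v_{k,j}^{\mathfrak{e}_{p',p},\mathfrak{d}_{p}}:=(\mathrm{id}-\mathcal{V}_{\epsilon})^{-1}\mathcal{W}_{\epsilon}^{p}(w_{k,j}^{p})$, so that $\sum_{j}v_{k,j}^{\mathfrak{e}_{p',p},\mathfrak{d}_{p}}=v_{k}^{\mathfrak{e}_{p',p},\mathfrak{d}_{p}}$; it remains to transfer the three bounds (\ref{norm_w_kj_on_triangle}), (\ref{norm_w_kj_on_discs}), (\ref{norm_w_kj_on_square_frames}) through $\mathcal{W}_{\epsilon}^{p}$ and $(\mathrm{id}-\mathcal{V}_{\epsilon})^{-1}$. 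On $\overline{S_{\mathfrak{e}_{p',p}}}$ one uses $S_{\mathfrak{e}_{p',p}}\subset U_{\mathfrak{d}_{p}}\subset\Theta^{p}_{\hat{Q}\mu_{1}}$ (by (\ref{e1374}) and the definition of the triangle frame) together with Proposition~\ref{prop1} and (\ref{norm_w_kj_on_triangle}) to get $\|w_{k,j}^{p}\|\lesssim K_{1}^{j}$, whence item 1 with a fixed $K_{5}<1$. On $\bar{D}(0,\check{Q}\mu_{0,j})$, (\ref{norm_w_kj_on_discs}) supplies the factor $K_{2}^{j}(1/q^{\delta})^{\frac{k}{2}\min_{l=1}^{D-1}d_{l}(j+1)j}$ directly, and since $\mathcal{W}_{\epsilon}^{p}$ and $(\mathrm{id}-\mathcal{V}_{\epsilon})^{-1}$ preserve this disc and the $\mathrm{Exp}^{q}\to F$ constant is bounded on bounded discs, item 2 follows. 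On $\overline{S_{\mathfrak{e}_{p',p}}}\cap\bar{D}(0,\hat{Q}\mu_{1,h})$ I would cover the set, exploiting (\ref{constraint_mu1_mu0}), which forces $\hat{Q}\mu_{1,h'}=\check{Q}\mu_{0,h'-1}$ so that the sectorial annuli $A^{p}_{\mu_{0,h'},\mu_{1,h'}}\subset P\Omega^{p}_{h'}$ tile $\bar{U}_{\mathfrak{d}_{p}}\cap\{|\tau|\le\hat{Q}\mu_{1}\}$, by the pieces $P\Omega^{p}_{h'}$ with $h\le h'\le j$ plus $\bar{D}(0,\check{Q}\mu_{0,j})$; since these sup-type norms are maxima over the covering pieces, $(1/q^{\delta})^{\frac{k}{2}\min_{l=1}^{D-1}d_{l}(h'+1)h'}\le(1/q^{\delta})^{\frac{k}{2}\min_{l=1}^{D-1}d_{l}(h+1)h}$ for $h'\ge h$, and $(j+1)j\ge(h+1)h$ for the disc exponent, the bounds (\ref{norm_w_kj_on_discs}) and (\ref{norm_w_kj_on_square_frames}) combine into $\lesssim K_{7}^{j}(1/q^{\delta})^{\frac{k}{2}\min_{l=1}^{D-1}d_{l}(h+1)h}$, which survives the passage through $\mathcal{W}_{\epsilon}^{p}$ and $(\mathrm{id}-\mathcal{V}_{\epsilon})^{-1}$ and gives item 3. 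Convergence of $\sum_{j}v_{k,j}^{\mathfrak{e}_{p',p},\mathfrak{d}_{p}}$ in $F_{(\nu,\beta,\mu,k);\overline{S_{\mathfrak{e}_{p',p}}}}$ is then immediate from item 1. Holomorphy in $\epsilon$ is inherited from that of $w_{k}^{p}$ and the construction, as in the remark after Proposition~\ref{prop6}.

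The main obstacle I expect is the uniform contractivity of $\mathcal{V}_{\epsilon}$: one must simultaneously match the kernel exponents of every term to the parameters of Propositions~\ref{prop3} and~\ref{prop241}, so that the structural hypotheses (\ref{deltaD_deltal_constraints_bf}) and (\ref{assum_dl_delta_l_Delta_l_bf_Borel})--(\ref{defin_d_l_k_bold_Borel}) are exactly what the convolution estimates require, and arrange the order of the smallness choices ($r_{\mathbf{Q},\mathbf{R}_{\mathbf{D}}}$ first, then $\boldsymbol{\varsigma}_{0,0},\boldsymbol{\varsigma}_{0},\boldsymbol{\varsigma}_{\mathbf{F}},\epsilon_{0}$) so that the non-small factor carried by $1/\mathbf{P}_{m}(\tau)$ is absorbed. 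A secondary point to verify is that, by the construction of the domains in Subsection~\ref{seccion42} and the choice of $\rho$, the lower bound (\ref{low_bounds_P_m_bf}) is genuinely valid on all three families of domains; the bookkeeping of the domain inclusions in the last step is routine but must be carried out carefully.
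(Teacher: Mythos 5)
Your proposal is correct and follows essentially the same route as the paper: the paper defines, for each $j$, the affine map $\mathcal{G}_{\epsilon,j}(w)=\mathcal{V}_{\epsilon}(w)+\mathcal{W}_{\epsilon}^{p}(w_{k,j}^{p})$ and applies the contractive mapping theorem on balls of radius proportional to $\sup_{\epsilon}\|w_{k,j}^{p}\|_{(\nu,\beta,\mu,k);\bar{\Omega}}$, which yields exactly your $v_{k,j}=(\mathrm{id}-\mathcal{V}_{\epsilon})^{-1}\mathcal{W}_{\epsilon}^{p}(w_{k,j}^{p})$, with the smallness constraints coming from Propositions~\ref{prop3} and~\ref{prop241} together with (\ref{low_bounds_P_m_bf}), and with your norm-transfer and domain-splitting step appearing verbatim as the paper's Lemma~\ref{lema4}. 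The only difference is presentational (Neumann-series inversion of $\mathrm{id}-\mathcal{V}_{\epsilon}$ versus a per-$j$ fixed point on a ball), not a different argument.
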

\begin{proof} For all $j \geq 0$ and $\epsilon\in D(0,\epsilon_0)$, we introduce the next linear operators
\begin{multline}
 \mathcal{G}_{\epsilon,j}(w(\tau,m))\\
 = \sum_{1 \leq h \leq \boldsymbol{\delta}_{\mathbf{D}}-1} A_{\boldsymbol{\delta}_{\mathbf{D}},h}
\frac{\mathbf{R}_{\mathbf{D}}(im)}{\mathbf{P}_{m}(\tau)\Gamma(\boldsymbol{\delta}_{\mathbf{D}}-h)}\int_{0}^{\tau^{k}}
(\tau^{k}-s)^{\boldsymbol{\delta}_{\mathbf{D}}-h-1}
(k^{h} s^{h} w(s^{1/k},m)) \frac{ds}{s}\\
+ \sum_{l=1}^{\mathbf{D}-1} \frac{\mathbf{R}_{l}(im)}{\mathbf{P}_{m}(\tau)}
\left( \epsilon^{\mathbf{\Delta}_{l}-\mathbf{d}_{l}+\boldsymbol{\delta}_{l}-1}
\frac{1}{\Gamma( \frac{\mathbf{d}_{l,k}}{k} )} \right.
\int_{0}^{\tau^{k}} (\tau^{k}-s)^{\frac{\mathbf{d}_{l,k}}{k}-1}(k^{\boldsymbol{\delta}_l}s^{\boldsymbol{\delta}_l}
w(s^{1/k},m)) \frac{ds}{s}\\
+ \sum_{1 \leq h \leq \boldsymbol{\delta}_{l}-1} A_{\boldsymbol{\delta}_{l},h}
\epsilon^{\mathbf{\Delta}_{l}-\mathbf{d}_{l}+\boldsymbol{\delta}_{l}-1}
\frac{1}{\Gamma( \frac{\mathbf{d}_{l,k}}{k} + \boldsymbol{\delta}_{l}-h)} \int_{0}^{\tau^{k}}
\left. (\tau^{k}-s)^{\frac{\mathbf{d}_{l,k}}{k}+\boldsymbol{\delta}_{l}-h-1}(k^{h}s^{h}
w(s^{1/k},m)) \frac{ds}{s} \right)\\
+ \epsilon^{-1}\frac{1}{\mathbf{P}_{m}(\tau)\Gamma(1 + \frac{1}{k})}\\
\times \int_{0}^{\tau^{k}}
(\tau^{k}-s)^{1/k} \frac{\mathbf{c}_{0,0}(\epsilon)}{(2\pi)^{1/2}}
( \int_{-\infty}^{+\infty} \mathbf{C}_{0,0}(m-m_{1},\epsilon)\mathbf{R}_{0}(im_{1})
w(s^{1/k},m_{1}) dm_{1} )\frac{ds}{s}\\
+ \epsilon^{-1} \mathbf{c}_{\mathbf{F}}(\epsilon) \frac{1}{\mathbf{P}_{m}(\tau)\Gamma(1 + \frac{1}{k})}\int_{0}^{\tau^{k}}
(\tau^{k}-s)^{1/k} w_{k,j}^{p}(s^{1/k},m,\epsilon) \frac{ds}{s}.
\end{multline}
where $w_{k,j}^{p}(\tau,m,\epsilon)$ are the functions constructed in Proposition~\ref{prop6}.

We depart from a lemma which provides estimates of the functions $w_{k,j}^{p}(\tau,m,\epsilon)$ within the Banach
spaces $F_{(\nu,\beta,\mu,k);\bar{\Omega}}$ for suitable starshaped domains $\Omega$.

\begin{lemma}\label{lema4} 1) The function $w_{k,j}^{p}(\tau,m,\epsilon)$ belongs to the space
$F_{(\nu,\beta,\mu,k);\overline{S}_{\mathfrak{e}_{p',p}}}$ and moreover there exists a constant
$C_{\Theta_{\hat{Q}\mu_{1}}^{p},k,k_{1}}>0$ (depending on $q,\alpha,\delta,R_l,\mu,\mu_1,\hat{Q},\tau_0,k_1,k,d_l,\epsilon_0,\Delta_l,r_{Q,R_D},\nu$)) with
\begin{equation}
||w_{k,j}^{p}(\tau,m,\epsilon)||_{(\nu,\beta,\mu,k);\overline{S}_{\mathfrak{e}_{p',p}}} \leq
C_{\Theta_{\hat{Q}\mu_{1}}^{p},k,k_{1}}K_{1}^{j}
||\frac{\psi_{k}(\tau,m,\epsilon)}{P_{m}(\tau)}||_{(k_{1},\beta,\mu,\alpha);\Theta_{\hat{Q}\mu_{1}}^{p}}
\label{compare_norm_qexp_norm_exp_wkj_on_sector}
\end{equation}
for all $j \geq 0$, where the constant $0<K_{1}<1$ is that in (\ref{norm_w_kj_on_triangle}).\\
\noindent 2) The function $w_{k,j}^{p}(\tau,m,\epsilon)$ belongs to the space
$F_{(\nu,\beta,\mu,k);\bar{D}(0,\check{Q}\mu_{0,j})}$ and furthermore there exists a constant
$C_{\bar{D}(0,\check{Q}\mu_{0}),k,k_{1}}>0$ (depending on $q,\alpha,\delta,R_l,\mu,\epsilon_0,\Delta_l,r_{Q,R_D},\gamma,\check{Q},\mu_0,\tau_0,k,k_1,d_l,\nu$) with
\begin{multline}
||w_{k,j}^{p}(\tau,m,\epsilon)||_{(\nu,\beta,\mu,k);\bar{D}(0,\check{Q}\mu_{0,j})} \leq
C_{\bar{D}(0,\check{Q}\mu_{0}),k,k_{1}}K_{2}^{j}(\frac{1}{q^{\delta}})^{\frac{k \min_{l=1}^{D-1}d_{l}}{2}(j+1)j}\\
\times ||\frac{\psi_{k}(\tau,m,\epsilon)}{P_{m}(\tau)}||_{(k_{1},\beta,\mu,\alpha);\bar{D}(0,\check{Q}\mu_{0})}
\label{compare_norm_qexp_norm_exp_wkj_on_disc_j}
\end{multline}
for all $j \geq 0$, where the constant $0<K_{2}<1$ is defined in (\ref{norm_w_kj_on_discs}).\\
\noindent 3) Let $0\le h\le j$. The function $w_{k,j}^{p}(\tau,m,\epsilon)$ belongs to the space
$F_{(\nu,\beta,\mu,k);\overline{S_{\mathfrak{e}_{p',p}}} \cap \bar{D}(0,\hat{Q}\mu_{1,h})}$. There exists $C_{\overline{S_{\mathfrak{e}_{p',p}}} \cap \bar{D}(0,\hat{Q}\mu_{1}),k,k_{1}}>0$ (dep. on $q,\alpha,\delta,R_l,\mu,\epsilon_0,\Delta_l,r_{Q,R_D},\gamma_l,\hat{Q},\check{Q},\mu_0,\mu_1,\tau_0,k,k_1,d_l,\nu$) with
\begin{multline}
||w_{k,j}^{p}(\tau,m,\epsilon)||_{(\nu,\beta,\mu,k);\overline{S_{\mathfrak{e}_{p',p}}} \cap \bar{D}(0,\hat{Q}\mu_{1,h})}\\
\leq C_{\overline{S_{\mathfrak{e}_{p',p}}} \cap \bar{D}(0,\hat{Q}\mu_{1}),k,k_{1}}
(\max(K_{2},K_{3}))^{j}(\frac{1}{q^{\delta}})^{\frac{k \min_{l=1}^{D-1}d_{l}}{2}(h+1)h}\\
\times
\max( ||\frac{\psi_{k}(\tau,m,\epsilon)}{P_{m}(\tau)} ||_{(k_{1},\beta,\mu,\alpha);\bar{D}(0,\check{Q}\mu_{0})},
||\frac{\psi_{k}(\tau,m,\epsilon)}{P_{m}(\tau)} ||_{(k_{1},\beta,\mu,\alpha);\bar{A}_{\hat{Q}\mu_{1}}},
||\frac{\psi_{k}(\tau,m,\epsilon)}{P_{m}(\tau)} ||_{(k_{1},\beta,\mu,\alpha);P\Omega_{0}^{p}} )
\label{compare_norm_qexp_norm_exp_wkj_on_disc_h_and_sector}
\end{multline}
for all $j \geq 0$ and $0 \leq h \leq j$, where the constant $0<K_{3}<1$ is determined in
(\ref{norm_w_kj_on_square_frames}).
\end{lemma}
\begin{proof} We first show 1). By definition of the norms involved we get
\begin{multline}
||w_{k,j}^{p}(\tau,m,\epsilon)||_{(\nu,\beta,\mu,k);\overline{S_{\mathfrak{e}_{p',p}}}}\\
= \sup_{\tau \in \overline{S_{\mathfrak{e}_{p',p}}},m \in \mathbb{R}} \{ (1 + |\tau|^{2k})e^{-\nu |\tau|^{k}}
\exp( \frac{k_{1}}{2} \frac{\log^{2}(|\tau|+\tau_{0})}{\log(q)} + \alpha \log(|\tau|+\tau_{0}) )\}\\
\times \{(1+|m|)^{\mu}e^{\beta|m|}\frac{1}{|\tau|}
\exp( \frac{-k_{1}}{2} \frac{\log^{2}(|\tau|+\tau_{0})}{\log(q)} - \alpha \log(|\tau|+\tau_{0}) )
|w_{k,j}^{p}(\tau,m,\epsilon)|\}\\
\leq \sup_{\tau \in \Theta_{\hat{Q}\mu_{1}}^{p}} \{ (1 + |\tau|^{2k})e^{-\nu |\tau|^{k}}
\exp( \frac{k_{1}}{2} \frac{\log^{2}(|\tau|+\tau_{0})}{\log(q)} + \alpha \log(|\tau|+\tau_{0}) )\}
||w_{k,j}^{p}(\tau,m,\epsilon)||_{(k_{1},\beta,\mu,\alpha);\Theta_{\hat{Q}\mu_{1}}^{p}}
\label{bounds_norm_qexp_wjkp_on_sector}
\end{multline}
since $\overline{S_{\mathfrak{e}_{p',p}}} \subset \Theta_{\hat{Q}\mu_{1}}^{p}$ (see~(\ref{e1374}). Hence, the inequality
(\ref{compare_norm_qexp_norm_exp_wkj_on_sector}) is a consequence of the estimates (\ref{norm_w_kj_on_triangle}).\\
The proof of 2) is analogous to the one given above by replacing the sector
$\overline{S_{\mathfrak{e}_{p',p}}}$ by the disc $\bar{D}(0,\check{Q}\mu_{0,j})$ in the norm estimates
(\ref{bounds_norm_qexp_wjkp_on_sector}) and by using the inequality
(\ref{norm_w_kj_on_discs}) instead of (\ref{norm_w_kj_on_triangle}).\\
We now give the proof for 3). Due to (\ref{constraint_mu1_mu0}) we can make the following splitting
\begin{equation}
\overline{S_{\mathfrak{e}_{p',p}}} \cap \bar{D}(0,\hat{Q}\mu_{1,h}) =
(\overline{S_{\mathfrak{e}_{p',p}}} \cap \bar{D}(0,\check{Q}\mu_{0,j})) \bigcup
(\bigcup_{l=h}^{j} ( \{ \tau \in \mathbb{C} / \check{Q}\mu_{0,l} \leq |\tau| \leq \hat{Q}\mu_{1,l} \} \cap 
\overline{S_{\mathfrak{e}_{p',p}}} ))
\end{equation}
for all $0 \leq h \leq j$. Using a similar factorization as in (\ref{bounds_norm_qexp_wjkp_on_sector}), we deduce a
constant\\ $\hat{C}_{\overline{S_{\mathfrak{e}_{p',p}}} \cap \bar{D}(0,\hat{Q}\mu_{1}),k,k_{1}}>0$ (depending on $q,\alpha,\delta,R_l,\mu,\epsilon_0,\Delta_l,r_{Q,R_D},\gamma_l,\hat{Q},\check{Q},\mu_0,\mu_1,\tau_0,k,k_1,d_l,\nu$)
such that
\begin{multline}
||w_{k,j}^{p}(\tau,m,\epsilon)||_{(\nu,\beta,\mu,k);\overline{S_{\mathfrak{e}_{p',p}}} \cap \bar{D}(0,\hat{Q}\mu_{1,h})}
\leq \hat{C}_{\overline{S_{\mathfrak{e}_{p',p}}} \cap \bar{D}(0,\hat{Q}\mu_{1}),k,k_{1}}\\
\times
\max_{l=h}^{j} \left(
||w_{k,j}^{p}(\tau,m,\epsilon)||_{(k_{1},\beta,\mu,\alpha);\overline{S_{\mathfrak{e}_{p',p}}} \cap \bar{D}(0,\check{Q}\mu_{0,j})},
||w_{k,j}^{p}(\tau,m,\epsilon)||_{(k_{1},\beta,\mu,\alpha);\{ \tau \in \mathbb{C} / \check{Q}\mu_{0,l} \leq |\tau| \leq \hat{Q}\mu_{1,l} \} \cap 
\overline{S_{\mathfrak{e}_{p',p}}} } \right)\\
\leq \hat{C}_{\overline{S_{\mathfrak{e}_{p',p}}} \cap \bar{D}(0,\hat{Q}\mu_{1}),k,k_{1}}
\max_{l=h}^{j} \left(
||w_{k,j}^{p}(\tau,m,\epsilon)||_{(k_{1},\beta,\mu,\alpha);\bar{D}(0,\check{Q}\mu_{0,j})},
||w_{k,j}^{p}(\tau,m,\epsilon)||_{(k_{1},\beta,\mu,\alpha);P\Omega_{l}^{p}} \right)
\end{multline}
by taking into account the inclusions
$$ \overline{S_{\mathfrak{e}_{p',p}}} \cap \bar{D}(0,\check{Q}\mu_{0,j}) \subset 
\bar{D}(0,\check{Q}\mu_{0,j})\ \ , \ \ \{ \tau \in \mathbb{C} / \check{Q}\mu_{0,l} \leq |\tau| \leq \hat{Q}\mu_{1,l} \} \cap
\overline{S_{\mathfrak{e}_{p',p}}} \subset P\Omega_{l}^{p}
$$
for all $h \leq l \leq j$. From the estimates (\ref{norm_w_kj_on_discs}) and
(\ref{norm_w_kj_on_square_frames}), we deduce that
\begin{multline}
||w_{k,j}^{p}(\tau,m,\epsilon)||_{(\nu,\beta,\mu,k);\overline{S_{\mathfrak{e}_{p',p}}} \cap \bar{D}(0,\hat{Q}\mu_{1,h})}
\leq \hat{C}_{\overline{S_{\mathfrak{e}_{p',p}}} \cap \bar{D}(0,\hat{Q}\mu_{1}),k,k_{1}}\\
\times
\max_{l=h}^{j} \left( K_{2}^{j}
(\frac{1}{q^{\delta}})^{\frac{k \min_{l=1}^{D-1}d_{l}}{2}(j+1)j} \right. \\
\times ||\frac{\psi_{k}(\tau,m,\epsilon)}{P_{m}(\tau)}||_{(k_{1},\beta,\mu,\alpha);\bar{D}(0,\check{Q}\mu_{0})},
C_{4}K_{3}^{j}K_{4}^{l}(\frac{1}{q^{\delta}})^{\frac{k (\min_{\ell=1}^{D-1}d_{\ell})}{2}(l+1)l} \\
\times \left.  \max( ||\frac{\psi_{k}(\tau,m,\epsilon)}{P_{m}(\tau)} ||_{(k_{1},\beta,\mu,\alpha);\bar{A}_{\hat{Q}\mu_{1}}},
||\frac{\psi_{k}(\tau,m,\epsilon)}{P_{m}(\tau)} ||_{(k_{1},\beta,\mu,\alpha);P\Omega_{0}^{p}}) \right)\\
\leq \hat{C}_{\overline{S_{\mathfrak{e}_{p',p}}} \cap \bar{D}(0,\hat{Q}\mu_{1}),k,k_{1}}\\
\times
\max( ||\frac{\psi_{k}(\tau,m,\epsilon)}{P_{m}(\tau)}||_{(k_{1},\beta,\mu,\alpha);\bar{D}(0,\check{Q}\mu_{0})},
||\frac{\psi_{k}(\tau,m,\epsilon)}{P_{m}(\tau)} ||_{(k_{1},\beta,\mu,\alpha);\bar{A}_{\hat{Q}\mu_{1}}},
||\frac{\psi_{k}(\tau,m,\epsilon)}{P_{m}(\tau)} ||_{(k_{1},\beta,\mu,\alpha);P\Omega_{0}^{p}} )\\
\times \max( 1, C_{4}) (\max(K_{2},K_{3}))^{j}
(\frac{1}{q^{\delta}})^{\frac{k (\min_{l=1}^{D-1}d_{l})}{2}(h+1)h}
\end{multline}
for all $j \geq 0$ and $0 \leq h \leq j$.
\end{proof}
\begin{lemma}\label{lema5} Let us assume the constraints (\ref{deltaD_deltal_constraints_bf}) hold. Let $j \geq 0$ and
$0 \leq h \leq j$ and let $\Omega$ be one of the
starshaped domains
$S_{\mathfrak{e}_{p',p}} \cap D(0,\hat{Q}\mu_{1,h})$, $D(0,\check{Q}\mu_{0,j})$ or $S_{\mathfrak{e}_{p',p}}$. Then the linear
map $\mathcal{G}_{\epsilon,j}$ satisfies the next properties. We denote
$$ \Omega_{w_{k,j}^{p}} =
\sup_{\epsilon \in D(0,\epsilon_{0})}||w_{k,j}^{p}(\tau,m,\epsilon)||_{(\nu,\beta,\mu,k);\bar{\Omega}},
$$
which turns out to be finite in view of Lemma~\ref{lema4}. For a well chosen (large enough) radius $r_{\mathbf{Q},\mathbf{R}_{\mathbf{D}}}>0$, (small enough) constants $\boldsymbol{\varsigma}_{0,0},\boldsymbol{\varsigma}_{0},\boldsymbol{\varsigma}_{\mathbf{F}}>0$
such that (\ref{varsigma00_C00_cF_small_bf}) holds, there exists $\upsilon>0$ such that\medskip

\noindent {\bf i)} The inclusion
\begin{equation}
\mathcal{G}_{\epsilon,j}(\bar{B}(0,\upsilon \Omega_{w_{k,j}^{p}}))
\subset \bar{B}(0,\upsilon \Omega_{w_{k,j}^{p}} ) \label{G_epsil_j_inclusion}
\end{equation}
holds where $\bar{B}(0,\upsilon \Omega_{w_{k,j}^{p}})$ stands for the closed disc of radius
$\upsilon\Omega_{w_{k,j}^{p}}>0$ centered at 0 in
$F_{(\nu,\beta,\mu,k);\bar{\Omega}}$, for all $\epsilon \in D(0,\epsilon_{0})$.\medskip

\noindent {\bf ii)} We have
\begin{equation}
|| \mathcal{G}_{\epsilon,j}(w_{1}) - \mathcal{G}_{\epsilon,j}(w_{2}) ||_{(\nu,\beta,\mu,k);\bar{\Omega}}
\leq \frac{1}{2} ||w_{1} - w_{2}||_{(\nu,\beta,\mu,k);\bar{\Omega}} \label{G_epsil_j_shrink}
\end{equation}
for all $w_{1},w_{2} \in \bar{B}(0,\upsilon\Omega_{w_{k,j}^{p}})$, and all $\epsilon \in D(0,\epsilon_{0})$.
\end{lemma}
\begin{proof} We first give proof of (\ref{G_epsil_j_inclusion}). Let $\upsilon>0$ and take
$w(\tau,m) \in \bar{B}(0,\upsilon \Omega_{w_{k,j}^{p}})$. For $0 \leq h \leq \boldsymbol{\delta}_{\mathbf{D}}-1$, by means of Proposition~\ref{prop3} and (\ref{low_bounds_P_m_bf}), (\ref{deltaD_deltal_constraints_bf}), we get
\begin{multline}
|| A_{\boldsymbol{\delta}_{\mathbf{D}},h}
\frac{\mathbf{R}_{\mathbf{D}}(im)}{\mathbf{P}_{m}(\tau)\Gamma(\boldsymbol{\delta}_{\mathbf{D}}-h)}\int_{0}^{\tau^{k}}
(\tau^{k}-s)^{\boldsymbol{\delta}_{\mathbf{D}}-h-1}
k^{h} s^{h} w(s^{1/k},m) \frac{ds}{s} ||_{(\nu,\beta,\mu,k);\bar{\Omega}}\\
\leq \frac{ |A_{\boldsymbol{\delta}_{\mathbf{D}},h}| k^{h} E_{2} }{
C_{\mathbf{P}}(r_{\mathbf{Q},\mathbf{R}_{\mathbf{D}}})^{
\frac{1}{(\boldsymbol{\delta}_{\mathbf{D}}-1)k}}\Gamma(\boldsymbol{\delta}_{\mathbf{D}}-h)}
||w(\tau,m)||_{(\nu,\beta,\mu,k);\bar{\Omega}} \leq \frac{ |A_{\boldsymbol{\delta}_{\mathbf{D}},h}| k^{h} E_{2} }{
C_{\mathbf{P}}(r_{\mathbf{Q},\mathbf{R}_{\mathbf{D}}})^{
\frac{1}{(\boldsymbol{\delta}_{\mathbf{D}}-1)k}}\Gamma(\boldsymbol{\delta}_{\mathbf{D}}-h)}
\upsilon \Omega_{w_{k,j}^{p}}. \label{G_epsilon_j_in_ball_1}
\end{multline}
With the help of Proposition~\ref{prop3} and due to the assumptions (\ref{assum_dl_delta_l_Delta_l_bf_Borel}),
(\ref{assum_Q_Rl_bf_Borel}) and (\ref{deltaD_deltal_constraints_bf}), for $1 \leq l \leq \mathbf{D}-1$, we also deduce that
\begin{multline}
|| \frac{\mathbf{R}_{l}(im)}{\mathbf{P}_{m}(\tau)}
\epsilon^{\mathbf{\Delta}_{l}-\mathbf{d}_{l}+\boldsymbol{\delta}_{l}-1}
\frac{1}{\Gamma( \frac{\mathbf{d}_{l,k}}{k} )}
\int_{0}^{\tau^{k}} (\tau^{k}-s)^{\frac{\mathbf{d}_{l,k}}{k}-1}k^{\boldsymbol{\delta}_l}s^{\boldsymbol{\delta}_l}
w(s^{1/k},m) \frac{ds}{s}||_{(\nu,\beta,\mu,k);\bar{\Omega}}\\
\leq \frac{k^{\boldsymbol{\delta}_{l}} E_{2}}{\Gamma( \frac{\mathbf{d}_{l,k}}{k} )
C_{\mathbf{P}}(r_{\mathbf{Q},\mathbf{R}_{\mathbf{D}}})^{
\frac{1}{(\boldsymbol{\delta}_{\mathbf{D}}-1)k}} }
\epsilon_{0}^{\mathbf{\Delta}_{l}-\mathbf{d}_{l}+\boldsymbol{\delta}_{l}-1}
\sup_{m \in \mathbb{R}} \left| \frac{\mathbf{R}_{l}(im)}{\mathbf{R}_{\mathbf{D}}(im)} \right|
||w(\tau,m)||_{(\nu,\beta,\mu,k);\bar{\Omega}}\\
\leq \frac{k^{\boldsymbol{\delta}_{l}} E_{2}}{\Gamma( \frac{\mathbf{d}_{l,k}}{k} )
C_{\mathbf{P}}(r_{\mathbf{Q},\mathbf{R}_{\mathbf{D}}})^{
\frac{1}{(\boldsymbol{\delta}_{\mathbf{D}}-1)k}} }
\epsilon_{0}^{\mathbf{\Delta}_{l}-\mathbf{d}_{l}+\boldsymbol{\delta}_{l}-1}
\sup_{m \in \mathbb{R}} \left| \frac{\mathbf{R}_{l}(im)}{\mathbf{R}_{\mathbf{D}}(im)} \right| \upsilon \Omega_{w_{k,j}^{p}}
\label{G_epsilon_j_in_ball_2}
\end{multline}
together with
\begin{multline}
|| \frac{\mathbf{R}_{l}(im)}{\mathbf{P}_{m}(\tau)}A_{\boldsymbol{\delta}_{l},h}
\epsilon^{\mathbf{\Delta}_{l}-\mathbf{d}_{l}+\boldsymbol{\delta}_{l}-1}
\frac{1}{\Gamma( \frac{\mathbf{d}_{l,k}}{k} + \boldsymbol{\delta}_{l}-h)}\\
\times \int_{0}^{\tau^{k}}
(\tau^{k}-s)^{\frac{\mathbf{d}_{l,k}}{k}+\boldsymbol{\delta}_{l}-h-1}k^{h}s^{h}
w(s^{1/k},m) \frac{ds}{s} ||_{(\nu,\beta,\mu,k);\bar{\Omega}}\\
\leq \frac{|A_{\boldsymbol{\delta}_{l},h}|k^{h}E_{2}}{\Gamma( \frac{\mathbf{d}_{l,k}}{k} + \boldsymbol{\delta}_{l}-h)
C_{\mathbf{P}}(r_{\mathbf{Q},\mathbf{R}_{\mathbf{D}}})^{
\frac{1}{(\boldsymbol{\delta}_{\mathbf{D}}-1)k}} }
\epsilon_{0}^{\mathbf{\Delta}_{l}-\mathbf{d}_{l}+\boldsymbol{\delta}_{l}-1}
\sup_{m \in \mathbb{R}} \left| \frac{\mathbf{R}_{l}(im)}{\mathbf{R}_{\mathbf{D}}(im)} \right|
||w(\tau,m)||_{(\nu,\beta,\mu,k);\bar{\Omega}}\\
\leq \frac{|A_{\boldsymbol{\delta}_{l},h}|k^{h}E_{2}}{\Gamma( \frac{\mathbf{d}_{l,k}}{k} + \boldsymbol{\delta}_{l}-h)
C_{\mathbf{P}}(r_{\mathbf{Q},\mathbf{R}_{\mathbf{D}}})^{
\frac{1}{(\boldsymbol{\delta}_{\mathbf{D}}-1)k}} }
\epsilon_{0}^{\mathbf{\Delta}_{l}-\mathbf{d}_{l}+\boldsymbol{\delta}_{l}-1}
\sup_{m \in \mathbb{R}} \left| \frac{\mathbf{R}_{l}(im)}{\mathbf{R}_{\mathbf{D}}(im)} \right| \upsilon \Omega_{w_{k,j}^{p}}
\label{G_epsilon_j_in_ball_3}
\end{multline}
for all $1\le l\le \mathbf{D}$, $1 \leq h \leq \boldsymbol{\delta}_{l}-1$. Using Proposition 4 under the hypothesis
(\ref{assum_Q_Rl_bf_Borel}), we can write
\begin{multline}
|| \epsilon^{-1}\frac{1}{\mathbf{P}_{m}(\tau)\Gamma(1 + \frac{1}{k})}\\
\times \int_{0}^{\tau^{k}}
(\tau^{k}-s)^{1/k} \frac{\mathbf{c}_{0,0}(\epsilon)}{(2\pi)^{1/2}}
( \int_{-\infty}^{+\infty} \mathbf{C}_{0,0}(m-m_{1},\epsilon)\mathbf{R}_{0}(im_{1})
w(s^{1/k},m_{1}) dm_{1} )\frac{ds}{s} ||_{(\nu,\beta,\mu,k);\bar{\Omega}}\\
\leq \frac{\boldsymbol{\varsigma}_{0,0}\boldsymbol{\varsigma}_{0}E_{3}}{(2\pi)^{1/2}\Gamma(1 + \frac{1}{k})
C_{\mathbf{P}}(r_{\mathbf{Q},\mathbf{R}_{\mathbf{D}}})^{
\frac{1}{(\boldsymbol{\delta}_{\mathbf{D}}-1)k}} }||w(\tau,m)||_{(\nu,\beta,\mu,k);\bar{\Omega}}\\
\leq \frac{\boldsymbol{\varsigma}_{0,0}\boldsymbol{\varsigma}_{0}E_{3}}{(2\pi)^{1/2}\Gamma(1 + \frac{1}{k})
C_{\mathbf{P}}(r_{\mathbf{Q},\mathbf{R}_{\mathbf{D}}})^{
\frac{1}{(\boldsymbol{\delta}_{\mathbf{D}}-1)k}} } \upsilon \Omega_{w_{k,j}^{p}}. \label{G_epsilon_j_in_ball_4}
\end{multline}
Finally, according again to Proposition 3, we obtain
\begin{multline}
|| \epsilon^{-1} \mathbf{c}_{\mathbf{F}}(\epsilon) \frac{1}{\mathbf{P}_{m}(\tau)\Gamma(1 + \frac{1}{k})}\int_{0}^{\tau^{k}}
(\tau^{k}-s)^{1/k} w_{k,j}^{p}(s^{1/k},m,\epsilon) \frac{ds}{s} ||_{(\nu,\beta,\mu,k);\bar{\Omega}}\\
\leq \frac{\boldsymbol{\varsigma}_{\mathbf{F}}E_{2}}{\Gamma(1 + \frac{1}{k})
C_{\mathbf{P}}(r_{\mathbf{Q},\mathbf{R}_{\mathbf{D}}})^{
\frac{1}{(\boldsymbol{\delta}_{\mathbf{D}}-1)k}} \mathrm{min}_{m \in \mathbb{R}} |\mathbf{R}_{\mathbf{D}}(im)| }
||w_{k,j}^{p}(\tau,m,\epsilon)||_{(\nu,\beta,\mu,k);\bar{\Omega}}\\
\leq \frac{\boldsymbol{\varsigma}_{\mathbf{F}}E_{2}}{\Gamma(1 + \frac{1}{k})
C_{\mathbf{P}}(r_{\mathbf{Q},\mathbf{R}_{\mathbf{D}}})^{
\frac{1}{(\boldsymbol{\delta}_{\mathbf{D}}-1)k}} \mathrm{min}_{m \in \mathbb{R}} |\mathbf{R}_{\mathbf{D}}(im)| }
\Omega_{w_{k,j}^{p}}. \label{G_epsilon_j_in_ball_5}
\end{multline}
We select now $\upsilon,r_{\mathbf{Q},\mathbf{R}_{\mathbf{D}}}>0$ and
$\boldsymbol{\varsigma}_{\mathbf{F}}>0$ in such a way that
\begin{multline}
\sum_{1 \leq h \leq \boldsymbol{\delta}_{\mathbf{D}}-1} \frac{ |A_{\boldsymbol{\delta}_{\mathbf{D}},h}| k^{h} E_{2} }{
C_{\mathbf{P}}(r_{\mathbf{Q},\mathbf{R}_{\mathbf{D}}})^{
\frac{1}{(\boldsymbol{\delta}_{\mathbf{D}}-1)k}}\Gamma(\boldsymbol{\delta}_{\mathbf{D}}-h)} \upsilon\\
+ \sum_{l=1}^{\mathbf{D}-1} \frac{k^{\boldsymbol{\delta}_{l}} E_{2}}{\Gamma( \frac{\mathbf{d}_{l,k}}{k} )
C_{\mathbf{P}}(r_{\mathbf{Q},\mathbf{R}_{\mathbf{D}}})^{
\frac{1}{(\boldsymbol{\delta}_{\mathbf{D}}-1)k}} }
\epsilon_{0}^{\mathbf{\Delta}_{l}-\mathbf{d}_{l}+\boldsymbol{\delta}_{l}-1}
\sup_{m \in \mathbb{R}} \left| \frac{\mathbf{R}_{l}(im)}{\mathbf{R}_{\mathbf{D}}(im)} \right| \upsilon\\
+ \sum_{l=1}^{\mathbf{D}-1} \sum_{1 \leq h \leq \boldsymbol{\delta}_{l}-1}
\frac{|A_{\boldsymbol{\delta}_{l},h}|k^{h}E_{2}}{\Gamma( \frac{\mathbf{d}_{l,k}}{k} + \boldsymbol{\delta}_{l}-h)
C_{\mathbf{P}}(r_{\mathbf{Q},\mathbf{R}_{\mathbf{D}}})^{
\frac{1}{(\boldsymbol{\delta}_{\mathbf{D}}-1)k}} }
\epsilon_{0}^{\mathbf{\Delta}_{l}-\mathbf{d}_{l}+\boldsymbol{\delta}_{l}-1}
\sup_{m \in \mathbb{R}} \left| \frac{\mathbf{R}_{l}(im)}{\mathbf{R}_{\mathbf{D}}(im)} \right| \upsilon\\
+ \frac{\boldsymbol{\varsigma}_{0,0}\boldsymbol{\varsigma}_{0}E_{3}}{(2\pi)^{1/2}\Gamma(1 + \frac{1}{k})
C_{\mathbf{P}}(r_{\mathbf{Q},\mathbf{R}_{\mathbf{D}}})^{
\frac{1}{(\boldsymbol{\delta}_{\mathbf{D}}-1)k}} } \upsilon\\
+ \frac{\boldsymbol{\varsigma}_{\mathbf{F}}E_{2}}{\Gamma(1 + \frac{1}{k})
C_{\mathbf{P}}(r_{\mathbf{Q},\mathbf{R}_{\mathbf{D}}})^{
\frac{1}{(\boldsymbol{\delta}_{\mathbf{D}}-1)k}} \mathrm{min}_{m \in \mathbb{R}} |R_{D}(im)| } \leq \upsilon.
\label{constraint_G_epsilon_j_in_ball}
\end{multline}
Gathering all the norm estimates (\ref{G_epsilon_j_in_ball_1}), (\ref{G_epsilon_j_in_ball_2}),
(\ref{G_epsilon_j_in_ball_3}), (\ref{G_epsilon_j_in_ball_4}) and (\ref{G_epsilon_j_in_ball_5})
under the constraint (\ref{constraint_G_epsilon_j_in_ball}), one gets (\ref{G_epsil_j_inclusion}).\medskip

We now check the second property (\ref{G_epsil_j_shrink}). Let $w_{1}(\tau,m),w_{2}(\tau,m)$ be in
$\bar{B}(0,\upsilon\Omega_{w_{k,j}^{p}})$. From the above estimates (\ref{G_epsilon_j_in_ball_1}), (\ref{G_epsilon_j_in_ball_2}),
(\ref{G_epsilon_j_in_ball_3}), (\ref{G_epsilon_j_in_ball_4}) and (\ref{G_epsilon_j_in_ball_5}), one can affirm that
\begin{multline}
|| A_{\boldsymbol{\delta}_{\mathbf{D}},h}
\frac{\mathbf{R}_{\mathbf{D}}(im)}{\mathbf{P}_{m}(\tau)\Gamma(\boldsymbol{\delta}_{\mathbf{D}}-h)}\int_{0}^{\tau^{k}}
(\tau^{k}-s)^{\boldsymbol{\delta}_{\mathbf{D}}-h-1}
k^{h} s^{h}\\
\times (w_{1}(s^{1/k},m)-w_{2}(s^{1/k},m)) \frac{ds}{s} ||_{(\nu,\beta,\mu,k);\bar{\Omega}}\\
\leq \frac{ |A_{\boldsymbol{\delta}_{\mathbf{D}},h}| k^{h} E_{2} }{
C_{\mathbf{P}}(r_{\mathbf{Q},\mathbf{R}_{\mathbf{D}}})^{
\frac{1}{(\boldsymbol{\delta}_{\mathbf{D}}-1)k}}\Gamma(\boldsymbol{\delta}_{\mathbf{D}}-h)}
||w_{1}(\tau,m)-w_{2}(\tau,m)||_{(\nu,\beta,\mu,k);\bar{\Omega}} \label{G_epsilon_j_shrink_1}
\end{multline}
for all $1 \leq h \leq \boldsymbol{\delta}_{\mathbf{D}}-1$, 
\begin{multline}
|| \frac{\mathbf{R}_{l}(im)}{\mathbf{P}_{m}(\tau)}
\epsilon^{\mathbf{\Delta}_{l}-\mathbf{d}_{l}+\boldsymbol{\delta}_{l}-1}
\frac{1}{\Gamma( \frac{\mathbf{d}_{l,k}}{k} )}
\int_{0}^{\tau^{k}} (\tau^{k}-s)^{\frac{\mathbf{d}_{l,k}}{k}-1}k^{\boldsymbol{\delta}_l}s^{\boldsymbol{\delta}_l}\\
\times (w_{1}(s^{1/k},m) - w_{2}(s^{1/k},m)) \frac{ds}{s}||_{(\nu,\beta,\mu,k);\bar{\Omega}}\\
\leq \frac{k^{\boldsymbol{\delta}_{l}} E_{2}}{\Gamma( \frac{\mathbf{d}_{l,k}}{k} )
C_{\mathbf{P}}(r_{\mathbf{Q},\mathbf{R}_{\mathbf{D}}})^{
\frac{1}{(\boldsymbol{\delta}_{\mathbf{D}}-1)k}} }
\epsilon_{0}^{\mathbf{\Delta}_{l}-\mathbf{d}_{l}+\boldsymbol{\delta}_{l}-1}
\sup_{m \in \mathbb{R}} \left| \frac{\mathbf{R}_{l}(im)}{\mathbf{R}_{\mathbf{D}}(im)} \right|
||w_{1}(\tau,m) - w_{2}(\tau,m)||_{(\nu,\beta,\mu,k);\bar{\Omega}}\\
\label{G_epsilon_j_shrink_2}
\end{multline}
for all $1 \leq l \leq \mathbf{D}-1$ with
\begin{multline}
|| \frac{\mathbf{R}_{l}(im)}{\mathbf{P}_{m}(\tau)}A_{\boldsymbol{\delta}_{l},h}
\epsilon^{\mathbf{\Delta}_{l}-\mathbf{d}_{l}+\boldsymbol{\delta}_{l}-1}
\frac{1}{\Gamma( \frac{\mathbf{d}_{l,k}}{k} + \boldsymbol{\delta}_{l}-h)}\\
\times \int_{0}^{\tau^{k}}
(\tau^{k}-s)^{\frac{\mathbf{d}_{l,k}}{k}+\boldsymbol{\delta}_{l}-h-1} k^{h}s^{h}
(w_{1}(s^{1/k},m)-w_{2}(s^{1/k},m)) \frac{ds}{s} ||_{(\nu,\beta,\mu,k);\bar{\Omega}}\\
\leq \frac{|A_{\boldsymbol{\delta}_{l},h}|k^{h}E_{2}}{\Gamma( \frac{\mathbf{d}_{l,k}}{k} + \boldsymbol{\delta}_{l}-h)
C_{\mathbf{P}}(r_{\mathbf{Q},\mathbf{R}_{\mathbf{D}}})^{
\frac{1}{(\boldsymbol{\delta}_{\mathbf{D}}-1)k}} }
\epsilon_{0}^{\mathbf{\Delta}_{l}-\mathbf{d}_{l}+\boldsymbol{\delta}_{l}-1}
\sup_{m \in \mathbb{R}} \left| \frac{\mathbf{R}_{l}(im)}{\mathbf{R}_{\mathbf{D}}(im)} \right|\\
\times ||w_{1}(\tau,m) - w_{2}(\tau,m)||_{(\nu,\beta,\mu,k);\bar{\Omega}}\\
\label{G_epsilon_j_shrink_3}
\end{multline}
for all $1 \leq l \leq \mathbf{D}-1$, $1 \leq h \leq \boldsymbol{\delta}_{l}-1$ and
\begin{multline}
|| \epsilon^{-1}\frac{1}{\mathbf{P}_{m}(\tau)\Gamma(1 + \frac{1}{k})}\\
\times \int_{0}^{\tau^{k}}
(\tau^{k}-s)^{1/k} \frac{\mathbf{c}_{0,0}(\epsilon)}{(2\pi)^{1/2}}
( \int_{-\infty}^{+\infty} \mathbf{C}_{0,0}(m-m_{1},\epsilon)\mathbf{R}_{0}(im_{1})\\
\times (w_{1}(s^{1/k},m_{1}) - w_{2}(s^{1/k},m_{1})) dm_{1} )\frac{ds}{s} ||_{(\nu,\beta,\mu,k);\bar{\Omega}}\\
\leq \frac{\boldsymbol{\varsigma}_{0,0}\boldsymbol{\varsigma}_{0}E_{3}}{(2\pi)^{1/2}\Gamma(1 + \frac{1}{k})
C_{\mathbf{P}}(r_{\mathbf{Q},\mathbf{R}_{\mathbf{D}}})^{
\frac{1}{(\boldsymbol{\delta}_{\mathbf{D}}-1)k}} }||w_{1}(\tau,m) - w_{2}(\tau,m)||_{(\nu,\beta,\mu,k);\bar{\Omega}}
. \label{G_epsilon_j_shrink_4}
\end{multline}
We fix $r_{\mathbf{Q},\mathbf{R}_{\mathbf{D}}}>0$ such that
inequality
\begin{multline}
\sum_{1 \leq h \leq \boldsymbol{\delta}_{\mathbf{D}}-1} \frac{ |A_{\boldsymbol{\delta}_{\mathbf{D}},h}| k^{h} E_{2} }{
C_{\mathbf{P}}(r_{\mathbf{Q},\mathbf{R}_{\mathbf{D}}})^{
\frac{1}{(\boldsymbol{\delta}_{\mathbf{D}}-1)k}}\Gamma(\boldsymbol{\delta}_{\mathbf{D}}-h)}\\
+ \sum_{l=1}^{\mathbf{D}-1} \frac{k^{\boldsymbol{\delta}_{l}} E_{2}}{\Gamma( \frac{\mathbf{d}_{l,k}}{k} )
C_{\mathbf{P}}(r_{\mathbf{Q},\mathbf{R}_{\mathbf{D}}})^{
\frac{1}{(\boldsymbol{\delta}_{\mathbf{D}}-1)k}} }
\epsilon_{0}^{\mathbf{\Delta}_{l}-\mathbf{d}_{l}+\boldsymbol{\delta}_{l}-1}
\sup_{m \in \mathbb{R}} \left| \frac{\mathbf{R}_{l}(im)}{\mathbf{R}_{\mathbf{D}}(im)} \right| \\
+ \sum_{l=1}^{\mathbf{D}-1} \sum_{1 \leq h \leq \boldsymbol{\delta}_{l}-1}
\frac{|A_{\boldsymbol{\delta}_{l},h}|k^{h}E_{2}}{\Gamma( \frac{\mathbf{d}_{l,k}}{k} + \boldsymbol{\delta}_{l}-h)
C_{\mathbf{P}}(r_{\mathbf{Q},\mathbf{R}_{\mathbf{D}}})^{
\frac{1}{(\boldsymbol{\delta}_{\mathbf{D}}-1)k}} }
\epsilon_{0}^{\mathbf{\Delta}_{l}-\mathbf{d}_{l}+\boldsymbol{\delta}_{l}-1}
\sup_{m \in \mathbb{R}} \left| \frac{\mathbf{R}_{l}(im)}{\mathbf{R}_{\mathbf{D}}(im)} \right| \\
+ \frac{\boldsymbol{\varsigma}_{0,0}\boldsymbol{\varsigma}_{0}E_{3}}{(2\pi)^{1/2}\Gamma(1 + \frac{1}{k})
C_{\mathbf{P}}(r_{\mathbf{Q},\mathbf{R}_{\mathbf{D}}})^{
\frac{1}{(\boldsymbol{\delta}_{\mathbf{D}}-1)k}} } \leq \frac{1}{2}
\label{constraint_G_epsilon_j_shrink}
\end{multline}
holds. Bearing in mind the above estimates (\ref{G_epsilon_j_shrink_1}), (\ref{G_epsilon_j_shrink_2}),
(\ref{G_epsilon_j_shrink_3}) and (\ref{G_epsilon_j_shrink_4}) under the restriction
(\ref{constraint_G_epsilon_j_shrink}), we deduce (\ref{G_epsil_j_shrink}).

Finally, we choose $\upsilon,r_{\mathbf{Q},\mathbf{R}_{\mathbf{D}}}>0$ and
$\boldsymbol{\varsigma}_{\mathbf{F}}>0$ such that both
(\ref{constraint_G_epsilon_j_in_ball}) and (\ref{constraint_G_epsilon_j_shrink}) are fulfilled. This concludes the proof.
\end{proof}
Let $j \geq 0$ and $0 \leq h \leq j$ and $\Omega$ be one of the
starshaped domains
$S_{\mathfrak{e}_{p',p}} \cap D(0,\hat{Q}\mu_{1,h})$, $D(0,\check{Q}\mu_{0,j})$ or $S_{\mathfrak{e}_{p',p}}$. We consider
the ball $\bar{B}(0,\upsilon \Omega_{w_{k,j}^{p}}) \subset
F_{(\nu,\beta,\mu,k);\bar{\Omega}}$ constructed in Lemma~\ref{lema5} which is a complete
metric space for the norm $||.||_{(\nu,\beta,\mu,k);\bar{\Omega}}$. We get that
$\mathcal{G}_{\epsilon,j}$ is a
contractive map from $\bar{B}(0,\upsilon \Omega_{w_{k,j}^{p}})$ into itself. Due to the classical contractive
mapping theorem, we deduce that
the map $\mathcal{G}_{\epsilon,j}$ has a unique fixed point denoted
$v_{k,j}^{\mathfrak{e}_{p',p},\mathfrak{d}_{p}}(\tau,m,\epsilon)$ meaning that
\begin{equation}
\mathcal{G}_{\epsilon,j}(v_{k,j}^{\mathfrak{e}_{p',p},\mathfrak{d}_{p}}(\tau,m,\epsilon)) =
v_{k,j}^{\mathfrak{e}_{p',p},\mathfrak{d}_{p}}(\tau,m,\epsilon) \label{fixed_point_G_epsilon_j}
\end{equation}
that belongs to $\bar{B}(0,\upsilon \Omega_{w_{k,j}^{p}})$, for all $\epsilon \in D(0,\epsilon_{0})$.
Moreover, the function
$v_{k,j}^{\mathfrak{e}_{p',p},\mathfrak{d}_{p}}(\tau,m,\epsilon)$ depends holomorphically
on $\epsilon$ in $D(0,\epsilon_{0})$. In particular, from the estimates
(\ref{compare_norm_qexp_norm_exp_wkj_on_sector}) we deduce that
\begin{multline}
||v_{k,j}^{\mathfrak{e}_{p',p},\mathfrak{d}_{p}}(\tau,m,\epsilon)||_{(\nu,\beta,\mu,k);\overline{S_{\mathfrak{e}_{p',p}}}}
\leq \upsilon \sup_{\epsilon \in D(0,\epsilon_{0})}
||w_{k,j}^{p}(\tau,m,\epsilon)||_{(\nu,\beta,\mu,k);\overline{S_{\mathfrak{e}_{p',p}}}}\\
\leq \upsilon C_{\Theta_{\hat{Q}\mu_{1}}^{p},k,k_{1}}K_{1}^{j}
\sup_{\epsilon \in D(0,\epsilon_{0})}
||\frac{\psi_{k}(\tau,m,\epsilon)}{P_{m}(\tau)}||_{(k_{1},\beta,\mu,\alpha);\Theta_{\hat{Q}\mu_{1}}^{p}}
\label{norm_exp_v_kj_ep'_dp_on_unbounded_sectors}
\end{multline}
for all $j \geq 0$, all $\epsilon \in D(0,\epsilon_{0})$. Furthermore, regarding (\ref{compare_norm_qexp_norm_exp_wkj_on_disc_j}), we also get that
\begin{multline}
||v_{k,j}^{\mathfrak{e}_{p',p},\mathfrak{d}_{p}}(\tau,m,\epsilon)||_{(\nu,\beta,\mu,k);\bar{D}(0,\check{Q}\mu_{0,j})}
\leq \upsilon \sup_{\epsilon \in D(0,\epsilon_{0})}
||w_{k,j}^{p}(\tau,m,\epsilon)||_{(\nu,\beta,\mu,k);\bar{D}(0,\check{Q}\mu_{0,j})}\\
\leq \upsilon C_{\bar{D}(0,\check{Q}\mu_{0}),k,k_{1}}
K_{2}^{j}(\frac{1}{q^{\delta}})^{\frac{k \min_{l=1}^{D-1}d_{l}}{2}(j+1)j} \sup_{\epsilon \in D(0,\epsilon_{0})}
||\frac{\psi_{k}(\tau,m,\epsilon)}{P_{m}(\tau)}||_{(k_{1},\beta,\mu,\alpha);\bar{D}(0,\check{Q}\mu_{0})}
\end{multline}
for all $j \geq 0$, all $\epsilon \in D(0,\epsilon_{0})$. In addition to that, as a consequence of
(\ref{compare_norm_qexp_norm_exp_wkj_on_disc_h_and_sector}), we obtain
\begin{multline}
||v_{k,j}^{\mathfrak{e}_{p',p},\mathfrak{d}_{p}}(\tau,m,\epsilon)
||_{(\nu,\beta,\mu,k);\overline{S_{\mathfrak{e}_{p',p}}} \cap \bar{D}(0,\hat{Q}\mu_{1,h})}
\leq \upsilon \sup_{\epsilon \in D(0,\epsilon_{0})}
||w_{k,j}^{p}(\tau,m,\epsilon)||_{(\nu,\beta,\mu,k);\overline{S_{\mathfrak{e}_{p',p}}} \cap \bar{D}(0,\hat{Q}\mu_{1,h})}\\
\leq \upsilon C_{\overline{S_{\mathfrak{e}_{p',p}}} \cap \bar{D}(0,\hat{Q}\mu_{1}),k,k_{1}}
(\max(K_{2},K_{3}))^{j}(\frac{1}{q^{\delta}})^{\frac{k \min_{l=1}^{D-1}d_{l}}{2}(h+1)h}\\
\times \left\{\sup_{\epsilon \in D(0,\epsilon_{0})}
\max( ||\frac{\psi_{k}(\tau,m,\epsilon)}{P_{m}(\tau)} ||_{(k_{1},\beta,\mu,\alpha);\bar{D}(0,\check{Q}\mu_{0})},
||\frac{\psi_{k}(\tau,m,\epsilon)}{P_{m}(\tau)} ||_{(k_{1},\beta,\mu,\alpha);\bar{A}_{\hat{Q}\mu_{1}}},\right.\\
\left.||\frac{\psi_{k}(\tau,m,\epsilon)}{P_{m}(\tau)} ||_{(k_{1},\beta,\mu,\alpha);P\Omega_{0}^{p}} )\right\}
\end{multline}
for all $j \geq 0$, all $0 \leq h \leq j$ and all $\epsilon \in D(0,\epsilon_{0})$.\medskip

We recall (see Proposition~\ref{prop6}) that the function $w_{k}^{p}(\tau,m,\epsilon)$ can be written as the sum
$w_{k}^{p}(\tau,m,\epsilon) = \sum_{j \geq 0} w_{k,j}^{p}(\tau,m,\epsilon)$, which converges in the Banach space
$\mathrm{Exp}_{(k_{1},\beta,\mu,\alpha);\Theta_{\hat{Q}\mu_{1}}^{p}}^q$. From the estimates
(\ref{compare_norm_qexp_norm_exp_wkj_on_sector}), we can affirm that this sum also converges in the Banach space
$F_{(\nu,\beta,\mu,k);\overline{S_{\mathfrak{e}_{p',p}}}}$. On the other hand, we can define
\begin{equation}
v_{k}^{\mathfrak{e}_{p',p},\mathfrak{d}_{p}}(\tau,m,\epsilon) := \sum_{j \geq 0}
v_{k,j}^{\mathfrak{e}_{p',p},\mathfrak{d}_{p}}(\tau,m,\epsilon), 
\end{equation}
which turn out to be a convergent series in the Banach space $F_{(\nu,\beta,\mu,k);\overline{S_{\mathfrak{e}_{p',p}}}}$
as above due to the upper bounds (\ref{norm_exp_v_kj_ep'_dp_on_unbounded_sectors}). Furthermore, from
(\ref{fixed_point_G_epsilon_j}), we see that
$v_{k}^{\mathfrak{e}_{p',p},\mathfrak{d}_{p}}(\tau,m,\epsilon)$ satisfies the convolution equation
$$  v_{k}^{\mathfrak{e}_{p',p},\mathfrak{d}_{p}}(\tau,m,\epsilon) =
\sum_{j \geq 0} \mathcal{G}_{\epsilon,j}(v_{k,j}^{\mathfrak{e}_{p',p},\mathfrak{d}_{p}}(\tau,m,\epsilon))
$$
which entails that $v_{k}^{\mathfrak{e}_{p',p},\mathfrak{d}_{p}}$ solves the problem
(\ref{k_Borel_equation_analytic_bf}) in the Banach space
$F_{(\nu,\beta,\mu,k);\overline{S_{\mathfrak{e}_{p',p}}}}$. Proposition~\ref{prop9} follows from here.
\end{proof}

\section{Analytic solutions of a differential initial value Cauchy problem}\label{seccion9}

Let $k \geq 1$ be the integer defined above in Section~\ref{seccion41} and let $\mathbf{D} \geq 2$ be an integer. For

$1 \leq l \leq \mathbf{D}$, let
$\mathbf{d}_{l}$,$\boldsymbol{\delta}_{l}$,$\mathbf{\Delta}_{l} \geq 0$ be nonnegative integers.
We assume that 
\begin{equation}
1 = \boldsymbol{\delta}_{1} \ \ , \ \ \boldsymbol{\delta}_{l} < \boldsymbol{\delta}_{l+1},  \label{assum_delta_l_bf}
\end{equation}
for all $1 \leq l \leq \mathbf{D}-1$. We also make the assumption that
\begin{equation}
\mathbf{d}_{\mathbf{D}} = (\boldsymbol{\delta}_{\mathbf{D}}-1)(k+1) \ \ , \ \
\mathbf{d}_{l} > (\boldsymbol{\delta}_{l}-1)(k+1) \ \ , \ \
\mathbf{\Delta}_{l} - \mathbf{d}_{l} + \boldsymbol{\delta}_{l} - 1 \geq 0 \ \ , \ \
\mathbf{\Delta}_{\mathbf{D}} = \mathbf{d}_{\mathbf{D}} - \boldsymbol{\delta}_{\mathbf{D}} + 1
\label{assum_dl_delta_l_Delta_l_bf}
\end{equation}
for all $1 \leq l \leq \mathbf{D}-1$. We define $\boldsymbol{d}_{l,k}$ in the form (\ref{defin_d_l_k_bold_Borel}), for all $1 \leq l \leq \mathbf{D}-1$. Let $\mathbf{Q}(X),\mathbf{R}_{l}(X) \in \mathbb{C}[X]$,
$0 \leq l \leq \mathbf{D}$, be polynomials such that
\begin{equation}
\mathrm{deg}(\mathbf{Q}) \geq \mathrm{deg}(\mathbf{R}_{\mathbf{D}}) \geq \mathrm{deg}(\mathbf{R}_{l}) \ \ , \ \
\mathbf{Q}(im) \neq 0 \ \ , \ \ \mathbf{R}_{\mathbf{D}}(im) \neq 0 \label{assum_Q_Rl_bf}
\end{equation}
for all $m \in \mathbb{R}$, all $0 \leq l \leq \mathbf{D}-1$. We assume these polynomials satisfy the conditions stated at the beginning of Section~\ref{seccion8}. Let $\beta,\mu>0$ be the integers considered in Section~\ref{seccion8}.
We consider a function $m \mapsto \mathbf{C}_{0,0}(m,\epsilon)$ that belong to the Banach space $E_{(\beta,\mu)}$ and which
depends holomorphically on $\epsilon \in D(0,\epsilon_{0})$. Let $\mathbf{c}_{0,0}(\epsilon)$ and
$\mathbf{c}_{\mathbf{F}}(\epsilon)$ be bounded holomorphic functions on $D(0,\epsilon_{0})$ which vanish at the origin
$\epsilon=0$.

For all $0 \leq p \leq \varsigma-1$, we consider the sector $U_{\mathfrak{d}_{p},\theta,\epsilon_{0}r_{\mathcal{T}}}$
described in Definition~\ref{defi7} and we set the function
\begin{equation}\label{e1963}
U_{p}(T,m,\epsilon) = k\int_{L_{\gamma_{p}}} w_{k}^{p}(u,m,\epsilon) e^{-(\frac{u}{T})^{k}} \frac{du}{u} 
\end{equation}
for all $T \in U_{\mathfrak{d}_{p},\theta,\epsilon_{0}r_{\mathcal{T}}}$, $m \in \mathbb{R}$ and
$\epsilon \in \mathcal{E}_{p}$, which is the $m_{k}-$Laplace transform along a halfline
$L_{\gamma_{p}} = \mathbb{R}_{+}e^{\sqrt{-1}\gamma_{p}} \subset U_{\mathfrak{d}_{p}} \cup \{ 0 \}$
of the function $w_{k}^{p}(u,m,\epsilon)$
constructed in Proposition~\ref{prop6} that solves the problem (\ref{q_diff_conv_init_v_prob}). Provided the construction and geometric assumptions on the polynomial $\mathbf{P}_m(\tau)$ stated in Section~\ref{seccion8} hold, for every $0\le p\le \varsigma -1$, we consider a finite set of real numbers $\mathfrak{e}_{p',p}$, $0 \leq p' \leq \chi_p-1$ and a family of unbounded
sectors $S_{\mathfrak{e}_{p',p}}$ centered at 0 with bisecting
direction $\mathfrak{e}_{p',p}$, a small closed disc $\bar{D}(0,\rho)$ in such a way that the sector $S_{\mathbf{Q},\mathbf{R}_{\mathbf{D}}}$ satisfies the geometric assumptions 1), 2) and 3) in Section~\ref{seccion8}. We consider the next
linear initial value problem
\begin{multline}
\mathbf{Q}(im)(\partial_{T}Y_{\mathfrak{e}_{p',p},\mathfrak{d}_{p}}(T,m,\epsilon) ) =
\sum_{l=1}^{\mathbf{D}} \mathbf{R}_{l}(im) \epsilon^{\mathbf{\Delta}_{l} - \mathbf{d}_{l} + \boldsymbol{\delta}_{l} - 1}
T^{\mathbf{d}_{l}} \partial_{T}^{\boldsymbol{\delta}_l}Y_{\mathfrak{e}_{p',p},\mathfrak{d}_{p}}(T,m,\epsilon)\\
+ \epsilon^{-1}\frac{\mathbf{c}_{0,0}(\epsilon)}{(2\pi)^{1/2}}\int_{-\infty}^{+\infty}\mathbf{C}_{0,0}(m-m_{1},\epsilon)
\mathbf{R}_{0}(im_{1})Y_{\mathfrak{e}_{p',p},\mathfrak{d}_{p}}(T,m_{1},\epsilon) dm_{1}
+ \epsilon^{-1}\mathbf{c}_{\mathbf{F}}(\epsilon)U_{p}(T,m,\epsilon)
\label{SCP_bf}
\end{multline}
for given initial data $Y_{\mathfrak{e}_{p',p},\mathfrak{d}_{p}}(0,m,\epsilon) = 0$.

\begin{prop}\label{prop1982}
Let $0\le p\le \varsigma-1$ and $0\le p'\le \chi_p-1$. Assume that (\ref{deltaD_deltal_constraints_bf}) holds for all $1\le l\le \mathbf{D}-1$, and $r_{\mathbf{Q},\mathbf{R}_{\mathbf{D}}},\boldsymbol{\varsigma}_{0,0},\boldsymbol{\varsigma}_{0},\boldsymbol{\varsigma}_{F}>0$ are small constants such that (\ref{varsigma00_C00_cF_small_bf}) holds. Then, the problem (\ref{SCP_bf}) under initial data $Y_{\mathfrak{e}_{p',p},\mathfrak{d}_{p}}(0,m,\epsilon)=0$ admits a solution $Y_{\mathfrak{e}_{p',p},\mathfrak{d}_{p}}(T,m,\epsilon)$, holomorphic and bounded with respect to $T$ in the sector $S_{\mathfrak{e}_{p',p},\theta, R^{1/k}}$ (see (\ref{e690})), where $\frac{\pi}{k}<\theta<\frac{\pi}{k}+2\delta$, for some $\delta$, and some $R>0$, continuous with respect to $m$ in $\mathbb{R}$, and holomorphic with respect to $\epsilon$ in $D(0,\epsilon_0)$. 
\end{prop}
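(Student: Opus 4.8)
The plan is to obtain $Y_{\mathfrak{e}_{p',p},\mathfrak{d}_{p}}$ as the $m_{k}$-Laplace transform, in the variable $\tau$, of the Borel-plane solution $v_{k}^{\mathfrak{e}_{p',p},\mathfrak{d}_{p}}(\tau,m,\epsilon)$ furnished by Proposition~\ref{prop9}, and then to check that it satisfies (\ref{SCP_bf}) by transporting the convolution equation (\ref{k_Borel_equation_analytic_bf}) through that transform with the operational rules of Proposition~\ref{prop7}.

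By Proposition~\ref{prop9}, $v_{k}^{\mathfrak{e}_{p',p},\mathfrak{d}_{p}}(\cdot,m,\epsilon)\in F_{(\nu,\beta,\mu,k);\overline{S_{\mathfrak{e}_{p',p}}}}$ for all $m\in\mathbb{R}$, $\epsilon\in D(0,\epsilon_{0})$, so by Definition~\ref{defi3} there is $C>0$ with
$$|v_{k}^{\mathfrak{e}_{p',p},\mathfrak{d}_{p}}(\tau,m,\epsilon)|\leq C\frac{1}{(1+|m|)^{\mu}}\frac{|\tau|}{1+|\tau|^{2k}}e^{-\beta|m|+\nu|\tau|^{k}}$$
for all $\tau\in\overline{S_{\mathfrak{e}_{p',p}}}$, uniformly in $(m,\epsilon)$; in particular $\tau\mapsto v_{k}^{\mathfrak{e}_{p',p},\mathfrak{d}_{p}}(\tau,m,\epsilon)$ vanishes at $\tau=0$ and has $m_{k}$-exponential growth of order $k$ on $S_{\mathfrak{e}_{p',p}}$ with rate at most $\nu$, uniformly in $(m,\epsilon)$. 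Hence Definition~\ref{defi4} applies and
$$Y_{\mathfrak{e}_{p',p},\mathfrak{d}_{p}}(T,m,\epsilon)=\mathcal{L}_{m_{k}}^{\mathfrak{e}_{p',p}}\big(\tau\mapsto v_{k}^{\mathfrak{e}_{p',p},\mathfrak{d}_{p}}(\tau,m,\epsilon)\big)(T)=k\int_{L_{\gamma_{p',p}}}v_{k}^{\mathfrak{e}_{p',p},\mathfrak{d}_{p}}(u,m,\epsilon)e^{-(u/T)^{k}}\frac{du}{u},$$
with $L_{\gamma_{p',p}}=\mathbb{R}_{+}e^{\sqrt{-1}\gamma_{p',p}}\subset S_{\mathfrak{e}_{p',p}}\cup\{0\}$, defines a bounded holomorphic function on a sector of the form $S_{\mathfrak{e}_{p',p},\theta,R^{1/k}}$ (see (\ref{e690})), for $\pi/k<\theta<\pi/k+2\delta$ with $\delta$ not exceeding the half-aperture of $S_{\mathfrak{e}_{p',p}}$, and any $0<R<\delta_{1}/\nu$. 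Continuity in $m\in\mathbb{R}$ follows from the uniformity of the above bound in $m$, and holomorphy in $\epsilon\in D(0,\epsilon_{0})$ follows, by differentiation under the integral sign, from the holomorphic dependence of $v_{k}^{\mathfrak{e}_{p',p},\mathfrak{d}_{p}}$ on $\epsilon$ recorded in the proof of Proposition~\ref{prop9}. Moreover $Y_{\mathfrak{e}_{p',p},\mathfrak{d}_{p}}(0,m,\epsilon)=0$ since $v_{k}^{\mathfrak{e}_{p',p},\mathfrak{d}_{p}}(0,m,\epsilon)=0$.

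It remains to verify that $Y_{\mathfrak{e}_{p',p},\mathfrak{d}_{p}}$ solves (\ref{SCP_bf}); this is the \emph{main technical point}. I would apply $\mathcal{L}_{m_{k}}^{\mathfrak{e}_{p',p}}$, in $\tau$ with image variable $T$, to both sides of (\ref{k_Borel_equation_analytic_bf}) and translate each block by Proposition~\ref{prop7}: the identity $\mathcal{L}_{m_{k}}^{d}(k\tau^{k}w)(T)=T^{k+1}\partial_{T}\mathcal{L}_{m_{k}}^{d}(w)(T)$ converts the factors $k^{h}s^{h}v_{k}^{\mathfrak{e}_{p',p},\mathfrak{d}_{p}}(s^{1/k},\cdot)$ appearing inside the convolution products into iterates of $T^{k+1}\partial_{T}$ acting on $Y_{\mathfrak{e}_{p',p},\mathfrak{d}_{p}}$, while the identity $T^{m}\mathcal{L}_{m_{k}}^{d}(w)(T)=\mathcal{L}_{m_{k}}^{d}\big(\frac{\tau^{k}}{\Gamma(m/k)}\int_{0}^{\tau^{k}}(\tau^{k}-s)^{m/k-1}w(s^{1/k})\frac{ds}{s}\big)(T)$ matches the fractional kernels $(\tau^{k}-s)^{\boldsymbol{\delta}_{\mathbf{D}}-h-1}$, $(\tau^{k}-s)^{\mathbf{d}_{l,k}/k-1}$, $(\tau^{k}-s)^{\mathbf{d}_{l,k}/k+\boldsymbol{\delta}_{l}-h-1}$, $(\tau^{k}-s)^{1/k}$ and their $\Gamma$-normalizations with the monomials $T^{\mathbf{d}_{l}}$ and the expansion of $\partial_{T}^{\boldsymbol{\delta}_{l}}$ into the operators $T^{k+1}\partial_{T}$, the constants $A_{\boldsymbol{\delta}_{l},h}$ being precisely the coefficients of that expansion; the relations (\ref{assum_dl_delta_l_Delta_l_bf_Borel}), (\ref{defin_d_l_k_bold_Borel}) together with the factorization (\ref{factor_P_m_bf}) make the division by $\mathbf{P}_{m}(\tau)$ correspond to inverting in the Borel plane the principal part $\mathbf{Q}(im)\partial_{T}-\mathbf{R}_{\mathbf{D}}(im)\epsilon^{\mathbf{\Delta}_{\mathbf{D}}-\mathbf{d}_{\mathbf{D}}+\boldsymbol{\delta}_{\mathbf{D}}-1}T^{\mathbf{d}_{\mathbf{D}}}\partial_{T}^{\boldsymbol{\delta}_{\mathbf{D}}}$, so that the whole computation reproduces exactly (\ref{SCP_bf}). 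Two points make this licit: every block to which a Laplace transform is applied lies in $F_{(\nu,\beta,\mu,k);\overline{S_{\mathfrak{e}_{p',p}}}}$ (Propositions~\ref{prop3}, \ref{prop241}, \ref{prop9} and Lemma~\ref{lema4}), so all the integrals converge and the rules of Proposition~\ref{prop7} apply; and $\mathcal{L}_{m_{k}}^{\mathfrak{e}_{p',p}}(w_{k}^{p})(T)=U_{p}(T,m,\epsilon)$, because $w_{k}^{p}$ is holomorphic with $q$-exponential growth on $U_{\mathfrak{d}_{p}}\supset S_{\mathfrak{e}_{p',p}}$ (Proposition~\ref{prop6} and (\ref{e1374})), which permits a Cauchy path deformation between $L_{\gamma_{p',p}}$ and $L_{\gamma_{p}}$ inside $U_{\mathfrak{d}_{p}}$ and identifies the two Laplace integrals with (\ref{e1963}); the convolutions in $m$ carrying $\mathbf{C}_{0,0}$ and $\mathbf{R}_{l}(im)$ commute with $\mathcal{L}_{m_{k}}^{\mathfrak{e}_{p',p}}$ as they act on an independent variable. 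The hard part is precisely this algebraic bookkeeping — expanding $T^{\mathbf{d}_{l}}\partial_{T}^{\boldsymbol{\delta}_{l}}$ in terms of $T^{k+1}\partial_{T}$, tracking the constants $A_{\boldsymbol{\delta}_{l},h}$ and the $\Gamma$-factors, and keeping the division by $\mathbf{P}_{m}(\tau)$ consistent throughout — since the analytic estimates have already been supplied by Proposition~\ref{prop9}.
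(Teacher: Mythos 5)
Your proposal is correct and follows essentially the same route as the paper: define $Y_{\mathfrak{e}_{p',p},\mathfrak{d}_{p}}$ as the $m_k$-Laplace transform of the Borel-plane solution $v_{k}^{\mathfrak{e}_{p',p},\mathfrak{d}_{p}}$ along $L_{\gamma_{p',p}}\subset S_{\mathfrak{e}_{p',p}}\cup\{0\}$, read off boundedness, holomorphy in $T$, continuity in $m$ and holomorphy in $\epsilon$ from the norm bound in $F_{(\nu,\beta,\mu,k);\overline{S_{\mathfrak{e}_{p',p}}}}$ and Definition~\ref{defi4}, and then transport (\ref{k_Borel_equation_analytic_bf}) through the transform using the operational rules of Proposition~\ref{prop7} together with the expansion of $T^{\boldsymbol{\delta}_l(k+1)}\partial_T^{\boldsymbol{\delta}_l}$ in powers of $T^{k+1}\partial_T$ (the paper's (\ref{expand_op_diff_bf})) and the integral representation (\ref{e1963}) of $U_p$. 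The only cosmetic difference is that the paper works from (\ref{SCP_bf}) multiplied by $T^{k+1}$ toward the Borel equation, while you go in the opposite direction; the content is identical.
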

\begin{proof}
Using the formula (8.7) from \cite{taya}, p. 3630, we can expand the operators
$T^{\boldsymbol{\delta}_{l}(k+1)} \partial_{T}^{\boldsymbol{\delta}_l}$ in the form
\begin{equation}
T^{\boldsymbol{\delta}_{l}(k+1)} \partial_{T}^{\boldsymbol{\delta}_l} = (T^{k+1}\partial_{T})^{\boldsymbol{\delta}_l} +
\sum_{1 \leq p \leq \boldsymbol{\delta}_{l}-1} A_{\boldsymbol{\delta}_{l},p} T^{k(\boldsymbol{\delta}_{l}-p)}
(T^{k+1}\partial_{T})^{p}
\label{expand_op_diff_bf}
\end{equation}
where $A_{\boldsymbol{\delta}_{l},p}$, $p=1,\ldots,\boldsymbol{\delta}_{l}-1$ are real numbers, for all
$1 \leq l \leq \mathbf{D}$. Multiplying the equation (\ref{SCP_bf}) by $T^{k+1}$ and using
(\ref{expand_op_diff_bf}), (\ref{defin_d_l_k_bold_Borel}) we can rewrite the equation (\ref{SCP_bf}) in the form
\begin{multline}
\mathbf{Q}(im)( T^{k+1}\partial_{T}Y_{\mathfrak{e}_{p',p},\mathfrak{d}_{p}}(T,m,\epsilon) ) \\
= \sum_{l=1}^{\mathbf{D}} \mathbf{R}_{l}(im)\left( \epsilon^{\mathbf{\Delta}_{l} - \mathbf{d}_{l} +
\boldsymbol{\delta}_{l} - 1}T^{\mathbf{d}_{l,k}}(T^{k+1}\partial_{T})^{\boldsymbol{\delta}_l}
Y_{\mathfrak{e}_{p',p},\mathfrak{d}_{p}}(T,m,\epsilon) \right.
\\+ \sum_{1 \leq h \leq \boldsymbol{\delta}_{l}-1} A_{\boldsymbol{\delta}_{l},h}
\left. \epsilon^{\mathbf{\Delta}_{l}-\mathbf{d}_{l}+\boldsymbol{\delta}_{l}-1} T^{k(\boldsymbol{\delta}_{l}-h)
+ \mathbf{d}_{l,k}}(T^{k+1}\partial_{T})^{h}Y_{\mathfrak{e}_{p',p},\mathfrak{d}_{p}}(T,m,\epsilon) \right)\\
+ \epsilon^{-1}T^{k+1}
\frac{\mathbf{c}_{0,0}(\epsilon)}{(2\pi)^{1/2}}\int_{-\infty}^{+\infty} \mathbf{C}_{0,0}(m-m_{1},\epsilon)
\mathbf{R}_{0}(im_{1})Y_{\mathfrak{e}_{p',p},\mathfrak{d}_{p}}(T,m_{1},\epsilon) dm_{1}
+ \epsilon^{-1}\mathbf{c}_{\mathbf{F}}(\epsilon)T^{k+1}U_{p}(T,m,\epsilon).
\label{SCP_irregular_bf}
\end{multline}

We consider the function $v_k^{\mathfrak{e}_{p',p},\mathfrak{d}_{p}}(\tau,m,\epsilon)$, unique solution of (\ref{k_Borel_equation_analytic_bf}), which belongs to the Banach space $F_{(\nu,\beta,\mu,k);\overline{S_{\mathfrak{e}_{p',p}}}}$, for every $\epsilon\in D(0,\epsilon_0)$. In view of the growth properties at infinity of the elements belonging to such Banach space, one is allowed to apply Laplace transform with respect to $\tau$ along a well chosen direction and define
\begin{equation}\label{e2015}
Y_{\mathfrak{e}_{p',p},\mathfrak{d}_{p}}(T,m,\epsilon)=k\int_{L_{\gamma_{p',p}}}v_{k}^{\mathfrak{e}_{p',p},\mathfrak{d}_p}(u,m,\epsilon)e^{-\left(\frac{u}{T}\right)^{k}}\frac{du}{u},
\end{equation}
for every $\epsilon\in D(0,\epsilon_0)$, $m\in\mathbb{R}$. More precisely, one may choose $$L_{\gamma_{p',p}}=\mathbb{R}_{+}e^{i\gamma_{p'}}\subseteq S_{\mathfrak{e}_{p',p}}\cup\{0\}\subseteq U_{\mathfrak{d}_{p}},$$
where $\gamma_{p',p}$ depends on $T$ and it is chosen in such a way that $\cos(k(\gamma_{p',p}-\arg(T)))\ge\delta_1$, for some $\delta_1>0$. From Definition~\ref{defi4} one gets that $Y_{\mathfrak{e}_{p',p},\mathfrak{d}_{p}}(T,m,\epsilon)$ is a holomorphic and bounded with respect to $T$ in the sector $S_{\mathfrak{e}_{p',p},\theta, R^{1/k}}$, where $\frac{\pi}{k}<\theta<\frac{\pi}{k}+2\delta$, for some $\delta$, and $0<R<\delta_1/\nu$, continuous with respect to $m$ in $\mathbb{R}$. From the construction of $v_{k}^{\mathfrak{e}_{p',p},\mathfrak{d}_p}(u,m,\epsilon)$ we get that $\epsilon\mapsto Y_{\mathfrak{e}_{p',p},\mathfrak{d}_{p}}(T,m,\epsilon)$ is a holomorphic function in $D(0,\epsilon_0)$. The properties of Laplace transform stated in Proposition~\ref{prop7}, and the integral  representation of $U_p(T,m,\epsilon)$ allow us to conclude that $Y_{\mathfrak{e}_{p',p},\mathfrak{d}_{p}}(T,m,\epsilon)$ solves (\ref{SCP_irregular_bf}).  
\end{proof}

\section{Analytic solutions of a differential initial value Cauchy problem with analytic forcing term on sectors and
with complex parameter}\label{seccion10}

In this section, we preserve the constructions and values of the parameters and functions adopted in the last one. We also assume that the conditions on the elements appearing in the construction of problem (\ref{main_q_diff_diff_first}) in Section~\ref{seccion7} hold. We also preserve the construction of the good covering $\{\mathcal{E}_{p}\}_{0\le p\le \varsigma-1}$, and the family $\{(U_{\mathfrak{d}_{p},\theta,\epsilon_0 r_{\mathcal{T}}})_{0\le p\le \varsigma -1},\mathcal{T}\}$ associated to the good covering defined in Section~\ref{seccion7}. 

Let $\{ \mathcal{E}_{p',p}\}_{\substack{0\le p\le \varsigma-1\\0\le p'\le \chi_p-1}}$ be a good covering in $\mathbb{C}^{\star}$ such that $\mathcal{E}_{p',p}\subseteq \mathcal{E}_{p}$ for every $0\le p\le \varsigma-1$ and all $0\le p'\le \chi_p-1$. We also fix an open and bounded sector with vertex at the origin, $\tilde{\mathcal{T}}$, with finite radius $r_{\mathcal{T}}$, such that $\tilde{\mathcal{T}}\subseteq\mathcal{T}$. Let 
$$S_{\mathfrak{e}_{p',p},\theta,\epsilon_0r_{\mathcal{T}}}=\{T\in\mathbb{C}^{\star}:|T|\le \epsilon_0r_{\mathcal{T}},|\mathfrak{e}_{p',p}-\arg(T)|<\frac{\theta}{2}\},$$
where $\frac{\pi}{k}<\theta<\frac{\pi}{k}+Ap(S_{\mathfrak{e}_{p',p}})$, with $Ap(S_{\mathfrak{e}_{p',p}})$ stands for the aperture of $S_{\mathfrak{e}_{p',p}}$, the sector which was chosen in Section~\ref{seccion8} satisfying (\ref{root_cond_1_bf}), (\ref{root_cond_2_bf}), (\ref{e1374}) and (\ref{low_bounds_P_m_bf}). In addition to that, we assume that for every $0\le p\le \varsigma-1$ and all $0\le p'\le \chi_p-1$ one has $\epsilon t\in S_{\mathfrak{e}_{p',p},\theta,\epsilon_0r_\mathcal{T}}$, for every $t\in\mathcal{\tilde{T}}$ and $\epsilon\in\mathcal{E}_{p',p}$, by reducing the aperture of $\tilde{\mathcal{T}}$, if necessary. Under all these assumptions, the set $\{(S_{\mathfrak{e}_{p',p},\theta,\epsilon_0r_\mathcal{T}})_{\substack{0\le p\le \varsigma -1\\0\le p'\le \chi_p-1}},\tilde{\mathcal{T}}\}$ is associated to the good covering $\{ \mathcal{E}_{p',p}\}_{\substack{0\le p\le \varsigma-1\\0\le p'\le \chi_p-1}}$. 

The function $\mathbf{C}_{0,0}(m,\epsilon)$, constructed in Section~\ref{seccion9} is such that one is allowed to apply inverse Fourier transform with respect to $m$ (see Definition~\ref{defi1}). We define
$$\mathbf{c}_0(z,\epsilon):=\mathcal{F}^{-1}\left(m\mapsto\mathbf{C}_{0,0}(m,\epsilon)\right)(z),$$
for every $(z,\epsilon)\in H_{\beta'}\times D(0,\epsilon_0)$, for any $0<\beta'<\beta$. Indeed, $\mathbf{c}_{0}(z,\epsilon)$ turns out to be a  holomorphic function on $H_{\beta'}\times D(0,\epsilon_0)$, for any $0<\beta'<\beta$.

For every $0\le p\le \varsigma-1$ and $0\le p'\le \chi_p-1$, we consider the following initial value problem
\begin{multline}
\mathbf{Q}(\partial_z)\partial_{t}y_{p',p}(t,z,\epsilon)  =\epsilon^{(\boldsymbol{\delta}_{\mathbf{D}}-1)(k+1)-\boldsymbol{\delta}_{\mathbf{D}}+1}t^{(\boldsymbol{\delta}_{\mathbf{D}}-1)(k+1)}\partial_t^{\boldsymbol{\delta}_{\mathbf{D}}}\mathbf{R}_{\mathbf{D}}(\partial_z)y_{p',p}(t,z,\epsilon)\\
+\sum_{l=1}^{\mathbf{D}-1}\epsilon^{\boldsymbol{\Delta}_l}t^{\mathbf{d}_l}\partial_t^{\boldsymbol{\delta}_l}\mathbf{R}_l(\partial_z)y_{p',p}(t,z,\epsilon)+\mathbf{c}_{0,0}(\epsilon)\mathbf{c}_{0}(z,\epsilon)
\mathbf{R}_{0}(\partial_z)y_{p',p}(t,z,\epsilon)+\mathbf{c}_{\mathbf{F}}(\epsilon)u_{p}(t,z,\epsilon),
\label{SCP_bf_2}
\end{multline}
for initial data $y_{p',p}(0,z,\epsilon)\equiv0$, where the constants $\mathbf{D},\boldsymbol{\delta}_l,\mathbf{d}_l,\boldsymbol{\Delta}_l$, for $1\le l \le \mathbf{D}$, the polynomials $\mathbf{Q},\mathbf{R}_l$ for $0\le l\le \mathbf{D}$, the function $\mathbf{c}_{0,0}$ are those considered in Section~\ref{seccion8}, and $u_p(t,z,\epsilon)$ is the solution of (\ref{main_q_diff_diff_first}), with $u_p(0,z,\epsilon)\equiv 0$, constructed in Theorem~\ref{teo1}.

\begin{theo}\label{teo3}
Under the hypotheses described in this section, assume moreover that (\ref{deltaD_deltal_constraints_bf}) holds. There exist positive constants $r_{\mathbf{Q},\mathbf{R}_{\mathbf{D}}},\boldsymbol{\varsigma}_{0,0},\boldsymbol{\varsigma}_{0},\boldsymbol{\varsigma}_{F}>0$ such that if
\begin{equation}
\sup_{\epsilon \in D(0,\epsilon_{0})} \left| \frac{\mathbf{c}_{0,0}(\epsilon)}{\epsilon} \right|
\leq \boldsymbol{\varsigma}_{0,0} \ \ , \ \
||\mathbf{C}_{0,0}(m,\epsilon)||_{(\beta,\mu)} \leq \boldsymbol{\varsigma}_{0} \ \ , \ \
\sup_{\epsilon \in D(0,\epsilon_{0})} \left| \frac{\mathbf{c}_{\mathbf{F}}(\epsilon)}{\epsilon} \right|
\leq \boldsymbol{\varsigma}_{\mathbf{F}} \label{varsigma00_C00_cF_small_bf_2}
\end{equation}
for all $\epsilon \in D(0,\epsilon_{0})$, then, for every $0\le p\le \varsigma-1$ and $0\le p'\le \chi_p-1$, there exists a solution $y_{p',p}(t,z,\epsilon)$ of equation (\ref{SCP_bf_2}) with $y_{p',p}(0,z,\epsilon)\equiv0$, which defines a holomorphic and bounded function on the domain $\tilde{\mathcal{T}}\times H_{\beta'}\times\mathcal{E}_{p',p}$, for every $0<\beta'<\beta$. In addition to that, two different estimates hold:
1) Let $0\le p\le \varsigma-1$ and $0\le p',p''\le \chi_p$ with $p'\neq p''$ and such that $\mathcal{E}_{p',p}\cap\mathcal{E}_{p'',p}\neq\emptyset$. Then, there exist $\tilde{C},\tilde{C}_2>0$, which do not depend on $\epsilon$ such that
$$\sup_{t\in\tilde{T},z\in H_{\beta'}}|y_{p',p}(t,z,\epsilon)-y_{p'',p}(t,z,\epsilon)|\le \tilde{C}\exp\left(-\frac{\tilde{C}_{2}}{|\epsilon|^k}\right),$$
for all $0<\beta'<\beta$, and every $\epsilon\in\mathcal{E}_{p',p}\cap\mathcal{E}_{p'',p}$.

2) Let $0\le p,p'\le \varsigma-1$ with $p\neq p'$, and let $0\le p_1\le \chi_p$, $0\le p_2\le \chi_{p'}$ such that $\mathcal{E}_{p_1,p}\cap\mathcal{E}_{p_2,p'}\neq\emptyset$. Then, there exist $\tilde{C}_3>0$, $\tilde{C}_4\in\mathbb{R}$ which do not depend on $\epsilon$ such that for every $0<\kappa<\frac{k}{\delta}\min_{l=1}^{D-1}d_l$, one has

$$\sup_{t\in\tilde{T},z\in H_{\beta'}}|y_{p_1,p}(t,z,\epsilon)-y_{p_2,p'}(t,z,\epsilon)|\le \tilde{C}_3\exp\left(-\frac{\kappa}{2\log(q)}\log^2(|\epsilon|)\right)|\epsilon|^{\tilde{C}_4},$$

for all $0<\beta'<\beta$, and every $\epsilon\in\mathcal{E}_{p,p_1}\cap\mathcal{E}_{p',p_2}$.

\end{theo}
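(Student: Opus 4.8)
The plan is to build $y_{p',p}$ as the iterated inverse Fourier and $m_k$-Laplace transform of the Borel-plane function supplied by Proposition~\ref{prop9}, and then to derive the two difference estimates by the path-deformation technique already used in Theorem~\ref{teo1} and Proposition~\ref{prop1982}. Concretely, for each $0\le p\le\varsigma-1$, $0\le p'\le\chi_p-1$ I take $v_k^{\mathfrak{e}_{p',p},\mathfrak{d}_p}\in F_{(\nu,\beta,\mu,k);\overline{S_{\mathfrak{e}_{p',p}}}}$ from Proposition~\ref{prop9}, form $Y_{\mathfrak{e}_{p',p},\mathfrak{d}_p}(T,m,\epsilon)$ as its $m_k$-Laplace transform along $L_{\gamma_{p',p}}=\mathbb{R}_+e^{\sqrt{-1}\gamma_{p',p}}\subseteq S_{\mathfrak{e}_{p',p}}\cup\{0\}$ (Proposition~\ref{prop1982}), and set
$$
y_{p',p}(t,z,\epsilon)=\mathcal{F}^{-1}\big(m\mapsto Y_{\mathfrak{e}_{p',p},\mathfrak{d}_p}(\epsilon t,m,\epsilon)\big)(z)=\frac{k}{(2\pi)^{1/2}}\int_{-\infty}^{+\infty}\int_{L_{\gamma_{p',p}}}v_k^{\mathfrak{e}_{p',p},\mathfrak{d}_p}(u,m,\epsilon)\,e^{-(u/(\epsilon t))^k}\,\frac{du}{u}\,e^{izm}\,dm .
$$
Since $v_k^{\mathfrak{e}_{p',p},\mathfrak{d}_p}$ has exponential growth of order $k$ in $\tau$ and decay of order $1$ in $m$ with $\mu>\deg(\mathbf{R}_{\mathbf{D}})+1$, Definition~\ref{defi4} and Proposition~\ref{prop8} give that $y_{p',p}$ is holomorphic and bounded on $\tilde{\mathcal{T}}\times H_{\beta'}\times\mathcal{E}_{p',p}$ (shrinking $r_{\mathcal{T}}$ if needed) with $y_{p',p}(0,z,\epsilon)\equiv0$. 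That $y_{p',p}$ solves (\ref{SCP_bf_2}) is Proposition~\ref{prop1982} followed by an inverse Fourier transform: Proposition~\ref{prop7} turns (\ref{k_Borel_equation_analytic_bf}) into (\ref{SCP_bf}) for $Y_{\mathfrak{e}_{p',p},\mathfrak{d}_p}$, and then Proposition~\ref{prop8} together with the substitution $T=\epsilon t$ converts $\partial_z$, the convolution by $\mathbf{C}_{0,0}$ and the monomials $T^{\mathbf{d}_l}\partial_T^{\boldsymbol{\delta}_l}$ into $\mathbf{R}_l(\partial_z)$, multiplication by $\mathbf{c}_0(z,\epsilon)$ and the monomials $\epsilon^{\boldsymbol{\Delta}_l}t^{\mathbf{d}_l}\partial_t^{\boldsymbol{\delta}_l}$ of (\ref{SCP_bf_2}) (the exponents matching thanks to (\ref{assum_dl_delta_l_Delta_l_bf})), while the $w_k^p$-term of (\ref{k_Borel_equation_analytic_bf}) becomes $\mathbf{c}_{\mathbf{F}}(\epsilon)u_p(t,z,\epsilon)$ because $u_p=\mathcal{F}^{-1}(m\mapsto U_p(\epsilon t,m,\epsilon))(z)$ by Theorem~\ref{teo1} and (\ref{e1963}).

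For the first estimate (same $p$), both $v_k^{\mathfrak{e}_{p',p},\mathfrak{d}_p}$ and $v_k^{\mathfrak{e}_{p'',p},\mathfrak{d}_p}$ solve the same convolution equation (\ref{k_Borel_equation_analytic_bf}) — same $w_k^p$, same $\mathbf{P}_m$ — hence, by uniqueness, they coincide on $U_{\mathfrak{d}_p}\cap\bar{D}(0,\rho)$: there $\mathbf{P}_m$ does not vanish (the small disc $\bar{D}(0,\rho)$ satisfies (\ref{root_cond_1_bf}), so $\rho$ lies below the uniform lower bound for $|\mathbf{q}_l(m)|$ coming from (\ref{quotient_Q_RD_in_S_bf})) and $w_k^p$ is holomorphic (its series converges in $\mathrm{Exp}^q_{(k_1,\beta,\mu,\alpha);\Theta_{\hat{Q}\mu_1}^p}$ and $\bar{U}_{\mathfrak{d}_p}\subset\Theta_{\hat{Q}\mu_1}^p$). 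Deforming each inner $u$-integral so that it passes through $\bar{D}(0,\rho)$, the difference $y_{p',p}-y_{p'',p}$ is, after Fourier integration in $m$, the sum of an arc integral of radius $\rho$ over the common germ and of the two outer half-line integrals (from $\rho$ to $+\infty$) over $v_k^{\mathfrak{e}_{p',p},\mathfrak{d}_p}$ and $v_k^{\mathfrak{e}_{p'',p},\mathfrak{d}_p}$. Bounding each piece by (\ref{e1465}) together with $\cos(k(\gamma-\arg(\epsilon t)))\ge\delta_1$ and $\int_{\mathbb{R}}e^{-(\beta-\beta')|m|}dm<\infty$, and using that $\rho$ is a fixed positive constant, each piece is $\le C\exp(-\delta_1\rho^k/(r_{\mathcal{T}}^k|\epsilon|^k))$, whence $\sup_{t\in\tilde{\mathcal{T}},z\in H_{\beta'}}|y_{p',p}-y_{p'',p}|\le\tilde{C}\exp(-\tilde{C}_2/|\epsilon|^k)$ on $\mathcal{E}_{p',p}\cap\mathcal{E}_{p'',p}$; this is the purely Gevrey-$1/k$ Stokes phenomenon of the second problem, and no $j$-decomposition is needed.

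For the second estimate (different $p$), $v_k^{\mathfrak{e}_{p_1,p},\mathfrak{d}_p}$ is built on $w_k^p$ while $v_k^{\mathfrak{e}_{p_2,p'},\mathfrak{d}_{p'}}$ is built on $w_k^{p'}$, and going from the Borel direction of $U_{\mathfrak{d}_p}$ to that of $U_{\mathfrak{d}_{p'}}$ forces one to cross a root $q_l(m)$ of the first problem $P_m$, which sits in the annulus $\{\mu_0<|\tau|<\mu_1\}$ and its dilates $q^{-\delta h}\{\mu_0<|\tau|<\mu_1\}$. I use the decompositions $w_k^p=\sum_{j\ge0}w_{k,j}^p$, $v_k^{\mathfrak{e}_{p_1,p},\mathfrak{d}_p}=\sum_{j\ge0}v_{k,j}^{\mathfrak{e}_{p_1,p},\mathfrak{d}_p}$, write $y_{p_1,p}=\sum_j y_{p_1,p,j}$ with $y_{p_1,p,j}$ the Laplace/Fourier transform of $v_{k,j}^{\mathfrak{e}_{p_1,p},\mathfrak{d}_p}$, and for each $j$ I deform the integration path exactly as in (\ref{path_deform_decomp_difference_u_p_j}): an arc of radius $\check{Q}\mu_{0,j}$ joining the two directions, the concatenated segments $L_{\gamma,\check{Q}\mu_{0,h},\hat{Q}\mu_{1,h}}$ for $0\le h\le j$ on each side, and the two infinite tails beyond $\hat{Q}\mu_1$. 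Estimating the arc by (\ref{e1474}), the segments by (\ref{e1484}) and the tails by (\ref{e1465}) — as for $I_1,I_2,I_3$ in the proof of Theorem~\ref{teo1} — I obtain three series $\tilde{J}_1(\epsilon),\tilde{J}_2(\epsilon),\tilde{J}_3(\epsilon)$ of the shape of $J_1,J_2,J_3$ there, carrying the weights $(1/q^{\delta})^{\frac{k}{2}(\min_l d_l)(h+1)h}$ and the factors $\exp(-c(1/q^{\delta k})^h/|\epsilon|^k)$; Lemma~\ref{lema3} with $\Delta_2=\frac{\delta k}{2}\min_l d_l$, $\Delta_4=1/q^{\delta k}$ shows each of them is $q$-exponentially flat of order $\kappa$ for every $0<\kappa<\frac{k}{\delta}\min_{l=1}^{D-1}d_l$. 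The extra contribution coming from also moving the Borel direction within the adjacent $U$-levels is exponentially flat of order $k$ as in the first estimate, hence a fortiori $q$-exponentially flat of order $\kappa$ since $|\epsilon|^{-k}\gg\log^2|\epsilon|$ as $\epsilon\to0$. Summing yields $\sup_{t\in\tilde{\mathcal{T}},z\in H_{\beta'}}|y_{p_1,p}-y_{p_2,p'}|\le\tilde{C}_3\exp(-\frac{\kappa}{2\log q}\log^2|\epsilon|)|\epsilon|^{\tilde{C}_4}$ on $\mathcal{E}_{p_1,p}\cap\mathcal{E}_{p_2,p'}$.

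The main obstacle is this second estimate: one has to run the path-splitting of Theorem~\ref{teo1} simultaneously with the fixed-point and frame bookkeeping of Proposition~\ref{prop9}, in particular checking that the inclusions $\overline{S_{\mathfrak{e}_{p',p}}}\cap\bar{D}(0,\check{Q}\mu_{0,j})\subset\bar{D}(0,\check{Q}\mu_{0,j})$ and $\{\check{Q}\mu_{0,h}\le|\tau|\le\hat{Q}\mu_{1,h}\}\cap\overline{S_{\mathfrak{e}_{p',p}}}\subset P\Omega_h^p$ used in Lemma~\ref{lema4} are compatible with the deformed contour, so that the $j$-th piece of the difference genuinely inherits only the shrinking weights of (\ref{e1474})–(\ref{e1484}) and the resulting double series over $(j,h)$ converges with the announced $q$-Gevrey rate. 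By contrast the construction of $y_{p',p}$ and the first estimate are routine adaptations of Proposition~\ref{prop1982} and of the classical exponentially small Stokes estimate.
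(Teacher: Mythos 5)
Your construction of $y_{p',p}$ and your treatment of the second ($q$-Gevrey) estimate follow the paper's own proof essentially step by step: you take the fixed point $v_k^{\mathfrak{e}_{p',p},\mathfrak{d}_p}$ of Proposition~\ref{prop9}, apply $m_k$-Laplace and inverse Fourier transform as in Proposition~\ref{prop1982} to get a bounded holomorphic solution of (\ref{SCP_bf_2}), and, for $p\neq p'$, you use the decomposition $y_{p_1,p}=\sum_{j\ge0}y_{p_1,p,j}$ together with the same path splitting as in (\ref{path_deform_decomp_difference_u_p_j}) (arc of radius $\check{Q}\mu_{0,j}$, segments $[\check{Q}\mu_{0,h},\hat{Q}\mu_{1,h}]$, infinite tails), estimated through (\ref{e1474}), (\ref{e1484}), (\ref{e1465}) and Lemma~\ref{lema3} with the same choice of $\Delta_2$ and $\Delta_4$. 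This is exactly the paper's argument.

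The one genuine gap is in your first estimate (same $p$, $p'\neq p''$). You deform both Laplace integrals through an arc of radius $\rho$ and assert that $v_k^{\mathfrak{e}_{p',p},\mathfrak{d}_p}$ and $v_k^{\mathfrak{e}_{p'',p},\mathfrak{d}_p}$ ``coincide by uniqueness on $U_{\mathfrak{d}_p}\cap\bar{D}(0,\rho)$''. But neither function is a priori defined there: Proposition~\ref{prop9} produces them only on $\overline{S_{\mathfrak{e}_{p',p}}}$, resp. $\overline{S_{\mathfrak{e}_{p'',p}}}$, and the pieces $v_{k,j}^{\mathfrak{e}_{p',p},\mathfrak{d}_p}$ are controlled on discs $\bar{D}(0,\check{Q}\mu_{0,j})$ whose radii shrink to $0$, so the sum is not automatically analytic on any fixed neighborhood of the wedge between the two directions of integration; uniqueness can only be invoked after a solution on that wedge has been produced. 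Moreover, to bound the arc integral you appeal to (\ref{e1465}), which is a norm on $\overline{S_{\mathfrak{e}_{p',p}}}$ and gives no control of the continuation on the wedge. The paper fills precisely this hole: it fixes $r_{p,p',p''}>0$ smaller than half the modulus of the roots of $\mathbf{P}_m$ lying in $U_{\mathfrak{d}_p}$, introduces the auxiliary Banach space $G_{\beta,\mu;\overline{S}_{p,p',p'',r_{p,p',p''}}}$ of functions on a compact sector containing $(S_{\mathfrak{e}_{p',p}}\cup S_{\mathfrak{e}_{p'',p}})\cap D(0,\tfrac{3}{2}r_{p,p',p''})$, and reruns the fixed-point argument of Proposition~\ref{prop9} there (this is possible because $w_k^p$ is holomorphic on all of $U_{\mathfrak{d}_p}$ and $\mathbf{P}_m$ has no zeros on that compact sector); this yields simultaneously the common analytic continuation of the two functions across the wedge and the bound used on the arc. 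With that supplement your deformation and the final estimate $\tilde{C}\exp(-\tilde{C}_2/|\epsilon|^k)$ go through, and the remainder of your proposal coincides with the paper's proof.
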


\textbf{Remark:} The existence of a unique formal solution and related asymptotics is left for the last main result in the last section for the sake of clarity.

\begin{proof}
Let $0\le p\le \varsigma-1$ and $0\le p'\le \chi_p-1$. One is allowed to apply Proposition~\ref{prop1982} to the problem (\ref{SCP_bf}) in order to get a solution $Y_{\mathfrak{e}_{p',p},\mathfrak{d}_{p}}(T,m,\epsilon)$ of such problem under initial data $Y_{\mathfrak{e}_{p',p},\mathfrak{d}_{p}}(0,m,\epsilon)\equiv0$. Such function is holomorphic and bounded with respect to $T$ in the sector $S_{\mathfrak{e}_{p',p},\theta, R^{1/k}}$, with $\frac{\pi}{k}<\theta<\frac{\pi}{k}+2\delta$, for some $\delta>0$, and some $R>0$, continuous with respect to $m$ in $\mathbb{R}$, and holomorphic with respect to $\epsilon$ in $D(0,\epsilon_0)$. Moreover, in view of (\ref{e2015}), it holds that 
$$|Y_{\mathfrak{e}_{p',p},\mathfrak{d}_{p}}(T,m,\epsilon)|\le k\int_{0}^{\infty}|v_{k}^{\mathfrak{e}_{p',p},\mathfrak{d}_p}(re^{i\gamma_{p',p}},m,\epsilon)|e^{-\left(\frac{r}{|T|}\right)^{k}\cos(k(\gamma_{p',p}-\arg(T)))}\frac{du}{u},$$
$$\le C_1\frac{1}{(1+|m|)^{\mu}}\exp(-\beta|m|)\int_{0}^{\infty}\frac{1}{1+r^{2k}}\exp((\nu-\delta_1/r_{\mathcal{T}})r^{k})dr\le C_2\exp(-\beta|m|)\frac{1}{(1+|m|)^{\mu}},$$
for some $C_1,C_2>0$, and all $(T,m,\epsilon)\in S_{\mathfrak{e}_{p',p},\theta, R^{1/k}}\times \mathbb{R}\times D(0,\epsilon_0)$. This and the choice of the set $\{(S_{\mathfrak{e}_{p',p}})_{\substack{0\le p\le \varsigma -1\\0\le p'\le \chi_p-1}},\tilde{\mathcal{T}}\}$, which is associated to the good covering $\{ \mathcal{E}_{p',p}\}_{\substack{0\le p\le \varsigma-1\\0\le p'\le \chi_p-1}}$, allow us to affirm that, for every $0\le p\le \varsigma -1$ and all $0\le p'\le \chi_p-1$, the function
$$y_{p',p}(t,z,\epsilon):=\mathcal{F}^{-1}\left(m\mapsto Y_{\mathfrak{e}_{p',p},\mathfrak{d}_p}(t\epsilon,m,\epsilon)\right)(z)$$
is holomorphic and bounded on the domain $\tilde{\mathcal{T}}\times H_{\beta'}\times\mathcal{E}_{p',p}$, for every $0<\beta'<\beta$.

We recall that, for every $0\le p\le \varsigma-1$, the function $u_p(t,z,\epsilon)$ is the solution of (\ref{main_q_diff_diff_first}), and it is obtained by applying Laplace and inverse Fourier transformation of $w_k^p(\tau,m,\epsilon)$, solution of (\ref{q_diff_conv_init_v_prob}). Regarding the forcing term, from the definition of the function $U_p(T,m,\epsilon)$ (see~(\ref{e1963})), being the Laplace transform of the function $w_k^p(\tau,m,\epsilon)$ along $\gamma_p$, and regarding the properties of inverse Fourier transform (see Proposition~\ref{prop8}) we conclude that $y_{p',p}(t,z,\epsilon)$ solves (\ref{SCP_bf_2}), with initial data $y_{p',p}(t,z,\epsilon)$. 

We now give proof to the second part of the enunciate, in which we determine upper bounds for the difference of consecutive solutions. We refer to consecutive solutions when dealing with solutions of the problem defined in consecutive sectors in the good covering determined with respect to the perturbation parameter $\epsilon$. At this point, we have to make a difference between consecutive solutions related to a common $p$, for $0\le p\le \varsigma-1$ (and differing on the value of $p'$, for $0\le p'\le \chi_p-1$) and consecutive solutions of the problem coming from different $p$.

Let $0\le p\le \varsigma-1$ and $0\le p',p''\le \chi_p-1$ with $p\neq p''$ such that $\mathcal{E}_{p',p}\cap\mathcal{E}_{p'',p}\neq\emptyset$. Let $0\le \beta'<\beta$. We aim to give upper bounds of the difference
\begin{equation}\label{e2075}\sup_{t\in\mathcal{\tilde{T}},z\in H_{\beta'}}|y_{p',p}(t,z,\epsilon)-y_{p'',p}(t,z,\epsilon)|.
\end{equation}

At this point, we think is important to take a look back to the geometric arrangement of the problem. For every $0\le p\le \varsigma-1$, the function $w_k^p(\tau,m,\epsilon)$ belongs to the vector space $\mathrm{Exp}_{(k,\beta,\mu,\alpha);\Theta_{\hat{Q}\mu_1}^p}^q$. In particular, $w_k^p(\tau,m,\epsilon)$ is a holomorphic function in $U_{\mathfrak{d}_{p}}$ (which contains both $S_{\mathfrak{e}_{p',p}}$ and $S_{\mathfrak{e}_{p'',p}}$, in which the directions of integration $\gamma_{p',p}$ and $\gamma_{p'',p}$ belong). 

Let $r_{p,p',p''}>0$ be such that the roots of the polynomial $\mathbf{P}_m(\tau)$ which are contained in $U_{\mathfrak{d}_p}$ have modulus larger than $2r_{p,p',p''}$, and consider an auxiliary finite sector $S_{p,p',p'',r_{p,p',p''}}$ of complex numbers satisfying that the set $\{\tau\in (S_{\mathfrak{e}_{p',p}}\cup S_{\mathfrak{e}_{p'',p}})\cap D(0,\frac{3}{2}r_{p,p',p''})\}\subseteq \bar{S_{p,p',p'',r_{p,p',p''}}}$. One can consider the problem (\ref{k_Borel_equation_analytic_bf}) and give rise to a solution of such problem in the Banach space $G_{\beta,\mu;\overline{S}_{p,p',p'',r_{p,p',p''}}}$ of continuous functions $(\tau,m)\mapsto h(\tau,m)$ on $\overline{S}_{p,p',p'',r_{p,p',p''}}\times \mathbb{R}$, which are holomorphic with respect to $\tau$ on $S_{p,p',p'',r_{p,p',p''}}$, and such that
$$\left\|h(\tau,m)\right\|_{\beta,\mu;S_{p,p',p'',r_{p,p',p''}}}=\sup_{\tau\in\overline{S}_{p,p',p'',r_{p,p',p''}},m\in\mathbb{R}}\frac{(1+|m|)^{\mu}\exp(\beta|m|)}{|\tau|}|h(\tau,m)|<\infty.$$

Observe that this Banach space is a restriction of that in Definition~\ref{defi3}, with a compact set $\overline{\Omega}$. The function $w_{k}^{p}(\tau,m,\epsilon)$ belongs to $G_{\beta,\mu;\overline{S}_{p,p',p'',r_{p,p',p''}}}$, and is holomorphic with respect to $\epsilon$ in $D(0,\epsilon_0)$. An analogous study of the problem (\ref{k_Borel_equation_analytic_bf}) by means of a fixed point argument as that stated in the first part of the proof in Proposition~\ref{prop9} within this novel Banach spaces allow us to affirm that the functions $v_{k}^{\mathfrak{e}_{p',p},\mathfrak{d}_{p}}$ and $v_{k}^{\mathfrak{e}_{p'',p},\mathfrak{d}_{p}}$ are analytically extended to $S_{p,p',p'',r_{p,p',p''}}$ (also with respect to $\epsilon$ in $D(0,\epsilon_0)$), and can be prolonged into $S_{\mathfrak{e}_{p',p}}$ and $S_{\mathfrak{e}_{p'',p}}$ respectively. We omit the details of the proof which is exact to the corresponding one of Proposition~\ref{prop9}. 

In this situation, by means of a deformation in the integration path (\ref{e2075}) is estimated from above in the following terms. See Figure~\ref{figure4} for an sketch of the deformation and the splitting of the integration path.

\begin{figure}[h]
	\centering
		\includegraphics[width=.5\textwidth]{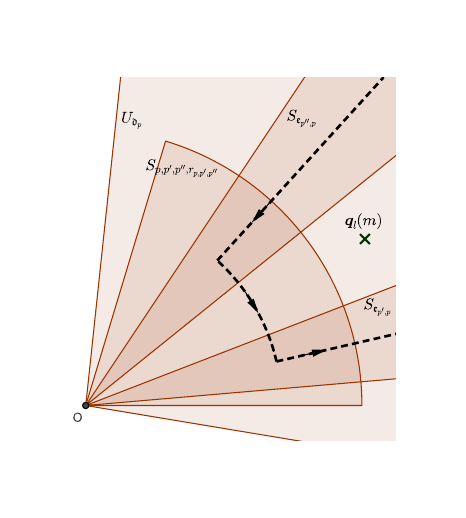}
	\caption{Proof of Theorem~\ref{teo3}. First case.}
	\label{figure4}
\end{figure}

$$\sup_{t\in\mathcal{\tilde{T}},z\in H_{\beta'}}|y_{p',p}(t,z,\epsilon)-y_{p'',p}(t,z,\epsilon)|\le I_1+I_2+I_3,$$
where
$$I_1=\left|\frac{k}{(2\pi)^{1/2}}\int_{-\infty}^{\infty}\int_{C_{\gamma_{p'',p},\gamma_{p',p},r_{p,p',p''}}}v_{k}^{\mathfrak{e}_{p'',p},\mathfrak{d}_{p}}(u,m,\epsilon)e^{-\left(\frac{u}{\epsilon t}\right)^k}e^{izm}\frac{du}{u}dm\right|,$$
with $C_{\gamma_{p'',p},\gamma_{p',p},r_{p,p',p''}}$ is an arc of circle with radius $r_{p,p',p''}$, connecting $r_{p,p',p''}e^{\sqrt{-1}\gamma_{p'',p}}$ with $r_{p,p',p''}e^{\sqrt{-1}\gamma_{p',p}}$ with an adequate orientation,
$$I_{2}=\left|\frac{k}{(2\pi)^{1/2}}\int_{-\infty}^{\infty}\int_{L_{\gamma_{p'',p},r_{p,p',p''},\infty}}v_{k}^{\mathfrak{e}_{p'',p},\mathfrak{d}_{p}}(u,m,\epsilon)e^{-\left(\frac{u}{\epsilon t}\right)^k}e^{izm}\frac{du}{u}dm\right|,$$
$$I_{3}=\left|\frac{k}{(2\pi)^{1/2}}\int_{-\infty}^{\infty}\int_{L_{\gamma_{p',p},r_{p,p',p''},\infty}}v_{k}^{\mathfrak{e}_{p',p},\mathfrak{d}_{p}}(u,m,\epsilon)e^{-\left(\frac{u}{\epsilon t}\right)^k}e^{izm}\frac{du}{u}dm\right|,$$
with $L_{\gamma_{a,p},r_{p,p',p''},\infty}=[r_{p,p',p''},\infty)e^{\sqrt{-1}\gamma_{a,p}}$, for $a=p',p''$. From the fact that $v_{k}^{\mathfrak{e}_{a,p},\mathfrak{d}_{p}}(u,m,\epsilon)$ belongs to the spaces $F_{(\nu,\beta,\mu,k);\overline{S_{\mathfrak{e}_{a,p}}}}$ and $G_{\beta,\mu;\overline{S}_{p,p',p'',r_{p,p',p''}}}$ for $a=p',p''$, one can follow analogous arguments as in (\ref{I_1_exp_small_order_k_on circle}) and (\ref{I_3_exp_small_order_k_on_halfline}). Indeed, one has
$$ I_1\le \tilde{C}_!\left\|v_{k}^{\mathfrak{e}_{p'',p},\mathfrak{d}_{p}}(u,m,\epsilon)\right\|_{\beta,\mu;S_{p,p',p'',r_{p,p',p''}}}\int_{\gamma_{p',p}}^{\gamma_{p'',p}}e^{-\left(\nu-\frac{\delta_1}{2\epsilon_0^k(r_{\mathcal{T}})^k}\right)r_{p,p',p''}^k}d\theta\exp\left(-\frac{\tilde{C}_2}{|\epsilon|^k}\right),$$
and
$$I_2\le \tilde{C}_1\left\|v_{k}^{\mathfrak{e}_{p'',p},\mathfrak{d}_{p}}(u,m,\epsilon)\right\|_{(\nu,\beta,\mu,k);\overline{S_{\mathfrak{e}_{p'',p}}}}\int_{r_{p,p',p''}}^{\infty}e^{-\left(\nu-\frac{\delta_1}{2\epsilon_0^k(r_{\mathcal{T}})^k}\right)|u|}d|u|\exp\left(-\frac{\tilde{C}_2}{|\epsilon|^k}\right),$$
with analogous estimates concerning $I_3$, for every $\epsilon\in\mathcal{E}_{p',p}\cap\mathcal{E}_{p'',p}$, $t\in\tilde{\mathcal{T}}$ and $z\in H_{\beta'}$. Here, $\tilde{C}_1=\frac{k}{(2\pi)^{1/2}}\int_{-\infty}^{\infty}e^{-(\beta-\beta')|m|}dm$, and $\tilde{C}_{2}=\frac{r_{p,p',p''}^k}{2(r_{\mathcal{T}})^k}$. The first part of the result follows from here.

We now assume that $0\le p,p'\le \varsigma-1$ with $p\neq p'$, and $0\le p_1\le \chi_p$, $0\le p_2\le \chi_{p'}$ such that $\mathcal{E}_{p_1,p}\cap\mathcal{E}_{p_2,p'}\neq\emptyset$. We proceed to estimate the difference $$\sup_{t\in\tilde{\mathcal{T}},z\in H_{\beta'}}|y_{p_1,p}(t,z,\epsilon)-y_{p_2,p'}(t,z,\epsilon)|,$$
for every $\epsilon\in \mathcal{E}_{p_1,p}\cap\mathcal{E}_{p_2,p'}$, and for any $0<\beta'<\beta$. We follow analogous arguments as in the proof of Theorem~\ref{teo1}, so we omit most of the details.

In view of Proposition~\ref{prop9}, one is able to write
$$y_{p_1,p}(t,z,\epsilon)=\sum_{j\ge0}y_{p_1,p,j}(t,z,\epsilon),\qquad y_{p_2,p'}(t,z,\epsilon)=\sum_{j\ge0}y_{p_2,p',j}(t,z,\epsilon),$$
where 
$$y_{p_1,p,j}=\frac{k}{(2\pi)^{1/2}}\int_{-\infty}^{\infty}\int_{L_{\gamma_{p_1,p}}}v_{k,j}^{\mathfrak{e}_{p_1,p},\mathfrak{d}_{p}}(u,m,\epsilon)e^{-\left(\frac{u}{\epsilon t}\right)^{k}}e^{izm}\frac{du}{u}dm,$$
and
$$y_{p_2,p',j}=\frac{k}{(2\pi)^{1/2}}\int_{-\infty}^{\infty}\int_{L_{\gamma_{p_2,p'}}}v_{k,j}^{\mathfrak{e}_{p_2,p'},\mathfrak{d}_{p'}}(u,m,\epsilon)e^{-\left(\frac{u}{\epsilon t}\right)^{k}}e^{izm}\frac{du}{u}dm,$$
where $L_{\gamma_{p_1,p}}$ and $L_{\gamma_{p_2,p'}}$ are defined above. We can deform the integration path in the shape of the proof of Theorem~\ref{teo1} (see (\ref{path_deform_decomp_difference_u_p_j}) and Figure~\ref{figure3}) to write
\begin{multline}
 y_{p_1,p,j}(t,z,\epsilon) - y_{p_2,p',j}(t,z,\epsilon) \\=
 \frac{k}{(2\pi)^{1/2}} \int_{-\infty}^{+\infty} \int_{C_{\gamma_{p_1,p},\gamma_{p_2,p'},\check{Q}\mu_{0,j}}}
 v_{k,j}^{\mathfrak{e}_{p_1,p},\mathfrak{d}_p}(u,m,\epsilon) \exp( -(\frac{u}{\epsilon t})^{k} ) e^{izm} \frac{du}{u} dm\\
 + \left(\sum_{h=0}^{j} \frac{k}{(2\pi)^{1/2}} \int_{-\infty}^{+\infty}
 \int_{L_{\gamma_{p_1,p},\check{Q}\mu_{0,h},\hat{Q}\mu_{1,h}}}
 v_{k,j}^{\mathfrak{e}_{p_1,p},\mathfrak{d}_p}(u,m,\epsilon) \exp( -(\frac{u}{\epsilon t})^{k} ) e^{izm} \frac{du}{u} dm \right)\\
 + \frac{k}{(2\pi)^{1/2}} \int_{-\infty}^{+\infty}
 \int_{L_{\gamma_{p_1,p},\hat{Q}\mu_{1},\infty}}
 v_{k,j}^{\mathfrak{e}_{p_1,p},\mathfrak{d}_p}(u,m,\epsilon) \exp( -(\frac{u}{\epsilon t})^{k} ) e^{izm} \frac{du}{u} dm\\
 - \left( \sum_{h=0}^{j} \frac{k}{(2\pi)^{1/2}} \int_{-\infty}^{+\infty}
 \int_{L_{\gamma_{p_2,p'},\check{Q}\mu_{0,h},\hat{Q}\mu_{1,h}}}
 v_{k,j}^{\mathfrak{e}_{p_2,p'},\mathfrak{d}_{p'}}(u,m,\epsilon) \exp( -(\frac{u}{\epsilon t})^{k} ) e^{izm} \frac{du}{u} dm \right)\\
 - \frac{k}{(2\pi)^{1/2}} \int_{-\infty}^{+\infty}
 \int_{L_{\gamma_{p_2,p'},\hat{Q}\mu_{1},\infty}}
 v_{k,j}^{\mathfrak{e}_{p_2,p'},\mathfrak{d}_p'}(u,m,\epsilon) \exp( -(\frac{u}{\epsilon t})^{k} ) e^{izm} \frac{du}{u} dm
 \label{path_deform_decomp_difference_u_p_j_2}
\end{multline}
The integral
$$\tilde{I}_1:=\left|\frac{k}{(2\pi)^{1/2}} \int_{-\infty}^{+\infty} \int_{C_{\gamma_{p_1,p},\gamma_{p_2,p'},\check{Q}\mu_{0,j}}}
 v_{k,j}^{\mathfrak{e}_{p_1,p},\mathfrak{d}_p}(u,m,\epsilon) \exp( -(\frac{u}{\epsilon t})^{k} ) e^{izm} \frac{du}{u} dm\right|$$
is estimated from above like in (\ref{I_1_exp_small_order_k_on circle}), and by means of (\ref{e1474}).
$$\tilde{I}_1\le \tilde{C}_{6} K_{6}^{j}(\frac{1}{q^{\delta}})^{\frac{k \min_{l=1}^{D-1}d_{l}}{2}(j+1)j}
\exp( - \frac{\delta_{1}(\check{Q}\mu_{0})^{k}}{(r_{\mathcal{T}})^{k}} (\frac{1}{q^{\delta k}})^{j}
\frac{1}{|\epsilon|^{k}} ).$$
The estimates for
$$\tilde{I}_2:= \left|\frac{k}{(2\pi)^{1/2}} \int_{-\infty}^{+\infty}
 \int_{L_{\gamma_{p_1,p},\check{Q}\mu_{0,h},\hat{Q}\mu_{1,h}}}
 v_{k,j}^{\mathfrak{e}_{p_1,p},\mathfrak{d}_p}(u,m,\epsilon) \exp( -(\frac{u}{\epsilon t})^{k} ) e^{izm} \frac{du}{u} dm\right|$$
are analogous to those in (\ref{I_2_exp_small_order_k_on_segment_h}), by means of (\ref{e1484}). At this point, we choose $r_{\mathcal{T}}>0$ with $\nu<\frac{\delta_1}{2\epsilon_0^kr_{\mathcal{T}^k}}$. Then,
\begin{align*}
\tilde{I}_2&\le \tilde{C}_{7.1}K_7^j (\frac{1}{q^{\delta}})^{\frac{k \min_{l=1}^{D-1}d_{l}}{2}(h+1)h} \int_{\check{Q}\mu_{0,h}}^{\hat{Q}\mu_{1,h}}\exp\left(\nu-\frac{\delta_1}{2(\epsilon_0r_{\mathcal{T})}^k}r^{k}\right)dr \exp\left( - \frac{\delta_{1}(\check{Q}\mu_{0})^{k}}{2(r_{\mathcal{T}})^{k}} (\frac{1}{q^{\delta k}})^{h}
\frac{1}{|\epsilon|^{k}} \right)\\
&\le \tilde{C}_{7} K_7^j (\frac{1}{q^{\delta}})^{\frac{k \min_{l=1}^{D-1}d_{l}}{2}(h+1)h}\exp\left( - \frac{\delta_{1}(\check{Q}\mu_{0})^{k}}{2(r_{\mathcal{T}})^{k}} (\frac{1}{q^{\delta k}})^{h}
\frac{1}{|\epsilon|^{k}} \right),
\end{align*}
for some $\tilde{C}_{7.1},\tilde{C}_7>0$. The term in which the integral along $L_{\gamma_{p_2,p'},\check{Q}\mu_{0,h},\hat{Q}\mu_{1,h}}$ can be treated in the same manner. Finally, in view of (\ref{e1465}) and following the same steps as in (\ref{I_2_exp_small_order_k_on_segment_h}), we conclude that
$$\tilde{I}_{3}=\left|\frac{k}{(2\pi)^{1/2}} \int_{-\infty}^{+\infty}
 \int_{L_{\gamma_{p_1,p},\hat{Q}\mu_{1},\infty}}
 v_{k,j}^{\mathfrak{e}_{p_1,p},\mathfrak{d}_p}(u,m,\epsilon) \exp( -(\frac{u}{\epsilon t})^{k} ) e^{izm} \frac{du}{u} dm\right|$$
is such that
\begin{align*}
I_3&\le \tilde{C}_{5.1}K_5^j\int_{\hat{Q}\mu_1}^{\infty}\exp\left(\nu- \frac{\delta_{1}}{2(\epsilon_0r_{\mathcal{T}})^{k}}r^k  \right) dr\exp\left(- \frac{\delta_{1}(\hat{Q}\mu_{1})^{k}}{2(r_{\mathcal{T}})^{k}} 
\frac{1}{|\epsilon|^{k}} \right)\\
&\le \tilde{C}_5K_5^j \exp\left(- \frac{\delta_{1}(\hat{Q}\mu_{1})^{k}}{2(r_{\mathcal{T}})^{k}} \frac{1}{|\epsilon|^{k}}\right), 
\end{align*}
for some $\tilde{C}_{5.1},\tilde{C}_5>0$. The above estimates on $\tilde{I}_{j}$, for $j=1,2,3$, and Lemma~\ref{lema3} allow us to conclude the result following the same arguments as in the last part of the proof in Theorem~\ref{teo1}.

\end{proof}

\section{Analytic solutions of a $q-$difference-differential initial value problem}\label{seccion11}

This section is devoted to the asymptotic behavior of the solutions of the problem (\ref{SCP_bf_2}), under null initial data. The main result in this direction is obtained by a two-level version of Ramis-Sibuya theorem involving both $q-$Gevrey and Gevrey estimates.

\subsection{Ramis-Sibuya theorem theorem in two levels (Gevrey and $q-$Gevrey)}\label{seccion111}

In Section~\ref{seccion6}, we mentioned a $q-$Gevrey version of Ramis-Sibuya theorem. In this section, we provide a novel two-level approach concerning both Gevrey and $q-$Gevrey asymptotics in sectors. A two-level version of Ramis-Sibuya theorem has already been studied when considering two different Gevrey asymptotics (see~\cite{lamaade16, lama0}), two different $q-$Gevrey asymptotics (see~\cite{lamaq}), two levels concerning more general asymptotics, associated with strongly regular sequences (see~\cite{lamasa0})   or in the case of a Gevrey level and the $1^+$ level (see~\cite{ma0}).

\begin{defin}
Let $(\mathbb{F},\left\|\cdot\right\|_{\mathbb{F}})$ be a complex Banach space and $\mathcal{E}$ be an open and bounded sector with vertex at 0. We also consider a positive real number $\alpha$.

We say that a function $f:\mathcal{E}\to\mathbb{F}$, holomorphic on $\mathcal{E}$, admits a formal power series $\hat{f}(\epsilon)=\sum_{k\ge0}a_{k}\epsilon^{k}\in\mathbb{F}[[\epsilon]]$ as its $1/\alpha-$Gevrey asymptotic expansion if, for any closed proper subsector $\mathcal{W}\subseteq\mathcal{E}$ with vertex at the origin, there exist $C,M>0$ such that
$$\left\|f(\epsilon)-\sum_{k=0}^{N-1}a_{k}\epsilon^k\right\|_{\mathbb{F}}\le CM^{N}N!^{1/\alpha}|\epsilon|^{N},$$
for every $N\ge 1$, and all $\epsilon\in\mathcal{W}$.
\end{defin}

\begin{theo}[Gevrey and $q$-Gevrey Ramis Sibuya Theorem]\label{teo4}
Let $(\mathbb{E},\left\|\cdot\right\|_{\mathbb{E}})$ be a complex Banach space, and let $\{ \mathcal{E}_{p',p}\}_{\substack{0\le p\le \varsigma-1\\0\le p'\le \chi_p-1}}$ be a good covering in $\mathbb{C}^{\star}$. We assume $G_{p',p}:\mathcal{E}_{p',p}\to\mathbb{E}$ is a holomorphic function for every $0\le p\le \varsigma-1$ and $0\le p'\le \chi_p-1$, and we define $\Delta_{(p_1,p),(p_2,p')}(\epsilon)=G_{p_2,p'}(\epsilon)-G_{p_1,p}(\epsilon)$ for every $\epsilon\in \mathcal{E}_{p_1,p}\cap\mathcal{E}_{p_2,p'}$, in the case that $\mathcal{E}_{p_1,p}\cap\mathcal{E}_{p_2,p'}\neq\emptyset$. Moreover, we assume:
\begin{enumerate}
\item For every $0\le p\le \varsigma-1$ and $0\le p'\le \chi_p$ the function $G_{p',p}(\epsilon)$ is bounded as $\epsilon\in\mathcal{E}_{p',p}$ tends to the origin.
\item Let $k\ge1$ be an integer. Let $0\le p\le \varsigma-1$ and $0\le p_1,p_2\le \chi_p-1$ with $p_1\neq p_2$ such that $\mathcal{E}_{p_1,p}\cap\mathcal{E}_{p_2,p}\neq\emptyset$. Then, there exist $K_1,M_1>0$ such that
$$\left\|\Delta_{(p_1,p),(p_2,p)}(\epsilon)\right\|_{\mathbb{E}}\le K_1\exp\left(-\frac{M_1}{|\epsilon|^{k}}\right),$$
for every $\epsilon\in\mathcal{E}_{p_1,p}\cap\mathcal{E}_{p_2,p}$.
\item Let $\kappa>0$. Let $0\le p,p'\le \varsigma-1$ with $p\neq p'$, $0\le p_1\le \chi_p-1$ and $0\le p_2\le\chi_{p'}-1$ such that $\mathcal{E}_{p_1,p}\cap\mathcal{E}_{p_2,p'}\neq\emptyset$. Then, there exist $K_2>0$ and $M_2\in\mathbb{R}$ such that
$$ \left\|\Delta_{(p_1,p),(p_2,p')}(\epsilon)\right\|_{\mathbb{E}}\le K_2|○\epsilon|^{M_2}\exp\left(-\frac{\kappa}{2\log(q)}\log^2|\epsilon|\right),$$
for every $\epsilon\in \mathcal{E}_{p_1,p}\cap\mathcal{E}_{p_2,p'}$.
\end{enumerate}
Then, there exists a convergent power series $a(\epsilon)\in\mathbb{E}\{\epsilon\}$ defined on some neighborhood of the origin and $\hat{G}_1(\epsilon),\hat{G}_2(\epsilon)\in\mathbb{E}[[\epsilon]]$ such that for every $0\le p\le \varsigma-1$ and all $0\le p_1\le\chi_p-1$, the function $G_{p_1,p}$ can be written in the form
$$G_{p_1,p}(\epsilon)=a(\epsilon)+G_{p_1,p}^1(\epsilon)+G_{p_1,p}^2(\epsilon),$$
where $G^{1}_{p_1,p}(\epsilon)$ is holomorphic on $\mathcal{E}_{p_1,p}$ and admits $\hat{G}_{1}(\epsilon)$ as its $1/k-$Gevrey asymptotic expansion in $\mathcal{E}_{p_1,p}$; $G^{2}_{p_1,p}(\epsilon)$ is holomorphic on $\mathcal{E}_{p_1,p}$ and admits $\hat{G}_{2}(\epsilon)$ as its $q-$Gevrey asymptotic expansion of order $1/\kappa$ on $\mathcal{E}_{p_1,p}$. In both cases, the formal power series are common for every $0\le p\le \varsigma-1$ and all $0\le p_1\le\chi_p-1$. 
\end{theo}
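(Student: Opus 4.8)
The plan is to run a two--level Cauchy--Heine (Mittag--Leffler) decomposition of the cocycle $(\Delta_{(p_1,p),(p_2,p')})$, treating separately the two scales of flatness. After relabelling the sectors of the good covering cyclically, each pair of consecutive elements of $\{\mathcal{E}_{p',p}\}$ is of exactly one of two kinds: \emph{internal} intersections $\mathcal{E}_{p',p}\cap\mathcal{E}_{p'+1,p}$ arising from a common $p$, on which by assumption 2 the cocycle is exponentially flat of order $k$, and \emph{transition} intersections joining the last sector of the $p$--block to the first sector of the next block, on which by assumption 3 it is $q$--exponentially flat of order $\kappa$. Since all triple intersections are empty, the splitting $\Delta = \Delta^{(1)} + \Delta^{(2)}$, where $\Delta^{(1)}$ coincides with $\Delta$ on internal intersections and vanishes on transition ones and $\Delta^{(2)}$ coincides with $\Delta$ on transition intersections and vanishes on internal ones, defines two admissible $1$--cocycles for the good covering; by construction $\Delta^{(1)}$ is exponentially flat of order $k$ on every intersection and $\Delta^{(2)}$ is $q$--exponentially flat of order $\kappa$ on every intersection.

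For $i=1,2$ I would then apply the standard Cauchy--Heine construction: fixing for each consecutive intersection a segment $L$ issuing from $0$ into that intersection, set $G^{i}_{p',p}(\epsilon)=\frac{1}{2\pi i}\sum_{L}\int_{L}\frac{\Delta^{(i)}(\xi)}{\xi-\epsilon}\,d\xi$, the sum over the fixed segments taken with signs and sheets governed by the position of $\mathcal{E}_{p',p}$ relative to each $L$, so that $G^{i}_{(p_2,p')}-G^{i}_{(p_1,p)} = \Delta^{(i)}_{(p_1,p),(p_2,p')}$ on every nonempty intersection. The candidate Taylor coefficients $b^{(i)}_n = \frac{1}{2\pi i}\sum_{L}\int_{L}\Delta^{(i)}(\xi)\xi^{-n-1}\,d\xi$, whose integrals converge for all $n$ thanks to the flatness of $\Delta^{(i)}$ at $0$, do not depend on $(p',p)$, so $\hat{G}_1=\sum_n b^{(1)}_n\epsilon^n$ and $\hat{G}_2=\sum_n b^{(2)}_n\epsilon^n$ are common candidate expansions. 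The core estimate is that of the remainder $G^{i}_{p',p}(\epsilon)-\sum_{n<N}b^{(i)}_n\epsilon^n = \frac{\epsilon^N}{2\pi i}\sum_{L}\int_{L}\frac{\Delta^{(i)}(\xi)}{\xi^N(\xi-\epsilon)}\,d\xi$ on proper subsectors. For $i=1$, inserting $\|\Delta^{(1)}(\xi)\|_{\mathbb{E}}\le K_1\exp(-M_1/|\xi|^k)$ and optimizing in $|\xi|$ gives a $1/k$--Gevrey bound $CM^{N}N!^{1/k}|\epsilon|^N$, exactly as in classical Ramis--Sibuya. For $i=2$, inserting $\|\Delta^{(2)}(\xi)\|_{\mathbb{E}}\le K_2|\xi|^{M_2}\exp(-\frac{\kappa}{2\log q}\log^2|\xi|)$ and substituting $\xi=e^{s}$ turns the relevant integral into a Gaussian one, producing a factor of order $q^{(N-1)^2/(2\kappa)}$; absorbing the $q^{O(N)/(2\kappa)}$ discrepancy into a geometric factor yields a $q$--Gevrey bound $CA^{N}q^{N(N-1)/(2\kappa)}|\epsilon|^N$ of order $1/\kappa$. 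This last step is the $q$--analog of the Cauchy--Heine / Watson lemma underlying Section~\ref{seccion6} and \cite{ma}; alternatively, after subtracting $G^{(1)}$ one may collapse each $p$--block into a single sector $\hat{\mathcal{E}}_p=\bigcup_{p'}\mathcal{E}_{p',p}$, on which $G_{p',p}-G^{1}_{p',p}$ glues to a holomorphic function with block--to--block jumps that are $q$--exponentially flat of order $\kappa$, and obtain $\hat{G}_2$ directly from Theorem (q-RS).

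Each $G^{i}_{p',p}$ thus admits an asymptotic expansion and in particular is bounded as $\epsilon\to0$ on $\mathcal{E}_{p',p}$. Setting $a:=G_{p',p}-G^{1}_{p',p}-G^{2}_{p',p}$ and using $\Delta=\Delta^{(1)}+\Delta^{(2)}$, the jump of this combination across every consecutive intersection vanishes, so the locally defined $a$'s glue to a single holomorphic function on $\mathcal{U}\setminus\{0\}$; being a difference of bounded functions it is bounded near $0$, hence by Riemann's removable singularity theorem it extends to $a\in\mathbb{E}\{\epsilon\}$. This yields $G_{p_1,p}=a(\epsilon)+G^{1}_{p_1,p}(\epsilon)+G^{2}_{p_1,p}(\epsilon)$ for all $0\le p\le\varsigma-1$ and $0\le p_1\le\chi_p-1$, with $G^{1}_{p_1,p}$ admitting the common $1/k$--Gevrey expansion $\hat{G}_1$ and $G^{2}_{p_1,p}$ the common $q$--Gevrey expansion $\hat{G}_2$ of order $1/\kappa$ on $\mathcal{E}_{p_1,p}$, as asserted.

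I expect the main obstacle to be the simultaneous uniform control of the two Cauchy--Heine integrals and, specifically, the $q$--Gevrey remainder estimate: one must track the precise $q^{N^{2}/(2\kappa)}$--type growth produced by the Gaussian integral and match it to the order $1/\kappa$, while the merely $\log^{2}$--type (rather than exponential) decay of $\Delta^{(2)}$ forces a finer optimization of the integration path and of the exponent than in the classical case. A secondary point requiring care is checking that the signs and sheet choices in the Cauchy--Heine sums are consistent across the whole cyclic covering so that the cohomological identities $G^{i}_{(p_2,p')}-G^{i}_{(p_1,p)}=\Delta^{(i)}$ hold on \emph{all} nonempty intersections, including those between non--consecutive blocks when they occur.
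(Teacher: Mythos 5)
Your proposal is correct and follows essentially the same route as the paper: split the cocycle into the sub-cocycle supported on intersections within a common $p$-block (exponentially flat of order $k$) and the sub-cocycle supported on block-to-block intersections (q-exponentially flat of order $\kappa$), split each one by a one-level Ramis--Sibuya argument, and glue $G_{p',p}$ minus the two pieces into a bounded holomorphic function on the punctured neighborhood, which extends to the convergent series $a(\epsilon)$ by removability. The only difference is that you re-derive the two one-level splittings by explicit Cauchy--Heine integrals and remainder estimates, whereas the paper simply invokes Lemma XI-2-6 of Hsieh--Sibuya for the Gevrey level and the lemma behind Theorem (q-RS) (Lemma 8 in \cite{ma}) for the $q$-Gevrey level — your alternative suggestion of collapsing each block and applying Theorem (q-RS) is precisely what the paper does.
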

\begin{proof}
For every $0\le p,p'\le \varsigma-1$, $0\le p_1\le\chi_p$ and $0\le p_2\le\chi_{p'}$ we define the holomorphic cocycles $\Delta^1_{(p_1,p),(p_2,p')}(\epsilon)=\Delta_{(p_1,p),(p_2,p')}$ if $p=p'$ (and $p_1\neq p_2$) and $\mathcal{E}_{p_1,p}\cap\mathcal{E}_{p_2,p'}\neq\emptyset$ and $\Delta^1_{(p_1,p),(p_2,p')}(\epsilon)\equiv 0$ otherwise. A direct consequence of Lemma XI-2-6 in~\cite{hssi} states that for every $0\le p,p'\le \varsigma-1$, $0\le p_1\le\chi_p-1$ and $0\le p_2\le\chi_{p'}-1$ there exist holomorphic functions $\Psi^1_{p_1,p},\Psi^1_{p_2,p'}:\mathcal{E}_{p_1,p}\to\mathbb{C}$ such that 
$$\Delta^1_{(p_1,p),(p_2,p')}=\Psi^1_{p_2,p'}(\epsilon)-\Psi^1_{p_1,p}(\epsilon),$$
for every $\epsilon\in\mathcal{E}_{p_1,p}\cap\mathcal{E}_{p_2,p'}$ in the case that $\mathcal{E}_{p_1,p}\cap\mathcal{E}_{p_2,p'}\neq\emptyset$. Moreover, there exists a formal power series $\sum_{m\ge0}\phi_{m,1}\epsilon^m\in\mathbb{E}[[\epsilon]]$ such that for every $0\le p\le \varsigma-1$, $0\le p'\le\chi_p-1$ and every closed proper subsector $\mathcal{W}\subseteq\mathcal{E}_{p',p}$ with vertex at the origin, there exist $\breve{K}_1,\breve{M}_1>0$ with
$$\left\|\Psi^1_{p',p}(\epsilon)-\sum_{m=0}^{M-1}\phi_{m,1}\epsilon^m\right\|\le \breve{K}_1 (\breve{M}_{1})^{M} M!^{1/k}|\epsilon|^{M},$$
for every $\epsilon\in\mathcal{W}$, and all positive integer $M$.

We now turn our attention to the $q-$Gevrey settings and define, for every $0\le p,p'\le \varsigma-1$, $0\le p_1\le\chi_p$ and $0\le p_2\le\chi_{p'}$ the holomorphic cocycles $\Delta^2_{(p_1,p),(p_2,p')}(\epsilon)=\Delta_{(p_1,p),(p_2,p')}$ if $p\neq p'$ and $\mathcal{E}_{p_1,p}\cap\mathcal{E}_{p_2,p'}\neq\emptyset$, and $\Delta^2_{(p_1,p),(p_2,p')}(\epsilon)\equiv 0$ otherwise. A direct application of Theorem (q-RS) in Section~\ref{seccion6}, which can be found in detail in Lemma 8 in~\cite{ma} one can also affirm there exist bounded holomorphic functions $\Psi^2_{p_1,p}:\mathcal{E}_{p_1,p}\to\mathbb{C}$ such that
$$\Delta^2_{(p_1,p),(p_2,p')}(\epsilon)=\Psi^2_{p_2,p'}(\epsilon)-\Psi^2_{p_1,p}(\epsilon),$$
for every $\epsilon\in\mathcal{E}_{p_1,p}\cap\mathcal{E}_{p_2,p'}$ in the case that $\mathcal{E}_{p_1,p}\cap\mathcal{E}_{p_2,p'}\neq\emptyset$. Moreover, there exists a formal power series $\sum_{m\ge0}\phi_{m,2}\epsilon^m\in\mathbb{E}[[\epsilon]]$ such that for every $0\le p\le \varsigma-1$, $0\le p'\le\chi_p-1$ and every closed proper subsector $\mathcal{W}\subseteq\mathcal{E}_{p',p}$ with vertex at the origin, there exist $\breve{K}_2,\breve{M}_2>0$ with
$$\left\|\Psi^2_{p',p}(\epsilon)-\sum_{m=0}^{M}\phi_{m,2}\epsilon^m\right\|\le \breve{K}_2 (\breve{M}_{2})^{M+1} q^{\frac{(M+1)M}{2\kappa}}|\epsilon|^{M+1},$$
for every $\epsilon\in\mathcal{W}$, and all integer $M\ge0$.

We consider the functions 
$$a_{p',p}(\epsilon)=G_{p',p}(\epsilon)-\Psi^1_{p',p}(\epsilon)-\Psi^2_{p',p}(\epsilon).$$
Let $0\le p,p'\le \varsigma-1$ and $0\le p_1\le \chi_p-1$, $0\le p_2\le \chi_{p'}-1$ such that $\mathcal{E}_{p_1,p}\cap\mathcal{E}_{p_2,p'}\neq\emptyset$. We have
\begin{align*}
a_{p_1,p}(\epsilon)-a_{p_2,p'}(\epsilon)&=G_{p_1,p}(\epsilon)-G_{p_2,p'}(\epsilon)-\Psi^1_{p_1,p}(\epsilon)-\Psi^2_{p_1,p}(\epsilon)+\Psi^1_{p_2,p'}(\epsilon)+\Psi^2_{p_2,p'}(\epsilon)\\
&=G_{p_1,p}(\epsilon)-G_{p_2,p'}(\epsilon)-\Delta^1_{(p_1,p),(p_2,p')}(\epsilon)-\Delta^2_{(p_1,p),(p_2,p')}(\epsilon)\\
&=G_{p_1,p}(\epsilon)-G_{p_2,p'}(\epsilon)-\Delta_{(p_1,p),(p_2,p')}(\epsilon)=0,
\end{align*}
for every $\epsilon\in\mathcal{E}_{p_1,p}\cap\mathcal{E}_{p_2,p'}$. Then, $a_{p',p}$ is the restriction on $\mathcal{E}_{p_1,p}$ of a holomorphic function $a(\epsilon)$, defined on a neighborhood of the origin but zero, and bounded at the origin. This yields $a(\epsilon)$ turns out to be a holomorphic function defined on some neighborhood of the origin. We write 
$$G_{p',p}(\epsilon)=a(\epsilon)+\Psi^1_{p',p}(\epsilon)+\Psi^2_{p',p}(\epsilon),$$
for every $0\le p\le \varsigma-1$ and $0\le p'\le \chi_p-1$. The result follows from here.
\end{proof}

\subsection{Solution and multiscale Gevrey and $q-$Gevrey asymptotics of the main problem}\label{seccion112}

In this last section, we state the main result of the present work, giving rise to the existence of an analytic and a formal solution of the main problem. We also provide an asymptotic behavior relating both in two different levels: $q-$Gevrey and Gevrey, in which the formal and the analytic solutions can be splitted. 

We now enunciate the main problem under study. We have decided to include every detail in the hypotheses made for the sake of completeness, although they can be detected from the preceding sections. 

Let $k \geq 1$ and $D \geq 2$ be integers. Let $\delta,k_{1}>0$, $q>1$ and $\alpha$ be real numbers and for $1 \leq l \leq D$, let $d_{l},\Delta_{l} \geq 0$ be nonnegative integers. We make the assumption that
\begin{equation}
kd_{D} - 1 \geq k_{1}\delta + kd_{l} \ \ , \ \ \Delta_{l} \geq kd_{l} \label{assum_dD_dl_k1_Delta_l_f}
\end{equation}
for all $1 \leq l \leq D-1$. Let $Q(X),R_{l}(X) \in \mathbb{C}[X]$, $1 \leq l \leq D$, be polynomials such that
\begin{equation}
\mathrm{deg}(Q) \geq \mathrm{deg}(R_D) \geq \mathrm{deg}(R_{l}) \ \ , \ \ Q(im) \neq 0 \ \ , \ \
R_{D}(im) \neq 0, \label{assum_deg_Q_RD_Rl_f}
\end{equation}
for all $m \in \mathbb{R}$, all $1 \leq l \leq D-1$. We make the additional assumption that there exists a bounded sectorial
annulus
$$ A_{Q,R_{D}} = \{ z \in \mathbb{C} / r_{Q,R_{D}} \leq |z| \leq r_{Q,R_{D}}^{1} \ \ , \ \
|\mathrm{arg}(z) - d_{Q,R_{D}}| \leq \eta_{Q,R_{D}} \} $$
for some $d_{Q,R_{D}} \in \mathbb{R}$, $\eta_{Q,R_{D}}>0$ and $0<r_{Q,R_{D}}<r_{Q,R_{D}}^{1}$ such that
\begin{equation}
\frac{Q(im)}{R_{D}(im)} \in A_{Q,R_{D}} \label{quotient_Q_RD_in_Ann_f}
\end{equation} 
for all $m \in \mathbb{R}$. Let $P_{m}(\tau) = Q(im) - R_{D}(im)(k\tau^{k})^{d_{D}}\in\mathbb{C}[\tau,m]$ with roots given by $\{q_{l}(m):0\le l\le kd_{D}-1\}$. Choose a set of real numbers $\{\mathfrak{d}_{p}\}_{0\le p\le \varsigma-1}$, for some integer $\varsigma\ge2$; we make this choice according to the next hypotheses:
\noindent 1) There exists $M_{1}>0$ such that $|\tau - q_{l}(m)| \geq M_{1}(1 + |\tau|)$ for all $0 \leq l \leq kd_{D}-1$, all $m \in \mathbb{R}$, all $\tau \in \bar{U}_{\mathfrak{d}_{p}} \cup
\bar{D}(0,\mu_{0}) \cup \bar{A}_{\mu_1}$, where $U_{\mathfrak{d}_p}$ is an unbounded sector with bisecting direction $\mathfrak{d}_{p}\in\mathbb{R}$, $0<\mu_0<\mu_1$ and $\bar{A}_{\mu_1}$stands for the annulus $\{\tau\in\mathbb{C}: |\tau|\ge \mu_1\}$.  

\noindent 2) There exists $M_{2}>0$ such that $|\tau - q_{l_0}(m)| \geq M_{2}|q_{l_0}(m)| $ for some $l_{0} \in \{0,\ldots,kd_{D}-1 \}$, all $m \in \mathbb{R}$,
$\tau \in \bar{U}_{\mathfrak{d}_p} \cup \bar{D}(0,\mu_{0}) \cup \bar{A}_{\mu_1}$. 

\noindent 3) The sector 
$$\bar{U}_{\mathfrak{d}_{p},\mathfrak{d}_{p+1}} =
\{ \tau \in \mathbb{C}^{\ast} / \mathrm{arg}(\tau) \in [\mathfrak{d}_{p},\mathfrak{d}_{p+1}] \}$$
contains at least one root $q_{l}(m)$, for some $0 \leq l \leq kd_{D}-1$, all $m \in \mathbb{R}$. 

We assume there exists $\hat{Q}>1$ and $0<\check{Q}<1$ such that $\hat{Q}\mu_1=q^{\delta}\check{Q}\mu_0$.

Let $\{\mathcal{E}_{p}\}_{0\le p\le \varsigma-1}$ be a good covering in $\mathbb{C}^{\star}$ such that the family $\{(U_{\mathfrak{d}_p,\theta,\epsilon_0 r_{\mathcal{T}}})_{0\le p\le \varsigma-1},\mathcal{T}\}$ is associated to the good covering $\{\mathcal{E}_{p}\}_{0\le p\le \varsigma-1}$ (see Definition~\ref{defi7}).

For every $0\le p\le \varsigma-1$ we first consider the linear initial value problem
\begin{equation}\label{e1p}
P_1(t,z,\epsilon,\sigma_{q^{\delta}},\partial_t,\partial_z)u_p(t,z,\epsilon)=f(t,z,\epsilon),
\end{equation}
where 
\begin{equation}\label{e1_1}P_1(t,z,\epsilon,\sigma_{q^{\delta}},\partial_t,\partial_z):=Q(\partial_{z})-R_{D}(\partial_{z})\epsilon^{kd_{D}}(t^{k+1}\partial_{t})^{d_{D}}- \sum_{l=1}^{D-1} \epsilon^{\Delta_{l}}(t^{k+1}\partial_{t})^{d_{l}}c_{l}(z,\epsilon)R_{l}(\partial_{z})\sigma_{q^{\delta}},
\end{equation}
with initial data $u_p(0,z,\epsilon) \equiv 0$. Here, $\sigma_{q^{\delta}}$ stands for the dilation operator on $t$ variable acting on a function $H(t)$ by $\sigma_{q^{\delta}}(H)(t)=H(q^{\delta}t)$, and for every $0\le l\le D-1$, $c_{l}(z,\epsilon):=\mathcal{F}^{-1}(m\mapsto C_{l}(m,\epsilon))(z)$, for some map $m \mapsto C_{l}(m,\epsilon)$ belonging to $E_{(\beta,\mu)}$ for some $\beta>0$ and $\mu > \mathrm{deg}(R_{D}) + 1$ and depends holomorphically on $\epsilon$ on $D(0,\epsilon_{0})$, for some $\epsilon_0>0$. $c_{l}(z,\epsilon)$ is a holomorphic function on $H_{\beta'}\times D(0,\epsilon_0)$, for every $0<\beta'<\beta$, where $H_{\beta'} = \{ z \in \mathbb{C} / |\mathrm{Im}(z)|< \beta' \}$. The forcing term $f(t,z,\epsilon)$ is given by
$$f(t,z,\epsilon):=\mathcal{F}^{-1}\left(m\mapsto F(t,m,\epsilon)\right)(z),$$
where $F(t,m,\epsilon):=\mathcal{L}_{m_k}^{\gamma}\left(\tau\mapsto \Psi_{k}(\tau,m,\epsilon)\right)(t\epsilon),$ for some $\gamma\in\mathbb{R}$, and $\psi_{k}(\tau,m,\epsilon)\in\mathrm{Exp}_{(k_{1},\beta,\mu,\alpha);\mathbb{C}}^{q}$ (see Definition~\ref{defi2}) for all $\epsilon \in D(0,\epsilon_{0})$, holomorphic with respect to $\epsilon$ in $D(0,\epsilon_0)$. $f(t,z,\epsilon)$ turns out to be a holomorphic function with respect to $(t,\epsilon)$ in a product of discs at the origin, and in $H_{\beta'}$ with respect to $z$, for every $0<\beta'<\beta$. In the case that $\sup_{\epsilon\in D(0,\epsilon_0)}\left\|C_l(m,\epsilon)\right\|_{\beta,\mu}\le \gamma_{l}$ for some small enough $\gamma_l$ for every $0\le l\le D-1$ and $r_{Q,R_D}>0$ is large enough, then for all $0\le p\le \varsigma-1$, the problem (\ref{e1_1}) admits a unique solution $u_p(t,z,\epsilon)\in\mathcal{O}(\mathcal{T}\times H_{\beta'}\times \mathcal{E}_p)$, for every $0<\beta'<\beta$ and small enough $r_{\mathcal{T}}>0$, with $u_p(0,z,\epsilon) \equiv 0$ (see Theorem~\ref{teo1}).

Let $0<\beta'<\beta$. We denote $\mathbb{F}$ the Banach space of bounded holomorphic functions on $\mathcal{T}\times H_{\beta'}$. There exists $\hat{u}(t,z,\epsilon)\in\mathbb{F}[[\epsilon]]$ which solves (\ref{e1_1}) and is the $q-$Gevrey asymptotic expansion of order $1/\kappa$ of the solution $u_p(t,z,\epsilon)$, seen as a holomorphic function from $\mathcal{E}_{p}$ into $\mathbb{F}$, for all $0\le p\le \varsigma-1$, and any fixed $0<\kappa<\frac{k}{\delta}\min_{l=1}^{D-1}d_l$ (see Theorem~\ref{teo2}).

Let $\mathbf{D} \geq 2$ be an integer. For $1 \leq l \leq \mathbf{D}$, let
$\mathbf{d}_{l}$,$\boldsymbol{\delta}_{l}$,$\mathbf{\Delta}_{l} \geq 0$ be nonnegative integers. We assume that $1 = \boldsymbol{\delta}_{1}$, $\boldsymbol{\delta}_{l} < \boldsymbol{\delta}_{l+1}$ and $\boldsymbol{\delta}_{\mathbf{D}}\ge\boldsymbol{\delta}_{l}+\frac{1}{k}$ for all $1 \leq l \leq \mathbf{D}-1$. We also make the assumption that
\begin{equation}
\mathbf{d}_{\mathbf{D}} = (\boldsymbol{\delta}_{\mathbf{D}}-1)(k+1) \ \ , \ \
\mathbf{d}_{l} > (\boldsymbol{\delta}_{l}-1)(k+1) \ \ , \ \
\mathbf{\Delta}_{l} - \mathbf{d}_{l} + \boldsymbol{\delta}_{l} - 1 \geq 0 \ \ , \ \
\mathbf{\Delta}_{\mathbf{D}} = \mathbf{d}_{\mathbf{D}} - \boldsymbol{\delta}_{\mathbf{D}} + 1
\label{assum_dl_delta_l_Delta_l_bf_Borel_f}
\end{equation}
for all $1 \leq l \leq \mathbf{D}-1$. Let $\mathbf{Q}(X),\mathbf{R}_{l}(X) \in \mathbb{C}[X]$, $0 \leq l \leq \mathbf{D}$, be polynomials such that
\begin{equation}
\mathrm{deg}(\mathbf{Q}) \geq \mathrm{deg}(\mathbf{R}_{\mathbf{D}}) \geq \mathrm{deg}(\mathbf{R}_{l}) \ \ , \ \
\mathbf{Q}(im) \neq 0 \ \ , \ \ \mathbf{R}_{\mathbf{D}}(im) \neq 0 \label{assum_Q_Rl_bf_Borel_f}
\end{equation}
for all $m \in \mathbb{R}$, all $0 \leq l \leq \mathbf{D}-1$. We consider $\mu>\deg(\mathbf{R}_{\mathbf{D}})+1$ and assume there exists
$$ S_{\mathbf{Q},\mathbf{R}_{\mathbf{D}}} = \{ z \in \mathbb{C} /
|z| \geq r_{\mathbf{Q},\mathbf{R}_{\mathbf{D}}} \ \ , \ \ |\mathrm{arg}(z) - d_{\mathbf{Q},\mathbf{R}_{\mathbf{D}}}|
\leq \eta_{\mathbf{Q},\mathbf{R}_{\mathbf{D}}} \} $$
for some $d_{\mathbf{Q},\mathbf{R}_{\mathbf{D}}} \in \mathbb{R}$, $\eta_{\mathbf{Q},\mathbf{R}_{\mathbf{D}}},r_{\mathbf{Q},\mathbf{R}_{\mathbf{D}}}>0$ such that
\begin{equation}
\frac{\mathbf{Q}(im)}{\mathbf{R}_{\mathbf{D}}(im)} \in S_{\mathbf{Q},\mathbf{R}_{\mathbf{D}}} \label{quotient_Q_RD_in_S_bf_f}
\end{equation} 
for all $m \in \mathbb{R}$. Let $\{\mathbf{q}_{l}(m):0\le l\le (\boldsymbol{\delta}_{\mathbf{D}}-1)k-1\}$ be the roots of the polynomial $\mathbf{P}_m(\tau)=\mathbf{Q}(im)k-\mathbf{R}_{\mathbf{D}}(im)k^{\boldsymbol{\delta}_{\mathbf{D}}}\tau^{(\boldsymbol{\delta}_{\mathbf{D}}-1)k}$. 

We fix $0\le p\le \varsigma-1$ and consider a finite set of real numbers $\mathfrak{e}_{p',p}$, $0 \leq p' \leq \chi_p-1$ related to an unbounded sector $S_{\mathfrak{e}_{p',p}}$ with vertex at 0 and bisecting direction $\mathfrak{e}_{p',p}$, and $\rho>0$ in such a way that:

\noindent 1) There exists $\mathbf{M}_{1}>0$ such that $|\tau - \mathbf{q}_{l}(m)| \geq \mathbf{M}_{1}(1 + |\tau|)$ for all $0 \leq l \leq (\boldsymbol{\delta}_{\mathbf{D}}-1)k-1$, $m \in \mathbb{R}$, $\tau \in S_{\mathfrak{e}_{p',p}} \cup \bar{D}(0,\rho)$, for all $0 \leq p' \leq \chi_p -1$.\medskip

\noindent 2) There exists $\mathbf{M}_{2}>0$ such that $|\tau - \mathbf{q}_{l_0}(m)| \geq \mathbf{M}_{2}|\mathbf{q}_{l_0}(m)|$ for some $l_{0} \in \{0,\ldots,(\boldsymbol{\delta}_{\mathbf{D}}-1)k-1 \}$, all $m \in \mathbb{R}$, all
$\tau \in S_{\mathfrak{e}_{p',p}} \cup \bar{D}(0,\rho)$, and for all $0 \leq p' \leq \chi_p -1$.\medskip

\noindent 3) For all $0 \leq p' \leq \chi_p-1$, one has $S_{\mathfrak{e}_{p',p}} \subset U_{\mathfrak{d}_{p}}$.

Let $\{ \mathcal{E}_{p',p}\}_{\substack{0\le p\le \varsigma-1\\0\le p'\le \chi_p-1}}$ be a good covering in $\mathbb{C}^{\star}$ such that $\mathcal{E}_{p',p}\subseteq \mathcal{E}_{p}$ for every $0\le p\le \varsigma-1$ and all $0\le p'\le \chi_p-1$. We consider a sector $\tilde{\mathcal{T}}\subseteq\mathcal{T}$ and
$$S_{\mathfrak{e}_{p',p},\theta,\epsilon_0r_{\mathcal{T}}}=\{T\in\mathbb{C}^{\star}:|T|\le \epsilon_0r_{\mathcal{T}},|\mathfrak{e}_{p',p}-\arg(T)|<\frac{\theta}{2}\},$$
where $\frac{\pi}{k}<\theta<\frac{\pi}{k}+Ap(S_{\mathfrak{e}_{p',p}})$, with $Ap(S_{\mathfrak{e}_{p',p}})$ such that $\epsilon t\in S_{\mathfrak{e}_{p',p},\theta,\epsilon_0r_\mathcal{T}}$, for every $t\in\mathcal{\tilde{T}}$ and $\epsilon\in\mathcal{E}_{p',p}$, by reducing the aperture of $\tilde{\mathcal{T}}$, if necessary, so that the set $\{(S_{\mathfrak{e}_{p',p},\theta,\epsilon_0r_\mathcal{T}})_{\substack{0\le p\le \varsigma -1\\0\le p'\le \chi_p-1}},\tilde{\mathcal{T}}\}$ is associated to the good covering $\{ \mathcal{E}_{p',p}\}_{\substack{0\le p\le \varsigma-1\\0\le p'\le \chi_p-1}}$. 

Let 
$$\mathbf{c}_0(z,\epsilon):=\mathcal{F}^{-1}\left(m\mapsto\mathbf{C}_{0,0}(m,\epsilon)\right)(z),$$
where $\mathbf{C}_{0,0}(m,\epsilon)\in E_{\beta,\mu)}$, and depends holomorphically on $\epsilon\in D(0,\epsilon_0)$. Then, $\mathbf{c}_{0}(z,\epsilon)$ turns out to be a  holomorphic function on $H_{\beta'}\times D(0,\epsilon_0)$, for any $0<\beta'<\beta$. We also consider bounded and holomorphic functions $\mathbf{c}_{0,0}(\epsilon),\mathbf{c}_{\mathbf{F}}(\epsilon)$ defined on $D(0,\epsilon_0)$, which vanish at $\epsilon=0$.

For large enough $r_{\mathbf{Q},\mathbf{R}_{\mathbf{D}}}>0$ and small enough $\boldsymbol{\varsigma}_{0,0},\boldsymbol{\varsigma}_{0},\boldsymbol{\varsigma}_{F}>0$ such that
\begin{equation}
\sup_{\epsilon \in D(0,\epsilon_{0})} \left| \frac{\mathbf{c}_{0,0}(\epsilon)}{\epsilon} \right|
\leq \boldsymbol{\varsigma}_{0,0} \ \ , \ \
||\mathbf{C}_{0,0}(m,\epsilon)||_{(\beta,\mu)} \leq \boldsymbol{\varsigma}_{0} \ \ , \ \
\sup_{\epsilon \in D(0,\epsilon_{0})} \left| \frac{\mathbf{c}_{\mathbf{F}}(\epsilon)}{\epsilon} \right|
\leq \boldsymbol{\varsigma}_{\mathbf{F}} \label{varsigma00_C00_cF_small_bf_2_f}
\end{equation}
for all $\epsilon \in D(0,\epsilon_{0})$, then, for every $0\le p\le \varsigma-1$, and $0\le p'\le \chi_p-1$, there exists a solution $y_{p',p}(t,z,\epsilon)$ of 
\begin{equation}\label{e2_1}P_2\left(t,z,\epsilon,\partial_t,\partial_z\right)y_{p',p}(t,z,\epsilon)=\mathbf{c}_{\mathbf{F}}(\epsilon)u_{p}(t,z,\epsilon),
\end{equation}
with $y_{p',p}(0,z,\epsilon)\equiv0$, where
$$P_2\left(t,z,\epsilon,\partial_t,\partial_z\right):=\mathbf{Q}(\partial_z)\partial_{t}-\sum_{l=1}^{\mathbf{D}}\epsilon^{\boldsymbol{\Delta}_l}t^{\mathbf{d}_l}\partial_t^{\boldsymbol{\delta}_l}\mathbf{R}_l(\partial_z)-\mathbf{c}_{0,0}(\epsilon)\mathbf{c}_{0}(z,\epsilon)
\mathbf{R}_{0}(\partial_z).$$
In view of Theorem~\ref{teo3}, we get the existence of a unique solution $y_{p',p}(t,z,\epsilon)$ of the previous problem which belongs to $\mathcal{O}(\tilde{\mathcal{T}}\times H_{\beta'}\times \mathcal{E}_{p',p})$, for every $0<\beta'<\beta$.

\begin{theo}
There exists a unique formal power series $\hat{y}(t,z,\epsilon)\in\mathbb{F}[[\epsilon]]$, formal solution of the problem
\begin{equation}\label{e2_1f}
P_2\left(t,z,\epsilon,\partial_t,\partial_z\right)\hat{y}(t,z,\epsilon)=\mathbf{c}_{\mathbf{F}}(\epsilon)\hat{u}(t,z,\epsilon).
\end{equation}
Moreover, one can write $\hat{y}(t,z,\epsilon)=a(t,z,\epsilon)+\hat{y}_{1}(t,z,\epsilon)+\hat{y}_{2}(t,z,\epsilon),$ in such a way that for every $0\le p\le \varsigma-1$ and $0\le p'\le \chi_p-1$ one has that 
$$y_{p',p}(t,z,\epsilon)=a(t,z,\epsilon)+y^{1}_{p',p}(t,z,\epsilon)+y^{2}_{p',p}(t,z,\epsilon),$$
where $\epsilon\mapsto y^{1}_{p',p}(t,z,\epsilon)$ is a $\mathbb{F}$-valued function that admits $\hat{y}_{1}(t,z,\epsilon)$ as its Gevrey asymptotic expansion of order $1/k$ in $\mathcal{E}_{p',p}$, and $\epsilon\mapsto y^{2}_{p',p}(t,z,\epsilon)$ is a $\mathbb{F}$-valued function that admits $\hat{y}_{2}(t,z,\epsilon)$ as its $q-$Gevrey asymptotic expansion of order $1/\kappa$, for every fixed $0<\kappa<\frac{k}{\delta}\min_{l=1}^{D-1}d_{l}$ (see Theorem~\ref{teo2}) in $\mathcal{E}_{p',p}$.
\end{theo}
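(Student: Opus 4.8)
The plan is to feed the analytic data furnished by Theorem~\ref{teo3} into the two-level Ramis--Sibuya statement of Theorem~\ref{teo4}, and then to transfer the resulting splitting to the formal level by an argument parallel to the proof of Theorem~\ref{teo2}. First I would set, for $0\le p\le\varsigma-1$ and $0\le p'\le\chi_p-1$, the map $G_{p',p}(\epsilon):=\big((t,z)\mapsto y_{p',p}(t,z,\epsilon)\big)$, which by Theorem~\ref{teo3} is a holomorphic function from $\mathcal{E}_{p',p}$ into the Banach space $\mathbb{F}$ of bounded holomorphic functions on $\tilde{\mathcal{T}}\times H_{\beta'}$, bounded as $\epsilon\to0$. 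The key remark is that assertion 1) of Theorem~\ref{teo3} is exactly hypothesis (2) of Theorem~\ref{teo4} (exponential flatness of order $k$ on overlaps sharing the index $p$), while assertion 2) is hypothesis (3) of Theorem~\ref{teo4} ($q$-exponential flatness on overlaps coming from distinct indices $p\ne p'$, for the fixed $0<\kappa<\frac{k}{\delta}\min_{l=1}^{D-1}d_l$). Applying Theorem~\ref{teo4} then produces $a(t,z,\epsilon)\in\mathbb{F}\{\epsilon\}$ and $\hat{y}_1(t,z,\epsilon),\hat{y}_2(t,z,\epsilon)\in\mathbb{F}[[\epsilon]]$, common to all indices, together with the decomposition $y_{p',p}=a+y^1_{p',p}+y^2_{p',p}$ in which $\epsilon\mapsto y^1_{p',p}$ admits $\hat{y}_1$ as its $1/k$-Gevrey asymptotic expansion and $\epsilon\mapsto y^2_{p',p}$ admits $\hat{y}_2$ as its $q$-Gevrey asymptotic expansion of order $1/\kappa$ on $\mathcal{E}_{p',p}$. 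This already yields the announced form of $y_{p',p}$, and it remains to check that $\hat{y}:=a+\hat{y}_1+\hat{y}_2$ solves the formal problem and is the unique such solution.

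To see that $\hat{y}$ solves (\ref{e2_1f}) I would argue as in the proof of Theorem~\ref{teo2}. Convergent, $1/k$-Gevrey and $q$-Gevrey asymptotic expansions are all, in particular, asymptotic expansions in the Poincar\'e sense, so $\epsilon\mapsto y_{p',p}(t,z,\epsilon)$ admits the coefficientwise sum $\hat{y}=\sum_{m\ge0}h_m(t,z)\epsilon^m/m!$ as its Poincar\'e asymptotic expansion on $\mathcal{E}_{p',p}$; since the $y_{p',p}$ are holomorphic on sectors, the expansion may be differentiated, so $\lim_{\epsilon\to0,\ \epsilon\in\mathcal{E}_{p',p}}\sup_{t\in\tilde{\mathcal{T}},z\in H_{\beta'}}|\partial_\epsilon^m y_{p',p}(t,z,\epsilon)-h_m(t,z)|=0$ for every $m\ge0$. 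I would then apply $\partial_\epsilon^m$ to (\ref{e2_1}) via the Leibniz rule, let $\epsilon\to0$ inside $\mathcal{E}_{p',p}$, and invoke Theorem~\ref{teo2}, which identifies $(\partial_\epsilon^m u_p)(t,z,0)$ with $m!$ times the $m$-th Taylor coefficient of $\hat{u}$, together with the analyticity of $\mathbf{c}_{0,0},\mathbf{c}_0,\mathbf{c}_{\mathbf{F}}$ at the origin. This yields a recursion for the $h_m$ that coincides with the one obtained by substituting $\sum_m h_m\epsilon^m/m!$ into (\ref{e2_1f}) and identifying powers of $\epsilon$; hence $\hat{y}$ is a formal solution.

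Uniqueness I would treat directly on that recursion: since $\mathbf{c}_{\mathbf{F}}(0)=\mathbf{c}_{0,0}(0)=0$ and every exponent $\boldsymbol{\Delta}_l$ occurring in $P_2$ is $\ge1$, the coefficient of $\epsilon^0$ in (\ref{e2_1f}) reads $\mathbf{Q}(\partial_z)\partial_t h_0=0$; as $\mathbf{Q}(im)\ne0$ for $m\in\mathbb{R}$ the operator $\mathbf{Q}(\partial_z)$ is injective on $\mathbb{F}$ for fixed $t$, so the initial condition $h_0(0,z)\equiv0$ forces $h_0\equiv0$, and inductively the $\epsilon^m$-coefficient determines $\mathbf{Q}(\partial_z)\partial_t h_m$ from $h_0,\dots,h_{m-1}$ and the coefficients of $\mathbf{c}_{\mathbf{F}}\hat{u}$, whence $h_m$ is uniquely fixed by $h_m(0,z)\equiv0$. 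The genuine difficulty in this scheme is not conceptual but lies in the bookkeeping of the previous paragraph: one must keep careful track of the shifts in $\epsilon$-order produced by $\epsilon^{\boldsymbol{\Delta}_l}$, $\mathbf{c}_{0,0}(\epsilon)$ and $\mathbf{c}_{\mathbf{F}}(\epsilon)$, of the $t$-powers and $\partial_t$-orders attached to the $\mathbf{R}_l$, and of the convolution in $m$ arising from $\mathbf{c}_0(z,\epsilon)$, and confirm that the limit relations for $\partial_\epsilon^m y_{p',p}$ are uniform in $(t,z)$, which is where the choice of $\mathbb{F}$ with its supremum norm is used.
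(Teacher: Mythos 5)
Your proposal follows essentially the same route as the paper's own proof: define $G_{p',p}(\epsilon):=(t,z)\mapsto y_{p',p}(t,z,\epsilon)$, feed the two kinds of flatness estimates of Theorem~\ref{teo3} into the two-level Ramis--Sibuya Theorem~\ref{teo4} to obtain the common splitting and asymptotic expansions, and then check that $\hat{y}$ solves (\ref{e2_1f}) by applying $\partial_{\epsilon}^{m}$ to (\ref{e2_1}), letting $\epsilon\to0$ with the limit relations for $\partial_{\epsilon}^{m}y_{p',p}$ and $\partial_{\epsilon}^{m}u_{p}$, and identifying the resulting coefficient recursion. Your explicit uniqueness argument on the recursion (using $\mathbf{c}_{\mathbf{F}}(0)=\mathbf{c}_{0,0}(0)=0$ and $\boldsymbol{\Delta}_{l}\ge1$) is actually more detailed than the paper, which simply refers back to the end of the proof of Theorem~\ref{teo2}, so the substance of the two arguments coincides.
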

\begin{proof}
For every $0\le p\le \varsigma-1$ and all $0\le p'\le \chi_p-1$ we define $G_{p',p}(\epsilon):=(t,z)\mapsto y_{p',p}(t,z,\epsilon)$, which is a holomorphic and bounded function from $\mathcal{E}_{p',p}$ into $\mathbb{F}$. In view of Theorem~\ref{teo3}, and Theorem~\ref{teo4} one yields the existence of $\hat{y}(t,z,\epsilon)$ which can be splitted in the form of the enunciate and the expected asymptotic properties. It only rests to check that $\hat{y}(t,z,\epsilon)$ is a formal solution of (\ref{e2_1f}).

We write $\hat{y}(t,z,\epsilon):=\sum_{m\ge0}\frac{H_m(t,z)\epsilon^m}{m!}$. From the properties of the asymptotic expansions Gevrey and $q-$Gevrey, and Taylor expansion at 0, we have
\begin{equation}\label{e2354}
\lim_{\epsilon\to0,\epsilon\in\mathcal{E}_{p',p}}\sup_{t\in\tilde{\mathcal{T}},z\in H_{\beta'}}\left|\partial_{\epsilon}^{m}y_{p',p}(t,z,\epsilon)-H_{m}(t,z)\right|=0,
\end{equation}
for every $0\le p\le \varsigma-1$, $0\le p'\le \chi_p-1$and $m\ge0$. The funtion $y_{p',p}(t,z,\epsilon)$ solves (\ref{e2_1}). We take derivatives of order $m$ with respect to $\epsilon$ at both sides of the equation (\ref{e2_1}), for some integer $m\ge0$ and get
\begin{align*}
\mathbf{Q}(\partial_z)\partial_{t}\partial_\epsilon^my_{p',p}(t,z,\epsilon)&=\sum_{l=1}^{\mathbf{D}}\sum_{m_1+m_2=m}\frac{m!}{m_1!m_2!}(\partial_\epsilon^{m_1}\epsilon^{\boldsymbol{\Delta}_l})t^{\mathbf{d}_l}\partial_t^{\boldsymbol{\delta}_l}\mathbf{R}_l(\partial_z)\partial_\epsilon^{m_2}y_{p',p}(t,z,\epsilon)\\
&+\sum_{m_1+m_2+m_3=m}\frac{m!}{m_1!m_2!m_3!}(\partial_\epsilon^{m_1}\mathbf{c}_{0,0}(\epsilon))(\partial_\epsilon^{m_2}\mathbf{c}_{0}(z,\epsilon))
\mathbf{R}_{0}(\partial_z)\partial_{\epsilon}^{m_3}y_{p',p}(t,z,\epsilon)\\
&+\sum_{m_1+m_2=m}\frac{m!}{m_1!m_2!}(\partial_{\epsilon}^{m_1}\mathbf{c}_{\mathbf{F}}(\epsilon))\partial_{\epsilon}^{m_2}u_p(t,z,\epsilon),
\end{align*}
for all $m\ge 0$, $(t,z,\epsilon)\in\tilde{\mathcal{T}}\times H_{\beta'}\times\mathcal{E}_{p',p}$. We write 
$$\mathbf{c}_{\mathbf{F}}(\epsilon)=\sum_{m\ge0}\mathbf{c}_{\mathbf{F},m}\frac{\epsilon^m}{m!}, \quad \mathbf{c}_{0}(z,\epsilon)=\sum_{m\ge0}\mathbf{c}_{0,m}(z)\frac{\epsilon^m}{m!},\quad \mathbf{c}_{0,0}(\epsilon)=\sum_{m\ge0}\mathbf{c}_{0,0,m}\frac{\epsilon^m}{m!}.$$ Letting $\epsilon\to 0$ in the last equality and making use of (\ref{e2354}) and (\ref{limit_deriv_order_m_of_up_epsilon}) we get a recursion formula for the coefficients $H_m(t,z)$:
\begin{align*}
\mathbf{Q}(\partial_z)\partial_{t}H_m(t,z)&=\sum_{l=1}^{\mathbf{D}}\sum_{m_1+m_2=m-\boldsymbol{\Delta}_l}\frac{m!}{m_1!m_2!}t^{\mathbf{d}_l}\partial_t^{\boldsymbol{\delta}_l}\mathbf{R}_l(\partial_z)H_{m_2}(t,z)\\
&+\sum_{m_1+m_2+m_3=m}\frac{m!}{m_1!m_2!m_3!}\mathbf{c}_{0,0,m_1}\mathbf{c}_{0,m_2}(z)\mathbf{R}_{0}(\partial_z)H_{m_3}(t,z)\\
&+\sum_{m_1+m_2=m}\frac{m!}{m_1!m_2!}\mathbf{c}_{\mathbf{F},m_1}h_{m_2}(t,z).
\end{align*}
An analogous argument as in the last part of the proof of Theorem~\ref{teo2} yields the conclusion.
\end{proof}

We consider the main problem under study in the work. Namely, we consider the equation
\begin{equation}\label{epral}
P_1(t,z,\epsilon,\sigma_{q^{\delta}},\partial_t,\partial_z)c_{\mathbf{F}}(\epsilon)^{-1}P_2\left(t,z,\epsilon,\partial_t,\partial_z\right)y_{p',p}(t,z,\epsilon)=f(t,z,\epsilon),
\end{equation}
for every $0\le p\le \varsigma-1$ and all $0\le p'\le \chi_p$.

\begin{theo}\label{teopral}
For every $0\le p\le \varsigma-1$ and $0\le p'\le \chi_p-1$ the problem (\ref{epral}) admits an analytic solution $y_{p',p}(t,z,\epsilon)\in\mathcal{O}(\tilde{\mathcal{T}}\times H_{\beta'}\times\mathcal{E}_{p',p})$, for every $0<\beta'<\beta$, with $y_{p',p}(0,z,\epsilon)\equiv 0$. In addition to this, the problem (\ref{epral}) admits a unique formal power series solution $\hat{y}(t,z,\epsilon)\in\mathbb{F}[[\epsilon]]$ which satisfies that
$$\hat{y}(t,z,\epsilon)=a(t,z,\epsilon)+y_{1}(t,z,\epsilon)+y_{2}(t,z,\epsilon),$$
and for every $0\le p\le \varsigma-1$ and $0\le p'\le \chi_p-1$ one has that 
$$y_{p',p}(t,z,\epsilon)=a(t,z,\epsilon)+y^{1}_{p',p}(t,z,\epsilon)+y^{2}_{p',p}(t,z,\epsilon),$$
where $\epsilon\mapsto y^{1}_{p',p}(t,z,\epsilon)$ is a $\mathbb{F}$-valued function that admits $\hat{y}_{1}(t,z,\epsilon)$ as its Gevrey asymptotic expansion of order $1/k$ in $\mathcal{E}_{p',p}$, and $\epsilon\mapsto y^{2}_{p',p}(t,z,\epsilon)$ is a $\mathbb{F}$-valued function that admits $\hat{y}_{2}(t,z,\epsilon)$ as its $q-$Gevrey asymptotic expansion of order $1/\kappa$, for every fixed $0<\kappa<\frac{k}{\delta}\min_{l=1}^{D-1}d_{l}$ in $\mathcal{E}_{p',p}$.
\end{theo}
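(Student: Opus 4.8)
The plan is to obtain both the analytic and the formal solution of (\ref{epral}) directly from the chained factorization $P=P_1\circ \mathbf{c}_{\mathbf{F}}(\epsilon)^{-1}\circ P_2$, exploiting that each of the two factors has already been inverted: $P_1$ in Theorem~\ref{teo1} and Theorem~\ref{teo2}, and $P_2$ in Theorem~\ref{teo3} and in the foregoing theorem. First I would fix $0\le p\le\varsigma-1$ and $0\le p'\le\chi_p-1$, take the solution $u_p(t,z,\epsilon)$ of (\ref{e1p}) given by Theorem~\ref{teo1} and the solution $y_{p',p}(t,z,\epsilon)$ of (\ref{e2_1}) given by Theorem~\ref{teo3}, both holomorphic on $\tilde{\mathcal{T}}\times H_{\beta'}\times\mathcal{E}_{p',p}$ and vanishing at $t=0$. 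Since $0\notin\mathcal{E}_{p',p}$, the factor $\mathbf{c}_{\mathbf{F}}(\epsilon)^{-1}$ is holomorphic there, and from $P_2 y_{p',p}=\mathbf{c}_{\mathbf{F}}(\epsilon)u_p$ I would read off $\mathbf{c}_{\mathbf{F}}(\epsilon)^{-1}P_2 y_{p',p}=u_p$, hence $P_1\,\mathbf{c}_{\mathbf{F}}(\epsilon)^{-1}P_2\, y_{p',p}=P_1 u_p=f$; this already yields the desired analytic solution of (\ref{epral}), with the stated regularity and initial condition $y_{p',p}(0,z,\epsilon)\equiv 0$.

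For the formal part I would use $\hat{u}\in\mathbb{F}[[\epsilon]]$ with $P_1\hat{u}=f$ from Theorem~\ref{teo2} and $\hat{y}\in\mathbb{F}[[\epsilon]]$ with $P_2\hat{y}=\mathbf{c}_{\mathbf{F}}(\epsilon)\hat{u}$ from the foregoing theorem; the point to emphasise is that one never inverts the series $\mathbf{c}_{\mathbf{F}}(\epsilon)$ (which vanishes at $\epsilon=0$) inside $\mathbb{F}[[\epsilon]]$: the right-hand side is tautologically $\mathbf{c}_{\mathbf{F}}(\epsilon)$ times a genuine element of $\mathbb{F}[[\epsilon]]$, so $\mathbf{c}_{\mathbf{F}}(\epsilon)^{-1}P_2\hat{y}=\hat{u}$ holds in $\mathbb{F}[[\epsilon]]$, and applying $P_1$ gives $P_1\,\mathbf{c}_{\mathbf{F}}(\epsilon)^{-1}P_2\,\hat{y}=f$. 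For uniqueness I would expand $\hat{y}=\sum_{m\ge0}H_m(t,z)\epsilon^m/m!$ and run the coefficient recursion as in the closing arguments of Theorem~\ref{teo2} and of the foregoing theorem: each $H_m$ is determined by the $H_{m'}$ with $m'<m$, by the coefficients of $\hat{u}$ and by the Taylor data, the lowest-order equations fixing the remaining normalizations.

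The last ingredient, the multiscale splitting $\hat{y}=a(t,z,\epsilon)+\hat{y}_1(t,z,\epsilon)+\hat{y}_2(t,z,\epsilon)$ and $y_{p',p}=a+y^1_{p',p}+y^2_{p',p}$ with $\epsilon\mapsto y^1_{p',p}$ of Gevrey order $1/k$ and $\epsilon\mapsto y^2_{p',p}$ of $q$-Gevrey order $1/\kappa$ for any $0<\kappa<\frac{k}{\delta}\min_{l=1}^{D-1}d_l$, I would inherit \emph{verbatim} from the foregoing theorem: that statement was obtained by applying Theorem~\ref{teo4} to the $\mathbb{F}$-valued family $G_{p',p}(\epsilon)=\big((t,z)\mapsto y_{p',p}(t,z,\epsilon)\big)$, feeding in the two flatness estimates of Theorem~\ref{teo3} (exponential flatness of order $k$ between consecutive $p'$ sharing a common $p$, and $q$-exponential flatness of order $\kappa$ between sectors coming from different $p$), and the analytic solution of (\ref{epral}) constructed above is literally the same function $y_{p',p}$. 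I expect the only genuinely delicate point to be the bookkeeping around the singular factor $\mathbf{c}_{\mathbf{F}}(\epsilon)^{-1}$: equation (\ref{epral}) cannot be attacked by a single fixed point or Ramis--Sibuya argument on the whole operator $P$ precisely because $\mathbf{c}_{\mathbf{F}}(\epsilon)^{-1}$ is not regular at the origin, so one must keep track that the intermediate object $\mathbf{c}_{\mathbf{F}}(\epsilon)^{-1}P_2 y$ is the genuinely regular quantity $u$ rather than something merely meromorphic in $\epsilon$ — which is exactly why the whole construction is organised around the two chained auxiliary problems (\ref{e1p}) and (\ref{e2_1}).
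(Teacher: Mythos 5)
Your argument is correct and is exactly the route the paper intends: the analytic solution is the function $y_{p',p}$ of Theorem~\ref{teo3} composed through the factorization $P_1\,\mathbf{c}_{\mathbf{F}}(\epsilon)^{-1}P_2$, the formal solution and the splitting into $a+\hat{y}_1+\hat{y}_2$ are inherited from the preceding theorem (itself an application of Theorem~\ref{teo4} with the flatness estimates of Theorem~\ref{teo3}), and uniqueness follows from the coefficient recursion as in Theorem~\ref{teo2}. The paper in fact leaves this composition implicit, so your write-up, including the careful remark that one divides $P_2\hat{y}=\mathbf{c}_{\mathbf{F}}\hat{u}$ rather than inverting $\mathbf{c}_{\mathbf{F}}(\epsilon)$ in $\mathbb{F}[[\epsilon]]$, is a faithful (and slightly more explicit) version of the intended proof.
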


\end{document}